\newtheorem{thm}{Theorem}[section]
\newtheorem{f}[thm]{Fact} 
\newtheorem{cor}[thm]{Corollary}
\newtheorem{lem}[thm]{Lemma}
\newtheorem{prop}[thm]{Proposition}
\theoremstyle{definition}
\newtheorem{defin}[thm]{Definition}
\theoremstyle{remark}
\newtheorem{remark}[thm]{Remark}
\newtheorem{remarks}[thm]{Remarks}
\newtheorem{ex}[thm]{Example}
\numberwithin{equation}{section}
\newcommand{\delete}[1]{} % Comment out text.
\newcommand{\nt}{\noindent}
\def\eps{{\varepsilon}}
\newcommand{\sk}{\vskip 0.2cm}
\newcommand{\ben}{\begin{enumerate}}
\newcommand{\een}{\end{enumerate}}
\newcommand{\bit}{\begin{itemize}}
\newcommand{\eit}{\end{itemize}}
\def\R {{\mathbb R}}
\def\N {{\mathbb N}}
\def\Z {{\mathbb Z}}
\def\Q {{\mathbb Q}}
\def\T {{\mathbb T}}
\newcommand{\cU}{\mathfrak{U}}
\def\Aut{{\mathrm Aut}\,}
\def\St{{\mathrm St}\,}
\def\diam{{\mathrm{diam}}}
\def\B{{\mathcal{B}}}
\def\Xcal{\mathcal{X}} 
\def\Rcal{\mathcal{R}}
\def\Ucal{\mathcal{U}}
\def\Rcal {{\mathcal{R}}}
\def\LOTS{\operatorname{LOTS}}
\def\COTS{\operatorname{COTS}}
\def\GLOTS{\operatorname{GLOTS}}
\def\GCOTS{\operatorname{GCOTS}}
\def\a{\alpha}
\def\t{\tau}
\def\s{\sigma}
\def\g{\gamma}
\def\wrt{with respect to }
\newcommand{\cls}{{\rm{cl\,}}}
\newcommand{\al}{\alpha}
\newcommand{\medcirc}{\scalebox{0.5}[0.5]{\ensuremath{\bigcirc}}}
\begin{document} 

\title[]
{Circular orders: topology and continuous actions} 

\author[]{Michael Megrelishvili}
\address{Department of Mathematics,
Bar-Ilan University, 52900 Ramat-Gan, Israel}
\email{megereli@math.biu.ac.il}
\urladdr{http://www.math.biu.ac.il/$^\sim$megereli}

\dedicatory{Dedicated to Eli Glasner on the occasion of his 80th birthday}
\thanks{Supported by the Gelbart Research Institute at the Department of Mathematics, Bar-Ilan  University}

\subjclass[2020]{54F05, 54H20, 37B05, 54D35, 54E15, 26A45, 06A05}

\keywords{convex uniformity, equivariant compactification, fragmentability, Helly space, tame dynamical system, topology of circular orders}

\date{April, 2026} 

\begin{abstract}
We study the topology of circularly ordered sets. While the algebraic notion is classical, the general \emph{topological} theory has received comparatively little attention. In this work we provide a self-contained topological exposition and present several new directions and results:
\begin{itemize}
\item Initiate a systematic study of generalized circularly ordered topological spaces and of continuous group actions on them. 
\item Provide a convex uniform structure description of circularly ordered compactifications. This yields a topological analysis of Novak's regular completion and its uniformity. 
\item Demonstrate that this uniform-structure approach yields several new results in the theory of  $G$-compactifications for topological group actions on abstract ordered spaces.

\item 
Reexamine functions of bounded variation on circularly ordered sets and prove generalizations of Helly's selection theorem (for circular and linear orders). 
\end{itemize}  
These developments and the systematic analysis of circular order topologies are motivated by recent applications in topological dynamics, particularly in joint works with Eli   Glasner, which demonstrate that circularly ordered dynamical systems provide a natural class of tame dynamics.  
\end{abstract}

\maketitle

\setcounter{tocdepth}{1}
\tableofcontents

\section{Introduction} \label{s:intro}

The concept of a circular (or cyclic) order has deep historical roots, with its axiomatic foundations established by E. V. Huntington \cite{Hunt16, Hunt24} and independently introduced by E. \v{C}ech \cite{Cech}.
There are several geometric roots of circular orders \cite{Struve}. 

By contrast, the \textit{topological} theory of abstract circularly ordered sets equipped with a ternary relation (Definition~\ref{newC}) has remained comparatively underdeveloped; an important early contribution is H. Kok’s 1973 monograph \cite{Kok}.
 For many years, topological and dynamical applications focused primarily on the standard circle $\T$ and its orientation-preserving homeomorphisms. 
This gap has become an obstacle in some recent works in topological dynamics \cite{GM-c, GM-tLN, GM-UltraHom21, GM-TC, Me-OrdSem}, where circular order topologies arise naturally. In joint works with Eli Glasner, circularly ordered compact $G$-systems (Definition \ref{d:c-ordG-sp}) were shown to have low dynamical complexity. Namely they are tame (see Section \ref{s:tame} for the definitions and properties).  
These applications motivate a complete, self-contained  treatment of circularly ordered topological spaces (COTS) and their generalizations. 
We also believe that the topological theory of circular orders has its own significance in many other research directions.  

There is a natural ``bridge" connecting circularly  ordered topological spaces ($\COTS$) to the 
well-studied theory of linearly ordered topological spaces ($\LOTS$) \cite{Nach, Eng89, CR-b, Kok}. The main mechanism for this bridge is the \textbf{split space} construction, which is detailed systematically in Section \ref{s:split}.

However, one of the justifications for a separate $\COTS$ theory lies precisely where this analogy breaks down. In topological dynamics, the dynamical picture is more complex (see Remark \ref{r:c-Helly}). For compact \textbf{minimal} $G$-systems, linear orderability is a profound restriction, forcing the system to be trivial  \cite{Ellis, GM-c}.  
In contrast, circularly ordered  (minimal) compact $G$-spaces form a rich class. 
 Important examples include the Sturmian-type minimal $\Z^k$-systems, which are familiar in symbolic dynamics and are naturally circularly ordered \cite{GM-c, GM-tLN}. 
 This sharp contrast shows that a complete $\COTS$ theory is not merely a generalization of $\LOTS$ theory but a necessary and distinct field.  
\begin{itemize}[topsep=0pt] 
	\item Sections \ref{s:CircOrd} and \ref{s:CircTop} study the properties of the \textit{circular interval topology} $\lambda_{\medcirc}$ (Proposition \ref{Hausdorff}, Lemma \ref{l:c-is-Open}) and the precise relationship between $\lambda_{\medcirc}$ and the linear interval topology $\lambda_{\leq}$. 
	 \item Recall that the classical Helly space  $M_+([0,1],[0,1]) \subset [0,1]^{[0,1]}$ is first countable. In Theorem \ref{t:lin-Helly} we show that the \textit{generalized Helly space} $M_+(X,Y)$ of all LOP maps is first countable in the pointwise topology for compact metrizable LOTS $X$ and $Y$. In contrast, the circular analogue fails: $M_+(\T,\T)$ is not first countable (Example \ref{ex:BigHelly}). 
	This demonstrates once more 
	the relative complexity of circular orders 
	when compared with linear orders. 	
	\item Section \ref{s:compINV}  deals with special  compactifications of ordered spaces using inverse limits. Section \ref{s:split} develops the ``split space" bridge (Lemma \ref{c-doubling}), where we study the topological properties of COTS.  	
	\item Another key direction is the systematic study of \textit{Generalized Circularly Ordered Topological Spaces} ($\GCOTS$) in Section \ref{s:GCOTS}, a concept introduced in \cite{Me-OrdSem}. We establish its fundamental relationship to $\GLOTS$ (Lemma \ref{l:easyPropGCO}) and examine their equivariant compactifications.  
	\item Section \ref{s:Novak} contains a full topological analysis of Nov\'{a}k's regular completion \cite{Novak-cuts}. Using the  ``complete=compact" result \cite{Me-OrdSem}, we prove that this completion is a proper COTS compactification (Theorem \ref{t:CompletionEmbeddingAndDensity}) and that it is the \textit{minimal circularly ordered compactification}  (Theorem \ref{t:NovakMinimality}). This establishes a strong analogy to the Dedekind-MacNeille completion for $\LOTS$ 
	(Remark \ref{r:bingo}).  
	We study the major role of \textit{convex uniform structures} and show in Theorem \ref{t:UniformRep} that there is an order anti-isomorphism
	$
	\operatorname{Comp}_{\mathrm{COP}}(X)\ \longleftrightarrow\ \operatorname{Unif}_{\mathrm{GCO}}(X) 
	$
	between circular order preserving proper compactifications and precompact convex uniform structures. 
	For every circular order we define the \textit{intrinsic} uniform structure (\textit{interval uniformity} $\mu_{\medcirc}$) whose completion gives exactly Nov\'ak's completion.
	
	\item In Section \ref{s:G-comp} we show that order preserving continuous actions of topological groups $G$ admit many proper $G$-compactifications. More precisely, according to Theorem \ref{t:G-bounded-GCOTS}  for every precompact convex uniformity $\Ucal$ with uniformly continuous $g$-translations, the induced $G$-action on the corresponding completion of $(X,\Ucal)$ is continuous. 
	This brings a remarkable equivariant aspect to the anti-isomorphism  
	$\operatorname{Comp}_{\mathrm{COP}}(X)\ \longleftrightarrow\ \operatorname{Unif}_{\mathrm{GCO}}(X)$.   
	In particular, we conclude that every 
	order-preserving topological group $G$-action on a COTS (or LOTS) is $G$-compactifiable.  
	\item The final sections open some analytical directions. Section \ref{s:fr} (Fragmentability) and Section \ref{s:BV} about Bounded Variation functions introduce these concepts for abstract ordered spaces. We provide a generalization of the classical fact that BV (e.g., monotone) functions are Baire 1 by proving that order preserving maps into precompact convex uniform spaces are fragmented, yielding Baire class 1 functions on Polish domains (Theorem \ref{monot}). This framework leads to new generalizations of \textit{Helly's selection theorem}, proving that $BV_r$ spaces on abstract ordered sets are sequentially compact  under natural assumptions (Theorem \ref{corOfGenHelly2}, Theorem \ref{GenHellyThm}).  
	  \item Section \ref{s:tame} applies these analytical tools to dynamics, proving that all compact circularly ordered $G$-spaces are representable on (the dual of)  Rosenthal Banach spaces, and therefore dynamically tame (Theorem \ref{t:CoisWRN}).
\end{itemize}

This paper provides a self-contained topological treatment of circularly ordered spaces. 
 Several of the presented results appeared earlier, occasionally without detailed proofs, in \cite{GM-tame,GM-c,GM-UltraHom21,GM-TC,Me-Helly,Me-medianBV,Me-OrdSem,Me-MaxEqComp}. We also provide new contributions, including the results numbered: 
\ref{p:Prop-c-ord}, \ref{t:M_Closed},  
\ref{c-doubling}, 
\ref{t:COTS_joint_cont}, 
 \ref{t:CompletionEmbeddingAndDensity}, 
\ref{t:NovakMinimality}, \ref{p:CompToCircle},
\ref{t:NovakUniformityInternal},  
 \ref{t:UniformRep}, 
\ref{t:myCOTS-G-comp}, 
\ref{t:G-bounded-GCOTS}, 
\ref{r:SUMMARY}, 
\ref{monot}, \ref{GenHellyThm}, 
\ref{t:BVisFr}, \ref{p:CorGCOTSisWRN}.   
 
\section{Circular and linear orders: basic properties} 
\label{s:CircOrd} 

The axiomatic approach (using ternary relations) to circular order goes back to E. V. Huntington \cite{Hunt16,Hunt24}. A geometric formulation using separation was later given by H.S.M. Coxeter. 
A cyclic ordering relation was independently introduced by E. Čech \cite{Cech}, which later inspired subsequent works of L. Rieger. 
A standard example of a circularly ordered space is the circle $\T$. An abstract circular (some authors prefer the term  \textit{cyclic}) order $R$ on a set $X$ 
can be defined as a certain ternary relation. 
Intuitively, a circular order can be seen as a linear order ``wrapped around" into a circle.  
We recall one of the main definitions of circular orders. 

\begin{defin} \label{newC} \cite{Hunt16,Hunt24,Cech} 
	Let $X$ be a set. A ternary relation $R \subset X^3$ on $X$ is said to be a {\it circular} (sometimes also called \emph{cyclic}) order (or, in short: \textit{c-order})   
	if the following four conditions are satisfied. It is convenient sometimes to write shortly $[a,b,c]$ (or, even simply $abc$) instead of $(a,b,c) \in R$. 
	\ben
	\item Cyclicity: 
	$[a,b,c] \Rightarrow [b,c,a]$;  
	
	\item Asymmetry: 
	$[a,b,c] \Rightarrow \neg [c,b,a]$ (or $(c,b,a) \notin R$);
	
	\item Transitivity:    
	$
	\begin{cases}
		[a,b,c] \\
		[a,c,d]
	\end{cases}
	$ 
	$\Rightarrow [a,b,d]$; 
	\item Totality: 
	if $a, b, c \in X$ are distinct, then $[a, b, c]$ 
	or $[a, c, b]$.  
	\een
	Then $(X,R)$ is a \textit{circularly ordered set}. 
	
	If $R$ satisfies the first three conditions (1), (2), (3) then $R$ is said to be a \textit{partial} circular order.   
\end{defin}

Observe that by (1) and (2) $[a,b,c]$ implies that $a,b,c$ are distinct. 

\begin{lem} \label{l:property}
	$
	\begin{cases}
		[c,a,x] \\
		[c,x,b]
	\end{cases}
	$ 
	$\Rightarrow [a,x,b]$. 
\end{lem}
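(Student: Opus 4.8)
The plan is to reduce everything to the transitivity axiom (3), which applies only when both triples share a common \emph{first} coordinate. The two hypotheses $[c,a,x]$ and $[c,x,b]$ already share the leading letter $c$, but the target $[a,x,b]$ does not involve $c$ at all, so the whole game is to rotate the given triples by cyclicity (axiom (1)) until the first coordinates line up for a single clean application of (3), and then rotate the output back into the desired form.

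Concretely, I would first promote $x$ to the common leading coordinate. Applying cyclicity twice to $[c,a,x]$ gives $[a,x,c]$ and then $[x,c,a]$; applying cyclicity once to $[c,x,b]$ gives $[x,b,c]$. Now both $[x,b,c]$ and $[x,c,a]$ begin with $x$ and fit the template $[p,q,r]$, $[p,r,s]$ of the transitivity axiom (with $p=x$, $q=b$, $r=c$, $s=a$). Axiom (3) then yields $[x,b,a]$. Finally I would rotate back: cyclicity applied to $[x,b,a]$ gives $[b,a,x]$ and then $[a,x,b]$, which is exactly the claim.

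The only real subtlety — and the step I would expect to stumble on first — is choosing the correct rotations. A naive attempt to keep $c$ as the common leading coordinate (applying (3) directly to the hypotheses) produces $[c,a,b]$, equivalently $[a,b,c]$, whose reversal $[a,c,b]$ is actually \emph{forbidden} by asymmetry; that branch therefore cannot reach the target. Recognizing that $x$, not $c$, must be the common first coordinate is the key insight, after which the argument is a three-line rotate-then-transitivize computation. Note that asymmetry (2) and totality (4) play no role here; only cyclicity and transitivity are used.
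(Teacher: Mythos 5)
Your proof is correct and uses the same strategy as the paper's (rotate by cyclicity, apply transitivity once, rotate back), with only cyclicity and transitivity needed. In fact your execution is the more reliable one: the paper's one-line proof claims to apply transitivity "with $a$ as anchor," which does not fit the transitivity template from the available triples (one would need $[a,c,b]$, which is false here), whereas your choice of $x$ as the common first coordinate — combining $[x,b,c]$ and $[x,c,a]$ to get $[x,b,a]=[a,x,b]$ — is exactly the rotation that makes the axiom apply.
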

\begin{proof}
	By cyclicity the second pair becomes $[x,c,a]$; transitivity on the given pairs yields the claim.
\end{proof}

Some alternative axiomatizations can be found in \cite{BORT}, \cite{Calegari04}, \cite{McMullen07}, \cite{BS}.  

For $a,b \in X$, define the (oriented) \emph{intervals}: 
$$
(a,b)_{\circ}:=\{x \in X: [a,x,b]\}, \  [a,b]_{\circ}:=(a,b)_{\circ} \cup \{a,b\}, \  [a,b)_{\circ}:=(a,b)_{\circ} \cup \{a\}, \ (a,b]_{\circ}:=(a,b)_{\circ} \cup \{b\}. 
$$
Sometimes we drop the subscript when the context is clear, or write $(a,b)_R$. 
Clearly, $[a,a]=\{a\}$ for every $a \in X$ and $X \setminus [a,b]=(b,a)$ for distinct $a \neq b$.  
For a circular order $R$ on $X$, the intersection of two open circular intervals is either empty, a single open circular interval, or a disjoint union of two open circular intervals.  

 \subsection{Partial and linear orders} 
 
 By a \textit{partial order}, we mean a reflexive, antisymmetric, and transitive relation $\leq$. 
 A \textit{linear order}, as usual, is a partial order which is totally ordered, meaning that for distinct $a,b \in X$ 
 we have exactly one of the alternatives: $a<b$ or $b<a$. As usual, $a < b$  means that $a \leq b$ and $a\neq b$.  Sometimes we write just $(X,\leq)$, or even simply $X$, where no ambiguity can occur.  
 For every linearly ordered set $(X,\leq)$ define the rays 
 $$(a,\to):=\{x \in X: a < x\}, \ \ \ \ (\leftarrow,b):=\{x \in X: x <b\}$$ 
 with $a,b \in X$. 
 All such rays form a prebase (also called subbase) for the \textit{interval topology} $\lambda_{\leq}$. A topological space is said to be \emph{Linearly Ordered Topological Space} ($\mathrm{LOTS}$) if its topology is $\lambda_{\leq}$ for some linear order $\leq$. Recall the following well known fact.   

\begin{lem} \label{ordHausd}
	Let $(X,\le)$ be a $\LOTS$. For $u_1<u_2$ there are disjoint $\lambda_{\le}$-open neighborhoods $O_1,O_2$ with $O_1<O_2$ (i.e. $x<y$ for all $(x,y)\in O_1\times O_2$). In particular $(X,\lambda_{\le})$ is Hausdorff and the graph of $\le$ is closed in $X\times X$.
\end{lem}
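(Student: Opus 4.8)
The plan is to exhibit, for any $u_1<u_2$, an explicit pair of subbasic open rays separating the two points, distinguishing two cases according to whether the open interval $(u_1,u_2)$ is empty. First I would treat the case in which there exists $c$ with $u_1<c<u_2$. Here I set $O_1:=(\leftarrow,c)$ and $O_2:=(c,\to)$. These are subbasic $\tau_{\le}$-open sets, $u_1\in O_1$ and $u_2\in O_2$, and for every $x\in O_1$, $y\in O_2$ we have $x<c<y$; this yields simultaneously $O_1<O_2$ and $O_1\cap O_2=\emptyset$.

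The remaining case is a \emph{jump}, i.e. $(u_1,u_2)=\emptyset$. Here I take $O_1:=(\leftarrow,u_2)$ and $O_2:=(u_1,\to)$, so that $u_1\in O_1$ and $u_2\in O_2$. Disjointness is immediate, since $O_1\cap O_2=(u_1,u_2)=\emptyset$. The one point requiring care—the main (if minor) obstacle—is verifying $O_1<O_2$: using totality, any $x<u_2$ must satisfy $x\le u_1$ (otherwise $u_1<x<u_2$ would place $x$ in the empty interval $(u_1,u_2)$), and symmetrically any $y>u_1$ satisfies $y\ge u_2$. Hence for $x\in O_1$, $y\in O_2$ we get $x\le u_1<u_2\le y$, so $x<y$, as required.

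The two global assertions then follow formally. For the Hausdorff property, given distinct $x\ne y$, totality lets me assume $x<y$, and the neighborhoods constructed above are in particular disjoint. For closedness of the graph $\{(x,y):x\le y\}$, I note that by linearity its complement is exactly $\{(x,y):y<x\}$; given a point $(a,b)$ with $b<a$, I apply the separation just proved to the pair $b<a$ to obtain disjoint open $O_1\ni b$ and $O_2\ni a$ with $O_1<O_2$. Then $O_2\times O_1$ is an open neighborhood of $(a,b)$ contained in the complement, since every point of it has the form $(a',b')$ with $a'\in O_2$ and $b'\in O_1$, whence $b'<a'$. Thus the complement of the graph is open and the graph is closed.
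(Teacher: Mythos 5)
Your proof is correct and follows essentially the same route as the paper: the same case split on whether $(u_1,u_2)$ is empty, the same choice of rays in each case, and the same use of the separation to show the complement of the graph of $\le$ is open. You simply spell out the verification of $O_1<O_2$ in the jump case more explicitly than the paper does.
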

\begin{proof}
	If $(u_1,u_2)=\emptyset$, take $O_1=(\leftarrow,u_2)$ and $O_2=(u_1,\to)$. Otherwise choose $t\in(u_1,u_2)$ and set $O_1=(\leftarrow,t)$, $O_2=(t,\to)$. For closedness: if 
	\(x\nleq y\), then $y<x$. If $(y,x) \neq \emptyset$ choose $t\in(y,x)$ and take neighborhoods $U=(t,\to)$ of $x$ and $V=(\leftarrow,t)$ of $y$. For the case $(y,x) = \emptyset$ take $U:=(y,\to), V:=(\leftarrow,x)$. 
	Then $U\times V$ is a neighborhood of $(x,y)$ disjoint from the graph of $\le$.
\end{proof}

 \begin{f} \label{f:propLOTS} 
 	Let $(X,\leq,\lambda_{\leq})$ be a $\LOTS$. 
 	\begin{enumerate}
 		\item \cite{HLZ} \ $X$ is monotonically normal (hence also, hereditarily collectionwise normal; in particular, hereditarily normal). 
 		\item The interval topology $\lambda_{\le}$ on $X$ is compact iff every subset of $X$ has a supremum (with $\sup(\emptyset)=\min X$) equivalently iff every subset has an infimum (with $\inf(\emptyset)=\max X$). 
 		\item \cite{LB,Nagata} $X$ is metrizable if (and only if) the diagonal $\Delta_X$ of $X^2$ is a $G_{\delta}$ subset. 
 	\end{enumerate}	
 \end{f}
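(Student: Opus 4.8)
The plan is to treat the three items separately, since they are of different natures. Items (1) and (3) are the classical theorems of Heath--Lutzer--Zenor \cite{HLZ} and of \cite{LB}, and I would simply invoke them: reproving monotone normality of every LOTS, or the $G_\delta$-diagonal metrization criterion, lies outside the scope of a recollection of known facts. So the genuine content to establish is item (2), the order-completeness characterization of compactness. I note first that the stated conventions $\sup(\emptyset)=\min X$ and (taking $A=X$) $\sup(X)=\max X$ force ``every subset has a supremum'' to be equivalent to ``$(X,\le)$ is a complete lattice''; in particular $X$ then has both a least element $m$ and a greatest element $M$. The final ``equivalently iff infimum'' clause is automatic by order duality: reversing $\le$ interchanges suprema and infima while leaving the ray subbase, hence $\tau_{\le}$, unchanged.

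For the direction \emph{compact $\Rightarrow$ complete}, I would first recover the extreme points. By Lemma \ref{ordHausd} each ray $(\leftarrow,a)$ is open and each $[a,\to)$ is closed. If $X$ had no minimum then $\{(a,\to):a\in X\}$ would be an open cover, and any finite subfamily has union $(\min_i a_i,\to)\neq X$, contradicting compactness; hence $m=\min X$ exists, and dually $M=\max X$ via $\{(\leftarrow,a):a\in X\}$. For an arbitrary nonempty $A\subseteq X$, the upper-bound set $B=\bigcap_{a\in A}[a,\to)$ is an intersection of closed sets with the finite intersection property (a finite subfamily meets in $[\max_i a_i,\to)\neq\emptyset$), so by compactness $B\neq\emptyset$. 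Finally $B$, being closed in a compact space, is itself compact, and if it lacked a minimum then $\{(x,\to):x\in B\}$ would cover $B$ with no finite subcover, since the least element $x_*$ of any finite selection lies in $B$ but not in $(x_*,\to)$; thus $\min B$ exists and equals $\sup A$.

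For the direction \emph{complete $\Rightarrow$ compact}, I would run the standard ``longest finitely-covered initial segment'' argument. Given an open cover $\mathcal{U}$, set $A=\{x\in X:[m,x]$ is covered by finitely many members of $\mathcal{U}\}$. Then $m\in A$, so $A$ is a nonempty initial segment, and $s:=\sup A$ exists by completeness. A neighborhood argument shows $s\in A$: assuming $s>m$, choose $U_0\in\mathcal{U}$ with $s\in U_0$ together with some $c<s$ such that $(c,s]\subseteq U_0$ (extracted from a basic interval around $s$); picking $a\in A$ with $c<a\le s$ and adjoining $U_0$ to a finite cover of $[m,a]$ yields a finite cover of $[m,s]$. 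One then shows $s=M$: if $s<M$, either $s$ has an immediate successor $t$, i.e. $\inf(s,\to)=t>s$, in which case $[m,t]=[m,s]\cup\{t\}$ is finitely covered; or $\inf(s,\to)=s$, in which case some $y>s$ lies in a basic interval inside $U_0$, whence $[m,y]$ is finitely covered. Either way a point strictly above $s=\sup A$ belongs to $A$, a contradiction; hence $s=M\in A$ and $X=[m,M]$ has a finite subcover.

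The step I expect to be the main obstacle is precisely this ``pushing past $s$'' part of the last direction, because order topologies may contain jumps (immediate successors) and gaps, so $s$ need not be a two-sided limit point. The clean way to handle it uniformly is to branch on whether $\inf(s,\to)$ equals $s$ or is a strict successor, as above; alternatively one may restrict to subbasic covers via Alexander's subbase lemma, which simplifies the neighborhood bookkeeping at the cost of invoking that lemma.
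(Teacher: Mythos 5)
Your proposal is correct. Note that the paper itself supplies no proof of this statement: it is recorded as a \emph{Fact} recalling classical results, with citations for items (1) and (3) and item (2) left as folklore, so there is no argument in the paper to compare against. Your treatment of (1) and (3) by citation therefore matches the paper exactly, and your proof of (2) is the standard one and is sound: the direction ``compact $\Rightarrow$ complete'' via the finite intersection property applied to the closed upper-bound set $B=\bigcap_{a\in A}[a,\to)$ (after first extracting $\min X$ and $\max X$ from ray covers), and the converse via the supremum $s$ of the initial segment of finitely covered points, with the correct case split at $s$ between an immediate successor ($\inf(s,\to)>s$) and a right limit point ($\inf(s,\to)=s$). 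The only points worth polishing are cosmetic: when you ``choose $c<s$ with $(c,s]\subseteq U_0$'' you should note the degenerate subcase where the basic neighborhood of $s$ has no lower ray constraint, in which case $[m,s]\subseteq U_0$ outright; and the equivalence of the sup and inf formulations can also be seen directly from $\inf A=\sup\{\text{lower bounds of }A\}$ rather than by order reversal, though your duality argument is equally valid.
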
 
 
 \begin{defin} \label{d:ord} (Nachbin \cite{Nach})
 	Let $(X,\tau)$ be a topological space and $\leq$ 
 	a partial order on $X$. The triple 
 	$(X,\tau,\leq)$ is said to be a 
 	\emph{partially ordered space} (POTS) if the graph of the relation $\leq$ is $\tau$-closed in $X \times X$. 
 	We say that it is compact (separable, etc.) if 
 	$(X,\tau)$ is a compact (separable, etc.) space.  
 \end{defin}
 
 Every $(X,\tau,\leq)$ from POTS is Hausdorff because the diagonal $\Delta_X=\{(x,x)\}$ being intersection of two closed relations $\le$ and $\ge$, is closed in $X^2$. 
   The class POTS is preserved under the subspaces and the products $\prod_{i \in I} X_i$ (using the product topology and the natural coordinate-wise partial order:  $(a_i) \leq (b_i)$ iff $a_i \leq b_i$ for every $i$). 
   
   Any \emph{compact} $\LOTS$ is a
   compact $\rm{POTS}$ (in the sense of Definition \ref{d:ord}) by Lemma \ref{ordHausd}. Conversely, 
   for every compact POTS $(X,\tau,\leq)$, where $\leq$ is a linear order, $\tau=\lambda_{\le}$. 
  
  \begin{lem} \label{l:DenseLin} 
  	Let $(Y,\t,\leq)$ be a \rm{POTS}.  
  	Suppose that $X$ is a dense subset of $Y$ such that the restricted partial order $\leq_X$ on $X$ is a linear order. Then $\leq$ is a linear order on $Y$.  
  \end{lem}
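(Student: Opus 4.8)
The plan is to reformulate \emph{totality} of the order as a closedness condition on the product space and then push it across the dense subset $X$. First I would introduce the \emph{comparability set}
$$C := \{(u,v) \in Y \times Y : u \leq v \ \text{ or } \ v \leq u\},$$
and note that $C$ is exactly the union of the graph of $\leq$ with the graph of $\geq$. Since $(Y,\t,\leq)$ is a POTS, the graph of $\leq$ is $\t$-closed in $Y\times Y$ by Definition \ref{d:ord}; the graph of $\geq$ is its image under the coordinate-swap homeomorphism $(u,v)\mapsto(v,u)$ of $Y\times Y$, hence also closed. As a finite union of closed sets, $C$ is therefore closed in $Y\times Y$.

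Next I would invoke the density hypothesis. Because $X$ is dense in $Y$, the product $X\times X$ is dense in $Y\times Y$ for the product topology. The assumption that the restricted order $\leq_X$ is linear says precisely that every pair of points of $X$ is comparable, that is $X\times X\subseteq C$. A closed set containing a dense subset must equal the whole space, so $C=Y\times Y$. This means $\leq$ is total on $Y$: for all $a,b\in Y$ we have $a\leq b$ or $b\leq a$.

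Finally I would upgrade totality to linearity using the axioms already available for $\leq$. For distinct $a,b\in Y$, totality gives at least one of $a<b$, $b<a$, while antisymmetry of the partial order $\leq$ forbids both simultaneously (they would force $a=b$); hence exactly one holds, which is the definition of a linear order. I do not anticipate a serious obstacle: the only genuinely substantive point is the observation that ``totality'' is membership in the closed set $C$, and that the POTS hypothesis automatically delivers closedness of the reversed graph via the swap homeomorphism. Once $C$ is seen to be closed, the density argument is immediate and the conclusion follows.
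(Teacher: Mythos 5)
Your proof is correct and follows essentially the same route as the paper: both arguments observe that the comparability set $R\cup R^{-1}$ is closed in $Y\times Y$ (the paper leaves the swap-homeomorphism step implicit) and contains the dense subset $X\times X$, hence equals $Y\times Y$. The extra final paragraph upgrading totality to linearity via antisymmetry is a harmless elaboration of what the paper takes for granted.
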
 
  \begin{proof}
  	The relation $R:=\leq$ is a closed  subset of $Y \times Y$. The union $R \cup R^{-1}$ is closed in $Y \times Y$. The subset $X \times X$ is dense in $Y \times Y$ and is contained in $R \cup R^{-1}$. Hence, 
  	$R \cup R^{-1} = Y \times Y$.  	
  \end{proof}
  
 A map $f\colon  (X,\leq) \to (Y,\leq)$ between two (partially) ordered sets is said to be \emph{order preserving} (OP) or  \textit{increasing} 
 if $x \leq x'$ implies $f(x) \leq f(x')$ for every $x,x' \in X$. Let $(X,\leq)$ and $(Y,\leq)$ be partially ordered sets.  
 Denote by $M_+(X,Y)$ the set of all order preserving maps $X \to Y$. For $Y=\R$ we use the symbol 
 $M_+(X,\leq)$ or $M_+(X)$. 
 Since the order of $\R$ is closed in $\R^2$, we have $\cls(M_+(X)) = M_+(X)$. That is, $M_+(X)$ is pointwise closed in $\R^X$. 
 If $(Y,\tau,\leq)$ is a compact partially ordered space then 
 $M_+(X,Y)$ is pointwise closed in $Y^X$. 
 For compact partially ordered spaces $X,Y$ we define also $C_+(X,Y)$ the set 
 of all continuous and OP maps $X \to Y$. 
 
 \begin{lem}\label{l:Nachbin}\emph{(Nachbin \cite[Section 3, Thm.\,6]{Nach})}
 	Let $(X,\tau,\le)$ be a compact POTS. Then $C_+(X,[0,1])$ separates points of $X$. Moreover, if $A\subseteq X$ is closed and $f\colon A\to \R$ is continuous and order preserving, there exists a continuous order-preserving $F\colon X\to \R$ with $F|_A=f$.
 \end{lem}

\begin{remark} \label{r:chech} \ 
	\ben 
	\item 
	Every linear order $\leq$ on $X$ defines a \emph{standard circular order} $R_{\leq}$ on $X$ as follows: 
	$[x,y,z]$ iff one of the following conditions is satisfied:
	$x < y < z, \ y < z < x, \  z < x < y.$	 
	
	\item  \cite[p.~35]{Cech} (standard cuts) Let $(X,R)$ be a c-ordered set and let $z\in X$.
	Define a binary relation $\le_z$ on $X$ by declaring that $z\le_z x$ for every $x\in X$, and
	for $a,b\in X\setminus\{z\}$ put
	\[
	a<_z b \ \Longleftrightarrow\ [z,a,b].
	\]
	Then $\le_z$ is a linear order on $X$ with least element $z$. The associated circular
	order $R_{\le_z}$ coincides with $R$.
	
	\item For every distinct $a,b$ in a circularly ordered set $(X,R)$, the interval
	$
	[a,b]_R:=\{x\in X: axb\}
	$
	is linearly ordered by the restriction of the standard cut $\le_a$. Equivalently,
	\[
	[a,b]_R=[a,b]_{\le_a}.
	\]
	In particular, for $u,v\in (a,b)_R$ one has
	$
	u<_a v \ \Longleftrightarrow\ [a,u,v].
	$ 
	\een
\end{remark}

\section{Topology of circular orders} \label{s:CircTop} 

The following natural topologization of circular orders mimics interval topology of linear orders.  

 \begin{prop}[Interval topology] \label{Hausdorff} \cite{GM-c,GM-UltraHom21}
 	\ben  
 	\item 
 	For every circular order $R$ on $X$ the family of subsets
 	$${\mathcal B}_1:=\{X \setminus [a,b]_R : \ a,b \in X\} \cup \{X\}$$  
 	forms a base for a topology $\lambda_R$ on $X$ which we call the \emph{interval topology} of $R$. 
 	\item If $X$ contains at least three elements, the (smaller) family of intervals  
 	$${\mathcal B}_2:=\{(a,b)_R : \  a,b \in X, a \neq b\}$$ 
 	forms a base for the same topology $\lambda_R$ on $X$. 
 	\item The interval topology $\lambda_R$ of every circular order $R$ is Hausdorff.  
 	\een 
 	\textbf{ Notation}: Sometimes we use the notation $\lambda_{\medcirc}$ instead of $\lambda_R$, when the context is clear. 
 \end{prop}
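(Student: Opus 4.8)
The plan is to route all three parts through the \emph{standard cut} $\leq_z$ of Remark~\ref{r:chech}(2), which turns $R$ into a linear order based at $z$ while restoring it ($R_{\leq_z}=R$). The one fact I would isolate first is the following translation principle, immediate from the defining formula of $R_{\leq_z}$ in Remark~\ref{r:chech}(1): for distinct $a,b$ with $z\notin[a,b]_R$ one has $a<_z b$ and $[a,b]_R=[a,b]_{\leq_z}$; that is, an $R$-arc that misses the base point becomes a genuine $\leq_z$-interval. Establishing this equivalence (a short check of the three cyclic alternatives against ``$z$ is the $\leq_z$-minimum'') is where the circular-to-linear passage really happens, and every subsequent computation is bookkeeping inside a linear order.

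For part (1) I would verify the two base axioms for $\mathcal B_1$. The covering axiom is trivial because $X\in\mathcal B_1$. For the intersection axiom, given $U=X\setminus[a,b]_R$, $V=X\setminus[c,d]_R$ and a point $x\in U\cap V$, I would cut at $x$. Then $x\notin[a,b]_R\cup[c,d]_R$, so by the translation principle both arcs equal $\leq_x$-intervals lying in the ray $(x,\to)_{\leq_x}$. Taking $e:=\min_{\leq_x}\{a,c\}$ and $f:=\max_{\leq_x}\{b,d\}$ gives $[a,b]_R\cup[c,d]_R\subseteq[e,f]_{\leq_x}=[e,f]_R$ with $x\notin[e,f]_R$, so $W:=X\setminus[e,f]_R\in\mathcal B_1$ satisfies $x\in W\subseteq U\cap V$. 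Fixing the base point at $x$ is precisely what collapses the usual ``the intersection of two arcs is one arc or two arcs'' dichotomy into a single enclosure in a linear order.

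For part (2) I would first note $(a,b)_R=X\setminus[b,a]_R$, so $\mathcal B_2\subseteq\mathcal B_1$ and each member of $\mathcal B_2$ is $\tau_R$-open; it then suffices to exhibit every member of $\mathcal B_1$ as a union of members of $\mathcal B_2$. For $a\neq b$ the set $X\setminus[a,b]_R=(b,a)_R$ already lies in $\mathcal B_2$, so only $X$ and the sets $X\setminus\{a\}$ need attention, and here is where $|X|\ge 3$ enters. For $X$, given any $x$ I would pick two other points $u,v$ and use Totality on $\{u,x,v\}$ to land $x$ in $(u,v)_R$ or $(v,u)_R$. For $X\setminus\{a\}$, given $y\neq a$ I would pick a third point $w$ and use Totality on $\{a,y,w\}$ to land $y$ in $(a,w)_R$ or $(w,a)_R$, each an open arc avoiding $a$. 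Thus every $\tau_R$-open set is a union of members of $\mathcal B_2$, so $\mathcal B_2$ is a base for $\tau_R$.

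For part (3) I would separate distinct $x,y$ using the partition $X=\{x,y\}\sqcup(x,y)_R\sqcup(y,x)_R$, choosing the cut point to be an \emph{interior} point of an arc rather than $x$ or $y$. If both arcs are empty then $X=\{x,y\}$ is discrete. If both are nonempty, pick $p\in(x,y)_R$ and $q\in(y,x)_R$ and take the complementary open arcs $U:=(q,p)_R$, $V:=(p,q)_R$; cutting at $p$ exhibits the order $p<_p y<_p q<_p x$, from which $x\in U$, $y\in V$ and $U\cap V=\emptyset$ are immediate. If exactly one arc is empty, say $(x,y)_R=\emptyset$, pick $q\in(y,x)_R$ and take $U:=(q,y)_R$, $V:=(x,q)_R$; cutting at $y$ makes $y$ the minimum and $x$ the maximum, so $U=\{w:q<_y w\}\ni x$ and $V=\{w:w<_y q\}\ni y$ are disjoint. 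I expect the main obstacle to be exactly this last part: one must choose the base point so that \emph{both} $x$ and $y$ sit strictly inside the linearized picture (which is why an interior arc point, not an endpoint, is used), and the empty-arc case must be handled in parallel because there the role of the missing interior point is played by the endpoint $x$ becoming the cut's maximum.
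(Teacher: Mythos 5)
Your proof is correct, but it takes a genuinely different route from the paper's. The paper verifies the base axiom for $\mathcal B_2$ by a direct six-subcase analysis of how two open arcs $(a_1,b_1)_R$ and $(a_2,b_2)_R$ can intersect (the intersection being empty, one arc, or two arcs), and proves Hausdorffness by an explicit choice of separating arcs; it never leaves the circular setting. You instead linearize everything through the standard cut $\leq_z$, and the whole argument rests on your ``translation principle'' ($z\notin[a,b]_R$ forces $a<_z b$ and $[a,b]_R=[a,b]_{\leq_z}$), which is essentially the content of the paper's later Lemma~\ref{l:cut} (valid for any cut, via Lemma~\ref{l:cut1}); your statement and its converse do follow from Cyclicity, Asymmetry, Transitivity and Totality alone, so there is no circularity in invoking it here. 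What your approach buys is a real structural simplification in part (1): by cutting at the point $x$ you are trying to keep, both closed arcs become genuine linear intervals in $(X,\leq_x)$, and a single enclosing interval $[e,f]_{\leq_x}=[e,f]_R$ replaces the paper's case analysis --- in particular the awkward ``intersection is a union of two arcs'' case disappears because you only need a basic set \emph{inside} the intersection containing $x$, not a description of the intersection itself. Your part (3) is likewise correct (I checked the memberships $x\in(q,p)_R$, $y\in(p,q)_R$ and the identifications $U=(q,\to)_{\leq_y}$, $V=[y,q)_{\leq_y}$ in the one-empty-arc case), and it matches the paper's construction almost verbatim, just with the linearized justification. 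The paper's version remains preferable as a fully self-contained first proposition, since it uses nothing beyond the four axioms, whereas yours front-loads the cut machinery; but your version is shorter and less error-prone once that machinery is available.
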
 
 \begin{proof} 
 	(1) and (2) If $|X|\le 2$, then the claim is immediate: $\mathcal B_1$ is a base for the discrete
 	topology, while (2) is vacuous. So assume that $|X|\ge 3$.
 	
 	For distinct $a\neq b$ we have
 	$
 	X\setminus [a,b]_R=(b,a)_R,
 	$
 	whereas for $a=b$,
 	$
 	X\setminus [a,a]_R=X\setminus\{a\}.
 	$
 	Hence
 	$
 	\mathcal B_1=\mathcal B_2\cup\{X\setminus\{a\}:a\in X\}\cup\{X\}.
 	$ In particular, $\mathcal B_2 \subseteq \mathcal B_1$.  
 	We claim that every set $X\setminus\{a\}$ is open in the topology generated by $\mathcal B_2$.
 	Indeed, let $b\in X\setminus\{a\}$. Choose $c\in X\setminus\{a,b\}$. By totality, either $[a,b,c]$
 	or $[a,c,b]$. In the first case $b\in (a,c)_R\subset X\setminus\{a\}$, and in the second case
 	$b\in (c,a)_R\subset X\setminus\{a\}$. Thus every point of $X\setminus\{a\}$ is contained in some
 	member of $\mathcal B_2$ lying in $X\setminus\{a\}$, so
 	$
 	X\setminus\{a\}=\bigcup\{U\in\mathcal B_2:U\subset X\setminus\{a\}\}.
 	$
 	Therefore $\mathcal B_1$ and $\mathcal B_2$ generate the same topology. 
 	It remains to show that $\mathcal B_2$ is a base. Let
 	$
 	A_1=(a_1,b_1)_R, \ A_2=(a_2,b_2)_R,
 	$
 	and $x\in A_1\cap A_2$. As observed after Lemma \ref{l:property}, the intersection of two open circular
 	intervals is either empty, a single open circular interval, or a disjoint union of two open circular
 	intervals. Hence there exists $U\in\mathcal B_2$ such that
 	$x\in U\subset A_1\cap A_2.$
 	
 	(3) Let $R$ be the circular order on $X$ and $\lambda_{\medcirc}$ the corresponding topology.
 	Let $a\neq b$. We have to find two disjoint neighborhoods $U$ and $V$ of $a$ and $b$ respectively.
 	If $X=\{a,b\}$ then the proof is trivial because $\{a\}=X\setminus\{b\}\in\mathcal{B}_1$ and
 	$\{b\}=X\setminus\{a\}\in\mathcal{B}_1$. Assume that $|X|\ge 3$ and choose $d\in X\setminus\{a,b\}$.
 	By the totality axiom, either $[b,d,a]$ or $[a,d,b]$. We consider only the first case; the second one is similar.
 	There are two subcases:
 	
 	1) $(a,b)\neq\emptyset$. Choose $c\in(a,b)$ and define $U:=(d,c)$, $V:=(c,d)$.
 	Then $[a,c,b]$ and $[b,d,a]$ yield $[b,a,c]$ and $[a,b,d]$ by cyclicity; hence
 	Lemma \ref{l:property} implies $[d,a,c]$ and $[c,b,d]$.
 	Therefore $a\in(d,c)=U$ and $b\in(c,d)=V$.
 	Moreover $U\cap V=\emptyset$ since $x\in(d,c)\cap(c,d)$ would give both $[d,x,c]$ and $[c,x,d]$,
 	contradicting asymmetry axiom.
 	
 	2) $(a,b)=\emptyset$.
 	Define $U:=(d,b)$, $V:=(a,d)$.
 	Since $(a,b)=\emptyset$ and $d\neq a,b$, totality yields $[a,b,d]$ (so $[d,a,b]$ by cyclicity),
 	hence $a\in(d,b)=U$ and $b\in(a,d)=V$.
 	If $x\in U\cap V$, then $[d,x,b]$ and $[a,x,d]$, and also $[a,b,x]$ (because $(a,b)=\emptyset$).
 	Applying Lemma \ref{l:property} to $[a,b,x]$ and $[a,x,d]$ we get $[b,x,d]$, i.e. $[d,b,x]$ by cyclicity,
 	a contradiction to $[d,x,b]$.
 	Thus $U\cap V=\emptyset$. 
 \end{proof}
 
 In contrast to the interval topology of linear orders, the topology of circular orders has received comparatively little attention in the literature, with the notable exception of H. Kok's monograph \cite{Kok}. 
More precisely, we record here a text from \cite[page 6]{Kok}: ``a topological space $(X,\tau)$ is said to be \textit{strictly cyclically orderable} 
if there exists a cyclic ordering $R$ on $X$ such that the intervals $(a,b)_R$ (with $a,b \in X$) form a base for the topology $\tau$. In the weaker case that all intervals are $\tau$-open, $X$ is called \textit{cyclically orderable}.  This is similar (but not equivalent) to the definition of generalized circularly ordered space below)." Note that we do not use these definitions below. 

  If $X$ contains only two or one element then every $(a,b)_R$ is empty. In this case ${\mathcal B}_2$ is not a topological base at all. 
 As we have seen in Proposition \ref{Hausdorff}, the family 
 ${\mathcal B}_2$ generates a topology on $X$ under a (minor) assumption that $X$ contains at least 3 elements. 
 
 In every circularly ordered set $(X,\circ)$ we have 
 $X \setminus (a,b)_{\circ}=[b,a]_{\circ}$ for every distinct $a \neq b$. So the ``circular closed interval" $[b,a]_{\circ}$ is always closed in the interval topology for all, not necessarily distinct, $a ,b$ (observe that the singleton $\{a\}$ is closed by Proposition \ref{Hausdorff}.3).    

 \begin{lem} \label{l:c-is-Open} \cite{GM-c}
 	Let $R$ be a circular order on $X$ and $\lambda_{\medcirc}$ the induced interval  topology. Then 
 	\begin{enumerate}
 		\item $R$ is an open subset of $\widetilde{X^3}$, where 
 		$\widetilde{X^3}:= \{(a,b,c) \in X^3: a,b,c \ \text{are pairwise distinct}\}.$  
 		More precisely, 
 		 for every  $[a,b,c]$ there exist disjoint  neighborhoods $U_1,U_2,U_3$ of $a,b,c$ respectively such that $[U_1,U_2,U_3]$ meaning that $[a',b',c']$ for every $(a',b',c') \in U_1 \times U_2 \times U_3$. One may generalize this to the case of any finite cycle $[c_1,c_2, \cdots, c_m]$.   
 		\item $R$ is a clopen subset of $\widetilde{X^3}$. In fact, $\widetilde{X^3}=R \cup R^*$ is the topological sum, where $R^*$ is the ``opposite circular relation".   
 	\end{enumerate} 
 \end{lem}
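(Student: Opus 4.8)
The plan is to deduce both assertions from the Hausdorff property of $\tau_R$ (Proposition \ref{Hausdorff}) together with the linearization of $R$ by a cut (Remark \ref{r:chech}). Throughout I may assume $X$ has at least three elements, since otherwise $\widetilde{X^3}=\emptyset$ and there is nothing to prove. For part (1), fix a triple with $[a,b,c]$. First I would use Hausdorffness to separate the three distinct points $a,b,c$ by \emph{pairwise disjoint} open sets, and then shrink each of these to a basic interval of ${\mathcal B}_2$; this yields basic open arcs $I_a\ni a$, $I_b\ni b$, $I_c\ni c$ that are still pairwise disjoint (being subsets of disjoint sets). The claim is that $I_a\times I_b\times I_c$ is the desired neighborhood, i.e. $[a',b',c']$ holds for every $(a',b',c')\in I_a\times I_b\times I_c$; note that pairwise disjointness already guarantees such a triple lies in $\widetilde{X^3}$.

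The geometric heart is an \emph{interval lemma}: if $I=(s,t)_R$ is a basic arc and $u,v\notin I$ with $u\neq v$, then $I$ is entirely contained in one of the two complementary arcs $(u,v)_R$ or $(v,u)_R$. I would prove this by cutting at $s$: under the linear order $\leq_s$ of Remark \ref{r:chech}, one has $(s,t)_R=\{x:\ s<_s x<_s t\}$, while the hypothesis $u,v\notin I$ forces $u,v$ to lie above $t$ (or to equal the least element $s$); expressing $(u,v)_R$ and $(v,u)_R$ through the standard circular order $R_{\leq_s}$ then shows that every point of $I$ lands in the same arc. Granting this lemma, the conclusion follows by a three–step ``sliding'' argument. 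Since $c\in(b,a)_R$ and $a,b\notin I_c$, the lemma gives $I_c\subseteq(b,a)_R$, i.e. $[a,b,c']$ for all $c'\in I_c$. Fixing such a $c'$: since $b\in(a,c')_R$ and $a,c'\notin I_b$, the lemma gives $I_b\subseteq(a,c')_R$, i.e. $[a,b',c']$ for all $b'\in I_b$. Fixing such a $b'$: since $a\in(c',b')_R$ (from $[a,b',c']$ by cyclicity) and $b',c'\notin I_a$, the lemma gives $I_a\subseteq(c',b')_R$, i.e. $[a',b',c']$ for all $a'\in I_a$. This proves (1).

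For part (2), recall the opposite relation $R^{*}$, defined by $(a,b,c)\in R^{*}\iff (c,b,a)\in R$. Totality (axiom (4)) says every triple of distinct points lies in $R$ or in $R^{*}$, while Asymmetry (axiom (2)) says it cannot lie in both; hence $\widetilde{X^3}=R\sqcup R^{*}$ as a set. Next I would observe that $R^{*}$ is again a circular order whose interval topology coincides with $\tau_R$, because $(a,b)_{R^{*}}=(b,a)_R$, so the two orders share the base ${\mathcal B}_2$. Applying part (1) to $R^{*}$ shows that $R^{*}$ is open in $\widetilde{X^3}$; since $R$ and $R^{*}$ are disjoint open sets covering $\widetilde{X^3}$, each is the complement of the other and therefore clopen, and $\widetilde{X^3}=R\sqcup R^{*}$ is exactly their topological sum.

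I expect the main obstacle to be the interval lemma and the verification that the neighborhoods in the sliding argument can be taken to be genuine $\tau_R$-intervals: disjointness coming from Hausdorffness is automatic, but it does not by itself preserve the ternary relation, and connectedness (which would make the sliding transparent on $\T$) is unavailable for abstract COTS. The cut linearization is precisely what replaces connectedness and makes the interval lemma—hence the whole argument—go through uniformly, with no separate treatment of empty arcs.
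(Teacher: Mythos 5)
Your proposal is correct, but it takes a genuinely different route from the paper. The paper proves (1) by a direct case analysis on which of the three arcs $(a,b)$, $(b,c)$, $(c,a)$ are empty (four cases up to symmetry), choosing witness points $x,y,z$ inside the nonempty arcs and writing down explicit neighborhoods such as $U_1=(z,x)$, $U_2=(x,y)$, $U_3=(y,z)$, with singleton intervals handling the empty arcs. You instead separate $a,b,c$ by pairwise disjoint basic arcs via Hausdorffness and then run a three-step sliding argument based on your ``interval lemma'' (an arc avoiding two points $u\neq v$ lies entirely in $(u,v)_R$ or in $(v,u)_R$). That lemma is valid — it follows either from your cut computation at $s$ or, more directly, from the convexity of arcs: if $I$ met both complementary arcs at $x$ and $y$, then $[x,y]$ would contain $v$ and $[y,x]$ would contain $u$, contradicting convexity of $I$; in fact your whole part (1) is essentially an instance of the paper's later Lemma \ref{l:3CONVEX} applied to the three disjoint convex sets $I_a,I_b,I_c$. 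Your approach buys uniformity (no case split on empty arcs, and the finite/isolated-point situations come for free), at the cost of invoking Proposition \ref{Hausdorff}(3) and the cut machinery of Remark \ref{r:chech}, whereas the paper's proof is self-contained and produces explicit witnessing intervals. For part (2) both arguments reduce to totality plus asymmetry giving $\widetilde{X^3}=R\sqcup R^{*}$; your extra observation that $R^{*}$ is itself a circular order with $(a,b)_{R^{*}}=(b,a)_R$, hence the same topology, is a clean way to get openness of $R^{*}$ from (1). All steps check out.
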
 
 \begin{proof} (1) 
 	We have four (up to equivalence; from eight) cases:
 	
 	(a) 	
 	$
 	\begin{cases}
 		\exists x \in (a,b) \neq \emptyset \\
 		\exists y \in (b,c) \neq \emptyset \\
 		\exists z \in (c,a) \neq \emptyset
 	\end{cases}
 	$ 
 	
 	Then $a \in U_1:=(z,x), b \in U_2:=(x,y), c \in U_3:=(y,z)$ are the desired neighborhoods. 
 	
 	(b) 
 	$
 	\begin{cases}
 		(a,b) = \emptyset \\
 		\exists y \in (b,c) \neq \emptyset \\
 		\exists z \in (c,a) \neq \emptyset
 	\end{cases}
 	$ 
 	
 	Then choose $a \in U_1:=(z,b), b \in U_2:=(a,y), c \in U_3:=(y,z)$. 
 	
 	(c) 
 	$
 	\begin{cases}
 		(a,b) = \emptyset \\
 		(b,c) = \emptyset \\
 		\exists z \in (c,a) \neq \emptyset
 	\end{cases}
 	$ 
 	
 	Then choose $a \in U_1:=(z,b), U_2:=(a,c)=\{b\}, c \in U_3:=(b,z)$. 
 	
 	(d) 
 	$
 	\begin{cases}
 		(a,b) = \emptyset \\
 		(b,c) = \emptyset \\
 		(c,a) = \emptyset
 	\end{cases}
 	$  
 	
 	Then simply choose $\{a\} = U_1:=(c,b), \{b\}=U_2:=(a,c), \{c\}=U_3:=(b,a)$.  
 	
 	(2) It is a corollary of (1) using the totality axiom. 
 \end{proof}
 
 \textbf{Notation}.  
 Denote by COTS the class of all topological spaces $(X,\tau)$ for which $\tau=\lambda_R$ for some circular order $R$.  
 By \rm{comp-LOTS} (resp. \rm{comp-COTS}) we mean the subcollection of compact members of \textsc{LOTS} (resp. \textsc{COTS}).
 
%  Now we show that if $(X,\leq)$ is a linearly ordered set such that its interval topology is compact then the corresponding circular order generates the same topology. 
  
  \begin{prop} \label{inclusion}  
  	\rm{comp-LOTS} $\subset$ \rm{comp-COTS}.  
  	More precisely: every compact linearly ordered space 
  	is a circularly ordered space  
  	\wrt the canonically associated circular order. 
  \end{prop}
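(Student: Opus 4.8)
The plan is to reduce the assertion entirely to Lemma~\ref{l:cWEAKER}, since the genuine topological comparison between $\tau_{R}$ and $\tau_{\leq}$ was already carried out there. Let $(X,\tau_{\leq},\leq)$ be a compact LOTS and let $R:=R_{\leq}$ be the canonically associated circular order from Remark~\ref{r:chech}(1). Recall that $(X,\tau_{\leq})$ is Hausdorff by Lemma~\ref{ordHausd}, so it is in fact compact Hausdorff. The only input I need beyond Lemma~\ref{l:cWEAKER} is the observation that a compact linearly ordered space must possess both a least and a greatest element, i.e.\ that $(X,\leq)=[u,v]$ with $u=\min X$ and $v=\max X$.

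First I would establish this extremal structure using the compactness characterization in Fact~\ref{f:propLOTS}(2): the interval topology $\tau_{\leq}$ is compact precisely when every subset of $X$ admits a supremum (equivalently, an infimum), with the conventions $\sup(\emptyset)=\min X$ and $\inf(\emptyset)=\max X$. Applying this to the whole set $X$ yields $\max X=\sup X$ and $\min X=\inf X$, so $X$ has both endpoints $u=\min X$ and $v=\max X$; in other words $(X,\leq)$ is of the form $[u,v]$.

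Now I would invoke the first bullet of the characterization in Lemma~\ref{l:cWEAKER}(2): the condition ``$(X,\leq)=[u,v]$ has the minimum $u$ and the maximum $v$'' is exactly the case just verified, and it guarantees $\tau_{R}=\tau_{\leq}$. Consequently $(X,\tau_{\leq})=(X,\tau_{R})$ is a topological space whose topology is the interval topology of the circular order $R$, so $(X,\tau_{R},R)\in\mathrm{COTS}_{\bigcirc}$, and since $(X,\tau_{\leq})$ is compact, so is $(X,\tau_{R})$. This places $X$ in \textsc{comp-COTS} via the canonically associated circular order, which is precisely the claimed inclusion.

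I do not expect a substantial obstacle here: the entire difficulty lies in Lemma~\ref{l:cWEAKER}, which is already proved. The only point requiring care is the routine verification that compactness forces the existence of the two endpoints $u$ and $v$, and that this is the \emph{favorable} case $[u,v]$ of Lemma~\ref{l:cWEAKER} rather than one of the one-sided cases in part~(3) where the two topologies genuinely differ. Once the endpoints are in place, the equality $\tau_{R}=\tau_{\leq}$ and the preservation of compactness are immediate.
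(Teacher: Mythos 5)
Your proof is correct, but it takes a different route from the one in the paper. The paper's argument uses only part (1) of Lemma~\ref{l:cWEAKER} (the unconditional inclusion $\tau_R\subseteq\tau_{\leq}$) together with the Hausdorffness of $\tau_R$ from Proposition~\ref{Hausdorff}(3); since the identity map $(X,\tau_{\leq})\to(X,\tau_R)$ is then a continuous bijection from a compact space onto a Hausdorff space, the two topologies coincide. You instead extract from Fact~\ref{f:propLOTS}(2) that a compact LOTS has both a minimum and a maximum, and then invoke the first bullet of Lemma~\ref{l:cWEAKER}(2), which states that precisely in the case $(X,\leq)=[u,v]$ the equality $\tau_R=\tau_{\leq}$ holds. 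Both arguments are sound and both ultimately rest on Lemma~\ref{l:cWEAKER}, but on different parts of it: the paper's version is shorter and needs only the easy inclusion plus a standard compact-versus-Hausdorff comparison, whereas yours is more order-theoretic and explains \emph{why} the topologies agree (the presence of both endpoints rules out the one-sided cases of part (3) where they genuinely differ), at the cost of relying on the fuller case analysis of Lemma~\ref{l:cWEAKER}(2) and on the supremum characterization of compactness. The only point worth flagging is the degenerate case $X=\emptyset$, where ``minimum and maximum'' is vacuous; this is harmless in either approach but is silently skipped in yours.
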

  \begin{proof}
  	Let $\leq$ be a linear order on $X$ such that the interval topology $\lambda_{\leq}$ is compact. Then $\lambda_{\medcirc} \subseteq \lambda_{\leq}$ by Lemma \ref{l:cWEAKER}.1 below. 
  	On the other hand, $\lambda_{\medcirc}$ is Hausdorff by Proposition \ref{Hausdorff}. Since $\lambda_{\leq}$ is compact, we get 
  	$\lambda_{\leq} = \lambda_{\medcirc}.$ 
  \end{proof}
  
  The compactness of $\lambda_{\leq}$ is essential. Indeed, the induced circular order topology of $[0,1)$ is naturally homeomorphic to the circle. 
  This gives a justification of the standard identification of the \emph{sets} $\T$ and (c-ordered) $[0,1)$. 
  So, it is not true that $\mathrm{LOTS} \subset \mathrm{COTS}$ even on the topology level. It turns out that the end-points control the situation. 
 
 	\begin{lem} \label{l:cWEAKER} Let $\leq$ be a linear order on $X$ and $R:=R_{\leq}$ is the corresponding circular order. Then
 	\begin{enumerate}
 		\item Always $\lambda_R \subseteq \lambda_{\leq}$.
 		\item $\lambda_R = \lambda_{\leq}$ if and only if one of the following conditionsis satisfied:
 		\begin{itemize}
 			\item $(X,\leq)=[u,v]$ has the minimum $u$ and the maximum $v$; 
 			\item $(X,\leq)$ has neither a minimum nor a maximum;
 		\end{itemize}
 	\item $\lambda_R \neq \lambda_{\leq}$	if and only if one of the following conditions is satisfied: 
 	\begin{itemize}
 		\item $(X,\leq)=[u,\to)$ has the minimum $u$ but not the maximum;
 		\item $(X,\leq)=(\leftarrow,v]$ has the maximum $v$ but not the minimum.
 	\end{itemize} 	
 	\end{enumerate}	  
 	\end{lem}
 	\begin{proof}
 		(1) We show that every basic $\lambda_R$-open interval is $\lambda_{\le}$-open.
 		For distinct $a,b\in X$ we have
 		\[
 		(a,b)_R=
 		\begin{cases}
 			(a,b)_{\le}, & \text{if } a<b,\\[2mm]
 			(\leftarrow,b)_{\le}\cup (a,\to)_{\le}, & \text{if } b<a.
 		\end{cases}
 		\]
 		Hence $(a,b)_R\in \lambda_{\le}$ for all $a\neq b$, and therefore
 		$
 		\lambda_R\subseteq \lambda_{\le}.
 		$
 		
 		(2) First assume that $(X,\le)=[u,v]$ has both a minimum $u$ and a maximum $v$.
 		Then for every $x\in X$,
 		$
 		(\leftarrow,x)_{\le}=(v,x)_R 
 		\ \text{and} \ 
 		(x,\to)_{\le}=(x,u)_R.
 		$
 		Thus every subbasic $\lambda_{\le}$-open ray is $\lambda_R$-open, so
 		$
 		\lambda_{\le}\subseteq \lambda_R.
 		$
 		Together with (1), this yields $\lambda_R=\lambda_{\le}$.
 		
 		Now assume that $(X,\le)$ has neither a minimum nor a maximum.
 		For every $a\in X$,
 		\[
 		(a,\to)_{\le}=\bigcup_{x>a}(a,x)_{\le}
 		=\bigcup_{x>a}(a,x)_R,
 		\]
 		because $a<x$ implies $(a,x)_R=(a,x)_{\le}$.
 		Similarly,
 		\[
 		(\leftarrow,a)_{\le}=\bigcup_{x<a}(x,a)_{\le}
 		=\bigcup_{x<a}(x,a)_R.
 		\]
 		So every subbasic $\lambda_{\le}$-open ray is $\lambda_R$-open, hence
 		$
 		\lambda_{\le}\subseteq \lambda_R.
 		$
 		Again, by (1), $\lambda_R=\lambda_{\le}$.
 		
 		(3) Assume that $(X,\le)=[u,\to)$ has a minimum $u$ but no maximum.
 		Fix $a>u$ and consider the $\lambda_{\le}$-open neighborhood
 		$
 		U=[u,a)_{\le}
 		$
 		of $u$. We claim that $U\notin \lambda_R$. 
 		Indeed, let $V=(b,c)_R$ be any basic $\lambda_R$-neighborhood of $u$.
 		Since $u\in V$, necessarily $c<b$, and therefore
 		$
 		V=(\leftarrow,c)_{\le}\cup (b,\to)_{\le}.
 		$
 		Because $u$ is the minimum, this is
 		$
 		V=[u,c)_{\le}\cup (b,\to)_{\le}.
 		$
 		Since $X$ has no maximum, the ray $(b,\to)_{\le}$ is nonempty; in fact, it contains points
 		larger than $a$. Hence $V\nsubseteq U$. Therefore no basic $\lambda_R$-neighborhood of $u$
 		is contained in $U$, so $U$ is not $\lambda_R$-open. Thus
 		$
 		\lambda_R\ne \lambda_{\le}.
 		$
 		
 		The case where $(X,\le)=(\leftarrow,v]$ has a maximum but no minimum is symmetric. 
 	\end{proof}
 
 The circle $\T$ is a COTS but not a LOTS. The space $[0,1)$ is a LOTS but not a COTS, \cite{Kok}. Hence, 
 after the definition of GLOTS and GCOTS (Section \ref{s:GCOTS}), these examples imply 
 $\LOTS$ $\subsetneq$ $\GLOTS$ and $\COTS$ $\subsetneq$  $\GCOTS$.

 	\begin{defin} \label{d:cutsNovak} 
 	Following Nov\'ak \cite{Novak-cuts}, we define cuts in c-ordered sets. 
 	Let $(X,\circ)$ be a c-ordered set. A linear order $\leq$ on the set $X$ is said to be a {\it cut} on $(X,\circ)$ if 
 	$$
 	a < b < c \ \text{in} \ (X, \leq) \ \text{implies that} \  [a,b,c] \ \text{in} \ (X,\circ). 
 	$$
 	\end{defin}
 	
 		For every linearly ordered set $(X,\leq)$ and its associated circular order $R_{\leq}$  (Remark \ref{r:chech}(1)) the given linear order $\leq$ is a cut on $(X,R_{\leq})$. 
 	Every point $z$ of a circularly ordered set $(X,R)$ defines a standard pointed cut $\leq_z$ on $X$ as it was defined in Remark \ref{r:chech}(2). 
 
 \begin{lem} \label{l:cut} 
 	Let $\leq$ be a cut on $(X,\circ)$.   
 	\begin{enumerate} 
 		\item \cite[Lemma 2.2]{Novak-cuts} If $[a,b,c]$ then either $a<b<c$ or $b<c<a$ or $c < a<b$. 
 		\item Let $a<b$. Then $(a,b)_{\leq}=(a,b)_{\circ}$, $[a,b]_{\leq}=[a,b]_{\circ}$,  $[a,b)_{\leq}=[a,b)_{\circ}$, $(a,b]_{\leq}=(a,b]_{\circ}$. 
 		\item For every interval $Y \in \{[a,b]_{\circ}, [a,b)_{\circ},(a,b)_{\circ},(a,b]_{\circ}\}$, the subspace topology $Y|_{\lambda_{\medcirc}}$ inherited from $\lambda_{\medcirc}$ is the same as the subspace topology $Y|_{\lambda_{\leq}}$ inherited from $\lambda_{\leq}$. 
 	\end{enumerate}
 \end{lem}
 
 \begin{proof}
 	(2) It is enough to show that $(a,b)_{\leq}=(a,b)_{\circ}$. 
 	
 	Let $x \in (a,b)_{\leq}$. Then by the definition of a compatible cut we have $[a,x,b]$. Hence, $x \in (a,b)_{\circ}$. 
 	
 	Conversely, let $x \in (a,b)_{\circ}$. Then by (1), either $a<x<b$ or $x<b<a$ or $b < a<x$. By our assumption, $a<b$. So, we necessarily have $a<x<b$. Hence, $x \in (a,b)_{\leq}$. 
 	
 	(3) We explain only the case of $Y:=[a,b]_{\circ}$. 
 	First, we show that $Y|_{\lambda_{\leq}} \subseteq Y|_{\lambda_{\medcirc}}$ by verifying that the intersections of $Y$ with the subbasic open rays of $\lambda_{\leq}$ are open in $Y|_{\lambda_{\medcirc}}$. Note that for every $x \in [a,b]_{\circ}$:
 	$$(\leftarrow,x)_{\leq} \cap [a,b]_{\circ} = [a,x)_{\leq} = (b,x)_{\circ} \cap [a,b]_{\circ}$$ 
 	$$(x,\rightarrow)_{\leq} \cap [a,b]_{\circ} = (x,b]_{\leq} = (x,a)_{\circ} \cap [a,b]_{\circ}.$$
 	Since $(b,x)_\circ$ and $(x,a)_\circ$ are open in $\lambda_{\medcirc}$, these intersections are relatively open in $Y|_{\lambda_{\medcirc}}$. This proves the inclusion $Y|_{\lambda_{\leq}} \subseteq Y|_{\lambda_{\medcirc}}$. 
 	
 	In order to see the converse inclusion $Y|_{\lambda_{\medcirc}} \subseteq Y|_{\lambda_{\leq}}$, consider a basic open set $(c,d)_\circ$ of $\lambda_{\medcirc}$. We must show its intersection with $Y$ is relatively open in $Y|_{\lambda_{\leq}}$. 
 	If $c < d$, then by (2) we have $(c,d)_\circ = (c,d)_{\leq}$. Thus, 
 	$(c,d)_\circ \cap Y = (c,d)_{\leq} \cap Y,$ 
 	which is clearly open in $Y|_{\lambda_{\leq}}$. 
 	If $d < c$, then $(c,d)_\circ = (c,\rightarrow)_{\leq} \cup (\leftarrow,d)_{\leq}$. Consequently,
 	$$
 	(c,d)_\circ \cap Y = \big( (c,\rightarrow)_{\leq} \cap Y \big) \cup \big( (\leftarrow,d)_{\leq} \cap Y \big),
 	$$
 	which is a union of two relatively open sets in $Y|_{\lambda_{\leq}}$. 
 	
 	For other intervals $Y$, the proof is similar. 
 \end{proof}

 \subsection*{Cycles and order preserving maps}
 On the set $\{0, 1, \cdots, n-1\}$ consider (uniquely defined up to isomorphism) the standard c-order modulo $n$. Denote this c-ordered set simply by $C_n$. 
 
 \begin{defin} \label{d:cycl}  Let $(X,R)$ be a c-ordered set. We say that a vector
 	$(x_1,x_2, \cdots, x_n) \in X^n$ is a \textit{cycle} in $X$ if it satisfies the following two conditions:  
 	\begin{enumerate} 
 		\item For every $[i,j,k]$ in $C_n$ and \textit{distinct} 
 		$x_i, x_j, x_k$ we have $[x_i,x_j,x_k]$; 
 		\item $x_i=x_k \ \Rightarrow$  
 		$(x_i=x_{i+1}= \cdots =x_{k-1}=x_k) \ \vee \ (x_k=x_{k+1}=\cdots =x_{i-1}=x_i).$  
 	\end{enumerate}  
 	\textit{Injective cycle} means that all $x_i$ are distinct. If otherwise is not stated, we consider only injective cycles. 
 \end{defin}
 
 \begin{defin} \label{d:c-ordMaps} 
 	A function $f \colon X_1 \to X_2$ between c-ordered sets $(X_1,R_1)$ and $(X_2,R_2)$ is said to be {\it c-order preserving} (or, in short:  {\it COP}), 
 	if it satisfies the following two conditions:
 	
 	\begin{itemize} 
 		\item [(COP1)] 
 		 For every $[a,b,c]$ in $X_1$ and  \textit{pairwise distinct} $f(a), f(b), f(c)$ we have $[f(a), f(b), f(c)]$;
 	
 		\item [(COP2)] If $f(x)=f(y)$ then $f$ is constant on one of the closed intervals $[x,y], \ [y,x]$. 
 	\end{itemize}   
 \end{defin}
 
Every COP $f$ moves every cycle to a cycle. 
 
 For (total) c-ordered sets (COP1) is equivalent (by totality and asymmetry) to: 
 for every cycle $[f(a), f(b), f(c)]$ in $X_2$ we have $[a,b,c]$ in $X_1$; equivalently, 
 $(f^3)^{-1}(R_2) \subseteq R_1$ (where $f^3:=f \times f \times f$). 
 
 In general, condition (COP1) does not imply condition (COP2). Indeed, consider a 4-element cycle $X=Y=\{1,2,3,4\}$ and a selfmap $p: X \to X, p(1)=p(3)=1, p(2)=p(4)=2$. Then (COP1) is trivially satisfied but (COP2) fails. However in Proposition \ref{p:Prop-c-ord}.1 we show that (COP1) implies (COP2) for every $p(X)$ which 
 is not a two-point set.

\begin{remark} \label{r:InducedMaps} \ 
		\begin{enumerate}
			\item For every linear order preserving  map $(X_1,\leq_1) \to (X_2,\leq_2)$ the map $(X_1,R_{\leq_1}) \to (X_2,R_{\leq_2})$ (between the corresponding c-ordered sets) is also c-order preserving.

\item A map $f \colon (X_1,R_1) \to (X_2,R_2)$ is 
c-order preserving if $f$ is order preserving for the linear orders induced by all standard cuts. That is, if and only if
$$
y \leq_x z \Rightarrow f(y) \leq_{f(x)} f(z) \ \  \forall x,y,z  \in X.
$$ 
		\end{enumerate} 
\end{remark}
 
 Let $M_+(X_1,X_2)$ be the collection of c-order preserving
 maps from $X_1$ into $X_2$.  A composition of c-order preserving maps is c-order preserving. Therefore, $M_+(X,X)$ is a semigroup under the composition (with the identity $id_X$) for every c-ordered $X$. 
 
 A COP function $f \colon X_1 \to X_2$ is an \textit{isomorphism} if, in addition, $f$ is a bijection (in this case, of course, only (COP1) is enough). It is necessarily a homeomorphism under the interval topologies. 
   Every finite c-ordered set with $n$ elements is isomorphic  to $C_n=\{0, 1, \cdots, n-1\}$ (mod $n$). 
   
 	Denote by $H_+(X)$ the group of all COP (similarly, for LOP) isomorphisms 
 	$X \to X$  
 which is a subgroup of the symmetric group $S(X)$ of all bijections $X \to X$ (in fact, homeomorphisms where $X$ carries the interval topology). 
In some cases we allow also the notation like $\Aut(X,\circ), \Aut(X,\leq)$. 

 For every circularly ordered set $(X,\circ)$ and every subgroup  $G \subset H_+(X)$, the corresponding action $G \times X \to X$ defines a circularly ordered $G$-set $X$.  
If, in addition, $(X,\lambda_{\circ})$ is compact, then $H_+(X)$ is a topological subgroup of $H(X)$ (of all homeomorphisms), usually equipped with the compact-open topology. Then the natural action $H_+(X) \times X \to X$ is continuous. 

A \textit{topological group}, as usual, will mean that the multiplication and the inversion are continuous. Below we consider only Hausdorff topological groups and spaces. 
\textit{Semitopological group} means that the multiplication is separately continuous.     
For every Hausdorff topological space $X$ the homeomorphism group $H(X)$ is a semitopological group  with respect to the pointwise topology $\s^p$ (inherited from the inclusion  $H(X) \subset (X,\t)^X$). 

 By a $G$-\textit{space}, we mean a topological space $X$ endowed with a continuous action 
 $\pi \colon G \times X \to X$ of a topological group $G$ on $X$. Often we write $gx$ or $g(x)$ instead of $\pi(g,x)$. 
If $G$ is a discrete group then the action is continuous if and only if every $g$-translation $\rho_g \colon X \to X, \rho_g(x)=gx$ is continuous; if and only if $\pi \colon G_{\mathrm{disc}} \times X \to X$ is continuous, where $G_{\mathrm{disc}}$ is the discrete copy of $G$.  
For every $x \in X$ we have the corresponding orbit map  $\zeta_{x} \colon G \to X$. 
A continuous map $f \colon X_1 \to X_2$ between two $G$-spaces is called a $G$-\textit{map} (or \textit{equivariant}) if $f(gx) = g f(x)$ for all $g \in G$ and $x \in X_1$.  

  \begin{lem} \label{l:DS-inclusion} 
	Every linearly ordered \textbf{compact} $G$-system is a circularly ordered $G$-system.  	
\end{lem}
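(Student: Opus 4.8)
The plan is to exhibit the canonically associated circular order $R:=R_{\le}$ (Remark \ref{r:chech}(1)) as the witness for the circularly ordered $G$-system structure. There are exactly two things to verify against Definition \ref{d:c-ordG-sp}: that the given $G$-action is still continuous with respect to the interval topology $\tau_R$, and that every $g$-translation is COP for $R$. Both will follow essentially for free from results already established.

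First I would invoke Proposition \ref{inclusion}. Since $(X,\tau_{\le})$ is a compact LOTS, the associated circular order $R=R_{\le}$ generates \emph{exactly} the same topology, $\tau_R=\tau_{\le}$. This is the decisive point, and it is precisely where compactness is used: by Lemma \ref{l:cWEAKER} the two topologies genuinely differ when $X$ has only one endpoint (e.g.\ $[0,1)$ yields the circle), so the compactness hypothesis cannot be dropped. Because the two topologies literally coincide, the given continuous action $\pi\colon G\times X\to X$ is, verbatim, a continuous action on the circularly ordered compact space $(X,\tau_R)$; there is nothing further to check for continuity.

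Next I would handle the order-preservation of the translations. By the definition of a linearly ordered $G$-system, each $g$-translation $t^g\colon (X,\le)\to(X,\le)$ is LOP. Applying Remark \ref{r:InducedMaps}(1) with $X_1=X_2=X$ and $\le_1=\le_2=\le$ (so that $R_{\le_1}=R_{\le_2}=R$), every such $t^g$ is automatically c-order preserving for $R$. Hence all $g$-translations are COP on $(X,R)$.

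Combining these two observations, the continuous $G$-action on the compact COTS $(X,\tau_R,R)$ has all $g$-translations COP, which is exactly the requirement of Definition \ref{d:c-ordG-sp} for a circularly ordered $G$-system. The only conceptual subtlety — and the step I expect to carry the real weight — is the topology coincidence $\tau_R=\tau_{\le}$ supplied by Proposition \ref{inclusion}; everything else is a direct transcription of the linear hypotheses into their circular counterparts through the canonical correspondence, with no additional computation required.
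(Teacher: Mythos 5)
Your proof is correct and follows the same route as the paper: the paper's own argument is precisely to apply Proposition \ref{inclusion} (so that $\tau_R=\tau_{\le}$ and continuity transfers verbatim) and to note that LOP $g$-translations are automatically COP for the induced circular order, which is Remark \ref{r:InducedMaps}(1). You have merely spelled out both steps in more detail, including the correct observation of where compactness is essential.
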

\begin{proof}
	Apply Proposition \ref{inclusion} (which ensures the coincidence of two topologies $\lambda_{\medcirc} = \lambda_{\leq}$) and note that if a  $g$-translation $X \to X$ is $\leq$-linear order preserving then it is also circular $R_{\leq}$-order preserving. 
\end{proof}

\begin{defin} \label{d:c-ordG-sp}  
We say that a continuous action $G \times X \to X$ of a topological group $G$ on a circularly ordered (compact) space $(X,\lambda_{\medcirc})$ is a \textit{circularly ordered (dynamical) $G$-system} if all $g$-translations $\rho \colon X \to X$ ($g \in G$) are COP. The class of  \textit{linearly ordered} dynamical $G$-systems is defined similarly.  	
\end{defin}
 
A prototypical example of a circularly ordered system is the classical case of the  circle $\T$ equipped with the action of the group $H_+(\T)$ (or, some of its subgroup $G$) on $\T$. Sturmian $\Z$-systems are important examples of c-ordered symbolic systems. Geometrically Sturmian systems can be obtained from $\T$ by a splitting of a certain dense subset of $\T$ (see Remark \ref{r:add}). 
There exists a very rich literature about actions on $\T$. 
See for example, \cite{Ghys} and references therein.  
 Systematic investigation of \textit{abstract} c-ordered $G$-spaces was initiated in 
\cite{GM-c,GM-UltraHom21,GM-TC}.

 \subsection*{Partially circularly ordered topological spaces PCOTS}
 In analogy with the \emph{partially ordered topological space} (POTS) from Definition \ref{d:ord}, it is natural to introduce the following. 
 
 \begin{defin} \label{d:PartCOTS}  
 	Let $(X,\tau)$ be a topological space and $R$ 
 	a partial circular order on $X$. The triple 
 	$(X,\tau,R)$ is said to be a 
 	\emph{partially c-ordered topological space} (PCOTS) if the graph of the ternary relation $R$ is $\tau$-closed in the subspace $\widetilde{X^3}$ of distinct triples in $X^3$.   
 \end{defin} 
 
 \begin{lem} \label{l:PCOTSprop} \ 
 	\begin{enumerate}
 		\item Every $\COTS$ is $\rm{PCOTS}$. 
 		\item $\rm{PCOTS}$ is closed under subspaces (with  induced partial circular orders).  
 		\item The class $\rm{PCOTS}$ is closed under products (with respect to the partial circular orders of products). 
 	\end{enumerate} 	
 \end{lem}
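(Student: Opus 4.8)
The plan is to dispatch the three parts separately. Part (1) is immediate from Lemma~\ref{l:c-is-Open}(2): for a COTS $(X,\tau_R,R)$ the relation $R$ is a \emph{clopen} subset of $\widetilde{X^3}$, hence in particular closed in $\widetilde{X^3}$, and since a circular order is a fortiori a partial circular order this is exactly the PCOTS condition. For part (2), let $(X,\tau,R)$ be a PCOTS and let $Y\subseteq X$ carry the subspace topology $\tau_Y$ and the induced relation $R_Y:=R\cap Y^3$; then $R_Y$ is again a partial circular order, since cyclicity, asymmetry and transitivity are universally quantified implications and therefore pass to every subset. The topological point is the identification $\widetilde{Y^3}=\widetilde{X^3}\cap Y^3$, whose subspace topology from $\widetilde{X^3}$ coincides with its subspace topology from $Y^3$ (both are the trace of the product topology of $X^3$); since $R_Y=R\cap\widetilde{Y^3}$ and $R$ is closed in $\widetilde{X^3}$, the relation $R_Y$ is the trace of a closed set, hence closed in $\widetilde{Y^3}$, which is the desired PCOTS condition.

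For part (3) I would put $X:=\prod_{i\in I}X_i$ with the product topology and, in analogy with the coordinatewise order used for POTS after Definition~\ref{d:ord}, equip $X$ with the coordinatewise partial circular order $R$ defined by $[a,b,c]$ in $X$ iff $[a_i,b_i,c_i]$ in $X_i$ for every $i$. That $R$ is a partial circular order is routine and checked coordinatewise: each axiom for $R$ reduces through the projections to the same axiom for every $R_i$ (asymmetry needs only a single index, and the empty product is a one-point space where everything is vacuous). For the closedness of $R$ in $\widetilde{X^3}$ I would show that $\widetilde{X^3}\setminus R$ is open, using the continuous coordinate maps $\pi_i^3\colon X^3\to X_i^3$, $(a,b,c)\mapsto(a_i,b_i,c_i)$. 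Given $(a,b,c)\in\widetilde{X^3}\setminus R$ there is an index $j$ with $(a_j,b_j,c_j)\notin R_j$. In the favorable subcase where $a_j,b_j,c_j$ are pairwise distinct, the point $(a_j,b_j,c_j)$ lies in $\widetilde{X_j^3}\setminus R_j$, which is open in $\widetilde{X_j^3}$; choosing a basic box $U_1\times U_2\times U_3$ around it with $(U_1\times U_2\times U_3)\cap R_j=\emptyset$ (here one uses $R_j\subseteq\widetilde{X_j^3}$) and pulling $U_1,U_2,U_3$ back along the $j$-th projection $X\to X_j$ yields an open box about $(a,b,c)$ that misses $R$ entirely.

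The main obstacle is precisely the complementary subcase, in which every index $j$ witnessing $(a,b,c)\notin R$ has $a_j,b_j,c_j$ \emph{not} pairwise distinct: then $(a_j,b_j,c_j)\notin\widetilde{X_j^3}$, and the closedness of $R_j$ inside $\widetilde{X_j^3}$ supplies no separating neighborhood, because distinctness of the triple $a,b,c$ in the product $X$ need not descend to distinctness in any single factor. This is exactly where the circular theory diverges from the linear one: for POTS the order is closed in the \emph{full} square $X\times X$, so the coordinatewise order is closed as an intersection of preimages of closed sets, whereas here closedness is available only on the open subset $\widetilde{X^3}$ of distinct triples and each pullback $(\pi_j^3)^{-1}(R_j)$ is merely locally closed in $X^3$. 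Controlling these degenerate-coordinate triples --- equivalently, pinning down the product partial circular order on $X$ so that closedness in $\widetilde{X^3}$ is genuinely restored --- is the heart of (3) and the step I expect to require the real work.
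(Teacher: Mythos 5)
Your parts (1) and (2) are correct and coincide with the paper's proof: (1) is exactly the citation of Lemma~\ref{l:c-is-Open}(2), and (2) is the same trace argument $R_Y=R\cap\widetilde{Y^3}$.

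Part (3) has a genuine gap, and you have correctly located it yourself — but the resolution is not "real work" on degenerate triples; it is a different \emph{definition} of the product relation. You take $[a,b,c]$ in $\prod_i X_i$ to mean $[a_i,b_i,c_i]$ for \emph{every} $i$. The paper instead defines $[u,v,w]$ to mean $[u_i,v_i,w_i]$ for every $i$ \emph{at which $u_i,v_i,w_i$ are pairwise distinct}; coordinates where the triple degenerates impose no condition. With the paper's definition, the negation of $(u,v,w)\in R$ is precisely the existence of an index $j$ with $u_j,v_j,w_j$ pairwise distinct and $(u_j,v_j,w_j)\notin R_j$ — that is, one is always in your ``favorable subcase,'' and the pullback of a basic box in $\widetilde{X_j^3}\setminus R_j$ (open since $R_j$ is closed there) finishes the proof. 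Your ``complementary subcase'' never arises as a witness of non-membership.

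Moreover, with your universally quantified definition the statement is actually false, not merely hard: take $X_1=X_2=\T$ and $a=(0,0)$, $b=(0,\tfrac13)$, $c=(\tfrac12,\tfrac23)$. These are three distinct points of $\T^2$, and $(a,b,c)\notin R$ in your sense because $a_1=b_1$ kills the first coordinate. Yet the triples $\bigl((-\tfrac1n,0),b,c\bigr)$ lie in $R$ (both coordinates are honestly cyclically ordered) and converge to $(a,b,c)$, so $R$ is not closed in $\widetilde{X^3}$. So the missing idea is not a technique but the correct coordinatewise partial circular order, under which the argument you already wrote for the nondegenerate case is the whole proof.
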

 \begin{proof} (1) Use Lemma \ref{l:c-is-Open}.2.
 	
 	(2) Let $(X,R,\t)$ be a c-ordered set, $Y \subseteq X$. Consider its natural subspace $(Y,R_Y, \t_Y)$. If $R$ is $\t$-closed in $\widetilde{X^3}$ then $R_Y=R \cap \widetilde{Y^3}$ is $\t_Y$-closed in $\widetilde{Y^3}$.  
 	
 	(3) Let $\{(X_i,R_i,\t_i): i \in I\}$ be a family of PCOTS. Consider the topological product $X:=\prod_{i\in I} (X_i,\t_i)$ and the natural partial c-order $R$ on $X$ defined as follows: 
 	$$
 	[u,v,w] \Leftrightarrow  [u_i,v_i,w_i] \ \forall i \in I \ \text{whenever} \ u_i,v_i,w_i \  \text{are pairwise distinct}.  
 	$$ 
 	It is easy to see that $R$ is a partial c-order 
 	on $X=\prod_{i\in I} X_i$. Indeed, the conditions 1,2,3 of Definition \ref{newC} are easily verified. 
 	Now we show that the ternary relation $R$ is $\tau$-closed in the subspace $\widetilde{X^3}$. Let $(u,v,w) \notin R$. Then there exists $i \in I$ such that $(u_i,v_i,w_i) \notin R_i$ in $X_i$. Since $(X_i,R_i,\t_i)$ is a PCOTS, there exist $\t_i$-open neighborhoods $U_i, V_i, W_i$ of $u_i,v_i, w_i$ respectively such that $(u'_i,v'_i,w'_i) \notin R_i$ for every $(u'_i,v'_i,w'_i) \in U_i \times V_i \times W_i$. Define $O:=p_i^{-1}(U_i) \times p_i^{-1}(V_i) \times p_i^{-1}(W_i)$ is an open neighborhood of $(u,v,w)$ in $X^3$ such that $O \cap R = \emptyset$. 
 \end{proof}
 
 \begin{lem} \label{l:DenseCircOrd} 
 	Let $(Y,\t,R)$ be a $\rm{PCOTS}$. 
 	Suppose that $X$ is a $\t$-dense 
 	subset of $Y$ such that the restricted partial circular order $R_X$ on $X$ is a circular order. Then $R$ is a circular order on $Y$.  
 \end{lem}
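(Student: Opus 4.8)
The plan is to mimic the proof of Lemma~\ref{l:DenseLin} for linear orders, replacing the relation $\leq\,\cup\geq$ by $R\cup R^{*}$, where $R^{*}$ denotes the opposite (reversed) circular relation, $(a,b,c)\in R^{*}\Leftrightarrow (c,b,a)\in R$. By hypothesis $R$ is already a \emph{partial} circular order on $Y$ (it satisfies Cyclicity, Asymmetry and Transitivity), so the only axiom of Definition~\ref{newC} still to be verified is Totality. The key observation is that Totality of $R$ on $Y$ is \emph{precisely} the statement $R\cup R^{*}=\widetilde{Y^3}$: for distinct $a,b,c$ the alternatives $[a,b,c]$ and $[a,c,b]$ say exactly that $(a,b,c)$ lies in $R$ or in $R^{*}$. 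Thus the whole proof reduces to a density-plus-closedness argument, exactly as in the linear case, and as reflected by the clopen decomposition $\widetilde{X^3}=R\cup R^{*}$ of Lemma~\ref{l:c-is-Open}(2).

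First I would record that $R\cup R^{*}$ is closed in $\widetilde{Y^3}$. The coordinate swap $\sigma\colon\widetilde{Y^3}\to\widetilde{Y^3}$, $\sigma(a,b,c)=(a,c,b)$, is an involutive homeomorphism and $R^{*}=\sigma(R)$; since $R$ is $\tau$-closed in $\widetilde{Y^3}$ by the PCOTS hypothesis (Definition~\ref{d:PartCOTS}), so is $R^{*}$, and hence so is their union.

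Next I would check that $\widetilde{X^3}$ is dense in $\widetilde{Y^3}$. Under the standing Hausdorffness assumption, the set $\widetilde{Y^3}$ of pairwise distinct triples is open in $Y^3$, its complement being the union of the three closed ``partial diagonals'' $\{a=b\}\cup\{b=c\}\cup\{a=c\}$. Since $X$ is $\tau$-dense in $Y$, the product $X^3$ is dense in $Y^3$, and the intersection of a dense set with an open set is dense in that open set; therefore $\widetilde{X^3}=X^3\cap\widetilde{Y^3}$ is dense in $\widetilde{Y^3}$. Finally, because $R_X$ is a genuine circular order on $X$, Totality already holds on $X$, so $\widetilde{X^3}\subseteq R\cup R^{*}$.

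Combining these three points: the closed set $R\cup R^{*}$ contains the dense subset $\widetilde{X^3}$ of $\widetilde{Y^3}$, whence $R\cup R^{*}=\widetilde{Y^3}$, which is exactly Totality of $R$ on $Y$. Together with the partial-circular-order axioms this shows $R$ is a circular order on $Y$. I do not expect a serious obstacle here; the only point requiring a little care is distinguishing density of $\widetilde{X^3}$ in $\widetilde{Y^3}$ from the more obvious density of $X^3$ in $Y^3$, and this hinges on $\widetilde{Y^3}$ being open, i.e. on the Hausdorffness of $Y$.
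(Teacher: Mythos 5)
Your proposal is correct and follows essentially the same route as the paper's proof: both reduce Totality to the identity $R\cup R^{*}=\widetilde{Y^3}$ and obtain it from the closedness of $R\cup R^{*}$ together with the density of $\widetilde{X^3}$ in $\widetilde{Y^3}$. You merely spell out two steps the paper leaves implicit (closedness of $R^{*}$ via the coordinate swap, and density of $\widetilde{X^3}$ via the openness of $\widetilde{Y^3}$), which is fine.
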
 
 \begin{proof}
 	The relation $R$ is a closed  subset of $\widetilde{Y^3}$. Then the opposite circular  relation $R^*$ is also closed. The union $R \cup R^*$ is closed in $\widetilde{Y^3}$. The subset $\widetilde{X^3}=R_X \cup R_X^*$ is dense in $\widetilde{Y^3}$ and is contained in $R \cup R^{*}$. Hence, 
 	$R \cup R^{*} = \widetilde{Y^3}$, proving the totality.  	
% 	Since $X$ is dense in $Y$, the product $X^3$ is dense in $Y^3$. Moreover,
% 	$\widetilde{Y^3}$ is an open subspace of $Y^3$, hence \(\widetilde{X^3}=X^3\cap \widetilde{Y^3}\) is dense in $\widetilde{Y^3}$.
 \end{proof}	

\section{Convexity and generalized Helly space} 

\subsection*{Convex subsets in circular orders} 
\label{s:convex} 

\begin{defin} \label{d:c-convex} 
	Let $(X,R)$ be a circularly ordered set. Let us say that a subset $Y$ in $X$ is \textit{convex} in $X$ if for every $a,b \in Y$ at least one of the intervals $[a,b]$, $[b,a]$ is a subset of $Y$. 
\end{defin}

\begin{remark} \label{r:convex} \ 
	\begin{enumerate}
		\item According to Definition \ref{d:c-convex} exactly the following subsets of $X$ are convex:
		$$\emptyset, X, (u,v), [u,v], (u,v], [u,v), X\setminus\{u\}$$
		for all $u,v \in X$. In particular, every singleton $\{u\}=[u,u]$ is convex. 
	
		\item Now the second condition (COP2) in Definition \ref{d:c-ordMaps} can be reformulated as follows: the preimage $f^{-1}(c)$ of a singleton $\{c\}$ is a  convex subset in $X$. 
	\end{enumerate} 
\end{remark}

The following technical lemma is easy to verify. 

\begin{lem} \label{l:ConvexProprties} 
	$(X,R)$ be a circularly ordered set. 
	\begin{enumerate}
		\item The complement of any convex subset is convex (in contrast to linear orders). 
		\item 
		Let $C_1, C_2$ be convex subsets in $X$ such that $C_1 \cap C_2 \neq \emptyset$. Then the union $C_1 \cup C_2$ is also convex (in contrast to the intersection). 
		\item 
			Let $(X,R)$ be a circularly ordered set and let $A,B,C$ be a triple of nonempty disjoint convex subsets in $X$. Then either $[A,B,C]$ or $[A,C,B]$. 
	\end{enumerate} 	
\end{lem}

\begin{prop} \label{p:Prop-c-ord}   
	Assume that $X,Y$ are circularly ordered sets. 
	\begin{enumerate}
		\item Let $p \colon X \to Y$ satisfy condition (COP1) in Definition \ref{d:c-ordMaps} and $p(X)$ is not a two-point set. Then $p$ is COP. 
	
		\item Let $p \colon X \to Y$ be a COP map. Then 
		\begin{enumerate} 
			\item If $p(a) \neq p(b)$ then $p[a,b] = [p(a),p(b)] \cap p(X)$. The image $p(C)$ is convex in $p(X)$ for every convex $C \subseteq X$.    
			\item The preimage $p^{-1}(I)$ is convex for every convex subset $I$ in $Y$.  
			\item For every $z \in X$  the restricted map 
			$$
			(X \setminus p^{-1}(p(z)),\leq_z) \to (Y,\leq_{p(z)})
			$$
			is linear order preserving. 	
		\end{enumerate}
	\end{enumerate} 	
\end{prop}
\begin{proof}    
	(1) We have to check that (COP2) is satisfied. Assuming the contrary, we have an injective cycle $[x_1,x_2,x_3,x_4]$ such that  $p(x_1)=p(x_3)=c \in p(X)$, $p(x_2) \neq c, p(x_4)  \neq c$. 
		Now observe that $p(x_2)=p(x_4)$. Indeed, if $p(x_2) \neq p(x_4)$, the elements $p(x_2), c, p(x_4)$ are pairwise distinct. Since $[x_2, x_3, x_4]$ holds in $X$, condition (COP1) implies $[p(x_2), c, p(x_4)]$ in $Y$. Similarly, $[x_4, x_1, x_2]$ implies $[p(x_4), c, p(x_2)]$. This contradicts the asymmetry axiom.
	  	
	  	Therefore, we have $t:=p(x_2)=p(x_4)$. We claim that $p(X)\subseteq \{c,t\}$.
	  	Indeed, let $u\in [x_1,x_3]$ and $v\in [x_3,x_1]$ be such that
	  	$p(u)\neq c$ and $p(v)\neq c$. We show that $p(u)=p(v)$.
	  	
	  	Assume the contrary. Then $p(u),c,p(v)$ are pairwise distinct. Since
	  	$[u,x_3,v]$ holds, condition {\rm(COP1)} implies $[p(u),c,p(v)]$.
	  	Also, since $[v,x_1,u]$ holds, condition {\rm(COP1)} implies
	  	$[p(v),c,p(u)]$, contradicting asymmetry. Hence $p(u)=p(v)$.
	  	
	  	Taking $u=x_2$ and $v=x_4$, we obtain that every value of $p$ different from $c$
	  	is equal to $t$. Since
	  	\[
	  	X=[x_1,x_3]\cup [x_3,x_1],
	  	\]
	  	it follows that $p(X)\subseteq \{c,t\}$, hence $p(X)=\{c,t\}$, a contradiction to
	  	the assumption $|p(X)|\ge 3$. 
	
	(2a) Assume that $p(a)\neq p(b)$. We first show that 	
	\[
	p([a,b])=[p(a),p(b)]\cap p(X).
	\]
	
	If $x\in (a,b)$ and $p(x)\notin \{p(a),p(b)\}$, then $p(a),p(x),p(b)$ are pairwise distinct.
	Since $[a,x,b]$ holds, (COP1) implies $[p(a),p(x),p(b)]$. Hence $p(x)\in [p(a),p(b)]$.
	Thus $p([a,b])\subseteq [p(a),p(b)]\cap p(X)$.

	Conversely, let $y\in [p(a),p(b)]\cap p(X)$. Choose $c\in X$ such that $p(c)=y$.
	If $y=p(a)$ or $y=p(b)$, then clearly $y\in p([a,b])$. So assume that
	$y\notin \{p(a),p(b)\}$. Then $[p(a),p(c),p(b)]$, and the points
	$p(a),p(c),p(b)$ are pairwise distinct. Since $X$ is totally c-ordered, the equivalent
	form of {\rm(COP1)} yields $[a,c,b]$. Hence $c\in [a,b]$, and therefore
	$y=p(c)\in p([a,b])$.
		
	This proves the equality $p([a,b])=[p(a),p(b)]\cap p(X)$.
	
	Now let $C\subseteq X$ be convex. We show that $p(C)$ is convex in $p(X)$.
	Take $y_1,y_2\in p(C)$, and choose $x_1,x_2\in C$ such that $p(x_1)=y_1$ and $p(x_2)=y_2$.
	If $y_1=y_2$, there is nothing to prove. So assume that $y_1\neq y_2$. 
	Since $C$ is convex, at least one of the closed intervals $[x_1,x_2]$ or $[x_2,x_1]$
	is contained in $C$. If $[x_1,x_2]\subseteq C$, then by the equality already proved,
	$p([x_1,x_2])=[y_1,y_2]\cap p(X)\subseteq p(C)$. If $[x_2,x_1]\subseteq C$, then similarly
	$p([x_2,x_1])=[y_2,y_1]\cap p(X)\subseteq p(C)$. 
	Therefore $p(C)$ is convex in $p(X)$.

	(2b) 
	We have to check that $p^{-1}(I)$ is convex for every convex subset $I$ of $Y$.
	As we mentioned in Remark~\ref{r:convex}.1,
	$I \in \{\emptyset, Y, (u,v), [u,v], (u,v], [u,v), Y\setminus \{u\}: \ u,v \in Y\}.$ 
	The cases $I=\emptyset$ and $I=Y$ are trivial.
	We give a proof only for $I=[u,v]$ and $I=Y\setminus \{u\}$.
	For $I\in\{(u,v),(u,v],[u,v)\}$ the proof is similar, using (2a) and the defining
	property of convexity of $I$.
	
	If $I=Y\setminus \{u\}$, then $p^{-1}(Y \setminus \{u\})=X \setminus p^{-1}(\{u\})$.
	Since $p$ is COP, the fiber $p^{-1}(\{u\})$ is convex by Remark~\ref{r:convex}.2.
	Its complement is convex by Lemma~\ref{l:ConvexProprties}.1.
	
	Let us show that $p^{-1}[u,v]$ is convex in $X$. We have to check that 
	\begin{equation} \label{eq:int2} 
		[a,b] \subseteq p^{-1}[u,v]  \vee [b,a] \subseteq p^{-1}[u,v] 
	\end{equation}	
	for every $a,b \in p^{-1}[u,v]$.  
	If $p(a) = p(b)=y$ then $a,b \in p^{-1}(y)$. By assertion (1) and Remark \ref{r:convex}.2, $p^{-1}(y)$ is convex. 	
	So, $[a,b] \subset p^{-1}(y) \vee [b,a] \subset p^{-1}(y)$ and Equation \ref{eq:int2} is true because $p^{-1}(y) \subseteq p^{-1}[u,v]$. 
	
	Below we can assume that $p(a) \neq p(b)$. 
 By (2a) we have
\[
p([a,b])\subseteq [p(a),p(b)] \quad\text{and}\quad p([b,a])\subseteq [p(b),p(a)].
\]
Since $p(a),p(b)\in [u,v]$ and $[u,v]$ is convex in $Y$, we have
\[
[p(a),p(b)]\subseteq [u,v]\ \ \vee\ \ [p(b),p(a)]\subseteq [u,v].
\]
In the first case $p([a,b])\subseteq [u,v]$, hence $[a,b]\subseteq p^{-1}[u,v]$.
In the second case $p([b,a])\subseteq [u,v]$, hence $[b,a]\subseteq p^{-1}[u,v]$.
This proves (\ref{eq:int2}).
 	
	(2c) Let $a \leq_z b$ where $a,b \in X \setminus p^{-1}(p(z))$ (so, $p(a) \neq p(z), p(b) \neq p(z)$). 
	In particular, $a \neq z, b \neq z$. 
	We have to show that 
	$p(a) \leq_{p(z)} p(b)$.  Without loss of generality, we can suppose that 
	$p(a) \neq p(b)$.  So, $a \leq_z b$ implies $zab$.  
	Since $p(a), p(z), p(b)$ are pairwise distinct, (COP1)  gives $p(z)p(a)p(b)$, i.e. $p(a)\le_{p(z)} p(b)$.
\end{proof}

\subsection*{Generalized Helly space} \label{s:GenHellySp} 

\begin{thm} \label{t:M_Closed}   
	$M_+(X,Y)$ is pointwise closed in $Y^{X}$ for every pair of c-ordered sets $X,Y$. 
\end{thm}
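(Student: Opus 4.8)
The plan is to take a map $f$ in the pointwise closure of $M_+(X,Y)$ and show it is itself c-order preserving by checking the two clauses (COP1) and (COP2) of Definition \ref{d:c-ordMaps}. I will use repeatedly the concrete meaning of pointwise closure: for any finite tuple $x_1,\dots,x_n$ in $X$ and any open neighborhoods $V_i$ of the values $f(x_i)$, some genuine $g\in M_+(X,Y)$ satisfies $g(x_i)\in V_i$ for all $i$.

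First I would establish (COP1) for $f$ using the clopenness of the relation. Since $Y$ is Hausdorff (Proposition \ref{Hausdorff}), the set $\widetilde{Y^3}$ of pairwise-distinct triples is open in $Y^3$, and on it Lemma \ref{l:c-is-Open} gives the splitting $\widetilde{Y^3}=R_Y\cup R_Y^*$ as a topological sum; hence $R_Y^*$ is open in $Y^3$. As the evaluation $h\mapsto (h(a),h(b),h(c))$ is continuous on $Y^X$, the set of all $h$ with $(h(a),h(b),h(c))\in R_Y^*$ is open. Were $f$ to violate (COP1) at some $[a,b,c]$ of $X$ (distinct images lying in $R_Y^*$), this open set would contain $f$ and hence meet $M_+(X,Y)$, yielding a COP map $g$ with $[g(c),g(b),g(a)]$ although $[a,b,c]$ holds in $X$ --- impossible. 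So $(f(a),f(b),f(c))\in R_Y$ whenever the three images are distinct; that is, $f$ satisfies (COP1).

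Now I would split on the size of the image. If $|f(X)|\ge 3$, then (COP1) already forces $f$ to be COP by Lemma \ref{l:Prop-c-ord}.1, and we are done. If $|f(X)|\le 1$ then $f$ is constant, hence trivially COP. This reduces everything to the case $|f(X)|=2$, say $f(X)=\{y_0,y_1\}$, where (COP1) is vacuous and the entire content is (COP2): one must show that $f^{-1}(y_0)$ (equivalently its complement $f^{-1}(y_1)$) is convex.

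This last case is the main obstacle, since with only two values the ternary relation gives no direct leverage on $f$ and one must genuinely combine the approximating maps with convexity. Suppose $f^{-1}(y_0)$ is not convex: then there are $a,c$ with $f(a)=f(c)=y_0$ and points $b_1\in (a,c)_\circ$, $b_2\in (c,a)_\circ$ with $f(b_1)=f(b_2)=y_1$, so that $a,b_1,c,b_2$ is an injective $4$-cycle. Choose disjoint convex (interval) neighborhoods $V_0\ni y_0$ and $V_1\ni y_1$ and, by closure, a COP map $g$ with $g(a),g(c)\in V_0$ and $g(b_1),g(b_2)\in V_1$; I claim no such $g$ can exist. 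If $g(a)=g(c)$, then (COP2) for $g$ makes $g$ constant on $[a,c]_\circ$ or on $[c,a]_\circ$, pushing $g(b_1)$ or $g(b_2)$ into $V_0$ and contradicting $V_0\cap V_1=\emptyset$; symmetrically $g(b_1)\ne g(b_2)$. Thus $g$ is injective on the $4$-cycle, so (COP1) for $g$ preserves the cyclic order and places $g(b_1)\in (g(a),g(c))_\circ$ and $g(b_2)\in (g(c),g(a))_\circ$, i.e. in the two complementary arcs cut out by $g(a),g(c)$. But $V_1$ is convex and misses $g(a),g(c)$, while any convex set containing a point from each of these two open arcs must contain $g(a)$ or $g(c)$ (each closed subarc joining such a pair swallows one of the two endpoints). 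This contradicts $g(b_1),g(b_2)\in V_1$. Hence the bad configuration is impossible, $f^{-1}(y_0)$ is convex, and $f\in M_+(X,Y)$, as required.
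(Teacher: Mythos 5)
Your proof is correct and follows essentially the same route as the paper's: establish (COP1) via the openness of the circular order (Lemma \ref{l:c-is-Open}), dispose of the case $|f(X)|\ge 3$ with Lemma \ref{l:Prop-c-ord}.1, and refute a non-convex fibre in the two-valued case using an injective $4$-cycle together with disjoint convex neighbourhoods of the two values. The only (cosmetic) difference is at the very end: the paper pulls the two neighbourhoods back to $X$ and contradicts Lemma \ref{l:Prop-c-ord}.(2b) (preimages of convex sets under COP maps are convex), whereas you derive the contradiction directly in $Y$ by observing that a convex set meeting both open arcs cut out by $g(a),g(c)$ must contain one of these two endpoints.
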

\begin{proof}
	Let $\{p_i: \ i \in I\}$ be a net in $M_+(X,Y)$ which pointwise converges to some function $p \colon X \to Y$. 
	We have to show that $p$ is also c-order preserving. Assume it is not. Clearly, if $p$ is a  constant map then $p$ is COP. So, we have two cases:  
	
	\medskip 
	(1) $|p(X)| \geq 3$. 
	Then by Proposition \ref{p:Prop-c-ord}.1 there exists a triple $x_1,x_2,x_3$ (of distinct points) such that $x_1x_2x_3$ but $p(x_1)p(x_3)p(x_2)$. Since the c-order is open (Lemma \ref{l:c-is-Open}), choose open pairwise disjoint neighborhoods $O_1,O_2,O_3$ of $p(x_1),p(x_2),p(x_3)$, respectively,  such that $[y_1',y_3',y_2']$ for every $y'_i \in O_i$. This leads to a contradiction because  $p_{i_0}(x_1)p_{i_0}(x_2)p_{i_0}(x_3)$ and at the same time $p_{i_0}(x_1)p_{i_0}(x_3)p_{i_0}(x_2)$ for some $i_0 \in I$.   
	
	(2) Assume that $p(X)=\{u,v\}$, where $u\neq v$. Since $p$ is not COP, at least one of the
	fibers $p^{-1}(\{u\})$, $p^{-1}(\{v\})$ is not convex. Without loss of generality, $A:=p^{-1}(\{u\})$
	is not convex. Then there exist $x_1,x_3\in A$ such that neither $[x_1,x_3]\subseteq A$ nor
	$[x_3,x_1]\subseteq A$. Choose $x_2\in [x_1,x_3]\setminus A$ and $x_4\in [x_3,x_1]\setminus A$.
	Then $p(x_1)=p(x_3)=u$, $p(x_2)=p(x_4)=v$, and $[x_1,x_2,x_3,x_4]$ is an injective $4$-cycle in $X$.
	
	If $|Y|\ge 3$ we may choose
	disjoint open intervals $u\in U$ and $v\in V$ in $Y$. Since $p_i\to p$ pointwise, there exists
	$i_0$ such that
	\[
	p_{i_0}(x_1),\, p_{i_0}(x_3)\in U
	\quad\text{and}\quad
	p_{i_0}(x_2),\, p_{i_0}(x_4)\in V .
	\]
	Hence $x_1,x_3\in p_{i_0}^{-1}(U)$, while $x_2\in [x_1,x_3]\setminus p_{i_0}^{-1}(U)$ and
	$x_4\in [x_3,x_1]\setminus p_{i_0}^{-1}(U)$. So $p_{i_0}^{-1}(U)$ is not convex. Similarly,
	$p_{i_0}^{-1}(V)$ is not convex. This contradicts Proposition~\ref{p:Prop-c-ord}.(2b).
	
	If $|Y|=2$, then $U:=\{u\}=Y\setminus [v,v]$ and $V:=\{v\}=Y\setminus [u,u]$ are open in $Y$.
	The same argument shows that $p_{i_0}^{-1}(U)$ and $p_{i_0}^{-1}(V)$ are not convex, again
	contradicting Proposition~\ref{p:Prop-c-ord}.(2b). 
\end{proof} 

\begin{prop} \label{p:compCirc1} 
	Let $K$ be a c-ordered compact space. Then $H_+(K)$ is a closed subgroup of the homeomorphism group $H(K)$. 
	Therefore, $H_+(K)$ is complete  (in its two sided uniformity). 
\end{prop}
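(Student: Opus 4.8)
The plan is to derive closedness directly from the pointwise closedness of $M_+(K,K)$ (Theorem \ref{t:M_Closed}), and then obtain completeness from the general principle that a closed subgroup of a group complete in its two-sided uniformity is itself complete. So the entire analytic content of the assertion is already packaged in Theorem \ref{t:M_Closed}; what remains is essentially bookkeeping among the relevant topologies, plus the standard completeness of $H(K)$.

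First I would recall that, since $K$ is compact Hausdorff, it carries a unique compatible uniformity, and on $H(K)$ the compact-open topology coincides with the topology of uniform convergence; in particular this topology is finer than the topology of pointwise convergence inherited from $K^{K}$. Consequently, any net $(h_i)$ in $H_+(K)$ that converges to some $h$ in $H(K)$ converges to $h$ also \emph{pointwise}, and thus $h$ lies in the pointwise closure of $H_+(K) \subseteq M_+(K,K)$.

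Now for closedness I would invoke Theorem \ref{t:M_Closed}: since $M_+(K,K)$ is pointwise closed in $K^{K}$, the pointwise limit $h$ is COP. But $h$ also belongs to $H(K)$, so it is a bijective (indeed bicontinuous) COP map, hence a COP \emph{isomorphism}, which by definition means $h \in H_+(K)$. Therefore $H_+(K)$ is closed in $H(K)$.

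Finally, for completeness I would use the well-known fact that, for compact Hausdorff $K$, the homeomorphism group $H(K)$ is a topological group which is complete in its two-sided (Raikov) uniformity. The brief reason: a two-sided Cauchy net $(h_i)$ forces both $(h_i)$ and $(h_i^{-1})$ to be uniformly Cauchy, hence to converge uniformly to continuous maps $h,k \colon K \to K$; passing to the limit in $h_i \circ h_i^{-1} = \id_K = h_i^{-1}\circ h_i$ and using continuity of composition gives $h \circ k = k \circ h = \id_K$, so $h \in H(K)$. Since a closed subgroup of a group complete in its two-sided uniformity is complete in the induced two-sided uniformity, the closedness established above yields completeness of $H_+(K)$. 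The only genuine subtlety here is the interplay of the three topologies (compact-open, uniform, pointwise) on a function space over a compact domain; everything else is either Theorem \ref{t:M_Closed} or a standard completeness argument for $H(K)$.
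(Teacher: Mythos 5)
Your proposal is correct and follows essentially the same route as the paper: both reduce closedness to the pointwise closedness of $M_+(K,K)$ (Theorem \ref{t:M_Closed}) via the observation that compact-open convergence implies pointwise convergence, and then deduce completeness of $H_+(K)$ from the standard completeness of $H(K)$ together with closedness. The extra details you supply (the sketch of why $H(K)$ is Raikov-complete) are fine but not needed beyond what the paper's proof already invokes.
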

\begin{proof} It is well known that the topological group $H(K)$ is complete for every compact space $K$. Therefore, it is enough to see that the  subgroup $H_+(K)$ is closed. 
	Let $g_i$ be a net in $H_+(K)$ which converges in $H(K)$ to some $g  \in H(K)$ with respect to the compact open topology. Then this net converges also pointwise. Hence, $g \in M_+(K,K)$ by Theorem \ref{t:M_Closed}. Since $\rho_g$ is a homeomorphism of $K$ we obtain that $g \in H_+(K)$. 
\end{proof}

\begin{defin} \label{d:sing}  Let $(X,\leq)$ be a linearly ordered set. Let us say that $u \in X$ is a \textit{right-singular point} (resp. \textit{left-singular}) and write $u \in \rm{sing}^{-}(X)$ (resp. $u \in \rm{sing}^{+}(X)$) if $[u,\to)$ (resp. $(\leftarrow,u]$) is a clopen subset in $(X,\lambda_{\leq}$). Define also $\rm{sing}(X):=\rm{sing}^{-}(X) \cup  \rm{sing}^{+}(X)$  the set of all \textit{singular points}. 
\end{defin}  

\begin{lem} \label{l:sing}  Let $(X,\lambda_{\leq})$ be a linearly ordered compact metric space. 
	Then $\rm{sing}(X,\leq)$ is at most countable. 
\end{lem}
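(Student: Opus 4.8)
The plan is to reduce the clopenness condition to mere openness and then inject the singular points into a countable base via an ``extremum'' assignment. First I would observe that in any LOTS the graph of $\le$ is closed in $X\times X$ (Lemma \ref{ordHausd}); consequently each ray $[u,\to)=\{x: u\le x\}$ and $(\leftarrow,u]=\{x: x\le u\}$ is automatically \emph{closed}, so a point $u$ is right-singular exactly when $[u,\to)$ is \emph{open}, and left-singular exactly when $(\leftarrow,u]$ is open. In particular
$$\operatorname{sing}^{-}(X)\subseteq\{u: [u,\to)\ \text{is}\ \tau_{\le}\text{-open}\},\qquad \operatorname{sing}^{+}(X)\subseteq\{u: (\leftarrow,u]\ \text{is}\ \tau_{\le}\text{-open}\}.$$
Since $(X,\tau_{\le})$ is compact metric, it is second countable; I fix once and for all a countable base $\mathcal{B}$ for $\tau_{\le}$.

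The key step is an injectivity observation. Suppose $[u,\to)$ is open. Then it is a $\tau_{\le}$-neighborhood of $u$, so there is $B_u\in\mathcal{B}$ with $u\in B_u\subseteq[u,\to)$. Every $x\in B_u$ satisfies $x\ge u$, and $u\in B_u$, whence $u=\min B_u$. Thus the assignment $u\mapsto B_u$ sends distinct right-singular points to distinct members of $\mathcal{B}$ (each $u$ is recovered as $\min B_u$), giving an injection $\operatorname{sing}^{-}(X)\hookrightarrow\mathcal{B}$; hence $\operatorname{sing}^{-}(X)$ is countable. A symmetric argument with $(\leftarrow,u]$ shows that every left-singular $u$ is the \emph{maximum} of a witnessing base member, so $\operatorname{sing}^{+}(X)$ is countable as well. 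Therefore $\operatorname{sing}(X,\le)=\operatorname{sing}^{-}(X)\cup\operatorname{sing}^{+}(X)$ is at most countable.

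I expect there to be no genuinely hard analytic step here: metrizability enters only to supply the countable base, and compactness is used (together with metrizability) merely to guarantee second countability; indeed the same proof bounds the singular set in any second countable LOTS. The only point requiring a little care is the reduction in the first paragraph, namely that the ``closed'' half of ``clopen'' is automatic for rays, so that singularity is genuinely an openness (hence neighborhood) condition that can be witnessed inside $\mathcal{B}$. The extremum trick $u=\min B_u$ (resp.\ $u=\max B_u$) is what converts that neighborhood witness into an injection, and it is the crux of the argument.
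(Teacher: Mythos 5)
Your proof is correct. The reduction in your first paragraph is sound: in any LOTS the complement of $[u,\to)$ is the open ray $(\leftarrow,u)$ (and symmetrically for $(\leftarrow,u]$), so these rays are automatically closed and the "clopen" condition of Definition \ref{d:sing} is genuinely an openness condition; the extremum trick $u=\min B_u$ (resp.\ $u=\max B_u$) then gives a clean injection of each of $\mathrm{sing}^{-}(X)$ and $\mathrm{sing}^{+}(X)$ into a fixed countable base.

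Your route differs from the paper's in a way worth noting. The paper assigns to each singular point a nonempty clopen witness interval and injects $\mathrm{sing}(X)$ into the family $\mathrm{clop}(X)$ of all clopen subsets, then uses \emph{compactness} to conclude that $\mathrm{clop}(X)$ is countable (every clopen set is compact open, hence a finite union of members of a countable base). You instead inject directly into the countable base itself, recovering the singular point as the minimum or maximum of its witnessing basic set. This buys two things. First, compactness becomes irrelevant: your argument proves the statement for any second countable LOTS, whereas the paper's argument genuinely needs compactness to bound $\mathrm{clop}(X)$. Second, by treating $\mathrm{sing}^{-}$ and $\mathrm{sing}^{+}$ separately you sidestep an injectivity subtlety present in the paper's single assignment: a right-singular witness for $u_1$ and a left-singular witness for $u_2\neq u_1$ could in principle coincide as sets (a two-point clopen interval has distinct min and max), so the paper's map into $\mathrm{clop}(X)$ is only two-to-one without further comment — harmless for countability, but your version avoids the issue entirely. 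No gaps.
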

\begin{proof}
	For every $u \in \rm{sing}(X)$ choose exactly one clopen (nonempty) set $[u,\to)$ or $(\leftarrow,u]$. 
	This assignment defines a  1-1 map from $\rm{sing}(X)$ 
	into the set $\rm{clop}(X)$ of all clopen subsets of $X$. 
	Now observe that $\rm{clop}(X)$ is countable for every compact metric space (take a countable 
	basis $\mathcal{B}$ of open subsets; 
	then every clopen subset is a finite union of some members of $\mathcal{B}$).
\end{proof} 

It is a well known fact in classical analysis that every linear order preserving bounded real
valued  function $f \colon X \to \R$ on an interval $X \subseteq \R$ has one-sided limits. 
This can be extended to the case of linear order preserving functions 
$f \colon X \to Y$ such that $X$ is first countable and $Y$ is sequentially compact; see \cite{FS}. 

\begin{lem} \label{l:propSIDE} 
	Let $(X,\lambda_{\leq_X})$ and $(Y,\lambda_{\leq_Y})$ be $\LOTS$, where $Y$ is compact and metrizable. 
	Assume that $p \colon (X,\leq_X) \to (Y,\leq_Y)$ is a LOP map. Then 
	\begin{enumerate}
		\item $p$ has one-sided limits $L(a), R(a)$ at any $a \in X$ and $L(a)  \leq p(a) \leq R(a) $. 
		\item If $a \notin \rm{sing}(X)$, then $p$ is continuous at $a$ if and only if $L(a) = p(a) = R(a)$.  
		\item The set of discontinuity points of $p$ outside $\rm{sing}(X)$ is at most countable. 
		In particular, if $\rm{sing}(X)$ is countable (e.g. if $X$ is a compact metric space, by Lemma \ref{l:sing}),
		then $p$ has at most countably many discontinuity points.
	\end{enumerate} 
\end{lem} 
\begin{proof} 
	(1) Since $Y$ is a compact LOTS, it is order-complete. Thus, the one-sided limits 
	$L(a):=\sup \{p(x): x<a\}$ and $R(a):=\inf \{p(x): a<x\}$ 
	exist in $Y$ for every $a \in X$ (with the convention $L(\min X) = \min Y$ and $R(\max X) = \max Y$). Since $p$ is LOP, $x < a$ implies $p(x) \leq p(a)$, so taking the supremum yields $L(a) \leq p(a)$. Similarly, $p(a) \leq R(a)$, giving $L(a) \leq p(a) \leq R(a)$.
	
	Assume now that $a < b$ and $a,b \notin \rm{sing}(X)$. The interval $(a,b)$ is nonempty (otherwise either $[b,\to)$ or $(\leftarrow,a]$ would be clopen, making them singular). For any $x \in (a,b)$, the LOP property implies $R(a) \leq p(x) \leq L(b)$. Hence, $R(a) \leq L(b)$, which yields:
	\begin{equation} \label{eq:empty} 
		a < b,\ a,b \notin \rm{sing}(X) \Longrightarrow (L(a), R(a)) \cap (L(b), R(b)) = \emptyset.
	\end{equation} 
	
	(2) Let $a \notin \rm{sing}(X)$. 
	If $p$ is continuous at $a$, the preimage of any neighborhood of $p(a)$ must intersect both $(\leftarrow,a)$ and $(a,\to)$ (since $a$ is not singular). This forces $\sup_{x < a} p(x) = p(a)$ and $\inf_{x > a} p(x) = p(a)$. Thus, $L(a) = p(a) = R(a)$.

Conversely, assume that $L(a)=p(a)=R(a)$. Let $O$ be any neighborhood of $p(a)$ in $Y$.
Then $O$ contains a basic convex open neighborhood $V$ of $p(a)$ of one of the forms
$(u,v)$, $(\leftarrow,v)$, or $(u,\to)$.

If $V=(u,v)$, choose $x_1<a<x_2$ such that $p(x_1)>u$ and $p(x_2)<v$.
Then for every $x\in (x_1,x_2)$ we have
$u<p(x_1)\le p(x)\le p(x_2)<v$, hence $p(x)\in V\subseteq O$.

If $V=(\leftarrow,v)$, choose $x_2>a$ such that $p(x_2)<v$.
Then for every $x\in [a,x_2)$ we have $p(x)\le p(x_2)<v$, hence $p(x)\in V\subseteq O$.

If $V=(u,\to)$, choose $x_1<a$ such that $p(x_1)>u$.
Then for every $x\in (x_1,a]$ we have $u<p(x_1)\le p(x)$, hence $p(x)\in V\subseteq O$.

Thus $p$ is continuous at $a$.  
	
	(3) Let $D_0 := \{a \in X \setminus \rm{sing}(X) : p \text{ is discontinuous at } a\}$. By (2), for every $a \in D_0$, we must have $L(a) < R(a)$. We will map $D_0$ injectively into a countable set. 
	
	Since $Y$ is a linearly ordered compact metric space, it has a countable dense subset $C$, and by Lemma \ref{l:sing}, $\rm{sing}(Y)$ is countable. Define $\phi: D_0 \to C \cup \rm{sing}(Y)$ as follows:
	\begin{itemize}
		\item If $(L(a),R(a)) \neq \emptyset$, choose $\phi(a) \in C \cap (L(a),R(a))$.
		\item If $(L(a),R(a)) = \emptyset$, set $\phi(a) := R(a)$. Since $a \in D_0$, $L(a) < R(a)$. Thus $R(a)$ is the immediate successor of $L(a)$, making the ray $[R(a), \to)$ open, so $R(a) \in \rm{sing}(Y)$.
	\end{itemize}
	Let $a < b$ in $D_0$. From \eqref{eq:empty} we have $R(a) \leq L(b)$. By the definition of $\phi$, we have $\phi(a) \leq R(a)$ and $L(b) < \phi(b)$ in both subcases. Therefore, $\phi(a) \leq R(a) \leq L(b) < \phi(b)$, which implies $\phi(a) < \phi(b)$. 
	Thus, $\phi$ is strictly increasing, hence injective, making $D_0$ countable.
	
	Finally, the set of all discontinuity points of $p$ is contained in $D_0 \cup \rm{sing}(X)$, which is at most countable if $\rm{sing}(X)$ is countable.
\end{proof}

The following useful result is a generalization of \cite[Proposition 2]{Bour}. 

\begin{f} \label{countable} \cite{GM-TC} 
	Let $X$ be a set, $(Y,d)$  a metric space, 
	and $E \subset Y^X$ a compact subspace in the pointwise topology. 
	The following conditions are equivalent:
	\begin{enumerate}
		\item a point $p \in E$ admits a countable local basis in $E$;  
		\item there is a countable set $C \subset X$ which determines $p$ in $E$ (meaning that for 
		any given $q \in E$, the condition $q(c) =p(c)$ for all $c \in C$ implies that $q(x)=p(x)$ for every $x \in X$).
	\end{enumerate} 
\end{f}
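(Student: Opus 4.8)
The plan is to prove the two implications separately, constructing the required object explicitly in each case and then verifying it works; the only place where compactness of $E$ is genuinely needed is the implication $(2)\Rightarrow(1)$.

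For $(2)\Rightarrow(1)$, fix a countable determining set $C=\{c_1,c_2,\dots\}$ and, for $n,k\in\N$, put
$$
V_{n,k}:=\{q\in E:\ d(q(c_i),p(c_i))\le 1/k \ \text{for all}\ i\le n\},\qquad U_{n,k}:=\{q\in E:\ d(q(c_i),p(c_i))< 1/k \ \text{for}\ i\le n\}.
$$
There are only countably many such sets; each $U_{n,k}$ is an open neighborhood of $p$, each $V_{n,k}$ is closed in $E$, and $U_{n,k}\subseteq V_{n,k}$. The family $\{V_{n,k}\}$ is downward directed, since $V_{\max(n,n'),\max(k,k')}\subseteq V_{n,k}\cap V_{n',k'}$, and the determining property of $C$ gives $\bigcap_{n,k}V_{n,k}=\{p\}$, because a point agreeing with $p$ on all of $C$ must equal $p$. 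I then verify that $\{U_{n,k}\}$ is a local base at $p$. Given an open $W\ni p$ in $E$, the sets $V_{n,k}\cap(E\setminus W)$ are closed subsets of the compact space $E\setminus W$ with empty total intersection (as $p\in W$); by the finite intersection property a finite subfamily already has empty intersection, and downward directedness then yields a single $V_{n_0,k_0}$ with $V_{n_0,k_0}\cap(E\setminus W)=\emptyset$, i.e. $U_{n_0,k_0}\subseteq V_{n_0,k_0}\subseteq W$. Thus $p$ has the countable local base $\{U_{n,k}\}$.

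For $(1)\Rightarrow(2)$, let $\{W_m\}_{m\in\N}$ be a countable local base at $p$. Because the topology of $E$ is inherited from the product $Y^X$, each $W_m$ contains a basic neighborhood of $p$ supported on a finite set $F_m\subseteq X$; that is, $\{q\in E:\ q(x)\in O_x^m \ \forall x\in F_m\}\subseteq W_m$ with each $O_x^m$ open in $Y$ and $p(x)\in O_x^m$. Set $C:=\bigcup_m F_m$, a countable subset of $X$. If $q\in E$ satisfies $q(c)=p(c)$ for all $c\in C$, then for every $m$ we have $q(x)=p(x)\in O_x^m$ on $F_m$, so $q\in W_m$, whence $q\in\bigcap_m W_m$. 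Since $Y^X$, and hence $E$, is Hausdorff and $\{W_m\}$ is a local base, $\bigcap_m W_m=\{p\}$; therefore $q=p$, and $C$ determines $p$ in $E$.

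The main obstacle is the compactness step in $(2)\Rightarrow(1)$: one must show that the explicit nested neighborhoods actually form a base, not merely shrink down to $\{p\}$, and this is precisely where the finite-intersection-property argument — and thus the hypothesis that $E$ is pointwise compact — enters. The reverse implication requires no compactness, only the Hausdorffness of the pointwise topology.
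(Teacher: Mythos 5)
Your proof is correct. Note that the paper states this result as a Fact quoted from \cite{GM-TC} (a generalization of Bourgain's Proposition 2) and gives no proof of its own, so there is nothing in the text to compare against; your argument is the standard one. Both directions check out: in $(2)\Rightarrow(1)$ the sets $U_{n,k}$ are open and $V_{n,k}$ closed because the finitely many evaluation maps $q\mapsto q(c_i)$ are continuous into the metric space $Y$, the family $\{V_{n,k}\}$ is downward directed with intersection $\{p\}$ by the determining property, and the finite intersection property on the compact set $E\setminus W$ correctly upgrades ``shrinking to $\{p\}$'' to ``forming a local base''; in $(1)\Rightarrow(2)$ you correctly use that product-basic neighborhoods depend on finitely many coordinates and that Hausdorffness of $E$ forces $\bigcap_m W_m=\{p\}$. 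The division of labor you point out — compactness is needed only for $(2)\Rightarrow(1)$ — is also accurate.
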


Recall that the Helly space $M_+([0,1],[0,1])$ is first countable (see, for example, \cite[page 127]{SS}).  
The following theorem is inspired by this classical fact. 

\begin{thm}[Generalized Helly space] \label{t:lin-Helly}
	Let $X$ and $Y$ be linearly ordered sets with their interval topologies such that 
	$X$ and $Y$ are compact metric spaces. Then the set $M_+(X,Y)$ in the pointwise topology of all LOP maps is first countable (and compact by Theorem \ref{t:M_Closed}).    
\end{thm}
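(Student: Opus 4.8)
The plan is to reduce first countability to the ``countable determining set'' criterion of Fact \ref{countable} and then to build such a set out of the dense points, the discontinuities, and the order-singular points of each individual map. First I note that $M_+(X,Y)$ is pointwise \emph{compact}: $Y^X$ is compact by Tychonoff (as $Y$ is compact), and $M_+(X,Y)$ is pointwise closed by Theorem \ref{t:M_Closed}. Since $Y$ is metric, Fact \ref{countable} applies, so it suffices to show that every $p\in M_+(X,Y)$ is \emph{determined} by some countable $C\subseteq X$, meaning that for all $q\in M_+(X,Y)$, $q|_C=p|_C$ forces $q=p$.

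Given $p$, I would take $C:=C_0\cup D\cup\operatorname{sing}(X)$, where $C_0$ is a countable dense subset of $X$ (it exists since $X$ is compact metric), $D$ is the set of discontinuity points of $p$, and $\operatorname{sing}(X)$ is the set of singular points (Definition \ref{d:sing}). Each piece is countable: $D$ by Lemma \ref{l:propSIDE}(3), $\operatorname{sing}(X)$ by Lemma \ref{l:sing}, and $C_0$ by construction; hence $C$ is countable.

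Now fix $q\in M_+(X,Y)$ with $q|_C=p|_C$ and a point $a\in X$; I must show $q(a)=p(a)$. If $a\in C$ this is immediate, so assume $a\notin C$. Then $a$ is a continuity point of $p$ (as $a\notin D$), $a$ is not isolated (isolated points lie in every dense set, hence in $C_0$), and $a\notin\operatorname{sing}(X)$. The key observation is that a non-singular point is a genuine two-sided limit: unwinding Definition \ref{d:sing}, $a\notin\operatorname{sing}(X)$ forces $a\neq\min X,\max X$ and forces $a$ to have neither an immediate predecessor nor an immediate successor, so both $(\leftarrow,a)$ and $(a,\to)$ are nonempty with $a$ as their supremum and infimum respectively. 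Using density of $C_0$ together with metrizability, I can then choose sequences $c_n\in C_0$ with $c_n<a$, $c_n\to a$ and $d_n\in C_0$ with $d_n>a$, $d_n\to a$. Continuity of $p$ at $a$ gives $p(c_n)\to p(a)$ and $p(d_n)\to p(a)$, while monotonicity of $q$ and $q|_{C_0}=p|_{C_0}$ give $p(c_n)=q(c_n)\le q(a)\le q(d_n)=p(d_n)$. Since the graph of $\le$ is closed in $Y\times Y$ (Lemma \ref{ordHausd}), passing to the limit yields $p(a)\le q(a)\le p(a)$, hence $q(a)=p(a)$.

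The hard part — and the reason the statement is more delicate than the classical Helly case on $[0,1]$ — is handling exactly the points where this two-sided sandwiching fails: the global endpoints, points with an immediate neighbor, one-sided accumulation points, and jump discontinuities. The device that resolves all of these is to absorb them \emph{directly} into $C$: the jumps via $D$, and the order-theoretically exceptional points via $\operatorname{sing}(X)$. What makes this legitimate is precisely the compact-metric hypothesis on both $X$ and $Y$, which keeps $D$ and $\operatorname{sing}(X)$ countable (this is where Lemma \ref{l:propSIDE}(3) and Lemma \ref{l:sing} are indispensable). Once these exceptional points are inside $C$, the only remaining points are the tame two-sided limits, for which the approximation argument above closes the proof; an application of Fact \ref{countable} then gives first countability of $M_+(X,Y)$, and compactness is Theorem \ref{t:M_Closed}.
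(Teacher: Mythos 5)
Your proposal is correct and follows essentially the same route as the paper: the identical countable determining set $C=\operatorname{disc}(p)\cup\operatorname{sing}(X)\cup C_0$, justified by Lemma \ref{l:sing} and Lemma \ref{l:propSIDE}, fed into Fact \ref{countable}. The only (cosmetic) difference is the final step, where the paper traps $q(x_0)$ in a convex neighborhood of $p(x_0)$ to reach a contradiction, while you sandwich $q(a)$ between $p(c_n)$ and $p(d_n)$ and pass to the limit using closedness of $\le$; both arguments rest on the same non-singularity and continuity facts.
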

\begin{proof}
	Let $p\in M_+(X,Y)$. By Lemmas \ref{l:propSIDE}.3 and \ref{l:sing}, the set $\rm{disc}(p)$ is countable. Also,
	$\rm{sing}(X)$ is countable by Lemma \ref{l:sing}. Since $X$ is compact and metrizable, there exists a
	countable dense subset $A\subseteq X$. Without loss of generality, we may assume that $A$
	contains the endpoints of $X$, if they exist. Put
	$C:=\rm{disc}(p)\cup \rm{sing}(X)\cup A$.
	
	It is enough to show that $C$ satisfies the condition of Fact \ref{countable} for
	$E:=M_+(X,Y)$. Let $q\in M_+(X,Y)$ be such that $q(c)=p(c)$ for every $c\in C$.
	We prove that $q(x)=p(x)$ for every $x\in X$.
	
	Assume the contrary, and choose $x_0\in X$ such that $q(x_0)\neq p(x_0)$. Then
	$x_0\notin C$. Hence $x_0\notin \rm{disc}(p)$, so $p$ is continuous at $x_0$; also
	$x_0\notin \rm{sing}(X)$, and $x_0$ is not an endpoint of $X$.
	
	Choose an open convex neighborhood $U$ of $p(x_0)$ in $Y$ such that $q(x_0)\notin U$.
	Since $p$ is continuous at $x_0$ and $x_0$ is not an endpoint, there exist
	$x_1,x_2\in X$ such that $x_0\in (x_1,x_2)$ and $p((x_1,x_2))\subseteq U$.
	Since $x_0\notin \rm{sing}(X)$, the intervals $(x_1,x_0)$ and $(x_0,x_2)$ are nonempty.
	As $A$ is dense in $X$, there exist $a_1\in A\cap (x_1,x_0)$ and $a_2\in A\cap (x_0,x_2)$.
	Then $a_1<x_0<a_2$ and $p(a_1),p(a_2)\in U$.
	
	Since $U$ is convex, we have $[p(a_1),p(a_2)]\subseteq U$. On the other hand, $q$ is LOP
	and $q(a_i)=p(a_i)$ for $i=1,2$, so
	$q(a_1)\le q(x_0)\le q(a_2)$. Therefore
	$q(x_0)\in [p(a_1),p(a_2)]\subseteq U$, contradicting the choice of $U$. 
	This contradiction shows that $q=p$. Hence $C$ determines $p$ in $M_+(X,Y)$.
	By Fact \ref{countable}, the point $p$ admits a countable local basis in $M_+(X,Y)$.
	Since $p$ was arbitrary, $M_+(X,Y)$ is first countable. Compactness follows from
	Theorem \ref{t:M_Closed}.  
\end{proof}

\begin{remark} \label{r:c-Helly-Note}
For \textbf{circular} order preserving maps, the last step of the previous proof is not always true. Namely, by Definition \ref{d:c-ordMaps}, it is possible that $q(a_1)=q(a_2)=q(x)$ for every $x \in [a_2,a_1]_{\circ}$ but $q(x_0) \notin [q(a_1),q(a_2)]_{\circ}$.
\end{remark}

The circular version of Theorem \ref{t:lin-Helly} is not true.

\begin{ex} \label{ex:BigHelly}
	The space $M_+(\T,\T)$ is not first countable. 	
\end{ex}	
\begin{proof} 
	Let $p_a \colon \T \to \T$ be the constant map $p_a(x)=a$. For every $b \neq a$ in $\T$ define the map 
	$$p_a^{b} \colon \T \to \T, \ \ p_a^{b}(x):=a \ \forall x \neq b, \ p_a^{b}(b):=b.$$
	Then $p_a^{b}, p_a \in M_+(\T,\T)$. Note that $p_a^b$ is COP because the fibers are $\T \setminus\{b\}$ and $\{b\}$, both of which are convex in a circular order, and (COP1) is vacuous when images are not all distinct.
	Fact \ref{countable}  shows that $p_a$ does not admit a countable local basis; cf. Proposition \ref{ex:T}.     
\end{proof}

 \section{Completeness, compactifications and inverse limits} 
 \label{s:compINV} 
 
 A {\it gap} on $(X,\circ)$ is a cut $\leq$ on $X$ that has neither a least nor a greatest element. This continues the idea of classical Dedekind cuts for linear orders. 
 A c-ordered set 
 $(X,\circ)$ is \textit{complete} in the sense of Nov\'{a}k \cite{Novak-cuts} if it has no gaps. 
 Every circularly ordered set admits a \textit{completion} \cite{Novak-cuts}. 
 
 \subsection*{Compactness of the circular topology} 
 
 The following result gives a natural circular version of the compactness criterion for linear orders (compare Fact \ref{f:propLOTS}.2).    
 
 \begin{thm}[complete=compact] \label{t:c-comp} \cite{Me-OrdSem}  
 	Let $\lambda_{\circ}$ be the circular topology of a circular order on a set $X$. The following conditions are equivalent:
 	\begin{enumerate} 
 		\item $\lambda_{\circ}$ is a compact topology. 
 		\item $(X,\circ)$ is complete.
 		\item $[a,b]_{\circ}$ is compact in the subspace topology of $(X,\lambda_{\medcirc})$ for every $a,b \in X$.
 	\end{enumerate} 
 \end{thm}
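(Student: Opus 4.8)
The plan is to close the cycle $(1)\Rightarrow(2)\Rightarrow(3)\Rightarrow(1)$; the directions $(1)\Leftrightarrow(3)$ are elementary and I would record them first. For $(1)\Rightarrow(3)$, note that each circular closed interval is $\tau_\circ$-closed, since $[a,b]_\circ = X\setminus(b,a)_\circ$ (as observed after Proposition~\ref{Hausdorff}), and a closed subspace of a compact space is compact. For $(3)\Rightarrow(1)$, use totality in the form $X=[a,b]_\circ\cup[b,a]_\circ$ for any distinct $a,b$ (because $X\setminus[a,b]_\circ=(b,a)_\circ\subseteq[b,a]_\circ$): any $\tau_\circ$-open cover of $X$ restricts to covers of the two compact arcs, and combining the two resulting finite subcovers covers $X$. (If $|X|\le 2$ the space is finite, hence compact, and has no gaps, so the statement is trivial.)

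For $(1)\Rightarrow(2)$ I argue by contradiction. Suppose $\tau_\circ$ is compact but $(X,\circ)$ has a gap, i.e.\ a cut $\le$ with neither a least nor a greatest element. Since $\le$ is a cut, Lemma~\ref{l:cut1}(1) shows that $[x,y,z]$ holds exactly when one of $x<y<z$, $y<z<x$, $z<x<y$ holds; that is, $R=R_{\le}$ in the sense of Remark~\ref{r:chech}(1). As $(X,\le)$ has no minimum and no maximum, Lemma~\ref{l:cWEAKER}(2) gives $\tau_\circ=\tau_R=\tau_{\le}$, so $(X,\tau_\circ)$ is a compact LOTS for the order $\le$. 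But by Fact~\ref{f:propLOTS}(2) a compact LOTS possesses both a minimum ($=\sup\emptyset$) and a maximum ($=\sup X$), contradicting the defining property of a gap. Hence $(X,\circ)$ is complete.

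The substantive step is $(2)\Rightarrow(3)$, which I prove contrapositively: from a non-compact interval I manufacture a gap. Assume $[a,b]_\circ$ is not compact. By Lemma~\ref{l:cut}(2) its subspace topology agrees with the one induced by $\tau_{\le_a}$, and on this closed convex set that is the interval topology of the restricted linear order; thus $[a,b]_{\le_a}$ is a non-compact LOTS with minimum $a$ and maximum $b$. By Fact~\ref{f:propLOTS}(2) some $S\subseteq[a,b]_{\le_a}$ has no supremum. Let $B$ be the set of upper bounds of $S$ in $[a,b]_{\le_a}$ and $A$ its complement. Then $b\in B$, and $a\in A$ (were $a\in B$, it would be an upper bound, forcing $S\subseteq\{a\}$ and $\sup S=a$); so both are nonempty, $A<B$, the set $B$ has no least element (a least upper bound would be $\sup S$), and $A$ has no greatest element (a greatest $a_0\in A$ is dominated by some $s\in S$, whence $s\in B\cap S$ and $\sup S=\max S$).

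Finally I rotate the basepoint cut to the gap. Writing $X=B\sqcup(b,a)_\circ\sqcup A$, define $\preceq$ to coincide with $\le_a$ inside each of the three convex pieces and to order the pieces $B\prec(b,a)_\circ\prec A$. Because this merely cyclically rotates the cut $\le_a$ (whose piece order is $A,B,(b,a)_\circ$) so as to break the circle at the gap, one checks piecewise that $u\prec v\prec w\Rightarrow[u,v,w]$, so $\preceq$ is a cut; its would-be least and greatest elements are $\min B$ and $\max A$, neither of which exists. Thus $\preceq$ is a gap, contradicting completeness, and every $[a,b]_\circ$ must be compact. I expect the main obstacle to be exactly this last construction---verifying that the rotated relation $\preceq$ genuinely respects the ternary relation across the three pieces (equivalently, that $R=R_{\preceq}$), while keeping careful track of where the basepoint $a$ and the gap sit inside the interval.
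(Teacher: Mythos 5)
Your proof is correct and follows the same overall cycle as the paper, with the same key construction for $(2)\Rightarrow(3)$: your three-block rotation $B \prec (b,a)_{\circ} \prec A$ is, up to relabelling, exactly the paper's order $\leq_Y$ built from $X_1$ (the upper bounds of the supremum-less set, together with $(b,a)_{\circ}$) and $X_2=X\setminus X_1$; and $(3)\Rightarrow(1)$ (writing $X=[a,b]_{\circ}\cup[b,a]_{\circ}$) is identical. The one step you handle genuinely differently is $(1)\Rightarrow(2)$: the paper argues directly that for a gap $\leq$ the open cover $\{(a,b)_{\leq}: a<b\}$ admits no finite subcover because the cut has no extrema, whereas you first identify the circular order with $R_{\leq}$ via Lemma~\ref{l:cut1}(1) and the cut axiom, invoke Lemma~\ref{l:cWEAKER}(2) to conclude $\tau_{\circ}=\tau_{\leq}$, and then contradict Fact~\ref{f:propLOTS}(2), since a compact LOTS must have a minimum and a maximum. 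Your route is slightly longer but reuses the earlier machinery cleanly; the paper's is more self-contained and does not need the equality of topologies. Both are valid. The only point worth making fully explicit in your $(2)\Rightarrow(3)$ step is the standard fact that the subspace topology on the convex set $[a,b]_{\leq_a}$ (which contains both of its endpoints) coincides with the interval topology of the restricted order, so that Fact~\ref{f:propLOTS}(2) genuinely applies to the non-compact interval; the paper glosses over this in the same way.
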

 
 \begin{proof}
 	(1) $\Rightarrow$ (2) 
 	Let $(X,\leq)$ be a gap. Then 
 	$\cup \{(a,b)_{\leq}: a < b\}=X$ is a cover. This cover is open (since $(a,b)_{\leq}=(a,b)_{\circ}$ by Lemma \ref{l:cut}.2). Because the cut $\le$ has no extrema, no finite subcollection can cover $X$.
 	A finite union of ``bounded open" intervals in a linear order without endpoints cannot cover the whole set.
 	
 	\medskip 
 	(2) $\Rightarrow$ (3) 
 	Assume that $[a,b]_{\circ}$ is not compact in its subspace topology. Then the linearly ordered space $([a,b]_{\leq_a},\leq_a)$ 
 	has a subset $Y$ such that $\sup(Y)$ does not exist (use Fact \ref{f:propLOTS}). Consequently, $([a,b]_{\leq_a}, \leq_a)$ has a ``linear gap", i.e., a subset $Y \subset [a,b]_{\leq_a}$ without supremum. Then $Y$ does not have a supremum as a subset of $(X,\leq_a)$ either. Indeed, 
 	if a supremum existed in $X$, it would still have to lie in $[a,b]_{\leq_a}$, because $b$ is already an upper bound of $Y$.   
 	
 	Let $X_{1}:=\{x \in X: \forall y \in Y \ \ y <_a x\}$ and $X_2:=X \setminus X_1$. 
 	Define now a linear order $\leq_Y$ on $X$ by the following rule: 
 	$x_1 <_Y x_2$ for every $x_1 \in X_1, x_2 \in X_2$, while keeping the old relation $\leq_a$ for pairs within $X_1$ and within $X_2$.  
 	Then $\le_Y$ is a cut on $X$. Indeed, first note that $x_2 <_a x_1$ for all $x_2 \in X_2$ and $x_1 \in X_1$. 
 	For $x_2\in X_2$ choose $y\in Y$ with $x_2\le_a y$, while $y<_a x_1$ for every $x_1\in X_1$.
 	
 	Let $u<_Y v<_Y w$. If $\{u,v,w\}\subseteq X_i$ for some $i\in\{1,2\}$, then $u<_a v<_a w$, hence $[u,v,w]$ by Lemma \ref{l:cut}.2.
 	Otherwise $u\in X_1$ and $w\in X_2$.
 	If $v\in X_1$, then $w<_a u<_a v$, so $[w,u,v]$ (since $\le_a$ is a cut), and by cyclicity $[u,v,w]$.
 	If $v\in X_2$, then $v<_a w<_a u$, so $[v,w,u]$, and again by cyclicity $[u,v,w]$.
 	Thus $\le_Y$ is a cut. 
 	Since $Y$ has no supremum in $(X, \leq_a)$, the set of upper bounds $X_1$ has no minimum, and the set $X_2$ has no maximum. Because $(X, \leq_Y) = X_1 \oplus X_2$, the linear order $\leq_Y$ has neither a minimum nor a maximum, so it is a gap.
 	
 	\medskip 
 	(3) $\Rightarrow$ (1) 
 	For every $a \neq b$ we have 
 	$X=[a,b]_{\circ} \cup [b,a]_{\circ}$. Hence, $X$ is compact being a union of two compact subsets. 
 \end{proof}
 
 \subsection*{Lexicographic order} 
 \label{r:prod} 
 
 For every c-ordered set $C$ and a linearly ordered set $L$, one may define the so-called 
 \emph{c-ordered lexicographic product} $C \otimes_c L$. See, for example, \cite{CJ} and 
 also Figure 1. 
 \begin{figure}[h]  
 	\begin{center}   
 		\scalebox{0.15}{\includegraphics{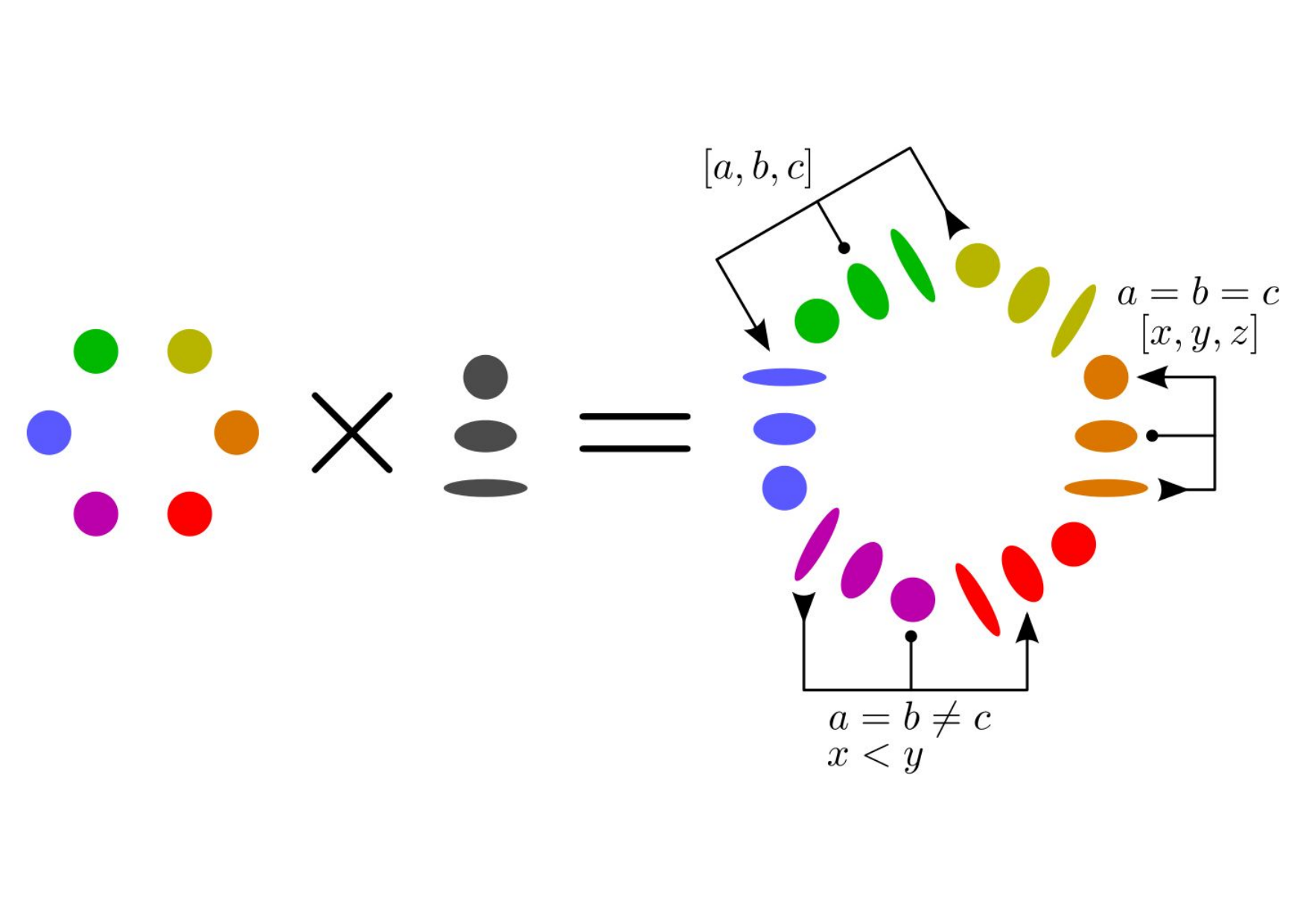}}
 		\caption{c-ordered lexicographic product (from Wikipedia - \textit{Cyclic order})}
 	\end{center} 
 	\label{fig:first}
 \end{figure} 
 
 \sk 
 \begin{defin} \label{d:lexic}  
 	More formally, 
 	$[(a,x), (b,y), (c,z)]$ (for distinct points) in $C \otimes_c L$ will mean that one of the following conditions is satisfied:
 	\begin{enumerate}
 		\item $[a, b, c]$.
 		\item $a = b \neq c$ and $x < y$.
 		\item $b = c \neq a$ and $y < z$.
 		\item $c = a \neq b$ and $z < x$.
 		\item $a = b = c$ and $[x, y, z]$ (in the cyclic order on $L$ induced by the linear order). 
 	\end{enumerate} 
 \end{defin} 
 
 The linear lexicographic product of two linearly ordered sets $L_1, L_2$ is denoted by $L_1 \otimes_l L_2$.  
 The following result is a particular case of \cite[Lemma 2.1]{Kem} proved by N. Kemoto. It can be obtained also directly using Fact \ref{f:propLOTS}.2. 
 
 \begin{f} \label{f:Kem} \cite{Kem} 
 	A lexicographic linearly ordered product $Y \otimes_l L$ of a compact linearly ordered space $Y$ and a compact linearly ordered space $L$ is compact in the interval topology.  
 \end{f}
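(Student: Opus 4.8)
The plan is to reduce the statement to the completeness criterion for compact LOTS recorded in Fact \ref{f:propLOTS}.2: the interval topology of a linear order is compact precisely when every subset admits a supremum (with the convention $\sup(\emptyset)=\min$). Since the lexicographic product $Y \otimes_l L$ carries a genuine linear order (Definition \ref{d:lexic}), it suffices to exhibit, for an arbitrary subset $S \subseteq Y \times L$, a supremum of $S$ in $Y \otimes_l L$. The governing idea is that the lexicographic order is controlled first by the primary coordinate, so I would compute the sup coordinatewise, primary coordinate first.

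Concretely, let $\pi_Y \colon Y \times L \to Y$ be the projection onto the primary factor and set $a^* := \sup \pi_Y(S)$, which exists because $Y$ is a compact LOTS. The argument then splits according to whether this supremum is \emph{attained} inside $\pi_Y(S)$. If the fiber $S_{a^*} := \{x \in L : (a^*,x) \in S\}$ is nonempty, I would set $x^* := \sup_L S_{a^*}$ (available by compactness of $L$) and claim $(a^*,x^*) = \sup S$. If instead $S_{a^*}=\emptyset$, i.e. $a^* \notin \pi_Y(S)$ (the unattained case, which also subsumes $S=\emptyset$), I would claim $(a^*,\min L)=\sup S$, where $\min L$ exists since $L$ is a compact LOTS.

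Verifying each claim is routine once the candidate is correctly chosen. Upper-boundedness follows directly from the definition of $\le$ on $Y \otimes_l L$ by distinguishing $a<a^*$ from $a=a^*$ for each $(a,x)\in S$; in the unattained case every $a$ is strictly below $a^*$, making $(a^*,\min L)$ dominate $S$ regardless of the secondary coordinate. For minimality, any upper bound $(b,y)$ forces $b \ge a^*$ (as $b$ bounds $\pi_Y(S)$), and when $b=a^*$ one reads off $x^* \le y$ from $S_{a^*}$ in the attained case, or $\min L \le y$ automatically in the unattained case. I expect the only genuine obstacle to be this attained/unattained dichotomy: one must recognize that when $\sup \pi_Y(S)$ is a limit not realized in $S$, the correct secondary coordinate is $\min L$ rather than anything read off from the fibers, and then check that $(a^*,\min L)$ is truly least among upper bounds. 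With a supremum produced in both cases, Fact \ref{f:propLOTS}.2 immediately yields compactness of the interval topology on $Y \otimes_l L$.
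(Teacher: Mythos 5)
Your proof is correct and follows exactly the route the paper indicates: the statement is recorded as a Fact cited from Kemoto, with the remark that it ``can be obtained also directly using Fact \ref{f:propLOTS}.2,'' and your argument carries out precisely that suggested derivation, producing the supremum of an arbitrary subset lexicographically (primary coordinate first, with the correct attained/unattained dichotomy and the $\min L$ choice in the unattained case). The verification of both the upper-bound and minimality claims is sound, including the empty-set convention.
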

 
 \begin{prop} \label{p:LexComp}  
 	A lexicographic c-ordered product $K \otimes_c L$ of a compact c-ordered space $K$ and a compact linearly ordered space $L$ is a compact c-ordered space.  
 \end{prop}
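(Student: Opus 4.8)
The plan is to reduce the circular statement to the linear lexicographic case (Fact \ref{f:Kem}) via the ``complete=compact'' principle (Theorem \ref{t:c-comp}). Write $X:=K\otimes_c L$. If $|K|\le 1$ there is nothing to do: for $|K|=0$ the space is empty, and for $|K|=1$ clause (5) of Definition \ref{d:lexic} identifies $X$ with $L$ carrying the induced circular order $R_{\le_L}$, which is compact by Proposition \ref{inclusion}. So I would assume $K$ has two distinct points $a\neq b$, fix arbitrary $x,y\in L$, and set $P:=(a,x)$, $Q:=(b,y)$. By the proof of Theorem \ref{t:c-comp}, since $X=[P,Q]_\circ\cup[Q,P]_\circ$, it suffices to show that these two complementary closed intervals are each compact.

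First I would identify $[P,Q]_\circ$ explicitly. Writing $\le_a$ for the standard pointed cut of $K$ at $a$ (Remark \ref{r:chech}(2)), a direct check against the five clauses of Definition \ref{d:lexic} gives
\[
[P,Q]_\circ=\big(\{a\}\times[x,\to)_L\big)\ \cup\ \bigcup_{c\in(a,b)_{\le_a}}\big(\{c\}\times L\big)\ \cup\ \big(\{b\}\times(\leftarrow,y]_L\big).
\]
This is exactly the closed lexicographic interval $[(a,x),(b,y)]$ in the \emph{linear} lexicographic product $K''\otimes_l L$, where $K'':=[a,b]_{\le_a}$. Moreover the standard cut $\le_P$ of $X$ at $P$ restricts on $[P,Q]_\circ$ to the lexicographic linear order of $K''\otimes_l L$: this is verified case by case on Definition \ref{d:lexic}, the key point being that $\le_a$ is a cut, so $a<_a c<_a c'$ forces $[a,c,c']$ by Definition \ref{d:cutsNovak}, matching the first-coordinate comparison.

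Next I would match topologies and invoke Fact \ref{f:Kem}. By Lemma \ref{l:cut}(1) we have $[P,Q]_\circ=[P,Q]_{\le_P}$, and by Lemma \ref{l:cut}(2) the subspace topology induced on $[P,Q]_\circ$ from $\tau_\circ$ agrees with the one induced from the cut topology $\tau_{\le_P}$; on a closed interval of a LOTS the latter is simply the interval topology of $\le_P$, which by the previous paragraph is the lexicographic interval topology. Hence $[P,Q]_\circ$ is homeomorphic (via the identity) to the closed interval $[(a,x),(b,y)]$ of $K''\otimes_l L$. Now $K''=[a,b]_{\le_a}$ is itself a \emph{compact} LOTS: $[a,b]_\circ$ is $\tau_\circ$-compact by Theorem \ref{t:c-comp} (as $K$ is compact), and Lemma \ref{l:cut}(2) identifies its subspace topology with that of $\tau_{\le_a}$. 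Since $K''$ and $L$ are both compact LOTS, Fact \ref{f:Kem} yields that $K''\otimes_l L$ is compact, so its closed interval $[P,Q]_\circ$ is compact. Running the identical argument with the roles of $P$ and $Q$ interchanged (cut $\le_b$, base interval $[b,a]_{\le_b}$) shows $[Q,P]_\circ$ is compact, and the union gives compactness of $X$.

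The main obstacle is purely the order- and topology-matching of the second and third paragraphs: checking that the restriction of the circular order to $[P,Q]_\circ$ (equivalently, the cut $\le_P$) is precisely the lexicographic linear order is a routine but tedious walk through the five clauses of Definition \ref{d:lexic}. The one genuinely structural choice is taking $a\neq b$: this places $P$ and $Q$ in different $K$-fibers and thereby avoids the degenerate ``wrapping'' interval (the case $a=b$ where the interval runs the long way around the cycle), which does \emph{not} embed as a single interval of one linear lexicographic product. Selecting the two endpoints in distinct fibers is exactly what keeps both complementary arcs in lexicographic-product form.
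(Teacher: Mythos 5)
Your proof is correct and follows essentially the same route as the paper: both reduce compactness of $K\otimes_c L$ to Fact \ref{f:Kem} by identifying a circular interval $[(a,u),(b,v)]_\circ$ with a closed interval of the linear lexicographic product $[a,b]_{\le_a}\otimes_l L$ and invoking Theorem \ref{t:c-comp} together with Lemma \ref{l:cut}. Your only deviation is organizational — you verify compactness of just two complementary arcs with endpoints in distinct $K$-fibers (the $(3)\Rightarrow(1)$ step of Theorem \ref{t:c-comp}) rather than of every interval, which cleanly sidesteps the ``wrapping'' interval with $a=b$ that the paper's statement of its reduction glosses over.
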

 \begin{proof} By Theorem \ref{t:c-comp}, we have to show that every interval  $[(a,u),(b,v)]_{\circ}$ of $K \otimes_c L$ is compact.  
 	
 	 Let $u \leq v$ in $L$ and $a,b \in K$.  
 	Consider the linearly ordered set $[a,b]_{\leq_a}$. It is compact by Theorem  \ref{t:c-comp} 
 	in its circular or linear topologies. Observe that $[(a,u),(b,v)]_{\circ}=[(a,u),(b,v)]_{\leq}$, where $\leq$ is the linear order inherited from the lexicographic linearly ordered product $Y:=[a,b]_{\leq_a} \otimes_l L$.  
 	The latter is compact by Fact \ref{f:Kem}. Then $[(a,u),(b,v)]_{\circ}$ is also compact (being a closed subset in $Y$). 
 	
 	It remains to consider the case $a=b$ and $v<u$.
 	
 	If $K=\{a\}$, then $K\otimes_c L$ is just the circular order induced by the linear order on $L$,
 	so the interval $[(a,u),(a,v)]_{\circ}$ is compact.
 	
 	Assume now that $|K|>1$. Choose $c\in K\setminus\{a\}$ and any $w\in L$.
 	Since $v<u$, Definition~\ref{d:lexic}.(4) implies that
 	$[(a,u),(c,w),(a,v)]$ in $K\otimes_c L$. Hence $(c,w)\in [(a,u),(a,v)]_{\circ}$.
 	By Remark \ref{r:chech}.3, the interval $[(a,u),(a,v)]_{\circ}$ is linearly ordered by the
 	standard cut at $(a,u)$, and therefore
 	\[
 	[(a,u),(a,v)]_{\circ}
 	=
 	[(a,u),(c,w)]_{\circ}\cup [(c,w),(a,v)]_{\circ}.
 	\]
 	Each of the intervals on the right-hand side has endpoints in different fibers, so by the argument
 	already proved above each of them is compact. Therefore $[(a,u),(a,v)]_{\circ}$ is compact as a
 	union of two compact sets.
 \end{proof}
 
 \subsection*{Uniform structures and uniform covers}
 
 For the standard definitions around uniform structures in terms of \textit{entourages} we refer to \cite{Eng89}. We use also an equivalent known approach in terms of coverings. Recall some auxiliary definitions. 

 For two covers $\alpha,\beta$ of a set $X$, we say that $\alpha$ \emph{refines} $\beta$ if for every $A \in \alpha$ there exists $B \in \beta$ such that $A \subseteq B$. 
 Notation: $\alpha \succ \beta$.   
 The \textit{star} of a subset $C \subset X$ with respect to $\a$ is the set 
 $$
 \St(C,\a)=\cup\{A \in \a: A \cap C \neq \emptyset\}. 
 $$
 For the singleton $C:=\{c\}$ we simply write $\St(c,\a)$. 
 So, $\St(c,\a)=\cup\{A \in \a: c \in A\}$. 
 The collection $\a^*:=\{\St(A,\a): A \in \a\}$ is a covering and is called the \textit{star of $\a$}. Always, 
 $\a \succ \a^*$. 
 
 \begin{defin} \label{d:cov-unif}[Coverings approach] \cite{Isb,Eng89}    
 	Let $\mathcal{U}$ be a family of coverings on a set $X$. 
 	Then $\mathcal{U}$ is said to be a \textit{(covering) uniformity} on $X$ if: 
 	\begin{itemize}
 		\item [(C1)] $\a,\beta \in \mathcal{U}$ implies that 
 		$\a \wedge \beta:=\{A \cap B: A \in \a, B \in \beta\} \in \mathcal{U}$; 
 		\item [(C2)] $\a \in \mathcal{U}$ and $\a \succ \beta$ imply that $\beta \in \mathcal{U}$; 
 		\item [(C3)] every element in $\mathcal{U}$ has a star-refinement in $\mathcal{U}$ (meaning that for every  $\beta \in \mathcal{U}$ there exists $\a \in  \mathcal{U}$ such that $\a^* \succ \beta$).   
 	\end{itemize} 
 \end{defin} 
 
 A subfamily $\mathcal{B} \subset {\mathcal U}$ such that each $\eps \in {\mathcal U}$ has a refinement $\delta \subset \eps$ with
 $\delta \in \mathcal{B}$, is said to be a (uniform) \emph{base} of ${\mathcal U}$. A subfamily $\Gamma \subset {\mathcal U}$ is a \textit{subbase} (or, a \textit{prebase}) if the enriched family $\Gamma^{\wedge}:=\{\g_1 \wedge \cdots \wedge \g_n : \ \g_i \in \Gamma, n \in \N\}$ is a base of ${\mathcal U}$.  
 An abstract set $\mathcal{B}$ of coverings on $X$ is a base of some uniformity $\cU$ if and only if 
$$
 	\forall P_1,P_2 \in \mathcal{B} \ \exists P_3 \in \mathcal{B} \ \ P_3^{*} \succ P_1 \wedge P_2. 
$$

 Each uniformity $\mathcal{U}$ on $X$ defines a topology $top(\mathcal{U})$ on $X$: a subset $A \subseteq X$ is open iff for each $a \in A$ there exists a covering $P \in \cU$ such that $\St(a,P) \subset A$. 
 In terms of entourages: 
 for each $a \in A$ there exists an entourage (binary relation on $X$) $\eps \in \mathcal{U}$ such that $\eps(a):=\{x \in X: (a,x) \in \eps\} \subseteq A$. 
 
 Below we consider mainly only \textit{Hausdorff uniform structures}. This is the case when the uniform covers separate the points. That is, for every $x \neq y$ in $X$ there exists $\alpha \in \mathcal{U}$ such that $y \notin \St(x,\alpha)$. 
 
 Recall that a covering uniform space $(X,\mathcal{U})$ is called \emph{precompact} (or, \textit{totally bounded}) if
 $\mathcal{U}$ has a base consisting of finite covers of $X$. If the uniformity is Hausdorff, then it is equivalent to say that the uniform completion is compact.

 \begin{lem} \label{l:intBase} 
 	Let $(X,\mathcal{U})$ be a uniform (covering) space and $\mathcal{B}$ is its uniform base. For every $\beta \in \mathcal{B}$ consider $\beta^{\rm{int}}:=\{\rm{int}(B): B \in \beta\}$. Then $\mathcal{B}^{\rm{int}}:=\{\beta^{\rm{int}}: \beta \in \mathcal{B}\}$ also is a uniform base. 	
 \end{lem}
 \begin{proof} 
 	Choose $\alpha \in \mathcal{U}$ such that $\alpha^* \succ \beta$. Then $\alpha \succ \beta^{\rm{int}}$.
 	Indeed, if $\St(A,\a) \subseteq B$, then for every $x \in A$ one has $\St(x,\a) \subseteq B$, so $x \in \rm{int}(B)$, hence $A \subseteq \rm{int}(B)$.
 \end{proof}
 
 \subsection*{Compactifications of spaces and group actions}
 
 In the present paper ``compactification" does not necessarily imply a topological embedding.  
 More precisely, a continuous dense map $\nu \colon X \to Y$ into a compact Hausdorff space $Y$ is called a \textit{compactification} of $X$. If $X$ and $Y$ are $G$-spaces and $\nu$ is a $G$-map, then $\nu$ is said to be a $G$-\textit{compactification}. If $\nu$ is also a topological embedding, it is called \textit{proper}. Recall that $\nu$ is proper iff the canonically induced Banach unital subalgebra $\mathcal{B}_{\nu}:=\{f \circ \nu: f \in C(Y)\}$ of $C_b(X)$ separates points and closed subsets of $X$. 
 
 Two compactifications $\nu_1 \colon X \to Y_1, \nu_2 \colon X \to Y_2$ are equivalent if there exists a homeomorphism $h \colon Y_1 \to Y_2$ such that $\nu_2=h \circ \nu_1$. Or, iff these compactifications have the same induced Banach algebras $\mathcal{B}_{\nu_2}=\mathcal{B}_{\nu_1}$. 
 More generally, $\nu_1$ dominates $\nu_2$ means that $\mathcal{B}_{\nu_2} \subseteq \mathcal{B}_{\nu_1}$ iff there exists a continuous (onto) factor $\g \colon Y_1 \to Y_2$ such that $\nu_2=\gamma \circ \nu_1$.  
 If $X$ is a $G$-space and $\nu_1, \nu_2$ are  $G$-compactifications then the map $\g \colon Y_1 \to Y_2$ above necessarily is a $G$-map.  
 
 The \textit{Samuel compactification} of a uniform space $(X,\mathcal{U})$ is the compactification induced by the algebra $\mathrm{Unif}^b(X,\mathcal{U})$ of all $\mathcal{U}$-uniformly continuous bounded real functions on $X$.  
 
 \medskip

 Let $G$ be a semitopological group. 
 By $\mathcal{N}(g)$ we denote set of all neighborhoods at the point $g$ in $G$. So, $V \in \mathcal{N}(e)$ if and only if $g_0 V \in \mathcal{N}(g_0)$.  
 \begin{defin} \label{d:equiun} 
 	Let 
 	$\pi \colon G \times X \to X$ be a group action of a (semi)topological group $G$ and 
 	$\mathcal{U}$ be a compatible uniformity on a topological space $X$ defined by coverings (or by entourages). We call the action:
 	\begin{enumerate}
 		\item
 		{\it $\mathcal{U}$-saturated} if every $g$-translation $ \rho_g \colon X \to X$ 
 		is $\mathcal{U}$-uniformly continuous. 
 		If $\mathcal{U}$ is saturated then the corresponding homomorphism 
 		$h_{\pi}\colon G \to \Aut(X,\mathcal{U}), \ g \mapsto  \rho_g$ is well defined, where $\Aut(X,\mathcal{U})$ is the automorphism group of the uniform space.
 		
 		\item
 		{\it $\mathcal{U}$-bounded} (at $e \in G$) if for every $\varepsilon \in \mathcal{U}$
 		there exists $V \in \mathcal{N}(e)$ such that 
 		
 		(coverings) $  \ \ \ 
 		\{Vx: x\in X\} \ \text{refines} \ \eps   
 		$ 
 		
 		(entourages) \ \ \   $(x,vx) \in \varepsilon$ for each $x\in X$ and $v \in V$. 
 		
 		\item \emph{$\mathcal{U}$-equiuniform}  
 		if and only if it is $\mathcal{U}$-saturated and $\mathcal{U}$-bounded.  
 		It is equivalent to say that the corresponding homomorphism 
 		$h_{\pi}\colon G \to \Aut(X,\mathcal{U})$ is continuous, where $\Aut(X,\mathcal{U})$ carries the topology of uniform convergence.
 	\end{enumerate} 
 	Sometimes we say also that $\mathcal{U}$ is $G$-\textit{equiuniform} or $G$-\textit{bounded}. 
 \end{defin}

 Definition \ref{d:equiun}.2 of boundedness appears in \cite{Br} and  \cite{Vr-Embed77} under the names: \emph{motion equicontinuous} and \emph{``bounded uniformity"} for topological groups. 
 The term ``equiuniform" was introduced in 
 \cite{Me-EqComp84}. 
  
 \begin{remarks} \label{f:G-bound} \ 
 	\begin{enumerate}
 		\item
 		Every $\mathcal{U}$-equiuniform action is continuous. 
 		\item 
 		Every compact $G$-space $K$ is equiuniform  
 		(with respect to its unique compatible uniformity).
 		
 		\item If the action on $X$ is $\Ucal$-equiuniform then both the completion and the Samuel compactification admit  natural continuous actions of $G$ which extend the original action. 
 		
 		\item \cite{Me-EqComp84} Proper $G$-compactifications on a $G$-space $X$ are exactly completions of equiuniform precompact uniformities on $X$.  
 	\end{enumerate}
 \end{remarks}

 \begin{f} \label{f:G-compactFacts} 
 	(\cite[Lemma 4.5]{Me-cs07} for topological groups $G$)  
 	
 	\noindent Let $(Y,\mathcal{U})$ be a uniform space 
 	and let $\pi \colon G \times Y \to Y$ be an action of a (semi) topological group $G$ with uniform $g$-translations.
 	\begin{enumerate}
 		\item  
 		 Suppose that there exists a $G$-invariant dense subset $X \subseteq Y$ such that the 
 		inherited action $G \times X \to X$ is 
 		$\mathcal{U}|_X$-equiuniform. Then the original action $\pi$ on $Y$ is $\mathcal{U}$-equiuniform  and continuous. 
 		\item Let $\pi \colon G \times X \to X$ be a (continuous) $\mathcal{U}$-equiuniform action on  $(X,\mathcal{U})$. Then the canonically extended 
 		$G$-action on the completion
 		$\widehat{\pi} \colon G \times \widehat{X} \to \widehat{X}$  
 		is $\widehat{\mathcal{U}}$-equiuniform and (continuous).    
 	\end{enumerate}	
 \end{f}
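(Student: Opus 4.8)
The plan is to prove part (1) directly and then obtain part (2) as an immediate application of it. In both parts the action is already known to be $\mathcal{U}$-saturated (the $g$-translations are uniformly continuous), so the entire task reduces to establishing $\mathcal{U}$-\emph{boundedness}; continuity then follows automatically from Remark \ref{f:G-bound}.1, which asserts that every equiuniform action is continuous.

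For part (1), I would fix $g_0 \in G$ and an entourage $\varepsilon \in \mathcal{U}$, and first replace $\varepsilon$ by a \emph{closed} entourage $\delta$ with $\delta \subseteq \varepsilon$; such $\delta$ exist because the closed entourages form a base of any uniformity (choose a symmetric $\eta$ with $\eta \circ \eta \circ \eta \subseteq \varepsilon$ and take $\delta = \overline{\eta}$, using $\overline{\eta} \subseteq \eta \circ \eta \circ \eta$). Since the inherited action on $X$ is $\mathcal{U}|_X$-bounded, there is a neighborhood $V \in N_{g_0}$ with $(g_0 x, gx) \in \delta$ for all $x \in X$ and all $g \in V$. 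Now fix $g \in V$. As $t^{g_0}$ and $t^{g}$ are uniformly continuous on $Y$, they are in particular continuous, so the map $y \mapsto (g_0 y, gy)$ is continuous from $Y$ into $Y \times Y$. Because $\delta$ is closed in $Y \times Y$, the set $\{ y \in Y : (g_0 y, gy) \in \delta \}$ is closed in $Y$; it contains the dense subset $X$, hence equals $Y$. Thus $(g_0 y, gy) \in \delta \subseteq \varepsilon$ for all $y \in Y$ and all $g \in V$, which is exactly $\mathcal{U}$-boundedness at $g_0$. Since $g_0$ was arbitrary and the action is $\mathcal{U}$-saturated by hypothesis, the action on $Y$ is $\mathcal{U}$-equiuniform, and therefore continuous.

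For part (2), I would first extend the action to the completion. Each translation $t^g \colon X \to X$ is $\mathcal{U}$-uniformly continuous, hence extends uniquely to a $\widehat{\mathcal{U}}$-uniformly continuous map $\widehat{t^g} \colon \widehat{X} \to \widehat{X}$. By uniqueness of uniformly continuous extensions, the identities $\widehat{t^{g_1 g_2}} = \widehat{t^{g_1}} \circ \widehat{t^{g_2}}$ and $\widehat{t^e} = \mathrm{id}_{\widehat{X}}$ hold, since both sides agree on the dense subset $X$; hence the family $\{\widehat{t^g}\}$ defines a $G$-action on $\widehat{X}$ with uniformly continuous translations. Moreover $X$ sits inside $\widehat{X}$ as a dense, $G$-invariant subset (each $\widehat{t^g}$ restricts to $t^g$ on $X$), and $\widehat{\mathcal{U}}|_X = \mathcal{U}$, so the inherited action on $X$ is $\widehat{\mathcal{U}}|_X$-equiuniform by hypothesis. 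Applying part (1) with $Y := \widehat{X}$ then yields that the extended action is $\widehat{\mathcal{U}}$-equiuniform and continuous.

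The main obstacle, and the crux of the argument, is the transfer of boundedness from $X$ to $Y$: one must produce a \emph{single} neighborhood $V$ of $g_0$ that works simultaneously for \emph{all} points $y \in Y$, not merely pointwise. This is precisely what boundedness on the dense subset supplies, and the passage to a closed entourage $\delta$ is what lets the density of $X$ close the gap, since $\{ y : (g_0 y, gy) \in \delta \}$ is then automatically closed. A minor point worth recording is that the argument is insensitive to $G$ being only a right topological group: we use only the continuity of individual translations and the existence of neighborhoods of $g_0$, never joint continuity of the multiplication.
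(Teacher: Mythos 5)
Your proof is correct and follows essentially the same route as the paper: both arguments pass to a closed entourage inside $\varepsilon$, use boundedness on the dense invariant subset $X$ to get a single neighborhood $V$ of $g_0$, and then use density plus closedness (the paper via nets, you via the closed preimage of $\delta$ under the continuous map $y \mapsto (g_0y, gy)$ — the same argument in different clothing) to transfer the estimate to all of $Y$. Your part (2) simply spells out the extension-to-completion details that the paper compresses into ``directly follows from (1) with $Y := \widehat{X}$.''
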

 \begin{proof}   
 	(1) 
 	We show that the action $\pi \colon G \times Y \to Y$
 	is $\mathcal{U}$-equiuniform (the continuity of $\pi$ is an easy corollary).  
 	Let $g_0 \in G$ and $\eps \in {\mathcal{U}}$. There exists 
 	an entourage $\eps_1 \in {\mathcal{U}}$ such that $\eps_1 \subset \eps$ and $\eps_1$ is a \textit{closed} subset of $Y \times Y$ (indeed, note that if $\eps_2\circ \eps_2 \circ \eps_2 \subseteq \eps$ then $\eps_2 \subseteq cl(\eps_2) \subseteq \eps$). Since $\pi_X \colon G \times X \to X$ is $\mathcal{U}|_X$-equiuniform, one may choose a neighborhood 
 	$U(g_0)$ of $g_0$ such that $(g_0x,gx) \in \eps_1$ for every $g \in U(g_0)$ and $x \in X$. For a given $y \in Y$, 
 	choose any net $(x_i)$ in $X$ which tends to $y$ in $Y$. Then for any given pair $(g_0, g)$ we have $\lim_i g_0x_i=g_0y$ and $\lim_i gx_i=gy$. Since $\eps_1$ is closed, we obtain that $(g_0y,gy) \in \eps_1 \subset \eps$ for every $g \in U(g_0)$ and $y \in Y$. 
 	
 	(2) Directly follows from (1) with $Y:=\widehat{X}$. 
 \end{proof}
 
 Recall that a continuous action of a topological group $G$ on a Tychonoff space $X$ does not necessarily admit a proper $G$-compactification; see the relevant results (about counterexamples and sufficient conditions) in \cite{Me-MaxEqComp}. 
 Equivalently, the greatest $G$-compactification $\beta_G \colon X \to \beta_G X$ is not necessarily proper (even for Polish $G$ and $X$) \cite{Me-Ex88}. Moreover,  $\beta_G X$ might be even a singleton for nontrivial $X$ (Pestov \cite{Pest-Smirnov}). Below we show that for order preserving actions no such counterexample is possible.

\subsection*{Circularly ordered inverse limits and compactifications} \label{s:InverseLim} 

\begin{lem} \label{invLimCirc} \cite{Me-OrdSem} 
	Let $X_{\infty}:=\underleftarrow{\lim} (X_i,I)$ be the inverse limit of the inverse system 
	$$\{f_{ij} \colon X_i \to X_j, \ \ j \leq i, \ i,j \in I\}$$ where $(I,\leq)$ 
	is a directed poset. Suppose that every $X_i$ is a c-ordered set with the c-order $R_i \subset X_i^3$ and each bonding map $f_{ij}$ is c-order preserving. On the inverse limit $X_{\infty}$ define a ternary relation $R$ as follows. 
	An ordered triple $(a,b,c) \in X_{\infty}^3$ belongs to $R$ iff $[p_i(a),p_i(b),p_i(c)]$ is in $R_i$ for some $i \in I$, where $p_i \colon X_{\infty} \to X_i$ are the natural  projections.  
	\begin{enumerate}
		\item Then $R$ is a c-order on $X_{\infty}$ and each projection 
		map $p_i \colon X_{\infty} \to X_i$ is c-order preserving.
		\item Assume in addition that every $X_i$ is a compact c-ordered space and each bonding map $f_{ij}$ is continuous. Then the inverse limit $X_{\infty}$ is also a c-ordered (nonempty) compact space.
	\end{enumerate}	
\end{lem}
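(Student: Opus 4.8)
The plan is to prove Part (1) by reducing the four axioms of Definition \ref{newC} to two structural facts about $X_\infty$, and then to obtain Part (2) from Part (1) together with the compactness criterion of Theorem \ref{t:c-comp}. Throughout I use two elementary consequences of directedness. First, for any finitely many pairwise distinct threads there is a single index $i$ at which their projections are pairwise distinct: distinct threads differ in some coordinate, this difference is carried by the bonding maps to a cofinal set of indices, and directedness then supplies one common index. Second, for any two indices there is a third, say $m$, admitting COP bonding maps onto each of the two given factors, through which the corresponding projections factor.

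The technical core is the following \emph{consistency} statement: if $[p_k(a),p_k(b),p_k(c)]$ holds for some $k$, then $[p_j(a),p_j(b),p_j(c)]$ holds for \emph{every} index $j$ at which $p_j(a),p_j(b),p_j(c)$ are pairwise distinct. To prove it I pass to a common index $m$ with COP maps $\beta\colon X_m\to X_k$ and $\beta'\colon X_m\to X_j$ such that $p_k=\beta\circ p_m$ and $p_j=\beta'\circ p_m$. Distinctness of the triple at $j$ forces distinctness at $m$, so by the Totality axiom in $X_m$ the orientation of $(p_m a,p_m b,p_m c)$ is one of the two alternatives; applying $\beta$ via (COP1), its images $p_k a,p_k b,p_k c$ being distinct, together with the Asymmetry axiom, excludes the reversed alternative, whence $[p_m a,p_m b,p_m c]$. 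Applying $\beta'$ via (COP1) then yields $[p_j a,p_j b,p_j c]$.

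Granting consistency, the axioms for $R$ follow quickly. Cyclicity is read off at a single witnessing index. For Asymmetry: if both $[a,b,c]$ and $[c,b,a]$ lay in $R$, the second, witnessed at some $j$, makes $p_j a,p_j b,p_j c$ distinct, so consistency applied to the first forces $[p_j a,p_j b,p_j c]$, contradicting $[p_j c,p_j b,p_j a]$ by Asymmetry in $X_j$. Transitivity and Totality are obtained by choosing, via the first structural fact, a common index at which all the involved points have distinct projections (Asymmetry showing the points distinct) and invoking the corresponding axiom in that $X_m$. Each projection $p_i$ satisfies (COP1) directly from consistency. For (COP2) I use its reformulation as convexity of fibers (Remark \ref{r:convex}.2): were $p_i^{-1}(y)$ non-convex, I would obtain $a\ne c$ in the fiber and points $b\in(a,c)_R$, $d\in(c,a)_R$ outside it, hence an injective $4$-cycle $(a,b,c,d)$; passing to an index $m$ with bonding map $\beta\colon X_m\to X_i$, $p_i=\beta\circ p_m$, at which the four projections are distinct and the $4$-cycle is preserved, I apply (COP2) for $\beta$ at the coincidence $\beta(p_m a)=\beta(p_m c)=y$ to conclude that $\beta$ is constant on one of the closed arcs $[p_m a,p_m c]_{R_m}$, $[p_m c,p_m a]_{R_m}$; since these contain $p_m b$ and $p_m d$ respectively, this forces $p_i b=y$ or $p_i d=y$, a contradiction.

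For Part (2), each $X_i$ is compact Hausdorff (Proposition \ref{Hausdorff}.3), so $X_\infty$ is the closed subset of the compact product $\prod_i X_i$ determined by the continuous bonding relations, hence compact, and it is nonempty by the standard fact that an inverse limit of nonempty compact Hausdorff spaces over a directed set is nonempty. It remains to identify the subspace topology $\tau$ with the interval topology $\tau_R$ of the order from Part (1). Since each $p_i$ is $\tau$-continuous, for a basic interval $(a,b)_R$ and a point $x$ in it a witnessing index $k$ gives $[p_k a,p_k x,p_k b]$, and $p_k^{-1}\big((p_k a,p_k b)_{R_k}\big)$ is then a $\tau$-open neighbourhood of $x$ contained in $(a,b)_R$; thus $\tau_R\subseteq\tau$. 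As $\tau_R$ is Hausdorff (Proposition \ref{Hausdorff}.3) and $\tau$ is compact, the identity map is a homeomorphism, so $\tau=\tau_R$ and $X_\infty$ is a compact COTS. I expect the main obstacle to be the consistency statement and the (COP2) verification for the projections: these are precisely the points where the circular rather than linear nature is felt, since orientation must be pinned down through Totality and Asymmetry together, and convexity of fibres must be transported across the bonding maps — the very phenomenon that makes (COP1) alone insufficient (cf. Remark \ref{r:2ElementImage}).
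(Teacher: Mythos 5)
Your proposal is correct and follows essentially the same route as the paper's proof: your ``consistency'' statement is the paper's Claim 2 (proved the same way, via a dominating index together with Totality, Asymmetry and (COP1) of the bonding maps), your finite separation fact is its Claim 1, your (COP2) verification is the same injective $4$-cycle argument (merely rephrased through convexity of fibres), and Part (2) is identified with the interval topology by the same ``compact onto Hausdorff'' comparison. No gaps.
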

\begin{proof} (1)  
	\nt	\textbf{Claim 1}: The family $\{p_i\}$ of all projections is 3-point separating.  
	
	Indeed, since $a,b,c \in X_{\infty}$ are distinct,  there exist indices $j(a,b), j(a,c), j(b,c) \in I$ such that 
	$$a_{j(a,b)} \neq b_{j(a,b)}, \  a_{j(a,c)} \neq c_{j(a,c)}, \ b_{j(b,c)} \neq c_{j(b,c)}.$$
	
	Since $I$ is directed we may choose $i \in I$ which dominates all three indexes 
	$j(a,b)$, $j(a,c), j(b,c)$. Then $a_i,b_i,c_i$ are distinct. 
	
\nt	\textbf{Claim 2}: \textit{If $[a_i,b_i,c_i]$ for some $i \in I$ and $a_j,b_j,c_j$ are distinct in $X_j$ for some $j \in I$ then $[a_j,b_j,c_j]$.}
	
	Indeed, choose an index $k \in I$ such that $i \leq k, j \leq k$ then $a_k,b_k,c_k$ are distinct.   
	Necessarily $[a_k,b_k,c_k]$. Otherwise, $[b_k,a_k,c_k]$ by the Totality axiom.  Then also $[b_i,a_i,c_i]$ because the bonding map $f_{ki} \colon X_k \to X_i$ is c-order preserving	and $a_i,b_i,c_i$ are distinct in $X_i$ (since $[a_i,b_i,c_i]$). 
	By $[a_k,b_k,c_k]$ it follows that $[a_j,b_j,c_j]$ because the bonding map $f_{kj} \colon X_k \to X_j$ is c-order preserving. 
	
	Now we show that $R$ is a c-order (Definition \ref{newC})  on  $X_{\infty}$. 
	
	The Cyclicity axiom is trivial. 
	Asymmetry axiom is easy by Claim 2. 
	
	Transitivity: by Claims 1 and 2 there exists $k \in I$ such that $[a_k,b_k,c_k]$ and $[a_k,c_k,d_k]$. Hence, $[a_k,b_k,d_k]$ by the transitivity of $R_k$. Therefore, $[a,b,d]$ in $X_{\infty}$ by the definition of $R$. 
	
	Totality: 
	if $a, b, c \in X$ are distinct, then $a_j,b_j,c_j$ are distinct for some $j \in I$ by Claim 1. By the totality of $R_j$ we have $[a_j, b_j, c_j]$ $\vee$ $[a_j, c_j, b_j]$, 
	hence also $[a, b, c]$ $\vee$ $[a, c, b]$ in $R$. 
	 
	We show that each projection $p_i \colon X_{\infty} \to X_i$ is c-order preserving.  Condition (COP1) of Definition \ref{d:c-ordMaps} is satisfied for every $i \in I$ 
	by Claim 2 and the definition of $R$. In order to verify condition (COP2) of Definition \ref{d:c-ordMaps}, assume that $p_i(a)=p_i(b)$ for some distinct $a, b \in X_{\infty}$. 
	We have to show that $p_i$ is constant on one of the closed intervals $[a,b], [b,a]$. 
	If not then there exist $u,v \in X_{\infty}$ such that $[a,u,b], [b,v,a]$ but $p_i(u) \neq p_i(a) \ne p_i(v)$.  As in the proof of Claim 1 one may choose an index $k \in I$ such that the elements $p_k(a), p_k(b), p_k(u), p_k(v)$ are distinct in $X_k$. Then we get that the bonding map $f_{ki} \colon X_k \to X_i$ does not satisfy condition (COP2).  
	This contradiction completes the proof.  
	
	(2) Let $\lambda_{\infty}$ be the usual topology of the inverse limit $X_{\infty}$.  
	It is well known that the inverse limit $\lambda_{\infty}$ of compact Hausdorff spaces 
	(with continuous $f_{ij}$) is nonempty and compact Hausdorff. Let $\lambda_{\medcirc}$ be the interval topology  
	of the c-order $R$ on $X_{\infty}$, 
	where $(X_{\infty},R)$ is defined as in Lemma \ref{invLimCirc}. We have to show that $\lambda_{\infty}=\lambda_{\medcirc}$. Since $\lambda_{\medcirc}$ is Hausdorff, it is enough to show that $\lambda_{\infty} \supseteq \lambda_{\medcirc}$. This is equivalent to showing 
	that every interval $(u,v)_o$ is $\lambda_{\infty}$-open in $X_{\infty}$ for every distinct $u,v \in X_{\infty}$. 
	Let $w \in (u,v)_o$; that is, $[u,w,v]$. By our definition of the c-order $R$ of $X_{\infty}$ we have $[u_i,w_i,v_i]$ in $X_i$ for some $i \in I$. 
	The interval $O_i:=(u_i,v_i)_o$  is open in $X_i$. Then its preimage $p_i^{-1}(O_i)$ is $\lambda_{\infty}$-open in $X_{\infty}$. On the other hand, 
	$$w \in p_i^{-1}(O_i) \subseteq (u,v)_o.$$
	Indeed, if $x \in p_i^{-1}(O_i)$ then $p_i(x) \in (u_i,v_i)_o$. This means that 
	$[u_i,x_i,v_i]$ in $X_i$. By the definition of $R$ we get that $[u,x,v]$ in $X_{\infty}$. So, $x \in (u,v)_o$. 	
\end{proof}

\begin{thm} \label{t:lim}  \cite{GM-UltraHom21} 
	Let $(X,R)$ be a c-ordered set and let $G$ be a subgroup of $H_+(X)$ with the pointwise topology, where $X$ carries the discrete topology $\t_{discr}$.  
	Then there exist: a c-ordered compact zero-dimensional space $X_{\infty}$ and a map $\pi_{\infty} \colon X \to X_{\infty}$ such that 
	\ben 
	\item $X_{\infty}=\underleftarrow{\lim} (X_F, \ I)$ is the inverse limit of finite c-ordered sets $X_F$, where $F \in I=Cycl(X)$. 
	
	\item $X_{\infty}$ is a compact c-ordered $G$-space and 
	$\pi_{\infty} \colon (X,\t_{discr}) \to X_{\infty}$ is a dense c-order preserving $G$-map which is a topological embedding of $(X,\t_{discr})$ into the compact space $X_{\infty}$.   
	\een 
\end{thm}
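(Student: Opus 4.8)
The plan is to realize $X_\infty$ as an inverse limit of \emph{doubled} finite quotients of $X$, the doubling being the device that turns each point of $X$ into an isolated point of the image. Assume $|X|\ge 3$ (otherwise $X$ is finite and discrete and the statement is immediate), and let $I=\mathrm{Cycl}(X)$ be the set of finite subsets $F\subseteq X$ with $|F|\ge 3$, directed by inclusion. For such an $F$, written cyclically as $c_0,c_1,\dots,c_{n-1}$, I would let $X_F$ be the finite c-ordered set whose cyclic sequence is
\[
c_0 \prec g_0 \prec c_1 \prec g_1 \prec \cdots \prec c_{n-1}\prec g_{n-1},
\]
where $g_i$ is a symbol standing for the open arc $(c_i,c_{i+1})_{\circ}$ (indices mod $n$), and define $q_F\colon X\to X_F$ to send each $c_i$ to itself and each $x\in(c_i,c_{i+1})_{\circ}$ to $g_i$ (discarding the symbols for empty arcs, so $q_F$ is onto). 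The fibers of $q_F$ are exactly the singletons $\{c_i\}$ and the open arcs $(c_i,c_{i+1})_{\circ}$, all convex, so (COP2) holds; (COP1) holds because travelling cyclically through $X$ one meets the positions in the same cyclic order as the symbols, so $q_F$ is COP. The decisive feature is that the fiber over a \emph{cut}-symbol is a singleton: $q_F^{-1}(c_i)=\{c_i\}$.

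For $F_1\subseteq F_2$, every open $F_2$-arc is disjoint from $F_1$, so $q_{F_1}$ is constant on each fiber of $q_{F_2}$; this produces a unique surjection $f_{F_2F_1}\colon X_{F_2}\to X_{F_1}$ with $q_{F_1}=f_{F_2F_1}\circ q_{F_2}$, and $f_{F_2F_1}$ is cyclic-order monotone with convex fibers, hence COP (using Lemma \ref{l:Prop-c-ord}.1, noting $|X_{F_1}|\ge 3$). These maps are compatible, so $\{X_F,f_{F_2F_1},I\}$ is an inverse system of finite (hence compact zero-dimensional) c-ordered sets with continuous COP bonding maps. By Lemma \ref{invLimCirc} the limit $X_\infty=\varprojlim(X_F,I)$ is a compact c-ordered space, giving assertion (1); as a closed subspace of the product $\prod_F X_F$ of finite discrete spaces it is zero-dimensional.

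The family $\{q_F\}$ induces $\pi_\infty\colon X\to X_\infty$, $\pi_\infty(x)=(q_F(x))_F$, which is COP because each coordinate is and $\prod_F X_F$ is a PCOTS (Lemma \ref{l:PCOTSprop}), so Lemma \ref{l:DenseCircOrd} applies. For density, a point $y\in X_\infty$ and finitely many indices $F_1,\dots,F_k$ are dominated by $F=\bigcup_j F_j\in I$; the coordinate $y_F$ is either some $c\in F$ or a symbol $g_i$ for a nonempty arc, and taking $x:=c$ (resp. any $x\in(c_i,c_{i+1})_{\circ}$) gives $q_F(x)=y_F$, so $\pi_\infty(x)$ agrees with $y$ on $F$ and hence on all $F_j$; thus $\pi_\infty(X)$ is dense and $X_\infty=\overline{\pi_\infty(X)}$. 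Finally, $\pi_\infty$ is a topological embedding of the discrete space: for any $x\in X$ choose $F\in I$ with $x\in F$; then $q_F^{-1}(q_F(x))=\{x\}$, so the clopen set $p_F^{-1}(q_F(x))$ meets $\pi_\infty(X)$ only in $\pi_\infty(x)$. Hence every $\pi_\infty(x)$ is isolated in $\pi_\infty(X)$, and $\pi_\infty$ is an injection that is a homeomorphism onto its relatively discrete image.

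It remains to install the $G$-action. Each $g\in G\subseteq H_+(X)$ permutes $I$ by $F\mapsto gF$ and, being a c-order isomorphism, restricts to an isomorphism $g_*\colon X_F\to X_{gF}$ carrying the $F$-arc structure to the $gF$-arc structure; I would define the action on $\prod_F X_F$ by $(g\cdot y)_{gF}=g_*(y_F)$. A direct check yields $q_{gF}(gx)=g_*(q_F(x))$, i.e. $\pi_\infty(gx)=g\cdot\pi_\infty(x)$, so $\pi_\infty$ is a $G$-map, $\pi_\infty(X)$ is $G$-invariant, and, each $g$ acting by a homeomorphism of the product, its closure $X_\infty$ is $G$-invariant with COP translations. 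The genuinely delicate point—\emph{the main obstacle}—is joint continuity of $G\times X_\infty\to X_\infty$, precisely because $G$ does not fix coordinates. Here the pointwise topology on $G$ is exactly what is required: the value $(g\cdot y)_{F'}=g_*(y_{g^{-1}F'})$ at any fixed \emph{finite} coordinate $F'$ depends only on the restriction of $g$ to the finite set $F'\cup g_0^{-1}F'$ and on finitely many coordinates of $y$, and the set of $g$ agreeing with $g_0$ on a finite subset is a basic neighborhood in $S_X$. This yields continuity of the action and completes the proof.
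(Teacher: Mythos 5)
Your proposal is correct and follows essentially the same route as the paper: the same finite quotients $X_F$ obtained from the cycle covers $\{t_1,(t_1,t_2)_\circ,t_2,\dots\}$, the same directed system and inverse limit via Lemma \ref{invLimCirc}, the same isolation-of-fibers argument for the embedding, and the same coordinate-permuting $G$-action. Your explicit verification of joint continuity (the $F'$-coordinate of $g\cdot y$ depends only on $g|_{g_0^{-1}F'}$ and one coordinate of $y$) is in fact more detailed than the paper's ``one may easily show'' at that step.
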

\begin{proof}  
	Let $F:=\{t_1,t_2, \cdots, t_m\}$ be an $m$-cycle on $X$.   
	We have a natural equivalence ``modulo-$m$" between $m$-cycles (with the same support). 
	For every given cycle $F:=\{t_1,t_2, \cdots, t_m\}$  
	define the corresponding finite disjoint covering $cov_F$ of $X$, by adding to the list: all 
	points $t_i$ and nonempty intervals $(t_i,t_{i+1})_o$ between the cycle points. More precisely we consider the following disjoint cover which can be thought of an equivalence relation on $X$.  
	$$
	cov_F:=\{t_1, (t_1,t_2)_o, t_2, (t_2,t_3)_o, \cdots, t_m, (t_m,t_1)_o \}. 
	$$ 
	Moreover, $cov_F$ naturally defines also a finite c-ordered set $X_F$ by ``gluing the points" of 
	$(t_i,t_{i+1})_o$ for each $i$. 
	So, the c-ordered set $X_F$ is the factor-set of the equivalence relation $cov_F$ and it contains at most $2m$ elements. 
	In the extremal case of $m=1$ (that is, for $F=\{t_1\}$) we define $cov_F:=\{t_1, X \setminus\{t_1\}\}$.  
	We have the following canonical c-order preserving onto map 
	\begin{equation} \label{projection} 
		\pi_F \colon X \to X_F, \  \ 
		\pi_F(x) =
		\begin{cases}
			t_i & {\text{for}} \ x=t_i\\
			(t_i,t_{i+1})_o & {\text{for}} \ x \in (t_i,t_{i+1})_o,
		\end{cases}
	\end{equation}  
	The family 
	$
	\{cov_F\}
	$ 
	where $F$ runs over all finite injective cycles 
	$F \colon \{1,2,\cdots,m\} \to X$  
	on $X$ is a basis of a natural precompact  uniformity $\mu_X$ of $X$.
	
	Let $Cycl(X)$ be the set of all finite injective cycles.  
	Every finite $m$-element subset $A \subset X$ defines a cycle $F_A \colon \{1,\cdots,m\} \to X$  (perhaps after some reordering) which is uniquely defined up to the natural cyclic equivalence and the image of $F_A$ is $A$. 
	$Cycl(X)$ is a poset if we define $F_1 \leq F_2$ whenever $F_1 \colon C_{m_1} \to X$ is a \emph{sub-cycle} of $F_2 \colon C_{m_2} \to X$. This means that $m_1 \leq m_2$ and $F_1(C_{m_1}) \subseteq F_2(C_{m_2})$.   
	This partial order is directed. Indeed, for $F_1,F_2$ we can consider $F_3=F_1 \bigsqcup F_2$ whose support is the union of the supports of $F_1$ and $F_2$. 
	
	For every $F \in Cycl(X)$ we have the disjoint finite 
	$\mu_X$-uniform covering $cov_F$ of $X$.  
	As before we can look at $cov_F$ as a c-ordered (finite) set $X_F$. Also, as in equation \ref{projection} we have a c-order preserving natural map $\pi_F \colon X \to X_F$ which are $\mu_X$-uniformly continuous into the finite (discrete) uniform space $X_F$. 
	Moreover, 
	if $F_1 \leq F_2$ then $cov_{F_2}$ refines  $cov_{F_1}$. This implies that the equivalence relation $cov_{F_2}$ is finer than $cov_{F_1}$.  
	We have a c-order preserving (continuous) onto bonding map $f_{F_2,F_1} \colon X_{F_2} \to X_{F_1}$ between finite c-ordered sets. Moreover, $f_{F_2,F_1} \circ \pi_{F_2}=\pi_{F_1}$. 
	
	In this way we get an inverse system 
	$
	\{f_{F_2,F_1} \colon X_{F_2} \to X_{F_1}, \ \ F_1 \leq F_2\}$, 
	where $(I,\leq)=Cycl(X)$ be the directed poset defined above. 
	It is easy to see that $f_{F,F}=id$ and $f_{F_3,F_1}=f_{F_2,F_1} \circ f_{F_3,F_2}$. 
	
	Denote by $X_{\infty}:=\underleftarrow{\lim} (X_F, \ I)  \subset \prod_{F \in I} X_F$ the corresponding inverse limit. Its typical element is 
	$\{(x_F) : F \in Cycl(X)\} \in X_{\infty}$, where $x_F \in X_F$.  The set 
	$X_{\infty}$ carries a circular order $R$ as in Lemma \ref{invLimCirc}. 
	On the other hand this inverse limit $X_{\infty}$ is c-ordered as it follows from Lemma \ref{invLimCirc}. 
	Moreover, $X_{\infty}$ is a compact c-ordered space. 
	
	\textbf{Definition of $\pi_{\infty} \colon X \to X_{\infty}$}. 
	
	Observe that $f_{F_2,F_1} \circ \pi_{F_2}=\pi_{F_1}$ for every $F_1 \leq F_2$.  By the universal property of the inverse limit we have the canonical uniformly continuous map $\pi_{\infty} \colon X \to X_{\infty}$  
	which is a dense c-order preserving embedding with discrete $\pi_{\infty}(X)$. Furthermore, if $X$ is countable then $X_{\infty}$ is a metrizable compact. 		
	
	Note that $(X,\mu_X)$ can be treated as the weak uniformity with respect to the family of maps $$\{\pi_F\colon X \to X_F: \ F \in Cycl(X)\}$$ 
	into finite uniform spaces $X_F$. The corresponding topology of $\pi_{\infty}(X)$ is discrete. 
It is easy to see that it is an embedding of uniform
 spaces and $\pi_{\infty}(X)$ is dense in $X_{\infty}$. Since $X$ is a precompact uniform space, its uniform completion is a compact space and can be identified with $X_{\infty}$. 
	 
	\textbf{Definition of the action} $G \times X_{\infty} \to X_{\infty}$.
	
	For every given $g \in G$ (it is c-order preserving on $X$) we have the induced isomorphism $X_F \to X_{gF}$ of c-ordered sets, where $t_i \mapsto gt_i$ and $(t_i,t_{i+1})_o \mapsto (gt_i,gt_{i+1})_o$ for every $t_i \in cov_F$. 
	For every $F_1 \leq F_2$ we have $f_{F_2,F_1} (x_{F_2})=x_{F_1}$. This implies that $f_{gF_2,gF_1} (x_{gF_2})=x_{gF_1}$. So, $(gx_F)=(x_{gF}) \in X_{\infty}$. 
	
	Therefore $g \colon X \to X$ can be extended canonically to a map 
	$$
	g_{\infty} \colon X_{\infty} \to X_{\infty}, \ \ g_{\infty} (x_F) := (x_{gF}).
	$$ 
	\nt This map is a c-order automorphism. Indeed, if $[x,y,z]$ in $X_{\infty}$ then there exists $F \in I$ such that $[x_F,y_F,z_F]$ in $X_F$. Since $g \colon X \to X$ is a c-order automorphism we obtain that $[gx_{F},gy_{F},gz_{F}]$ in $X_{gF}$. 
	One may easily show that we have a continuous action $G \times X_{\infty} \to X_{\infty}$, where $X_{\infty}$ carries the compact topology of the inverse limit as a closed subset of the topological product $\prod_{F \in I} X_F$ of finite discrete spaces $X_F$. 
	
	The uniform embedding $\pi_{\infty} \colon X \to X_{\infty}$ is a $G$-map. It follows that the uniform isomorphism $\widehat{X} \to X_{\infty}$ is also a $G$-map. 
	Furthermore, as we have
	already mentioned, the action of $G$ on $X_{\infty}$ is c-order preserving. Therefore $X_\infty$ is a compact c-ordered $G$-system.    
\end{proof} 

\begin{thm} \label{t:limLin} \cite{Me-OrdSem} 
	Let $(X,\leq)$ be a linearly ordered set and $G$ be a subgroup of $H_+(X)$ with the pointwise topology. 
	Then there exist: a linearly ordered compact zero-dimensional space $X_{\infty}$ and a map $\nu \colon X \hookrightarrow X_{\infty}$ such that 
	\ben 
	
	\item $X_{\infty}=\underleftarrow{\lim} (X_F, I)$ is the inverse limit of finite linearly ordered sets $X_F$, where $F \in I$. 
	
	\item $X_{\infty}$ is a compact linearly ordered $G$-space and 
	$\nu \colon X \hookrightarrow X_{\infty}$ is a dense topological $G$-embedding of the discrete set $X$ such that $\nu$ is a LOP map.  
	\een 
\end{thm}
\begin{proof}
	Argue as in Theorem~\ref{t:lim}, replacing finite cycles by finite chains
	$F=\{t_1<\cdots<t_m\}\in \rm{Chain}(X)$ and circular quotients by the corresponding finite linearly
	ordered quotients $X_F$ determined by the clopen partitions
	\[
	\rm{cov}_F=\{(\leftarrow,t_1),\{t_1\},(t_1,t_2),\{t_2\},\dots,(t_{m-1},t_m),\{t_m\},(t_m,\to)\}.
	\]
	Then the inverse limit $X_\infty=\varprojlim (X_F,I)$ is a compact zero-dimensional LOTS, and the
	canonical map $\nu:X\to X_\infty$ is a dense topological embedding of the discrete set $X$ such that $\nu$ is LOP. 
\end{proof}

\begin{remark} \label{r:Q} 
	Let $X=(\Q,\leq)$ be the rationals with usual order but equipped with the discrete topology. Consider the Polish  automorphism group $G:=\Aut(\Q,\leq)$ of all LOP permutations 
	with the pointwise topology.
	One may apply Theorem \ref{t:limLin} getting the linearly ordered $G$-compactification 
	$\nu \colon X \hookrightarrow X_{\infty}$ (where $X_{\infty}$ is metrizable and zero-dimensional). This compactification has a remarkable property (as we show in \cite{Me-MaxEqComp}). Namely, 
	$\nu$ is the \textit{maximal $G$-compactification} for the $G$-space $\Q$ (where $\Q$ is discrete). The same is true for every dense subgroup $G$ of $\Aut(\Q,\leq)$ (e.g.,  for Thompson's group $F$). 
%	For additional information about ultratransitive actions on linearly ordered spaces, their dynamics and $G$-compactifications we refer to \cite{KS}.  
	    
	Similar result is valid for the circular version. Namely, the rationals on the circle with its circular order $\Q_0:=(\Q / \Z,\circ)$, the automorphism group $G:=\Aut(\Q_0,\circ)$ 
	and its dense subgroups $G$ (for instance, Thompson's circular group $T$). 
\end{remark}  

\begin{f} \label{f:GM} \cite{GM-UltraHom21,Me-AutKey}   
Let $X=(\Q_0,\circ)$ and let  $G:=\Aut(\Q_0,\circ)$ be a Polish group with pointwise topology, $H:=St(q)$ is the stabilizer subgroup of a point $q \in \Q_0$. Then the following hold:  
	\begin{enumerate}
		\item The natural \textit{right uniformity} $\mu_r$ on $G/H=\Q_0$ is a precompact equiuniformity, containing a uniform base $\mathcal{B}_r$,  where its typical element is the disjoint covering $cov(\nu)$. 
		The completion of $\mu_r$ is the greatest $G$-compactification $\beta_G (\Q_0)$ of the discrete $G$-space  $\Q_0$. 
		\item $\beta_G (\Q_0) = \rm{trip}(\T,\Q_0)  =\underleftarrow{\lim} (X_F, I)$ is the inverse limit of finite c-ordered sets $X_F$.
		
		\noindent Geometrically, $\rm{trip}(\T;\Q_0)$ is a circularly ordered metrizable compact space  which we get from the circle $\T$ after replacing any rational point $q\in \Q_0$ by the ordered triple 
		$[q^{-}, q, q^{+}]$. %See Figure \ref{fig:two}.   
				\begin{figure}[h]  
			\begin{center} 
				\scalebox{0.4}{\includegraphics{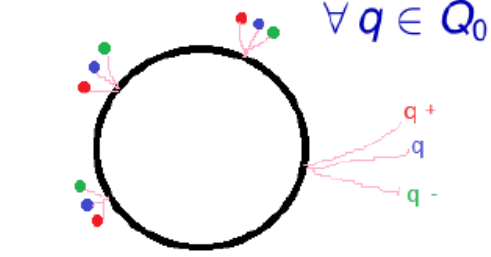}}
				\caption{Geometric description of $\beta_G (\Q_0)$}
			\end{center} 
			\label{fig:two}
		\end{figure}  
		
		\item $\beta_G (\Q_0)=M(G) \cup \Q_0$ and $M(G)=split(\T,\Q_0)$ is the universal minimal $G$-system of $G$.  
	
		\noindent Geometrically, $M(G)$ is the circle after splitting any rational point (removing isolated $q$ from that triple $q^{-}, q, q^{+}$). 
	\end{enumerate}	
\end{f} 
Recall that the Polish topological group $H_+(\T,\circ)$ is Roelcke precompact \cite{GM-tame}.   
The same is true for $\Aut(\Q_0,\circ)$ \cite{GM-UltraHom21}).  
Several important generalizations of these results can be found also in \cite{Sorin25-Sbornik}.  
Many remarkable results about automorphism groups of ordered spaces were obtained recently by B.V. Sorin, G.B. Sorin and K.L. Kozlov.  
Maximal $G$-compactifications of 
ultratransitive $G$-actions on discrete circularly (and linearly) ordered sets admit a general description \cite{Sorin25-Sbornik,KS}. Sometimes such compactifications are minimal ordered compactifications forcing the uniqueness of ordered $G$-compactifications in certain cases. 
Furthermore, a systematic study of Roelcke precompactness for automorphism groups and their subgroups for several ultratransitive actions was done in \cite{Sorin25-Sbornik,Sorin23-Roelcke}. For Ellis  compactifications in this context we refer to \cite{KS,Sorin23-Roelcke}.  

\subsection*{Orderly topological groups} Let us say that a topological group $G$ is \emph{orderly} (c-\emph{orderly}) if $G$ is a topological subgroup of $H_+(X)$ for some linearly (resp., circularly) ordered compact space $X$. 

\begin{thm} \label{t:LinCase} 
	Let $G$ be an abstract group. 
	The following are equivalent:
	\ben 
	\item $G$ is left linearly orderable (for the standard definition and properties see \cite{DNR}); 
	\item $(G,\t_{discr})$ is orderly.  
	\item $G$ embeds algebraically into the group $\Aut(X,\leq)$ for some linearly ordered set $(X,\leq)$.  
	\een
	In assertion (2), the compact space can additionally be chosen to be zero-dimensional.  
\end{thm}  
\begin{proof}
	(1) $\Rightarrow$ (2) 
	One may use the compactification $\nu \colon X \hookrightarrow X_{\infty}$ from Theorem \ref{t:limLin}, where $G=X$, $K=X_{\infty}$ and $\dim X_{\infty}=0$.   
	
	(2) $\Rightarrow$ (3) Trivial. 
	
	(3) $\Rightarrow$ (1)  The well-known proof (\cite{CR-b,DNR}) is to use a \emph{dynamically lexicographic order} on $G$.  
\end{proof}

The following result is a circular analog of a well-known result for linear orders. 

\begin{f} \label{t:S_+} \emph{(Zheleva \cite{Zheleva97})} 
Let $(X,\circ)$ be a c-ordered set and $G \times X \to X$ be an effective c-order preserving action. Then the group $G$ (e.g., $\Aut(X,\circ)$) admits a left invariant c-order. 
\end{f}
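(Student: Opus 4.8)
The plan is to construct a left-invariant circular order on $G$ directly from the action, by a \emph{dynamical lexicographic} argument that is the circular counterpart of the proof of (3)$\Rightarrow$(1) in Theorem~\ref{t:LinCase}. Since the action is effective, the orbit map $g\mapsto \hat g:=(gx)_{x\in X}$ embeds $G$ into $X^X$, and the idea is to pull a suitable circular order back from the target to $G$. The naive rule ``compare $g_1,g_2,g_3$ at the first coordinate where the three images are all distinct'' does \emph{not} work: such a coordinate need not exist (at every point two of the three images may coincide), and even where it exists the rule need not be transitive across different triples. The remedy is to split the comparison into a genuinely \emph{circular} part and a \emph{linear} part.

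Concretely, fix a base point $x_0\in X$ whose orbit $S:=Gx_0$ has at least three elements; map $G\to S$ by $g\mapsto gx_0$ and let $F_w:=\{g\in G:\ gx_0=w\}$ be the fibers. On the base $S\subseteq X$ I put the induced circular order, and each fiber $F_w$ I linearly order by a lexicographic rule over a fixed well-ordering $\{x_\alpha\}_{\alpha<\kappa}$ of $X$: for $g_1\neq g_2$ in $F_w$, taking the least $\alpha$ with $g_1x_\alpha\neq g_2x_\alpha$, declare $g_1<_{F_w}g_2$ iff $g_1x_\alpha<_{w}g_2x_\alpha$, where $\leq_w$ is the pointed cut of Remark~\ref{r:chech}(2). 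Effectiveness makes each $<_{F_w}$ total. Assembling the circular base order with the linear fibre orders is exactly the ``blow-up around a circle'' of Definition~\ref{d:lexic} (with the harmless generalization from a constant factor $L$ to the possibly different fibres $F_w$): the case analysis according to how many of $g_1x_0,g_2x_0,g_3x_0$ coincide matches clauses (1)--(5) there, so cyclicity, asymmetry, transitivity and totality for the resulting ternary relation $R_G$ reduce to those for the circular order on $S$ and the linear orders $<_{F_w}$.

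Left-invariance is where the hypotheses enter. Each $g\in G$ is a COP bijection, hence lies in $H_+(X)$; by Remark~\ref{r:InducedMaps}(2) it carries $\leq_w$ to $\leq_{gw}$, and since $g^{-1}$ is also COP, $g\colon(X,\leq_w)\to(X,\leq_{gw})$ is in fact a linear order \emph{isomorphism}. Now fix $k\in G$. Left translation sends $\hat g$ to $k\circ\hat g$ (the index $\alpha$ is untouched, only the values are transformed by $k$), so it maps $S$ to itself preserving its circular order and maps each $F_w$ onto $F_{kw}$. Because $k$ does not move the well-ordered index and satisfies $g_ix_\alpha<_{w}g_jx_\alpha\Leftrightarrow k(g_ix_\alpha)<_{kw}k(g_jx_\alpha)$, the first-difference coordinate and the outcome of every fibre comparison are preserved, giving $g_1<_{F_w}g_2\Leftrightarrow kg_1<_{F_{kw}}kg_2$. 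Combining the base and fibre invariances yields $[g_1,g_2,g_3]\Leftrightarrow[kg_1,kg_2,kg_3]$ in $R_G$, so $R_G$ is left-invariant, as required.

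I expect the main obstacle to be precisely the totality/transitivity failure flagged above---the degeneration of a single-coordinate comparison when orbit points collide---and the bookkeeping needed to check that the circular-base/linear-fibre assembly is \emph{simultaneously} a circular order and $G$-invariant; once the cut $\leq_w$ is chosen (it is forced to be equivariant by Remark~\ref{r:InducedMaps}(2)), both points become routine against Definition~\ref{d:lexic}, the only genuinely new input beyond the linear case being the use of pointed cuts to separate one circular ``direction'' from a linear ``refinement.'' The sole remaining point is the degenerate regime in which \emph{no} orbit has three elements: then every orbit has size at most $2$, $G$ embeds in a product of copies of the two-element symmetric group, and a short separate argument shows $G$ is trivial or $\mathbb{Z}/2$, both of which carry a left-invariant circular order trivially.
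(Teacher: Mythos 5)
The paper does not actually prove this statement: it is recorded as Fact~\ref{t:S_+} with a citation to Zheleva, and is then used as a black box in Theorem~\ref{t:c-ordCase}, so there is no in-paper argument to compare yours against. Your construction is, however, correct in outline and is the natural circular analogue of the ``dynamically lexicographic order'' that the paper itself invokes for the linear case in Theorem~\ref{t:LinCase}: evaluation at a base point $x_0$ gives the circular part on the orbit $S=Gx_0$, the fibers $F_w$ (left cosets of $\operatorname{St}(x_0)$) carry a first-difference lexicographic linear order computed in the pointed cut $\le_w$ of Remark~\ref{r:chech}(2), and the assembly is the ordered-sum version of Definition~\ref{d:lexic} with varying fibers. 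The two pillars of left-invariance are sound: $k|_S$ preserves the induced circular order because $k$ is a COP bijection with COP inverse, and $k\colon(X,\le_w)\to(X,\le_{kw})$ is a linear order isomorphism by Remark~\ref{r:InducedMaps}(2), so both the first-difference index and the outcome of each fiber comparison are preserved. Two points deserve more care than you give them. First, the claim that the varying-fiber lexicographic assembly satisfies Cyclicity, Asymmetry, Transitivity and Totality is genuinely routine but is nowhere proved in the paper even for a constant fiber (Definition~\ref{d:lexic} is a definition with an external citation), so the case analysis should be written out, in particular the mixed transitivity cases where the two hypothesis triples live partly in a fiber and partly on the base. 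Second, in the degenerate case your stated reason is insufficient as written: embedding $G$ into a product of copies of $\Z/2$ does not bound $|G|$. What does work is the order structure: on a set with at least three points, a nontrivial COP bijection all of whose orbits have size at most $2$ cannot fix any point (a fixed point $a$ together with a swapped pair $c,c'$ violates asymmetry), hence any two nontrivial elements agree on a chosen $2$-point orbit and their quotient is a nontrivial element with a fixed point --- a contradiction --- so $|G|\le 2$ and the conclusion is trivial. With those two verifications supplied, the proof is complete.
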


The following result is slightly stronger than Fact \ref{t:S_+}. 

\begin{thm} \label{t:c-ordCase} \cite{Me-OrdSem} 
Let $G$ be an abstract group. 
The following are equivalent:
\ben 
\item $G$ is left circularly orderable;   
\item $(G,\t_{discr})$ is c-orderly.  
\item $G$ embeds algebraically into the group $\Aut(X,\circ)$ for some circularly ordered set $(X,\circ)$.    
\een 
In assertion (2), the compact space can additionally be chosen to be zero-dimensional.
\end{thm} 
\begin{proof}
(1) $\Rightarrow$ (2)  
Let $(G,R)$ be a c-ordered group. By Theorem \ref{t:lim}, the COP compactification $\nu \colon G \to K=G_{\infty}$ is a c-order-preserving proper $G$-compactification which induces a topological embedding of (discrete) $G$ into $H_+(K)$.  

(2) $\Rightarrow$ (3) Trivial. 

(3) $\Rightarrow$ (1) Apply Fact \ref{t:S_+}.  
\end{proof}

\section{Split spaces and topological properties of COTS} \label{s:split} 

\begin{defin}[Single-split space] \label{d:cover} Let $(X,R)$ be a circularly ordered space and fix $c\in X$. Consider the standard cut $\leq_c$ (see Remark \ref{r:chech}) at $c$ where $c$ becomes the least element. Denote it by $c^{-}$. Then we get a linearly ordered set $X$ by declaring $x <_c y$ whenever $(c,x,y) \in R$. Adding to $X$ a new point $c^{+}$ as the greatest element we get a linearly ordered set $X(c)=[c^-,c^+]=X\cup \{c^+\}$ and a natural onto projection map 
	$$
	q \colon X(c) \to X, \ \ \ q(c^{-})=q(c^{+})=c, \ q(x)=x \ \ \forall x \in (c^{-}, c^{+})
	$$ 
	sometimes denoted also by $q_c$.  
\end{defin}

Let $R_{\leq_c}$ be the circular order induced by the linear order $\leq_c$ on $X(c)$. 
The corresponding, in general weaker, circular order topology $\lambda_{R_{\leq_c}}$ is the topology $\lambda_{\leq_c}$ of the linear order by Lemma \ref{l:cWEAKER}.2.

\subsection*{Splitting a subset} 

\begin{lem}\label{c-doubling}
	Let $R$ be a circular order on a set $X$ and $\emptyset \neq A \subseteq X$. There exist: a canonically defined circularly ordered set
	$X_A = (\operatorname{Split}(X; A), R_A)$ and an onto map $q \colon X_A \to X$ such that the following conditions are satisfied:
	\begin{enumerate}
		\item [(i)] $q\colon X_A \to X$ is c-order preserving and the preimage $q^{-1}(a)$ of any $a \in A$ consists of exactly two points,
		while $q^{-1}(x)$ is a singleton for every $x  \in X \setminus A$.
		Moreover, $(X_A,\lambda_{R_A})$, as a topological space, is a $\LOTS$.
		
		\item [(ii)] $q\colon X_A \to X$ is a continuous closed (quotient) 2-to-1 map. In particular, $q$ is a perfect map.
		
		\item [(iii)] If $X$ is (locally) compact then $X_A$ is (locally) compact for every $A \subseteq X$.
		
		\item [(iv)] If $X$ is separable and $A$ is countable, then $X_A$ is  separable.
		
		\item [(v)] If $X$ is second countable and $A$ is countable, then $X_A$ is second countable.
		
		\item [(vi)] If $X$ is metrizable and $A$ is countable then $X_A$ is metrizable.
	\end{enumerate}
\end{lem}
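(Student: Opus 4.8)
The plan is to realize $X_A$ concretely as a subset of the lexicographic c-ordered product $X\otimes_c L$ with $L=\{-,+\}$, $-<+$ (Definition \ref{d:lexic}), and then to read off all six properties from this single model. Writing $a^-:=(a,-)$ and $a^+:=(a,+)$, I would set
$$
X_A:=\{(a,-):a\in X\}\cup\{(a,+):a\in A\},
$$
equip it with the restriction $R_A$ of the product c-order (the restriction of a total c-order to a subset is again a c-order, so $R_A$ is a genuine c-order), and let $q\colon X_A\to X$ be the first-coordinate projection. The one computation that drives everything is that in the product $((a,-),(a,+))_\circ=\emptyset$, so the two copies satisfy $a^-\prec a^+$ \emph{consecutively} (a jump), while $((a,+),(a,-))_\circ$ contains every other arc. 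The cases $A=\emptyset$ and $|X|\le 2$ are trivial and I would dispose of them at the outset.

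For (i) I would check the two conditions of Definition \ref{d:c-ordMaps} for $q$ directly. Condition (COP1) is immediate from Definition \ref{d:lexic}: among its five clauses only clause (1) can hold when the three first coordinates are distinct, so $[p_1,p_2,p_3]$ with $q(p_1),q(p_2),q(p_3)$ distinct forces $[q(p_1),q(p_2),q(p_3)]$. Condition (COP2) holds because, by the jump computation, $q^{-1}(a)=\{a^-,a^+\}=[a^-,a^+]_{R_A}$ is a closed interval on which $q$ is constant, while $q^{-1}(x)$ is a singleton for $x\notin A$; the stated fiber sizes follow. For the LOTS assertion (with $A\neq\emptyset$) I fix $a_0\in A$ and pass to the standard cut $\leq_{a_0^+}$ (Remark \ref{r:chech}(2), Lemma \ref{l:cut1}); by the jump computation $a_0^-$ is its greatest and $a_0^+$ its least element, so $X_A=[a_0^+,a_0^-]$ has both a minimum and a maximum, and Lemma \ref{l:cWEAKER}(2) then gives $\tau_{R_A}=\tau_{\leq_{a_0^+}}$, exhibiting $X_A$ as a $\LOTS$.

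For (ii) the first step is continuity, via the interval base $\mathcal B_2$ of $X$: a short order computation in the product shows that the preimage of a basic interval is again an interval of $X_A$, namely $q^{-1}((u,v)_R)=\big((u,+),(v,-)\big)_\circ\cap X_A=(u',v')_{R_A}$, where $u'$ is the $\preceq$-largest copy over $u$ and $v'$ the $\preceq$-smallest copy over $v$ (the stray point $(u,+)\notin X_A$ when $u\notin A$ costs nothing, being a jump point). The genuinely delicate point, which I expect to be \textbf{the main obstacle}, is that $q$ is a \emph{closed} map. I would prove this through the standard criterion that a map with the ``tube'' property is closed: for every $x_0\in X$ and every open $U\supseteq q^{-1}(x_0)$ one must produce an open $V\ni x_0$ with $q^{-1}(V)\subseteq U$. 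Since each fiber is either a single point or a consecutive pair $\{x_0^-,x_0^+\}$, it lies inside one basic interval contained in $U$; shrinking that interval and projecting its endpoints yields the desired $V$, the only care being to track the doubled copies of the new endpoints so that $q^{-1}(V)$ does not spill out of $U$. As the fibers are finite, hence compact, continuity together with closedness makes $q$ a perfect map.

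The remaining properties then follow from perfectness of $q$ together with the $\LOTS$ structure. For (iii) I invoke the classical fact that perfect preimages of (locally) compact spaces are (locally) compact, so $X_A=q^{-1}(X)$ inherits (local) compactness; alternatively one argues intrinsically via Theorem \ref{t:c-comp}, observing that splitting inserts only jumps and creates no new gap, whence completeness of the compact $X$ passes to $X_A$. For (vi), using that $X_A$ is a $\LOTS$, I apply the diagonal criterion Fact \ref{f:propLOTS}(3): from $(q\times q)^{-1}(\Delta_X)=\Delta_{X_A}\cup E$ with $E=\{(a^-,a^+),(a^+,a^-):a\in A\}$ countable (as $A$ is countable) and disjoint from the diagonal, and from $\Delta_X$ being $G_{\delta}$ (metrizability of $X$), one gets $\Delta_{X_A}=\big((q\times q)^{-1}(\Delta_X)\big)\cap E^{c}$, an intersection of a $G_{\delta}$ set with the $G_{\delta}$ complement of the $F_{\sigma}$ set $E$; hence $\Delta_{X_A}$ is $G_{\delta}$ and $X_A$ is metrizable. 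For (v) I use the classical theorem that a separable $\LOTS$ is hereditarily separable: the copies of a countable dense subset of $X$ form a countable dense subset of the $\LOTS$ $X_A$, so $X_A$ is hereditarily separable, and $X$ itself is hereditarily separable because the section $x\mapsto x^-$ identifies $X$ with a continuous bijective coarsening of the subspace $\{x^-:x\in X\}\subseteq X_A$, coarsening preserving hereditary separability. Finally (iv) follows by combining (v) with the argument of (vi): for $X$ second countable and $A$ countable, $X_A$ is separable and metrizable, hence second countable (equivalently, a countable base is obtained from a countable interval-base of $X$ pulled back along $q$, augmented by the countably many intervals separating the countably many doubled points).
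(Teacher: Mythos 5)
Your proposal is, up to relabeling, the same construction as the paper's: the paper defines $R_A$ on the set $\{a^{\pm}:a\in A\}\cup(X\setminus A)$ by two explicit rules and only afterwards remarks (Remark \ref{r:3}) that the result sits inside the lexicographic product $X\otimes_c\{-,+\}$, whereas you take the product realization as the definition. Your labeling is opposite to the paper's (for you $(a^-,a^+)_\circ=\emptyset$, for the paper $(a^+,a^-)_\circ=\emptyset$), which is harmless by Proposition \ref{p:unique}. The product viewpoint buys you the totality of $R_A$ and (COP1) for free, which is a little cleaner than the paper's ``straightforward to see''. Your interval-preimage computation is the paper's ``Claim'' (and is internally consistent, whereas the paper's displayed formulas there have $a^+$ and $a^-$ interchanged relative to its own ordering rules). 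For closedness you use the fiber/tube criterion while the paper shows directly that the complement of $q(K)$ is open; these are equivalent and reduce to the same endpoint bookkeeping. Parts (iii), (vi) and the $\LOTS$ assertion in (i) are argued exactly as in the paper (perfect preimages of (locally) compact spaces, the $G_\delta$-diagonal criterion with the countable exceptional set $E$, and the standard cut at a split copy together with Lemma \ref{l:cWEAKER}(2)).

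Two soft spots. First, your primary route to (iv) (``separable and metrizable, hence second countable'') silently upgrades ``$X$ second countable'' to ``$X$ metrizable'', which needs regularity of a COTS --- a fact the paper obtains only later (Corollary \ref{c:normality}) and via this very lemma; keep instead your parenthetical direct construction of a countable base (pull back a countable interval base along $q$ and add the countably many intervals isolating the split copies), which is exactly the paper's argument for (iv). Second, and more substantively, the density claim in (v) is precisely the step that needs an argument and it does not hold in the generality stated: if $a\in A$ has an immediate neighbour $b$ in $X$ (i.e. $(a,b)_R=\emptyset$ or $(b,a)_R=\emptyset$), then one of the two copies $a^{\pm}$ becomes an \emph{isolated} point of $X_A$, so the copies of a dense set $D\subseteq X$ meet its singleton neighbourhood only if $a\in D$. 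Taking $X$ to be the double-arrow space $[0,1]\otimes_l\{0,1\}$ (a separable compact $\LOTS$, hence a $\COTS$ by Proposition \ref{inclusion}) and $A$ the uncountable set of all lower points produces uncountably many isolated points in $X_A$, which therefore cannot be separable. The paper's own case analysis in (v) quietly assumes the relevant intervals $(u,a)_R$ are nonempty and so has the same blind spot, so you are not deviating from the source; but this step cannot be waved through as written --- it requires either an explicit treatment of the jump case or an additional hypothesis (no point of $A$ has an immediate neighbour, as in the intended applications with $X=\T$), or restricting to countable $A$.
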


\begin{proof}
	\noindent{\bf (i)}
	$X_A$, as a set, is $\{a^{-}, a^{+}: a \in A \} \cup (X \setminus A)$. We have the onto projection
	$$q\colon X_A \to X,\qquad a^{\pm} \mapsto a,\qquad x \mapsto x \ \ \forall \ a \in A,\ \forall x \in X \setminus A.$$
	Define a natural circular order $R_A$ on $X_A$ by the following two rules:
	
	\smallskip
	\noindent$\bullet$ \ $[u,w,v]_{R_A}$ for every $(u,w,v) \in X_A^3$ such that $[q(u),q(w),q(v)]_R$ holds in $X$.
	
	\smallskip
	\noindent$\bullet$ \ $[a^{-}, u, a^{+}]_{R_A}$, \ $[u,a^{+}, a^{-}]_{R_A}$, \ $[a^{+}, a^{-}, u]_{R_A}$
	for every $a \in A$ and every $u \notin \{a^+,a^-\}$.
	
	\smallskip
	It is straightforward to see that $R_A$ is a circular order on the set $X_A$. In fact, $X_A$ is a circularly ordered subset of
	$X \otimes_c \{-1,+1\}$ and $q$ is c-order preserving according to Definition \ref{d:c-ordMaps}. Here it is important that the interval
	$(a^+,a^-)_{\circ}$ is empty.
	
	For empty $A$ identify $X_A$ with $X$. If $A$ is nonempty, then $(X_A,\lambda_{R_A})$, as a topological space, is a LOTS.
	Indeed, take any fixed $a \in A$ and note that the induced circular order of the linearly ordered set
	$X_{A\setminus \{a\}}(a)=[a^-,a^+]$ is just the circular order $R_A$ of $X_A$. Now use Lemma \ref{l:cWEAKER}.2.
	
		\smallskip
	\noindent{\bf Preparations for (ii).}
	We will use the identification $X\setminus A\subseteq X_A$ (so that $q(x)=x$ for $x\in X\setminus A$).
	Also, for $u\in X_A$ 
	we write $\underline{u}:=q(u) \in X.$
	Thus $\underline{u}=u$ whenever $u\in X\setminus A$.
		
		\noindent \textbf{Claim 1.} 
		For every $a \in A$ define $(a^+)^+=(a^-)^+=a^+$ and $(a^+)^-=(a^-)^-=a^-$.
		For every $x \in X\setminus A\subseteq X_A$ define $x^{-}=x^{+}=x$.
		Then for every $u,v,y \in X_A$ we have:
		\begin{enumerate}
			\item $(u,y^+)_{R_A} \cup (y^-,v)_{R_A}=(u,v)_{R_A}$.
			\item $(u^+,v^-)_{R_A} \subseteq (u,v)_{R_A}$.
			\item $q^{-1}\big((\underline{u},\underline{v})_R\big)=(u^{+},v^{-})_{R_A}$, and hence
			$q\big((u^{+},v^{-})_{R_A}\big)=(\underline{u},\underline{v})_R$.
			\item (Local basis)
			\begin{enumerate}
				\item \(\{(a^-,u)_{R_A}: u \in X_A,\ \underline{u}\neq a\}\) is a local basis at $a^+$ for every $a \in A$.
				\item \(\{(v,a^+)_{R_A}: v \in X_A,\ \underline{v}\neq a\}\) is a local basis at $a^-$ for every $a \in A$.
				\item \(\{(u^{+},v^{-})_{R_A}: u, v \in X_A, \ x\in (\underline{u},\underline{v})_R \}\) is a local basis at $x$ for every $x \in X \setminus A$.  		
			\end{enumerate}
		\end{enumerate}
	
	\begin{proof}
The proof is straightforward using the definition of $R_A$, conventions for $x^\pm$, and interval manipulations in a circularly ordered set. 
	\end{proof}
			
		\noindent \textbf{Claim 2.}
		The projection $q \colon X_A\to X$ is a \emph{closed} map.
	
	\begin{proof}
		Let $K\subset X_A$ be closed and pick $x_0\in X\setminus q(K)$.
		We produce an open neighbourhood of $x_0$ in $X$ disjoint from $q(K)$.
		
		\emph{Case A: $x_0\notin A$ (unsplit point).}
		We view $x_0$ as the unique point of $q^{-1}(x_0)$ in $X_A$.
		Since $K$ is closed and $x_0\notin K$, the set $X_A\setminus K$ is an open neighborhood of $x_0$ in $X_A$.
		By Claim 1.4(c) there exist $u,v \in X_A$ such that
		\[
		x_0 \in (u^{+},v^{-})_{R_A} \subseteq X_A\setminus K.
		\]
		Then by Claim 1.3 we have
		\[
		x_0 \in q\big((u^{+},v^{-})_{R_A}\big)=(\underline{u},\underline{v})_R \subseteq X \setminus q(K).
		\]
		
		\emph{Case B: $x_0=a\in A$ (split point).}
		Here $q^{-1}(a)=\{a^{-},a^{+}\}$ and both points lie outside $K$.
		Since $K$ is closed in $X_A$, by Claim 1.4 there exist $u_1,v_1 \in X_A$ such that
		\[
		a^- \in (u_1,a^+)_{R_A} \subseteq X_A \setminus K
		\quad\text{and}\quad
		a^+ \in (a^-,v_1)_{R_A} \subseteq X_A \setminus K.
		\]
		Then also the union $(u_1,a^+)_{R_A} \cup (a^-,v_1)_{R_A}$ (which equals $(u_1,v_1)_{R_A}$ by Claim 1.1)
		does not meet $K$. The same is true (by Claim 1.2) for the smaller interval
		$(u_1^{+},v_1^{-})_{R_A} \subseteq (u_1,v_1)_{R_A}$.
		Moreover, from the inclusions above we have  $a^-,a^+ \in (u_1^{+},v_1^{-})_{R_A}$, and therefore $a\in q((u_1^{+},v_1^{-})_{R_A})$.
		Finally, Claim 1.3 yields
		\[
		a \in q\big((u_1^{+},v_1^{-})_{R_A}\big)=(\underline{u_1},\underline{v_1})_R \subseteq X \setminus q(K).
		\]
		Thus in both cases we found an open neighbourhood of $x_0$ disjoint from $q(K)$, so $q(K)$ is closed.
	\end{proof}
	
	\smallskip
	\noindent{\bf (ii)}
	By Claim 1.3, the preimage under $q$ of every basic open interval $(\underline{u},\underline{v})_R$
	is the open interval $(u^{+},v^{-})_{R_A}$. Hence $q$ is continuous.
	Surjectivity and the fact that $q$ is $2$-to-$1$ exactly over $A$ are clear from the definition.
	Claim 2 shows that $q$ is closed. Therefore $q$ is a continuous closed surjection, hence a quotient map.
	Since every fiber of $q$ is finite (at most two points), $q$ is a perfect map.
	
	\smallskip
\noindent{\bf (iii)} 
Since the map $q \colon X_A \to X$ is perfect (by (ii)) and $X$ is (locally) compact, we obtain that $X_A$ is (locally) compact by a well-known lifting result \cite[Theorem 3.7.24]{Eng89}. 	

\smallskip
\noindent{\bf (iv)} 
Assume $X$ is separable. Fix a countable dense set $D\subseteq X$ and define
\[
D_A:=q^{-1}(D)\ \cup\ q^{-1}(A).
\]
Since $D$ and $A$ are countable and every fiber of $q$ has size at most $2$, the set $D_A$ is countable.
We claim that $D_A$ is dense in $X_A$.
Indeed, let $W\subseteq X_A$ be nonempty and open, and pick $x\in W$.
If $x\in q^{-1}(A)$, then $x\in D_A\cap W$ and we are done.
Otherwise $x\in X\setminus A\subseteq X_A$, and by Claim 1.4(c) there exist $u,v\in X_A$ such that
\[
x\in (u^{+},v^{-})_{R_A}\subseteq W.
\]
By Claim 1.3 we have $(u^{+},v^{-})_{R_A}=q^{-1}((\underline{u},\underline{v})_R)$, hence
$(\underline{u},\underline{v})_R$ is a nonempty open interval in $X$.
Choose $d\in D\cap (\underline{u},\underline{v})_R$, and pick any $d'\in q^{-1}(d)$.
Then $d'\in (u^{+},v^{-})_{R_A}\subseteq W$ and $d'\in q^{-1}(D)\subseteq D_A$, so $W\cap D_A\neq\emptyset$.
Thus $X_A$ is separable.

\smallskip
\noindent{\bf (v)}
Assume $X$ is second countable and $A$ is countable. Then $X$ is separable. Fix a countable dense set $D\subseteq X$ and define
\(
D_A:=q^{-1}(D)\ \cup\ q^{-1}(A).
\)  
By the argument in (iv), the set $D_A$ is dense in $X_A$. 
By (i) the space $X_A$ is a LOTS.
In a LOTS, the family of all nonempty open intervals with endpoints in a fixed dense subset is a base.
Hence
\(
\mathcal{B}_A:=\{(r,s)_{R_A}: r,s\in D_A,\ (r,s)_{R_A}\neq\emptyset\}
\)
is a base of $\lambda_{R_A}$. Since $D_A$ is countable, $\mathcal{B}_A$ is countable, and therefore $X_A$ is second countable.

	\smallskip
	\noindent{\bf (vi)} By (i), $X_A$ is a LOTS. By Fact~\ref{f:propLOTS}.3, it is enough to show that
	the diagonal $\Delta_{X_A}$ is a $G_\delta$ subset of $X_A\times X_A$. 
	Let $\{U_n:n\in \mathbb N\}$ be a sequence of open subsets of $X^2$ such that
	$\Delta_X=\bigcap_{n\in\mathbb N}U_n$.
	Then
	\[
	(q\times q)^{-1}(\Delta_X)
	=
	\Delta_{X_A}\cup \{(a^+,a^-),(a^-,a^+):a\in A\}.
	\]
	Since $A$ is countable, the family
	\[
	\{(q\times q)^{-1}(U_n):n\in\mathbb N\}\cup
	\{X_A^2\setminus\{(a^+,a^-)\}:a\in A\}\cup
	\{X_A^2\setminus\{(a^-,a^+)\}:a\in A\}
	\]
	is countable. Moreover, every member of this family is open in $X_A^2$.
	Its intersection is exactly $\Delta_{X_A}$. Hence $\Delta_{X_A}$ is a $G_\delta$ subset of
	$X_A\times X_A$, and therefore $X_A$ is metrizable. 
\end{proof}

For the c-ordered circle $X:=\T$ with $A=\T$ (splitting all points)  we get the ``double circle" of Ellis \cite{Ellis}, which we denote by $\T_{\T}$. It is the \emph{c-ordered}  lexicographic product space $\T \otimes_c  \{-,+\}$.

\begin{cor} \label{cover}    
	Let $(X,R)$ be a circularly ordered space and $c \in X$. 
	Then for the single-split space $X(c)=[c^-,c^+]$ the following conditions are satisfied: 
	\begin{itemize}
		\item [(i)] The corresponding map $q \colon X(c) \to X$ is a continuous perfect c-order preserving map.  
		\item [(ii)] 
		$X$ is compact (resp. locally compact, metrizable, separable) if and only if $X(c)$ is compact (resp. locally compact, metrizable, separable).  
		\item [(iii)]  
		The restriction of $q$ on $X(c) \setminus \{c^-, c^+\}$ is a homeomorphism with $X \setminus \{c\}$. 
	\end{itemize} 
\end{cor}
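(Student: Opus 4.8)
The plan is to recognize the single-split space $X(c)$ of Definition \ref{d:cover} as the special case $\operatorname{Split}(X;\{c\})$ of the subset-splitting construction in Lemma \ref{c-doubling}, and then read off (i)--(iii) from that lemma together with standard facts about perfect maps. First I would verify that the two constructions agree: the projection $q\colon X(c)\to X$ of Definition \ref{d:cover} is COP for $R_{\leq_c}$, its fibres being $\{c^-,c^+\}$ over $c$ and singletons elsewhere. Indeed (COP1) is inherited from the cut $\leq_c$ (on $X(c)\setminus\{c^+\}$, identified with $X$ via $c^-=c$, the map is the identity and $R_{\leq_c}$ restricts to $R$, while a triple through $c^+$ reduces to the cut since $c^+$ is the top element), and (COP2) holds because $q^{-1}(c)=\{c^-,c^+\}$ is the convex interval $[c^+,c^-]_\circ=\{c^+,c^-\}$ (as $(c^+,c^-)_\circ=\emptyset$) on which $q$ is constant. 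Hence Proposition \ref{p:unique} yields a canonical c-order isomorphism $X(c)\cong\operatorname{Split}(X;\{c\})$, which is a homeomorphism for the interval topologies; in particular the topology on $X(c)$ is the LOTS topology $\tau_{\leq_c}$, as already noted after Definition \ref{d:cover}. Item (i) is then immediate from Lemma \ref{c-doubling}(i)--(ii): $q$ is a continuous closed COP map and, having finite fibres, a perfect map.

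For (ii), the forward implications are exactly Lemma \ref{c-doubling}: part (iii) for (local) compactness, part (v) for separability, and part (vi) for metrizability, the last applying because $A=\{c\}$ is trivially countable. The reverse implications all exploit that $q\colon X(c)\to X$ is a perfect continuous surjection onto the Hausdorff space $X$ (Hausdorffness of $X$ by Proposition \ref{Hausdorff}(3), of $X(c)$ since it is a LOTS). Compactness and separability descend to $X$ because $X$ is a continuous image of $X(c)$. Local compactness descends along a perfect surjection of locally compact Hausdorff spaces: for $x\in X$ cover the compact fibre $q^{-1}(x)$ by finitely many open sets with compact closures, take their union $U$, and use closedness of $q$ to produce an open neighbourhood $V\ni x$ with $q^{-1}(V)\subseteq U$, so that $\overline V\subseteq q(\overline U)$ is compact. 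Metrizability descends because a perfect image of a metrizable space is metrizable (equivalently, by the Hanai--Morita--Stone theorem, since the closed surjection $q$ has compact fibres); see \cite{Eng89}.

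Finally, for (iii): the set $X(c)\setminus\{c^-,c^+\}$ is the saturated open interval $(c^-,c^+)$, and $q$ carries it bijectively onto $X\setminus\{c\}$. The restriction of the perfect map $q$ to this saturated open subset is again perfect, hence a perfect continuous bijection and therefore a homeomorphism onto $X\setminus\{c\}$; concretely, the interval correspondence in the Claim of Lemma \ref{c-doubling} (for unsplit endpoints $u,v\in X\setminus\{c\}$ one has $q^{-1}((u,v)_R)=(u,v)_{R_A}$) shows $q$ is open on this set. This restriction is injective, so condition (COP2) of Definition \ref{d:c-ordMaps} is vacuous while (COP1) is inherited from the global COP map $q$; thus it is a COP homeomorphism. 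I expect the main obstacle to be the reverse direction of (ii) for local compactness and metrizability, where one cannot merely invoke ``continuous image'' but must genuinely use that $q$ is perfect; everything else reduces directly to Lemma \ref{c-doubling}.
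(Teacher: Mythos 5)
Your proposal is correct and follows essentially the same route as the paper: identify $X(c)$ with $\operatorname{Split}(X;\{c\})$, read (i) and (ii) off Lemma \ref{c-doubling}, and obtain (iii) from the fact that the restriction of the perfect map $q$ to the saturated set $q^{-1}(X\setminus\{c\})$ is again perfect (the paper cites \cite[Proposition 2.4.15]{Eng89} for this). The only substantive difference is that you explicitly supply the reverse implications in (ii) — compactness and separability descending as continuous images, local compactness and metrizability descending along the perfect surjection — which the paper leaves implicit; these arguments are correct and a welcome completion.
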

\begin{proof} (i) and (ii) 
	If $A:=\{c\}$ is a singleton then the COTS $X_A$ is naturally homeomorphic to the LOTS  $X(c)$. Now use Lemma \ref{c-doubling}. 
	
	(iii) $q\colon X(c) \to X$ is a closed map (by (ii)).  Since 
	$X \setminus \{c\}$ is an open subset of $X$ and $q^{-1}(X \setminus \{c\})= X(c) \setminus \{c^-, c^+\}$, the restriction map $q^{-1}(X \setminus \{c\}) \to X \setminus \{c\}$ is also a quotient map (by \cite[Proposition 2.4.15]{Eng89}). Hence also a homeomorphism because this map is bijective.  
\end{proof}

\begin{cor} \label{c:normality} \ 
	\begin{enumerate}
		\item Every $\COTS$ is monotonically normal (hence  hereditarily collectionwise normal).  
		\item 
		A $\COTS$ $X$ is metrizable if and only if the diagonal of $X^2$ is a $G_{\delta}$ subset. 
		\item A $\COTS$ $X$ is separable if and only if it is hereditarily separable. 
	\end{enumerate}	 
\end{cor}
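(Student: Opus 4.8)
The plan is to reduce both statements to the corresponding facts about LOTS (Fact \ref{f:propLOTS}) by means of the single-split space $X(c)$ from Corollary \ref{cover}. Fix any point $c \in X$ (the case $X = \emptyset$ being trivial). By Corollary \ref{cover} the projection $q \colon X(c) \to X$ is a continuous closed (hence perfect) c-order preserving surjection, and $X(c) = [c^-,c^+]$ is a $\LOTS$. Thus $X$ is realized as a closed continuous image of a $\LOTS$, and the strategy is to push the relevant property of $X(c)$ down to $X$ for (1), and to pull the $G_{\delta}$ hypothesis up to $X(c)$ for (2).

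For (1): since $X(c)$ is a $\LOTS$ it is monotonically normal by Fact \ref{f:propLOTS}.1. Monotone normality is preserved by closed continuous surjections \cite{HLZ}; applying this to $q \colon X(c) \to X$ yields that $X$ is monotonically normal. The parenthetical ``hereditarily collectionwise normal'' is then automatic, since every monotonically normal space is hereditarily collectionwise normal \cite{HLZ}.

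For (2): the direction ``metrizable $\Rightarrow$ $G_{\delta}$ diagonal'' is trivial. For the converse, assume $\Delta_X = \bigcap_{n} U_n$ with each $U_n$ open in $X^2$, and the aim is to show that the diagonal of $X(c)$ is $G_{\delta}$. The map $q\times q \colon X(c)^2 \to X^2$ is continuous, so $(q\times q)^{-1}(\Delta_X) = \bigcap_n (q\times q)^{-1}(U_n)$ is $G_{\delta}$ in $X(c)^2$. Since the only nontrivial fiber of $q$ is $q^{-1}(c) = \{c^-,c^+\}$, one has the elementary identity
$$
(q\times q)^{-1}(\Delta_X) = \Delta_{X(c)} \cup \{(c^-,c^+),(c^+,c^-)\}.
$$
The two-point set $\{(c^-,c^+),(c^+,c^-)\}$ is closed in the Hausdorff space $X(c)^2$, so its complement is open; intersecting the $G_{\delta}$ set above with this open set gives exactly $\Delta_{X(c)}$ as a $G_{\delta}$ subset. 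By Fact \ref{f:propLOTS}.3 the $\LOTS$ $X(c)$ is then metrizable, and by Corollary \ref{cover}(ii) metrizability of $X(c)$ is equivalent to metrizability of $X$. Hence $X$ is metrizable. (This mirrors the $G_{\delta}$-diagonal computation already carried out for the general split space in Lemma \ref{c-doubling}(vi).)

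The routine parts are the continuity and closedness of $q$ and the set-theoretic identity for $(q\times q)^{-1}(\Delta_X)$, both available from Corollary \ref{cover} and the construction in Lemma \ref{c-doubling}. The one external input on which the argument genuinely rests is the preservation of monotone normality under closed continuous maps \cite{HLZ}; I expect this to be the main point to pin down, since it is the only place where a nontrivial general-topology theorem — rather than the split-space machinery developed in this section — is invoked.
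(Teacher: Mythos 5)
Your proposal is correct and follows essentially the same route as the paper: reduce to the $\LOTS$ $X(c)$ via the perfect map $q$, use preservation of monotone normality under closed continuous surjections for (1), and the $G_{\delta}$-diagonal metrization criterion for $\LOTS$ for (2). In fact your computation of $(q\times q)^{-1}(\Delta_X)$ is more careful than the paper's, which asserts that this preimage equals $\Delta_{X(c)}$ and overlooks the off-diagonal pairs $(c^-,c^+)$ and $(c^+,c^-)$ (both map to $(c,c)\in\Delta_X$); your step of intersecting with the open complement of that closed two-point set is exactly the needed correction.
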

\begin{proof} (1) 
	By Fact \ref{f:propLOTS} every LOTS is monotonically normal (hence also, hereditarily collectionwise normal). 
	This property is hereditary 
	and is preserved by closed continuous surjective maps, \cite{HLZ}. Now apply Corollary \ref{cover} which shows that the map $q \colon X(c) \to X$ is closed for every COTS $X$. 
	
	(2) 
	The necessity is clear: every metrizable space has a $G_\delta$ diagonal.
	
	Conversely, assume that $\Delta_X$ is a $G_\delta$ subset of $X^2$. Let
	$q \colon X(c)\to X$ be the canonical two-to-one closed map from Corollary~\ref{cover}.
	Since $q\times q \colon X(c)\times X(c)\to X\times X$ is a continuous closed finite-to-one map,
	the set $E:=(q\times q)^{-1}(\Delta_X)$ is a $G_\delta$ subset of $X(c)^2$.
	Moreover,
	\[
	E=\Delta_{X(c)}\cup\{(c^-,c^+),(c^+,c^-)\}.
	\]
	Therefore
	\[
	\Delta_{X(c)}
	=
	E\cap \bigl(X(c)^2\setminus\{(c^-,c^+)\}\bigr)\cap
	\bigl(X(c)^2\setminus\{(c^+,c^-)\}\bigr),
	\]
	so $\Delta_{X(c)}$ is a $G_\delta$ subset of $X(c)^2$.
	Since $X(c)$ is a LOTS, Fact~\ref{f:propLOTS}.3 implies that $X(c)$ is metrizable.
	Finally, $X$ is metrizable as a closed finite-to-one image of the metrizable space $X(c)$
	(see \cite[Theorem 4.4.15]{Eng89}). 
	
	(3) The LOTS $X(c)$ is separable in view of Corollary \ref{cover}(ii). By \cite{LB}, in (LOTS) separability is hereditary. So, the continuous image $q(X(c))=X$ is also hereditarily separable. 
\end{proof}

\begin{lem} \label{c-doublingGEN}
	In the setting of Lemma \ref{c-doubling}, assume a discrete group $G$ acts on $X$ by COP maps and $A\subseteq X$ is $G$-invariant. Then the action $G \times X \to X$ induces a continuous COP action $G \times X_A \to X_A$ such that $q \colon X_A\to~X$ is a $G$-map.
\end{lem}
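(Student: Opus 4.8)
The plan is to define, for each $g \in G$, a canonical lift $\bar g \colon X_A \to X_A$ of the COP bijection $t^g \colon X \to X$, to check that $g \mapsto \bar g$ is a homomorphism into the group of COP automorphisms of $(X_A, R_A)$, and then to obtain continuity for free from the fact (recorded after Definition \ref{d:c-ordMaps}) that a COP bijection of a circularly ordered set is automatically a homeomorphism of its interval topology. Throughout I use that each $g$ acts as a COP \emph{isomorphism} of $X$ (its inverse $g^{-1} \in G$ is also COP). The case $A=\emptyset$ is trivial since then $X_A=X$, so the content is in nonempty $A$, where $X_A$ is a LOTS.

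First I would exploit the $G$-invariance of $A$. Since each $t^g$ is a bijection and $g(A)\subseteq A$ while $g^{-1}(A)\subseteq A$, we get $g(A)=A$ and hence also $g(X\setminus A)=X\setminus A$. This allows the three-case definition on $X_A=\{a^+,a^-:a\in A\}\cup(X\setminus A)$:
$$
\bar g(x):=g(x)\ (x\in X\setminus A),\qquad \bar g(a^+):=(g(a))^+,\qquad \bar g(a^-):=(g(a))^-\ (a\in A).
$$
By construction $q\circ\bar g=g\circ q$, so $q$ will be a $G$-map once the action is set up. A routine check, splitting the cases $x\in X\setminus A$ and $a\in A$, gives $\bar e=\id_{X_A}$ and $\overline{gh}=\bar g\circ\bar h$; in particular $\bar g$ is a bijection with inverse $\overline{g^{-1}}$, so $g\mapsto\bar g$ is a genuine action on the set $X_A$.

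The heart of the argument is that each $\bar g$ is c-order preserving. Because $\bar g$ is a bijection, condition (COP2) holds vacuously, so it suffices to verify (COP1), i.e. that $\bar g(R_A)\subseteq R_A$, by inspecting the two defining rules of $R_A$ in Lemma \ref{c-doubling}(i). If $[u,w,v]$ holds via the first rule, i.e. $[q(u),q(w),q(v)]$ in $X$, then injectivity of $g$ keeps the projections distinct, (COP1) for $g$ yields $[gq(u),gq(w),gq(v)]$, and the intertwining $g\circ q=q\circ\bar g$ rewrites this as $[q(\bar g u),q(\bar g w),q(\bar g v)]$, which is the first rule applied to $\bar g u,\bar g w,\bar g v$. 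If instead $[u,w,v]$ is one of the special triples $[a^-,t,a^+]$, $[t,a^+,a^-]$, $[a^+,a^-,t]$ with $t\notin\{a^+,a^-\}$, then $ga\in A$ and $\bar g$ maps $\{a^-,a^+\}$ bijectively onto $\{(ga)^-,(ga)^+\}$, so the image is the corresponding special triple for $ga$; here $\bar g(t)\notin\{(ga)^+,(ga)^-\}$ precisely because $t\notin\{a^+,a^-\}$. Thus $\bar g(R_A)\subseteq R_A$, and applying this to $g^{-1}$ gives equality, so each $\bar g$ is a COP automorphism of $(X_A,R_A)$.

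Finally, continuity is automatic: a COP bijection of a circularly ordered set is a homeomorphism for the interval topology, so each $\bar g$ is a homeomorphism of $(X_A,\tau_{R_A})$; since $G$ is discrete, an action all of whose translations are continuous is itself continuous, and $q$ is a $G$-map by $q\circ\bar g=g\circ q$. I expect the only delicate point to be the bookkeeping in the (COP1) verification — specifically confirming that the orientation-preserving choice $a^{\pm}\mapsto(ga)^{\pm}$ (rather than the reversed one) is the one compatible with the special adjacency triples — but this is exactly what the case analysis above settles, so there is no serious obstacle.
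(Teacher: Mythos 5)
Your proposal is correct and follows essentially the same route as the paper: the paper defines the lifted action by exactly the same formula $g(s^{\pm})=(gs)^{\pm}$, $g(x)=gx$, notes well-definedness from $G$-invariance of $A$ and the COP hypothesis, and declares the remaining verifications straightforward. Your write-up simply supplies those omitted routine checks (the two-rule case analysis for (COP1), vacuousness of (COP2) for bijections, and continuity via discreteness of $G$), all of which are carried out correctly.
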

\begin{proof} 
	The induced action is given by
	$$G \times X_A \to X_A, \ g (s^+)=(gs)^+, 
	g (s^-)=(gs)^-, g(x)=gx \ \forall s \in A, \ \forall x \notin A.$$
	It is well defined because the original action is COP and 
	$A$ is $G$-invariant. 
	All statements are straightforward.    
\end{proof}

\begin{remark} \label{r:add} The $G$-space $X_A$ from Lemma \ref{c-doublingGEN} was denoted by $\operatorname{Split}(X,G; A)$ in \cite{GM-c}.  
	An important particular case is 
	\cite[Example 14.10]{GM1}. It gives 
	a concrete realization of the Sturmian symbolic system. 
	In this case the corresponding circularly ordered $\Z$-space is $\T_A=\operatorname{Split}(\T,\Z; A)$ with $A:=\{m\al, n(1-\al) : m, n \in \Z\}$ and the action of $\Z$ generated by an irrational angle $\al$.  
\end{remark}

\section{Generalized circularly ordered spaces and COP actions} 
\label{s:GCOTS} 

Recall that a topological space $X$ is said to be \textit{generalized orderable} (GO) (we use also  the notation: GLOTS) if $X$ is homeomorphic to a subspace of a LOTS. This concept is well known and goes back to E. \v{C}ech. For basic facts we refer to \cite{BenLut,Kem}. 

More informatively, define the class GLOTS$_{\curlyeqprec}$ of all 
\textit{generalized linearly ordered topological spaces} $(X,\leq,\tau)$. This means that there exist $(Y,\leq,\lambda_{\leq}) \in$ $\LOTS_\curlyeqprec$ and a bi-embedding 
$$i \colon (X,\leq,\tau) \hookrightarrow (Y,\leq,\lambda_{\leq}).$$

Define the natural forgetful assignment 
$
\operatorname{GLOTS}_{\curlyeqprec} \to \operatorname{GLOTS}, \ \ (X,\leq,\tau) \mapsto (X,\lambda_{\leq}). 
$

\begin{remark} \label{r:constr} 
It is well known that the following conditions are equivalent:
	\begin{enumerate}
		\item $(X,\leq,\tau) \in \operatorname{GLOTS}_{\curlyeqprec}$. 
		\item $\lambda_{\leq} \subseteq \tau$ and there exists a base $\mathcal{B}$ of the topology $\tau$, where every member $O \in \mathcal{B}$ is $\leq$-convex. 
		\item $(X,\leq,\tau)$ is bi-embedded into a LOTS $(Y,\leq_Y,\t_{\leq_Y})$ as a closed subset. 
	\end{enumerate}
	Let us recall the implication $(2) \Longrightarrow (3)$, which goes back to E. \v{C}ech. 
	Define:  
	$$X^-:=\{x\in X:\ [x,\to)\in \tau\setminus\lambda_{\leq}\},\qquad
	X^+:=\{x\in X:\ (\leftarrow,x]\in \tau\setminus\lambda_{\leq}\}.$$  
	Define a subset $X^*$ of $X \times \Z$ as follows:  
	$$
	X^*:=(X\times\{0\})
	\;\cup\; \{(x,k): x\in X^-,\ k<0,\ k\in\Z\}
	\;\cup\; \{(x,n): x\in X^+,\ 0<n,\ n\in\Z\}.
	$$
	Consider the linear order on $X^*$ inherited from the lexicographic linear order on $X \otimes_l \Z$. 
	Then 
	
	(A) the topological subspace $X \times \{0\}$ of $X^*$ is naturally homeomorphic to $(X,\tau)$.
	
	(B) $X$ is closed in $X^*$.   
	
	In fact, every point $u \in X^* \setminus X$ is isolated in $X^*$. This immediately implies (B).  
\end{remark} 

\begin{defin}[First definition] \label{d:GCO} \cite{Me-OrdSem} A topological space $X$ is said to be a \textit{generalized circularly orderable topological space} if $X$ is homeomorphic to a subspace of a COTS $Y$. Notation: $\GCOTS$, or sometimes simply: GCO as in \cite{Me-OrdSem}.  
\end{defin}

By Corollary \ref{c:COTSisCOMPACTIFIABLE}, one may assume that $Y$ is a \textbf{compact} COTS. Similar to the case of GLOTS, as expected, Definition \ref{d:GCO} is equivalent (see Proposition \ref{p:GCO-eq}) to the requirement: there exists a base $\mathcal{B}$ of the topology $\tau$ and a circular order $R$ such that $\lambda_R \subseteq \tau$ and every member $O \in \mathcal{B}$ is a $R$-convex subset (Definition \ref{d:c-convex}). 
More informatively, define the class $\GCOTS_{\medcirc}$ 
of all 
\textit{generalized circularly ordered topological spaces} $(X,R,\tau)$. This means that there exist $(Y,\circ,\lambda_{\medcirc}) \in$ COTS$_{\medcirc}$ and a bi-embedding 
$i \colon (X,R,\tau) \hookrightarrow (Y,\circ,\lambda_{\medcirc}).$ 

So, we have the natural forgetful assignment
$
\operatorname{GCOTS}_{\medcirc} \to \operatorname{GCOTS}, \ (X,R,\tau) \mapsto (X,\tau).
$

Every $(X,\leq,\tau) \in \GLOTS_\curlyeqprec$ is a partially ordered space (in the sense of Definition \ref{d:ord}) and every $(X,R,\tau) \in \GCOTS_{\medcirc}$ is a partially c-ordered space (Definition \ref{d:PartCOTS}) because PCOTS is stable  under the subspaces by Lemma \ref{l:PCOTSprop}.2. 

\begin{defin} \label{d:c-sing}  Let $(X,R,\tau) \in \GCOTS_{\medcirc}$ be a generalized circularly ordered space. Let us say that $u \in X$ is a \textit{right-singular point} and write $u \in X^{+}$ if there exists $a \in X$ such that $(a,u]$ is a $\tau$-open subset which is not $\lambda_{\medcirc}$-open. 
	Similarly, we say that $v \in X$ is \textit{left-singular} and write $v \in X^{-}$ if there exists $c \in X$ such that $[v,c)$ is a $\tau$-open subset which is not $\lambda_{\medcirc}$-open.   
\end{defin}  

The following is a natural, completely expected analog of the linear case (Remark \ref{r:constr}).  

\begin{prop} \label{p:GCO-eq}
	The following conditions are equivalent: 
	\begin{enumerate} 
		\item  $(X,R,\tau) \in \GCOTS_{\medcirc}$.
	
		\item $\lambda_{R} \subseteq \tau$ and there exists a base $\mathcal{B}$ of the topology $\tau$, where every member $O \in \mathcal{B}$ is $R$-convex (in the sense of Definition \ref{d:c-convex}). 
		
		\item $(X,R,\tau)$ is bi-embedded into a  $(Y,R_Y,\t_{R_Y}) \in \COTS$ as a closed subset. 
	\end{enumerate} 
\end{prop}
\begin{proof} $(3) \Longrightarrow (1)$ is trivial and $(1) \Longrightarrow (2)$ is straightforward (as in the case of LOTS) by observing that if $(X,R)$ is a c-ordered subset of a c-ordered set $(Y,R_Y)$ then $A \cap X$ is $R$-convex in $X$  for every $R_Y$-convex subset $A \subseteq Y$. In particular, it is true for $\t_{R_Y}$-open intervals $A \subset Y$.  
	
	The proof of the implication $(2) \Longrightarrow (3)$ is  based on a "circular version" of the proof from Remark \ref{r:constr}. In this case we consider the lexicographic \textbf{circularly} ordered product $Y:=X \otimes_c \Z$. 
	We define the same subset $X^* \subset X \times \Z$, as before but here $X^-, X^+$ are defined as in Definition \ref{d:c-sing}. This inclusion respects the c-order. 
	Every point $u \in X^* \setminus X$ is isolated in $X^*$, where $X$ is identified with $X \times \{0\}$. This immediately implies	that $X$ is closed in $X^*$. For example, $(x,-1)$ is isolated in $X^*$ because the circular interval $((x,-2),(x,0))_{\circ}$ is just the singleton $\{(x,-1)\}$. 
\end{proof}

\begin{remark}\label{r:OmissionOfSorin}
	The concept of a GCO space was introduced in \cite[Section 2.2]{Me-OrdSem},
	after discussing convexity in circular orders.
	Recently, G.B. Sorin \cite{Sorin24} has also employed this definition (and the term GCO)
	in the equivalent formulation via convex sets.
	For the convenience of the reader, note that the definition of GCO spaces already
	appeared in \cite{Me-OrdSem}. We also note that \cite{Sorin24} cites another result
	from \cite{Me-OrdSem}, namely Theorem~\ref{t:c-comp} of the present work.
\end{remark}

Every circularly ordered set $(X,R)$ becomes a GCOTS under discrete topology. For example,   $(\Q_0,\circ,\t_{discr})$ in Fact \ref{f:GM} is a GCOTS.    

Recall that the classical Sorgenfrey space $\R_s$ is a GLOTS but not a LOTS. For every subset $Y$ of $\R$ we denote by $Y_s$ the corresponding subspace of the Sorgenfrey line.   
Let $(X,R)$ be a circularly ordered set with its interval  topology $\lambda_{\medcirc}$. In analogy with the classical Sorgenfrey space, one may define the circular Sorgenfrey topology generated by the base of all "semi-open" intervals of the type $[a,b)_R$. The corresponding  topology $\t_{s}$ is stronger than the interval topology $\lambda_{\medcirc}$. In fact, by Proposition \ref{p:GCO-eq} it is easy to check that $(X,R,\t_{s}) \in \GCOTS$. In particular, it is true for the circle $X=\T$. The corresponding  Sorgenfrey circle we denote by $\T_s$. It is not hard to show that the following topological spaces are homeomorphic: $\R_s \cong [0,2)_s \cong [0,1) \sqcup [1,2) \cong \T_s$. 
So, $\T_s$ (being homeomorphic to $\R_s$) is not metrizable but has a $G_{\delta}$ diagonal.  
Therefore, by Corollary \ref{c:normality}.2, $\T_s$ is not COTS.  
It is well known that $\R_s \in $ GLOTS. 
So, $\T_s \in \GLOTS \setminus \COTS$.  
In fact, one may show that this is not an isolated phenomenon but a general principle, as Proposition \ref{p:NoGCOTS}(b) demonstrates.
 
\begin{lem} \label{l:easyPropGCO} \ 
	\begin{enumerate}
		\item $\LOTS$ $\subsetneq$ $\GLOTS$ and $\COTS$ $\subsetneq$ $\GCOTS$. 

		\item \rm{comp-GLOTS}=\rm{comp-LOTS} $\subsetneq$ \rm{comp-GCOTS}=\rm{comp-COTS}. 
		\item Let $X$ be a $\COTS$. Suppose that $Y \subset X$ is a subset such that $Y \neq X$. Then $Y$ is a $\GLOTS$.  
	\end{enumerate}
	\begin{proof} 
		(1) LOTS $\neq$ GLOTS. The Sorgenfrey line $\R_s$ is a well known counterexample. 
		
		COTS $\neq$ $\GCOTS$. This time the Sorgenfrey circle $\T_s$ is a GCOTS but not a COTS.
	
		(2) The equalities follow by compactness of $\tau$ and Hausdorfness of interval topologies. For the inclusion apply Proposition \ref{inclusion}. For the inequality, take the circle $\T$.   
		
		(3) 	Take $c \in X \setminus Y$ and consider 
		$q \colon X(c)=[c^-,c^+] \to X$. 
		The induced map $q \colon q^{-1}(Y) \to Y$ 
		is a homeomorphism by Corollary \ref{cover}(iii). 
		Hence, $Y$ is embedded into the LOTS $[c^-,c^+]$.  	 	
	\end{proof}	
\end{lem}

Lemma \ref{l:easyPropGCO}.3 implies the following rigidity property: if $\T \subseteq Y$ with $Y$ is a COTS then $\T=Y$.    
   
While $\operatorname{GCOTS}_{\medcirc}$ and $\operatorname{GLOTS}_{\curlyeqprec}$ are essentially different classes, on the topological level (after forgetful assignment) $\GLOTS$ and $\GCOTS$ are very close. Already  Lemma \ref{l:easyPropGCO}.3 and Proposition \ref{p:NoGCOTS}(b) confirm this.    
Even more drastically, below in Corollary \ref{c:OutComp} we show that $\GLOTS \setminus Comp=\GCOTS \setminus Comp$. 

\begin{prop}\label{p:NoGCOTS} \ 
	\rm{(a)} $\GLOTS \subsetneq \GCOTS$; \rm{(b)}   
	\(
	\GCOTS\setminus \COTS = \GLOTS\setminus \COTS.
	\)
\end{prop}

\begin{proof} (a) 
	If $X$ is a GLOTS then it is a subspace of a LOTS. It is well known that every LOTS, in turn, is a subspace of a compact LOTS which is COTS by Proposition  \ref{inclusion}.
	Hence, $\GLOTS \subset \GCOTS$. 
	This inclusion is proper because the circle is a counterexample.   

	(b) Now it is enough to observe that $\GCOTS\setminus \COTS
	 \subseteq \GLOTS$ by Lemma \ref{l:easyPropGCO}.3.  	
\end{proof}

Let $(X,\leq_X,\t)$ be a GLOTS and $(Y,\leq_Y, \t_{\leq_Y})$  be a LOTS. A bi-embedding (i.e. a topological LOP embedding) $i \colon X \to Y$ is said to be an \textit{extension}. If, in addition,  $i(X)$ is $\t_{\leq_Y}$-dense in $Y$, then it is a \textit{d-extension}, \cite{MK}.  
A compact linearly ordered d-extension is a \textit{linearly ordered compactification} which is proper. 
Similar definitions make sense for GCOTS and COTS (see \cite{Sorin24}).

It is well known (Miwa-Kemoto \cite{MK}) that for every GLOTS $(X,\t,\leq)$ there exists a linear ``\textbf{minimal} d-extension" (we prefer the term  ``\textbf{least}" instead of ``minimal") 
$$
\theta \colon (X,\t,\leq) \to (\widetilde{X},\widetilde{\tau},\leq^{\sim}), 
$$
where $(\widetilde{X},\widetilde{\tau},\leq^{\sim})$ is a LOTS, $\widetilde{\tau}=\lambda_{\leq^{\sim}}$, $\theta$ is a LOP and topological \textit{dense} embedding such that for every linearly ordered d-extension $f \colon X \to Y$ there exists a (unique) LOP  topological embedding of $X$ into $Y$ which extends $f$. 
The following is a natural circular analog. 

\begin{lem}[least d-extension] \label{l:d-ext}  \cite{Sorin24} 
	For every $(X,R,\t) \in \GCOTS_{\medcirc}$ there exists a circular least  d-extension 
	$$
	\theta \colon (X,R,\t) \to 
	(\widetilde{X}, \widetilde{R},\widetilde{\tau}),
	$$
	where $(\widetilde{X}, \widetilde{R},\widetilde{\tau}) \in \COTS$, $\t_{\widetilde{R}}=\widetilde{\tau}$, $\theta$ is a COP and topological dense embedding such that for every circularly ordered d-extension $f \colon X \to Y$ there exists a (unique) COP topological embedding of $\widetilde{X}$ into $Y$ which extends $f$.	
\end{lem}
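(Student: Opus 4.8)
The plan is to construct $\widetilde X$ directly, by adjoining to $X$ exactly one new point on the appropriate side of each \emph{singular} point, in perfect analogy with the linear least d-extension of Miwa--Kemoto \cite{MK} and with the \v{C}ech-type construction used in Remark \ref{r:constr} and Proposition \ref{p:NoGCOTS}. Concretely, let $X^{+}$ and $X^{-}$ be the sets of right- and left-singular points of the $\GCOTS$ $(X,R,\tau)$ (Definition \ref{d:c-sing}); by Proposition \ref{p:GCO-eq} we may assume $\tau_R\subseteq\tau$ with a base of $R$-convex sets, so that $X^{+}\cup X^{-}$ measures precisely the discrepancy between $\tau$ and $\tau_R$ (and is empty exactly when $X$ is already a $\COTS$, cf. Lemma \ref{l:easyPropGCO}, in which case $\widetilde X:=X$ and $\theta:=\id$). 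I would realize the underlying set $\widetilde X:=X\sqcup\{u^{+}:u\in X^{+}\}\sqcup\{v^{-}:v\in X^{-}\}$ as a sub-c-ordered set of the lexicographic product $X\otimes_c\Z$ (Definition \ref{d:lexic}) via $x\mapsto(x,0)$, $u^{+}\mapsto(u,1)$, $v^{-}\mapsto(v,-1)$; the induced ternary relation $R^{\sim}$ is then automatically a circular order extending $R$, and I would endow $\widetilde X$ with the interval topology $\tau^{\sim}:=\tau_{R^{\sim}}$, so that $(\widetilde X,R^{\sim},\tau^{\sim})$ is a $\COTS$ by construction. The point of this choice is that $u^{+}$ becomes the immediate successor of $u$ and $v^{-}$ the immediate predecessor of $v$, i.e. the intervals $(u,u^{+})_{R^{\sim}}$ and $(v^{-},v)_{R^{\sim}}$ are empty.

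The second step is to check that $\theta\colon(X,\tau)\hookrightarrow(\widetilde X,\tau^{\sim})$ is a COP dense topological embedding. Preservation of the circular order (hence COP) is immediate, since $R^{\sim}$ restricts to $R$. For the topological statements I would use the explicit dictionary between convex $\tau$-open sets and traces of open $R^{\sim}$-intervals: for $u\in X^{+}$ with $(a,u]$ $\tau$-open one has $(a,u^{+})_{R^{\sim}}\cap X=(a,u]$ because $(u,u^{+})_{R^{\sim}}=\emptyset$, and symmetrically $[v,c)=(v^{-},c)_{R^{\sim}}\cap X$ for $v\in X^{-}$; together with $(p,q)_{R}=(p,q)_{R^{\sim}}\cap X$ for ordinary intervals this shows that the subspace topology inherited from $\tau^{\sim}$ is exactly $\tau$. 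Density follows because each adjoined point is a one-sided limit of $X$: the right-approaching $X$-points that prevent $(a,u]$ from being $\tau_R$-open lie just beyond $u^{+}$, so $u^{+}\in\cls_{\tau^{\sim}}(X)$, and dually for the $v^{-}$.

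The heart of the argument is the universal property. Given any circularly ordered d-extension $f\colon X\to Y$ (with $Y$ a $\COTS$ and $f$ a dense COP embedding, which I identify with the inclusion $X\subseteq Y$), I must produce a COP topological embedding $\widetilde X\hookrightarrow Y$ extending $f$. For $u\in X^{+}$ the set $(a,u]$ is open in the subspace $X$, hence equals $V\cap X$ for some $\tau_Y$-open $V$; choosing an $R_Y$-interval $(p,q)_{R_Y}\ni u$ inside $V$ gives $(u,q)_{R_Y}\cap X=\emptyset$, and since $X$ is dense in the $\COTS$ $Y$ this forces $(u,q)_{R_Y}=\emptyset$, i.e. $q$ is the immediate successor of $u$ in $Y$. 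I would set $\widetilde f(u^{+}):=q$, define $\widetilde f(v^{-})$ dually by immediate predecessors, and let $\widetilde f|_X:=f$. Note that $u\in X^{+}$ has no immediate successor inside $X$ (otherwise $(a,u]$ would be $\tau_R$-open), so $q\notin X$; hence $\widetilde f$ is injective. That $\widetilde f$ is a well-defined COP map and a topological embedding then follows from uniqueness of immediate successors together with the fact that both $\tau^{\sim}$ and $\tau_Y$ are interval topologies, while uniqueness of $\widetilde f$ is forced because any COP extension must send the immediate successor $u^{+}$ of $u$ to the unique immediate successor of $f(u)$. I expect this existence-of-immediate-neighbours step --- proving that the one-sided limits which are merely ``virtual'' inside $X$ are genuinely realized by points of \emph{every} d-extension $Y$ --- to be the main obstacle; the remaining verifications are routine interval-topology bookkeeping.

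Finally, I would dispose of the degenerate cases ($|X|<3$, where $X$ is discrete and $\widetilde X:=X$) and remark that the same object can be obtained by the split-bridge: cutting $X$ at a point $c$ produces a $\GLOTS$ $X(c)$ (Corollary \ref{cover}, Lemma \ref{l:ClosedProper}), to which the linear least d-extension of Miwa--Kemoto \cite{MK} applies; re-identifying the extreme points $c^{-},c^{+}$ of the resulting interval recovers a $\COTS$, and the two universal properties match. Uniqueness of $\widetilde X$ as an object is then the standard consequence of the universal property.
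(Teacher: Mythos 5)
Your construction is exactly the paper's: the paper's proof is a two-line sketch that defines $\widetilde X$ inside the circular lexicographic product $X\otimes_c\{-1,0,1\}$ by adjoining $(x,+1)$ for $x\in X^+$ and $(x,-1)$ for $x\in X^-$ and defers the verification to the linear case of Miwa--Kemoto, which is precisely what you do (your use of $X\otimes_c\Z$ restricts to the same object). Your fleshing-out of the universal property --- in particular the key step that every right-singular $u$ acquires an immediate successor $q\notin X$ in any d-extension $Y$, which indeed follows since a point $x\in(u,q)_{R_Y}\cap X$ would force $a\in(u,q)_{R_Y}\cap X\subseteq(a,u]$, a contradiction --- is correct and goes beyond the detail the paper itself supplies.
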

\begin{proof} Sketch: 
	The proof is similar to the linear case \cite{MK}, 
	but uses the \textit{circular} lexicographic product $X \otimes_c \{-1,0,1\}$. Define 
	$$
	\widetilde{X}:=(X\times\{0\})
	\;\cup\; \{(x,-1): x \in X^- \}
	\;\cup\; \{(x,1): x \in X^+ \},  
	$$ 	
	where $X^-, X^+$ are defined as in Definition \ref{d:c-sing} and $\theta(x)=(x,0)$. 
\end{proof}

\subsection*{Group actions on GLOTS}  
\label{s:ActGLOTS}

\begin{thm}[Separate $\Rightarrow$ joint continuity]  \label{t:COTS_joint_cont}
	Let $(X,R,\tau)$ be a $\GCOTS$.
	Suppose $\pi \colon G\times X\to X$ is a separately continuous COP action of a \textbf{semitopological} group $G$.
	Then the action map $\pi$ is jointly continuous. A similar result remains true for $(X,\leq,\tau) \in \GLOTS$ and LOP actions.
\end{thm}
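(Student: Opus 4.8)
The plan is to prove joint continuity at an arbitrary point $(g_0,x_0)$, using the monotonicity of the translations as a substitute for the Baire-category input of Namioka-type theorems. Put $c:=g_0x_0$. Each translation $t^g\colon X\to X$ is a COP homeomorphism: it is COP by hypothesis, bijective because $G$ is a group, and both it and its inverse $t^{g^{-1}}$ are continuous by separate continuity. Consequently every $g$ obeys the sandwiching inequality of Lemma \ref{l:Prop-c-ord}(2a), i.e. $g[a,b]_R\subseteq[ga,gb]_R$ whenever $ga\neq gb$, and (since $g^{-1}$ is COP as well) sends $R$-convex sets to $R$-convex sets by Lemma \ref{l:Prop-c-ord}(2b). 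Fix a $\tau$-open $W\ni c$; by Proposition \ref{p:GCO-eq}(2) I may assume $W$ is $R$-convex. The goal is to find neighborhoods $U\ni g_0$ in $G$ and $V\ni x_0$ in $X$ with $\pi(U\times V)\subseteq W$.

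First I would set up the sandwich on the target side. In the generic situation $c$ lies in the $\tau_R$-interior of $W$ from both sides, so I can choose points with $[c_{--},c_-,c,c_+,c_{++}]$ in cyclic order and $[c_{--},c_{++}]_R\subseteq W$. Because $g_0$ is a COP homeomorphism, I pull these back: set $s:=g_0^{-1}(c_-)$ and $t:=g_0^{-1}(c_+)$. Then $g_0s=c_-$, $g_0t=c_+$, and applying the COP map $g_0^{-1}$ to $[c_-,c,c_+]$ gives $[s,x_0,t]$; hence $V:=(s,t)_R$ is a $\tau_R$-open (so $\tau$-open) neighborhood of $x_0$. This choice already pins the images of the two flanking points to the prescribed spots $c_-,c_+$ strictly inside $W$.

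Next I would propagate in the group variable. The orbit maps $\widetilde s,\widetilde t\colon G\to X$ are continuous at $g_0$ by separate continuity, and $g_0s=c_-\in(c_{--},c)_R$, $g_0t=c_+\in(c,c_{++})_R$ with both arcs $\tau$-open; so there is a neighborhood $U\ni g_0$ with $gs\in(c_{--},c)_R$ and $gt\in(c,c_{++})_R$ for all $g\in U$. For such $g$ the points $gs,gt$ lie in disjoint arcs, so $gs\neq gt$ and $[gs,gt]_R\subseteq[c_{--},c_{++}]_R\subseteq W$. Finally, for any $x\in V\subseteq[s,t]_R$ and $g\in U$, the sandwiching property yields
$$
gx\in g[s,t]_R\subseteq[gs,gt]_R\subseteq W,
$$
which gives $\pi(U\times V)\subseteq W$ and hence joint continuity at $(g_0,x_0)$.

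The hard part is the wrap-around of circular intervals at the one-sided singular points $X^{+},X^{-}$ of Definition \ref{d:c-sing}, where $\tau$ is strictly finer than $\tau_R$ and $c$ admits no two-sided $\tau_R$-interval neighborhood inside $W$. Since $g_0$ is an orientation-preserving homeomorphism it carries singular points to singular points of the same type, so $x_0$ is singular of the same kind as $c$; the remedy is to let the offending flank collapse ($s=x_0$, with $V=[x_0,t)_R$ a half-open $\tau$-neighborhood, or symmetrically $t=x_0$), and to control the collapsed side directly by the continuous orbit map $\widetilde{x_0}$ rather than by an auxiliary point, so that the sandwich $[gx_0,gt]_R\subseteq W$ still closes up. The remaining degenerate cases are immediate: if $x_0$ is $\tau$-isolated then $V=\{x_0\}$ and only orbit continuity of $\widetilde{x_0}$ is needed, while the exceptional convex set $V=X\setminus\{w\}$ is handled by letting $s,t$ approach $w$ from the two sides. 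Throughout, the group enters only through separate continuity and the bijective COP nature of the translations, the right-topological hypothesis being the natural ambient setting (it holds, e.g., for subgroups of $\Homeo(X)$ with the pointwise topology).
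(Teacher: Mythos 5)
Your proposal follows essentially the same strategy as the paper's proof: flank the target by auxiliary points, use continuity of the orbit maps at those points to confine their images, and then use the COP sandwiching $g[s,t]_R\subseteq[gs,gt]_R$ of Lemma \ref{l:Prop-c-ord}(2a) to trap $gV$. The only structural difference is that you work directly at $(g_0,x_0)$ via the pullbacks $s=g_0^{-1}(c_-)$, $t=g_0^{-1}(c_+)$, whereas the paper first reduces to the point $(e,x_0)$ using continuity of right translations; your version does not actually invoke the right-topological hypothesis. Your two-sided case is correct, and the key device there --- forcing $gs$ and $gt$ into the two \emph{disjoint} arcs $(c_{--},c)_R$ and $(c,c_{++})_R$ separated by $c$ --- is exactly what makes the cyclic sandwich close: it pins the relative cyclic position of $gs$ and $gt$, so that $[gs,gt]_R$ is the correct arc through $c$ and lies in $W$. (Five points are slightly more than needed; one may take $c_{--},c_{++}$ to be the endpoints of $W$.)

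The one-sided case, however, is not closed as sketched, and this is precisely where the subtlety sits. If you collapse the left flank to $x_0$ and only require $gx_0\in[c,c_+)$ (via $\widetilde{x_0}$) and $gt\in(c,c_{++})$ (via $\widetilde{t}$ with $g_0t=c_+$), these two target arcs \emph{overlap}, so the relative cyclic position of $gx_0$ and $gt$ is not determined: a COP bijection $g$ can satisfy both constraints and yet have $[c,gt,gx_0]$, in which case $[gx_0,gt]_R$ is the long arc wrapping around $X$ and is not contained in $W$. Concretely, on $\T$ with $c=0$, $c_+=0.1$, $W=[0,0.2)$, an orientation-preserving homeomorphism with $g(0)=0.05$ and $g(0.1)=0.03$ meets both constraints but sends $[0,0.1)_R$ over most of the circle. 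The repair is the same disjointness device you already used in the two-sided case: choose $c_+\in(c,b)_R$ and a further point $c_{++}\in(c_+,b)_R$, set $t:=g_0^{-1}(c_{++})$, and require $gx_0\in[c,c_+)_R$ and $gt\in(c_+,b)_R$; these arcs are disjoint and separated by $c_+$, whence $c\le_c gx_0<_c c_+<_c gt<_c b$ in the cut at $c$ and $[gx_0,gt]_R\subseteq[c,b)_R\subseteq W$. (For what it is worth, the paper's own Case B imposes only the two controls $Vx_0\subset[x_0,t)$ and $Vt\subset(x_0,b)$ and asserts $[gx_0,gt)\subset[x_0,b)$, so it glosses over the same point.) With that adjustment your argument is complete.
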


\begin{proof} 
	Since the left action is separately continuous and $G$ is a semitopological group, it suffices to check joint continuity at $(e,x_0)$ for an arbitrary $x_0 \in X$.
	Let $U$ be an open neighborhood of $x_0$. Because $X$ is a $\GCOTS$, we may assume $U$ is convex.
	We seek a neighborhood $W$ of $x_0$ in $X$ and $V \in \mathcal{N}(e)$ in $G$ such that $V \cdot W \subseteq U$.
	
	If $U=\{x_0\}$, the continuity of the orbit map $\zeta_{x_0}$ yields $V \in \mathcal{N}(e)$ such that $V \cdot x_0 = \{x_0\}$. Thus, $W=\{x_0\}$ suffices. Assume $U$ contains multiple points. The convex set $U$ must take one of three forms relative to $x_0$:
	
	\smallskip\noindent
	\emph{Case 1 (Two-sided arc):} $x_0 \in U=(a,b)$ with $(a,x_0) \neq \emptyset$ and $(x_0,b) \neq \emptyset$.
	Choose $s \in (a,x_0)$ and $t \in (x_0,b)$. By the continuity of the orbit maps at $s$ and $t$, there exists $V\in \mathcal{N}(e)$ such that $V \cdot s \subseteq (a,x_0)$ and $V \cdot t \subseteq  (x_0,b)$.
	Set $W := (s,t)$. For any $g \in V$ and $x \in W$, the COP property of $g$ preserves the cycle $[s,x,t]$, yielding $g(x) \in (g(s), g(t))$.
	Since $g(s) \in (a,x_0)$ and $g(t) \in (x_0,b)$, we have $(g(s), g(t)) \subseteq (a,b) = U$. Hence, $V \cdot W \subseteq U$.
	
	\smallskip\noindent
	\emph{Case 2 (Left-closed arc):} $U=[x_0,b)$ with $(x_0,b) \neq \emptyset$. 
	Since $x_0$ is not isolated in $\tau$, $(x_0,b)$ is infinite. Pick $s,t \in (x_0,b)$ forming an injective cycle $[x_0,s,t,b]$.
	Then $[x_0,s) = [x_0,b) \setminus [s,b]$ is open in $\tau$. By separate continuity, there exists $V \in \mathcal{N}(e)$ such that $V \cdot x_0 \subseteq [x_0,s)$ and $V \cdot t \subseteq (s,b)$.
	Set $W := [x_0,t)$. For any $x \in W \setminus \{x_0\}$, the cycle $[x_0,x,t]$ holds. For $g \in V$, the COP property gives $[g(x_0), g(x), g(t)]$, so $g(x) \in (g(x_0), g(t))$.
	Because $g(x_0) \in [x_0,s)$ and $g(t) \in (s,b)$, this forces $g(x) \in [g(x_0), g(t)) \subset [x_0,s) \cup (s,b) \subseteq [x_0,b) = U$. This containment trivially holds for $x=x_0$ as well. Thus, $V \cdot W \subseteq U$.
	
	\smallskip\noindent
	\emph{Case 3 (Right-closed arc):} $U=(a,x_0]$ with $(a,x_0) \neq \emptyset$. 
	By a strictly symmetric argument to Case 2, there exist $s,t \in (a,x_0)$ with $[a,s,t,x_0]$, and one finds $V \cdot W \subseteq U$ for $W:=(s,x_0]$.
	
	This establishes joint continuity for $\GCOTS$. The proof for $\GLOTS$ proceeds similarly.
\end{proof}

It is well known (see, for example, \cite{Arens}) that for  every locally compact space $K$ and every subgroup $G$ of $H(K)$ there is a coarsest topology that makes the natural action $G \times K \to K$ continuous. It is the \textbf{g-topology of Arens} \cite{Arens} which for compact $K$ coincides with   the compact-open topology.      
Therefore, Theorem \ref{t:COTS_joint_cont} directly leads to the following result (strengthened below in Theorem \ref{t:myCOTS-G-comp}).  

\begin{cor} \label{c:Arens} 
	Let $(X,R,\t)$ be a locally compact $\GCOTS$. Then the  pointwise topology $\sigma^p$ on every subgroup  $G \subseteq H_+(X,R)$ coincides with the g-topology of Arens (compact-open topology, if $X$ is compact). Similarly, for $\GLOTS$.  
\end{cor}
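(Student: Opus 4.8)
The plan is to prove the two inclusions between the pointwise topology $\sigma_p$ and the Arens $g$-topology $\tau_g$ on $G=\operatorname{Homeo}_+(X)$, using the joint-continuity theorem \ref{t:COTS_joint_cont} as the main engine. The inclusion $\sigma_p\subseteq\tau_g$ is automatic, since $\tau_g$ refines the compact--open topology $\tau_{co}$, which in turn refines $\sigma_p$. For the reverse inclusion I would first observe that the evaluation action $\pi\colon (G,\sigma_p)\times X\to X$, $\pi(g,x)=g(x)$, is separately continuous: each orbit map $g\mapsto g(x)$ is $\sigma_p$-continuous by the very definition of the pointwise topology, and each $g$-translation is a homeomorphism. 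Since $(G,\sigma_p)$ is a right topological group (as a subgroup of $\operatorname{Homeo}(X)$ with the pointwise topology) and every $g$-translation is COP, Theorem \ref{t:COTS_joint_cont} applies and yields that $\pi$ is jointly continuous.

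Next I would exploit local compactness. Because $X$ is locally compact Hausdorff, the exponential law (currying) converts joint continuity of $\pi$ into continuity of the transpose $(G,\sigma_p)\to (C(X,X),\tau_{co})$, $g\mapsto g$; that is, $\tau_{co}\subseteq\sigma_p$, and combined with $\sigma_p\subseteq\tau_{co}$ this gives $\sigma_p=\tau_{co}$. When $X$ is compact this already finishes the proof, since there $\tau_g=\tau_{co}$.

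For the non-compact case the remaining task is to pass from $\tau_{co}$ to $\tau_g=\tau_{co}\vee\operatorname{inv}^{*}\tau_{co}$, and for this it suffices to show that inversion $g\mapsto g^{-1}$ is $\sigma_p$-continuous. Here the order structure enters. I would first argue that local compactness forces $X$ to be a genuine COTS (so that $\tau=\tau_R$ and, by Proposition \ref{Hausdorff}, the open intervals $(s,t)_\circ$ form a base, with no Sorgenfrey-type singular points). Then, given $g_i\to g$ pointwise and a basic neighborhood $(s,t)_\circ$ of $x_0:=g^{-1}(y)$, applying the COP bijection $g$ gives $[g(s),y,g(t)]$; since $R$ is open in $\widetilde{X^3}$ (Lemma \ref{l:c-is-Open}) there are pairwise disjoint neighborhoods $V_1\ni g(s)$, $V_3\ni g(t)$ with $[v_1,y,v_3]$ for all $v_1\in V_1,\ v_3\in V_3$. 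As $g_i(s)\to g(s)$ and $g_i(t)\to g(t)$, eventually $[g_i(s),y,g_i(t)]$ holds with the three points distinct; applying the COP bijection $g_i^{-1}$ yields $g_i^{-1}(y)\in(s,t)_\circ$. Hence $g_i^{-1}\to g^{-1}$ pointwise, so inversion is $\sigma_p$-continuous.

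Finally I would assemble the pieces: composition in $(C(X,X),\tau_{co})$ is jointly continuous because $X$ is locally compact, so $(G,\sigma_p)=(G,\tau_{co})$ is in fact a topological group whose inversion is continuous; consequently $\operatorname{inv}^{*}\tau_{co}\subseteq\tau_{co}$ and $\tau_g=\tau_{co}=\sigma_p$. The main obstacle is precisely the inversion step: pointwise convergence of homeomorphisms does not in general entail pointwise convergence of the inverses, and this is exactly where the $c$-order (via the openness of $R$) is indispensable. It is also the point at which local compactness must be used twice over — to invoke the exponential law and to exclude singular points — the Sorgenfrey circle being a $\GCOTS$ that fails local compactness and for which the conclusion would break down.
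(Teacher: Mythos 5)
Your overall architecture is the natural one and matches what the paper leaves implicit: joint continuity of the evaluation action of $(G,\sigma_p)$ on $X$ (Theorem \ref{t:COTS_joint_cont}) yields $\tau_{co}\subseteq\sigma_p$, the reverse inclusion is automatic, and the entire weight of the statement then rests on the $\sigma_p$-continuity of inversion, since $\tau_g=\tau_{co}\vee\operatorname{inv}^{*}\tau_{co}$. Your inversion argument via the openness of $R$ in $\widetilde{X^3}$ (Lemma \ref{l:c-is-Open}) is correct as far as it goes. (A minor point: the direction of the exponential law you use --- joint continuity of $\pi$ implies continuity of $g\mapsto g$ into $C(X,X)$ with $\tau_{co}$ --- holds for arbitrary $X$; local compactness is only needed for the converse and for joint continuity of composition.)

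The genuine gap is your claim that local compactness forces $X$ to be a $\COTS$. This is false, and the paper itself supplies the counterexample: $[0,1)$ with its usual topology is a locally compact $\LOTS$, hence a $\GCOTS$ (Lemma \ref{l:easyPropGCO}(5)), but it is not a $\COTS$ (Lemma \ref{l:easyPropGCO}(3)); its point $0$ is a singular point in the sense of Definition \ref{d:c-sing}, with basic convex neighborhoods $[0,t)$ that are not $\tau_R$-open. Your inversion argument treats only two-sided basic neighborhoods $(s,t)_\circ$ of $x_0=g^{-1}(y)$ and therefore says nothing at such points; one must separately show that $g_i^{-1}(y)$ eventually lands in one-sided convex neighborhoods $[x_0,t)$ or $(s,x_0]$. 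The tool the paper has for exactly this situation (Lemma \ref{l:eventCIRCULAR}, eventual $G$-fixedness of such endpoints) presupposes that $G$ is a topological group, i.e., the very continuity of inversion you are trying to establish, so invoking it here would be circular. This is presumably why the paper settles the topological-group question for general $\GCOTS$ only much later (Theorem \ref{t:admissGCOTS}), via the least d-extension and the minimal COP compactification. In short: your proof is complete for locally compact $\COTS$ (in particular for compact $X$), but the locally compact $\GCOTS$ case needs an additional argument at the singular points that your reduction incorrectly declares nonexistent.
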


\begin{remark} \label{r:AboutArens}  Every compact GLOTS is a LOTS. So, for compact LOTS $(X,\leq,\lambda_{\leq})$  Corollary \ref{c:Arens} gives a result of B. Sorin \cite[Theorem 2]{Sorin-B-22}; cf. Remark \ref{r:p-topologies} below. 
\end{remark}

\begin{defin} \label{d:EventFixed} 
	Let $G \times X \to X$ be an action of a semitopological group $G$ on $X$. 
	We say that:
	\begin{enumerate}
		\item a point $p \in X$ is \textit{eventually $G$-fixed} if there exists $V \in \mathcal{N}(e)$ such that $gp=p$ for every $g \in V$. Equivalently: if the stabilizer subgroup $\operatorname{St}(p)$ is open (clopen).   
		\item a subset $B \subseteq X$ \textit{eventually is in} $C \subseteq X$ if there exists $V \in \mathcal{N}(e)$ such that $gB \subseteq C$ for every $g \in V$.
	\end{enumerate}   
\end{defin}

Every eventually $G$-fixed point $p$ eventually belongs to every set $B \subseteq X$ such that $p \in B$. 

\begin{lem} \label{l:eventCIRCULAR} 
	Let $(X,R,\t)$ be a $\GCOTS$ and let $\pi \colon G \times X \to X$ be a (separately) continuous COP action of a  topological group $G$ on $(X,\t)$. Assume that $[b,c) \in \t$ or $(c,b] \in \t$. Then the point $b$ is eventually $G$-fixed. A similar result is true for $\GLOTS$ with LOP action.  
\end{lem}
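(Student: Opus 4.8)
The plan is to argue by contradiction and to reduce the whole statement to the Hausdorffness of the coarse interval topology $\tau_R$. By symmetry (passing to the opposite circular order $R^*$, which has the same interval topology and the same COP translations) I may assume $[b,c)_{\circ}\in\tau$. The GLOTS case I would dispatch immediately: choosing a symmetric $V\in\mathcal N(e)$ with $Vb\subseteq[b,c)$ by orbit-map continuity, for $g\in V$ one gets $b\le gb$ from $gb\in[b,c)$, and $gb\le b$ by applying the LOP map $g$ to $b\le g^{-1}b$; hence $gb=b$ and $\operatorname{St}(b)\supseteq V$ is open. The circular case is genuinely harder precisely because $g$ need not preserve the cut order $\leq_b$, so no global monotonicity squeeze is available.

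For the circular case I would suppose $\operatorname{St}(b)$ is not open. Since $G$ is a topological group, $e$ is then not interior to $\operatorname{St}(b)$, so there is a net $g_i\to e$ with $g_ib\neq b$. I work with the standard cut $\leq_b$ of Remark \ref{r:chech}(2), for which $b=\min$ and $[b,c)_{\circ}=[b,c)_{\leq_b}$ by Lemma \ref{l:cut}. Continuity of the orbit map $\widetilde b\colon G\to(X,\tau)$ at $e$, together with $g_i^{-1}\to e$ (continuity of inversion), gives that eventually both $g_ib$ and $g_i^{-1}b$ lie in the $\tau$-open set $[b,c)_{\circ}$; since they differ from $b$ (as $g_ib=b\Leftrightarrow g_i^{-1}b=b$), I obtain $b<_b g_ib<_b c$ and $b<_b g_i^{-1}b<_b c$, so in particular $(b,c)_{\circ}\neq\emptyset$.

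Next I use that each translation $t^{g_i}$ is a bijective COP map, hence a COP isomorphism, so it preserves $R$ in both directions and therefore carries open arcs to open arcs, $g_i(u,v)_{\circ}=(g_iu,g_iv)_{\circ}$. Applying this to the arc from $\delta_i:=g_i^{-1}b$ to $b$, and using $g_i\delta_i=b$ and $g_ib\in(b,c)_{\circ}$, yields $g_i\big((\delta_i,b)_{\circ}\big)=(b,g_ib)_{\circ}$. Since $\delta_i<_b c$ gives $c\in(\delta_i,b)_{\circ}$, it follows that $g_ic\in(b,g_ib)_{\circ}$, i.e. $b<_b g_ic<_b g_ib<_b c$. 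Thus the images $g_ic$ are trapped in the arcs $(b,g_ib)_{\circ}$, which I expect to collapse to $b$.

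The heart of the matter, and the step I expect to be the main obstacle, is converting ``$g_ib\to b$'' into the genuinely \emph{one-sided} collapse of these arcs, even though $\tau_R$-neighborhoods of $b$ are two-sided at the wrap point of the cut. The key sublemma I would prove is: for every $p\in(b,c)_{\circ}$ one has $g_ib<_b p$ eventually. Indeed $(c,p)_{\circ}$ is a basic $\tau_R$-open (hence $\tau$-open) neighborhood of $b$ — because $p<_b c$ forces $[c,b,p]$ — and it is disjoint from $[p,c)_{\circ}$; therefore $b\notin\overline{[p,c)_{\circ}}$, and $g_ib\to b$ forces $g_ib\notin[p,c)_{\circ}$ eventually, i.e. $g_ib<_b p$. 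Granting this, a squeeze shows $g_ic\to b$ in $\tau_R$: any $\tau_R$-neighborhood $N=(u,v)_{\circ}$ of $b$ contains $[b,v)_{\leq_b}$, and eventually $g_ib<_b v$ (immediately if $v\ge_b c$, otherwise by the sublemma with $p=v$), whence $g_ic<_b g_ib<_b v$ and $g_ic\in N$. On the other hand the orbit map $\widetilde c$ is $\tau$-continuous, so $g_ic\to c$ in $\tau$ and a fortiori in $\tau_R$. Since $\tau_R$ is Hausdorff by Proposition \ref{Hausdorff}(3) and $b\neq c$, this is a contradiction, so $\operatorname{St}(b)$ must be open and $b$ is eventually $G$-fixed. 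The remaining case $(c,b]\in\tau$ and the GLOTS statement then follow from the symmetric and linear versions of the same argument.
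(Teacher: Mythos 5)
Your proof is correct, but it reaches the contradiction by a genuinely different route than the paper. The paper's argument is finite and local: it fixes one auxiliary point $s\in(b,c)_{\circ}$, uses disjoint convex neighborhoods $O_1,O_2,O_3$ of $b,s,c$ together with Lemma \ref{l:3CONVEX} to rigidify the cyclic configuration under a small symmetric $V\in\mathcal N(e)$, and then shows that any offending $g_0\in V$ with $g_0b\neq b$ yields $[b,g_0b,g_0c]$, whence applying $g_0^{-1}$ gives $[g_0^{-1}b,b,c]$ — an immediate order contradiction with $g_0^{-1}b\in[b,c)$. You instead run a net argument: you track the images $g_ic$, trap them in the arcs $(b,g_ib)_{\circ}$ via the identity $g_i\bigl((g_i^{-1}b,b)_{\circ}\bigr)=(b,g_ib)_{\circ}$, prove the one-sided collapse sublemma (using the disjoint $\tau_R$-open set $(c,p)_{\circ}$) to force $g_ic\to b$ in $\tau_R$, and contradict $g_ic\to c$ and Hausdorffness of $\tau_R$. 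Both proofs hinge on the same two ingredients — the symmetric-neighborhood trick placing $gb$ \emph{and} $g^{-1}b$ in $[b,c)$, and the fact that translations are COP isomorphisms — and your identification of the two-sidedness of circular $\tau_R$-neighborhoods at the wrap point as the delicate step is exactly right; your sublemma handles it cleanly. What the paper's version buys is brevity and an explicit witnessing neighborhood $V$ with no limit process; what yours buys is a transparent dynamical picture (the point $c$ gets dragged to $b$), at the cost of the extra sublemma and the appeal to Proposition \ref{Hausdorff}(3). Your GLOTS reduction is the same standard monotonicity squeeze as one would expect.
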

\begin{proof}
	We treat only the circular case when $[b,c)$ is $\tau$-open; the proof of other cases is similar. 
	
	If $b$ is isolated in $(X,\tau)$ then $b$ is eventually $G$-fixed directly by continuity of the orbit map $\zeta_{b} \colon G \to X$. So assume $b$ is not $\t$-isolated. 
	Then $(b,c)$ is nonempty (otherwise, $[b,c)=\{b\}$).  
	Choose any $s \in (b,c)$. 
	Since $\t$ is Hausdorff and has a \textbf{convex} base, one may choose convex $\t$-open disjoint neighborhoods $O_1, O_2, O_3$ of $b,s,c$ respectively. Then by Lemma \ref{l:ConvexProprties}.3 we have $[O_1, O_2, O_3]$. 
	By continuity of the orbit maps $\zeta_{b}, \zeta_{s}$ and $\zeta_{c}$ there exists a \textbf{symmetric} $V\in\mathcal{N}(e)$ such that $Vb\subseteq O_1, Vs \subseteq O_2, Vc\subseteq O_3$. Then $V$ preserves the cyclic structure $[b,s,c]$. That is, 
	$$
	[g_1b, g_2s, g_3c] \qquad \text{for all } g_1, g_2, g_3 \in V.
	$$
	Since $[b,c)$ is $\t$-open and $\zeta_b \colon G \to X$ is continuous, we may suppose, in addition, that $gb \in [b,c)$ (hence, also $g^{-1}b \in [b,c)$) for every $g \in V=V^{-!}$. We claim that $gb=b$ for every $g \in V$. 
	
	Assuming the contrary, consider $g_0\in V$ with $g_0b \neq b$ (equivalently, $g_0^{-1}b \neq b$). Then necessarily $g_0b \in (b,s) \subset (b,c)$ and $[b,g_0b,s,g_0c]$. So, we have 
	$$
	[b,g_0b,g_0c].  
	$$
	Applying $g_0^{-1}$ and using that the action is COP we get
	$$
	[g_0^{-1}b,b,c].  
	$$
	This implies $g_0^{-1}b \notin [b,c)$, which is a contradiction. 
\end{proof}

\begin{remark}  
	In Lemma \ref{l:eventCIRCULAR} it is important that $G$ is a topological group (namely, the continuity of the inverse $G \to G, g \mapsto g^{-1}$). Indeed, Lemma \ref{l:eventCIRCULAR} fails for $G=X:=(\R_s,\t_s)$ the Sorgenfrey line with the usual left action. Then $G$ is a paratopological group (that is, the multiplication is continuous), the action is continuous, $(X,\leq,\t_s) \in \GLOTS_{\curlyeqprec}$ and all the intervals $[b,c)$ are open, yet there are no eventually $G$-fixed points.     
\end{remark}

Let $(X,R,\tau)$ be a generalized circularly ordered space (GCOTS), and let
\[
\theta \colon 
(X,R,\t) \to 
(\widetilde{X}, \widetilde{R},\widetilde{\tau}), \ \ \theta(x)=(x,0)
\]
denote the least $d$–extension (dense COP embedding into a COTS); see Lemma \ref{l:d-ext}. 

Usually we write: a) $x$ instead of $\theta(x)=(x,0)$; b) $x^{-}$ instead of $(x,-1)$; c) $x^{+}$ instead of $(x,1)$. 

Here, using Lemma \ref{l:eventCIRCULAR}, we show that there exists a canonical COP extension of the continuous COP action to $\widetilde X$ which is also continuous. 

\begin{prop} \label{p:ActionLeastExtension}  
	Let $(X,\t)$ be a $\GCOTS$ and let $\pi\colon G \times X \to X$ be a (separately) continuous action of a topological group $G$ on $X$. On the $\COTS$ $\widetilde X$ with $\t_{\widetilde{R}}=\widetilde{\tau}$
	define $\widetilde g:\widetilde X\to\widetilde X$ for $g\in G$ by
	\begin{equation} \label{e:permutes} 
	\widetilde g(x):=gx, \quad
	\widetilde g(x^+):=(gx^+),\quad
	\widetilde g(x^-):=(gx^-).	
	\end{equation}
	 Then:
	\begin{enumerate}
		\item Each $\widetilde g$ is a COP isomorphism (hence, a $\t_{\widetilde R}$-homeomorphism) of the $\COTS$ $(\widetilde X, \widetilde R)$;
		\item $\widetilde{\pi} \colon G \times \widetilde{X} \to \widetilde{X}$ is a $G$-action extending the action on $X$, and $\theta$ is $G$–equivariant: $\widetilde g\circ \theta=\theta\circ g$;
		\item The map $G\times \widetilde X\to \widetilde X$, $(g,z)\mapsto \widetilde g z$, is continuous. 	
	\end{enumerate}  
		(*) A similar result is true for $\GLOTS$. 
\end{prop}

\begin{proof} (1) 
	If $x\in X^{+}$, there exists $a$ with $(a,x]$ $\tau$–clopen and not $\lambda_{\medcirc}$–open. COP and continuity give
	$g\big((a,x]\big)=(ga,gx]$, which is $\tau$–clopen and not $\lambda_{\preceq}$–open, hence $gx\in X^{+}$. The left case is analogous. Thus $g$ naturally permutes singular points, Equation \ref{e:permutes} holds. So the action is well defined and moreover, $g \colon \widetilde X\to\widetilde X$ is COP.   

	(2) 
	Equivariance is immediate from the definition and Equation \ref{e:permutes}.
	
	(3) Now, for the continuity of the action $\widetilde{\pi}$, by Theorem \ref{t:COTS_joint_cont}, it is enough to show the continuity of orbit maps $\zeta_{u} \colon G \to \widetilde{X}$. If 
	$u=x=(x,0)$ then this follows from continuity of $\zeta_x \colon G \to X$. If $u=x^{-}$ then the arc $[x,a)$ of $X$ is  $\t$-open for some $a \in X$. By Lemma \ref{l:eventCIRCULAR}, $x \in X$ is eventually $G$-fixed. Since $g(x^{-})=(gx)^{-}$ it follows that $x^{-}$ is eventually $G$-fixed with respect to the action $\widetilde{\pi}$. This guarantees that $\zeta_{x^-} \colon G \to \widetilde X$ is continuous. 
	A similar proof is valid for $u=x^{+}$.
\end{proof}

\section{Novák's regular completion and COP compactifications} 
\label{s:Novak} 

A (proper) compactification of a GCOTS $(X,R,\t)$ is defined similar to the theory of GLOTS. Namely, 
let $(K,R_K,\t_{R_K})$ be a compact COTS with its c-order $R_K$ and interval topology $\t_{R_K}$.  
A proper compactification $j \colon X \to K$ is said to be a \textit{compactification of GCO-spaces}, or, 
\textit{COP compactification} if $j$ is a bi-embedding, meaning that $j$ is both a topological and order embedding. 

The usual partial order between the compactifications (up to isomorphisms) inherits the partial order between GCO-compactifications. Namely, let  
$j_i \colon X \to K_i$ be two compactifications of $(X,R,\t)$ into $(K_i,R_{K_i},\t_{R_{K_i}})$. We say that $j_2$ \textit{dominates} $j_1$ if there exists a (necessarily, unique) continuous map $F \colon K_2 \to K_1$ such that 
$F \circ j_2=j_1$. In this case it is easy to see that we automatically get that $F$ is COP. This directly follows from the following general result.  

\begin{lem} \label{l:AutomaticCOP} 
	Let $F \colon (Y_1,R_1,\tau_1) \to (Y_2,R_2,\tau_2)$ be a continuous surjective map 
	between $\GCOTS$. Let $(X_1,R_{X_1})$ and $(X_2,R_{X_2})$ be circularly ordered sets, and let $f \colon X_1 \to X_2$ be a COP map. Assume that $j_1 \colon X_1 \to Y_1$ and $j_2 \colon X_2 \to Y_2$ are COP maps such that $j_1(X_1)$ is dense in $Y_1$ and $F \circ j_1 = j_2 \circ f$. 
	Then $F$ is also a COP map.  
\end{lem}

\begin{proof} 
	First, we prove the main case when $Y_2$ contains at least three elements. By Proposition \ref{p:Prop-c-ord}.1, it is sufficient to show condition (COP1).  
	Assume the contrary, and let $y_1, y_2, y_3 \in Y_1$ be such that $[y_1,y_2,y_3]$ holds in $Y_1$ but $[F(y_1),F(y_3),F(y_2)]$ holds in $Y_2$. 
	By Lemma \ref{l:c-is-Open}.1, there exist open neighborhoods $O_i$ of $y_i$ in $Y_1$ and $U_i$ of $F(y_i)$ in $Y_2$ (for $i \in \{1,2,3\}$) such that $[O_1,O_2,O_3]$ and $[U_1,U_3,U_2]$ hold. By the continuity of $F$, we may additionally assume that $F(O_i) \subseteq U_i$. 
	
	Since $j_1(X_1)$ is dense in $Y_1$, we may choose $x_1,x_2,x_3 \in X_1$ such that $j_1(x_i) \in O_i$. Then $[O_1,O_2,O_3]$ forces $[j_1(x_1),j_1(x_2),j_1(x_3)]$. 
	Because $F(j_1(x_i)) \in U_i$, the relation $[U_1,U_3,U_2]$ forces $[F(j_1(x_1)),F(j_1(x_3)),F(j_1(x_2))]$. Using the commutativity $F \circ j_1 = j_2 \circ f$, this translates to:
	\[
	[j_2(f(x_1)), j_2(f(x_3)), j_2(f(x_2))]. 
	\]  
	However, (COP1) and the relation $[j_1(x_1), j_1(x_2), j_1(x_3)]$ implies $[x_1,x_2,x_3]$ in $X_1$. Since $j_2 \circ f$ is a COP map, it preserves this cycle, yielding $[j_2(f(x_1)), j_2(f(x_2)), j_2(f(x_3))]$. This contradicts the relation established above, completing the proof for this case. 
	
	In the special case of $Y_2:=\{y_1,y_2\}$, assuming the contrary to (COP2), the fibers $F^{-1}(y_1), F^{-1}(y_2)$ fail to be  convex in $Y_1$. They should be clopen by the continuity of $F$. Using the density of $j_1(X_1)$ in $Y_1$ and the   commutativity $F \circ j_1=j_2 \circ f$ one may derive a similar contradiction using as before the idea involving Lemma \ref{l:c-is-Open}.1.   
\end{proof}

Recall that by (complete=compact) Theorem \ref{t:c-comp} a COTS $(X,R)$ is compact in the interval topology $\lambda_R$ if and only if $(X,R)$ is complete (that is, iff it has no gaps). 

It is important to take into account that if $j \colon (X,R_X) \to (Y,R_Y)$ is an embedding of circularly ordered sets with $R_Y$ complete (equivalently, with compact $(Y,\lambda_{R_Y})$), then this ``completion" is not necessarily a compactification of the topological space $(X,\lambda_{R_X})$ because 
$j \colon (X,\lambda_{R_X}) \to (Y,\lambda_{R_Y})$ is not necessarily continuous. Indeed, the concept of GCOTS  demonstrates this. 

For a concrete example, consider the discrete circled rationals $(\Q_0,R,\t_{discr})$ and its COP $\t_{discr}$-topological dense embedding into the compact COTS $Y:=\rm{trip}(\T;\Q_0)$ (see Fact \ref{f:GM}.2). Then 
the dense COP inclusion $\Q_0 \hookrightarrow \rm{trip}(\T;\Q_0)$ is not a compactification of the COTS $(\Q_0,\lambda_{R})$ (but of the GCOTS $(\Q_0,\t_{discr})$).  
So, a ``completion" of LOTS is not necessarily continuous and hence it is not automatically a compactification.
This justifies the following careful definition and Theorem \ref{t:CompletionEmbeddingAndDensity}. 

\begin{defin} \label{d:TopCompletion} 
Let us say that a COP map $f \colon (X,R_X) \to  (Y,R_Y)$ is a \textit{topological completion} if $(Y,R_Y)$ is complete  and $f$ is a compactification with respect to the interval topologies. 	
\end{defin} 

Below in Theorem \ref{t:CompletionEmbeddingAndDensity} we show that \textit{Novak's regular completion} $\nu\colon X\to \Xcal_r$, defined by $x\mapsto \leq_x$ of every circularly ordered set is a (proper) compactification  hence a \textbf{topological completion}. 

\begin{defin}[Nov\'ak \cite{Novak-cuts}]  \label{d:Novak3blocks} 
	A \emph{regular cut} on $(X,R)$ is either a gap or a lower point-cut $\leq_x$ (for some $x\in X$).
	Denote by $\Xcal_r$ the set of all regular cuts in $(X,R)$. Circular order
	$\Rcal$ on $\Xcal_r$ is defined as follows: for distinct $L_1,L_2,L_3\in \Xcal_r$,
	\[
	[L_1,L_2,L_3]_{\Rcal}
	\ \Longleftrightarrow\
	\exists\ \text{a disjoint partition } X=A\sqcup B\sqcup D\ \text{into nonempty pieces such that}
	\]
	\[
	(X,L_1)=A\oplus B\oplus D,\qquad
	(X,L_2)=B\oplus D\oplus A,\qquad
	(X,L_3)=D\oplus A\oplus B,
	\]
	where $U\oplus V$ denotes the ordinal sum of linearly ordered sets. 
	Then $(\mathcal{X}_r, \mathcal{R})$ is complete by \cite[Theorem 5.6]{Novak-cuts}. 
	The map $\nu\colon X\to \Xcal_r$, defined by $z\mapsto \leq_z$, 
	is an embedding of circularly ordered sets by 
	\cite[Corollary 4.5]{Novak-cuts}: 
	$
	[a,b,c]_R\iff [\nu(a),\nu(b),\nu(c)]_{\Rcal}\quad(\text{for distinct } a,b,c \in X).
	$
\end{defin}

\begin{thm} \label{t:CompletionEmbeddingAndDensity}
	Let $(X,R)$ be a c-ordered set and $(\Xcal_r,\Rcal)$ its Novák regular completion.
	Let $\mathrm{int}_r$ be the interval topology of the circularly ordered set $(\Xcal_r,\Rcal)$.
	Then the canonical map
	\[
	\nu \colon (X,\lambda_{\medcirc})\longrightarrow (\Xcal_r,\mathrm{int}_r),\qquad x \mapsto {\leq_x},
	\]
	is a COP topological embedding, and $\nu(X)$ is dense in the compact $\COTS$ $(\Xcal_r,\mathrm{int}_r)$.
\end{thm}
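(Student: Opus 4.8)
The plan is to prove the four assertions---that $(\Xcal_r,\mathrm{int}_r)$ is a compact $\COTS$, that $\nu$ is COP, that $\nu$ is a topological embedding, and that $\nu(X)$ is dense---in an order that lets each step feed the next.

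First I would settle the ambient space. Novák's theorem quoted in Definition~\ref{d:Novak3blocks} gives that $(\Xcal_r,\Rcal)$ is complete, so the ``complete $=$ compact'' principle (Theorem~\ref{t:c-comp}) immediately yields that $\mathrm{int}_r$ is compact; it is a $\COTS$ by construction and Hausdorff by Proposition~\ref{Hausdorff}(3). The COP property of $\nu$ is then free: the equivalence $[a,b,c]_R\Leftrightarrow[\nu(a),\nu(b),\nu(c)]_{\Rcal}$ recorded in Definition~\ref{d:Novak3blocks} is exactly (COP1) in its forward direction, and, $\nu$ being injective, condition (COP2) is vacuous. Its backward direction gives order-reflection, so $\nu$ is in fact a \emph{c-order isomorphism} of $(X,R)$ onto $(\nu(X),\Rcal|_{\nu(X)})$.

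The conceptual core is density, which I would obtain by unwinding the partition definition of $\Rcal$. Let $(L_1,L_2)_{\Rcal}\neq\emptyset$ and choose $L_3$ in it; if $L_3$ is a point-cut we are done, so suppose $L_3$ is a gap, realized by nonempty blocks $X=A\sqcup B\sqcup D$ with $(X,L_1)=A\oplus B\oplus D$, $(X,L_3)=B\oplus D\oplus A$, $(X,L_2)=D\oplus A\oplus B$. The key point is that the endpoints do not rigidify the whole partition: picking any $x\in B$ and splitting the initial $L_1$-block $A\oplus B$ as $A':=A\oplus B_{<x}$, $B':=B_{\ge x}$, $D':=D$ produces nonempty blocks (using $A\neq\emptyset$) for which $L_1=A'\oplus B'\oplus D'$, $L_2=D'\oplus A'\oplus B'$, and $B'\oplus D'\oplus A'=B_{\ge x}\oplus D\oplus A\oplus B_{<x}$ is precisely the point-cut $\le_x$. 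Hence $\nu(x)=\,\le_x\in(L_1,L_2)_{\Rcal}\cap\nu(X)$, so $\nu(X)$ is dense.

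Finally I would upgrade the algebraic isomorphism to a topological embedding. As a COP bijection, $\nu$ is a homeomorphism of $(X,\tau_R)$ onto $\nu(X)$ carrying the interval topology $\tau_{\Rcal|_{\nu(X)}}$; it therefore suffices to see that on the dense set $\nu(X)$ this interval topology coincides with the subspace topology inherited from $\mathrm{int}_r$. One inclusion is automatic; for the other, given $\nu(z)\in(L_1,L_2)_{\Rcal}$ I insert point-cuts $\nu(a),\nu(b)$ with $\nu(z)\in(\nu(a),\nu(b))_{\Rcal}\subseteq(L_1,L_2)_{\Rcal}$, using density on the two sub-arcs $(L_1,\nu(z))_{\Rcal}$ and $(\nu(z),L_2)_{\Rcal}$. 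The main obstacle I expect is precisely the degenerate case here, when one sub-arc, say $(L_1,\nu(z))_{\Rcal}$, is empty and no interior point-cut can be inserted on that side. I would dispose of it by the same partition bookkeeping as in the density step: writing $\nu(z)=\,\le_z$ as $B\oplus D\oplus A$ via the triple $[L_1,\nu(z),L_2]$ and using that a gap $L_1=A\oplus B\oplus D$ has $A$ without a minimum, one checks that every $y\in A$ yields $\le_y\in(L_1,\nu(z))_{\Rcal}$; emptiness therefore forces $L_1$ itself to be a point-cut of $\nu(X)$, which then serves directly as the left endpoint $\nu(a)$ (and symmetrically on the right). Apart from these endpoint verifications and the routine reading of each point-cut as an ordinal sum of the three blocks, every step is mechanical.
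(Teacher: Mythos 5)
Your proof is correct, and although it shares the paper's basic toolkit (compactness from Theorem~\ref{t:c-comp}, the order-embedding equivalence of Definition~\ref{d:Novak3blocks} for openness onto the image, and partition surgery on the three-block description of $\Rcal$), its architecture differs in the most technical half. The paper proves continuity \emph{directly}: from the partition witnessing $[L_1,\le_z,L_2]$ it picks $a_0\in A$, $d_0\in D$, sets $U=(a_0,d_0)_R$ and re-blocks in three cases to show $\nu(U)\subseteq(L_1,L_2)_{\Rcal}$; density is a separate, similar surgery. You prove density first --- splitting the middle block $B$ at $x$ and absorbing $B_{<x}$ into $A$, which keeps all three new blocks nonempty by construction --- and then \emph{derive} continuity by inserting point-cuts into the two sub-arcs, paying for this with the degenerate one-sided case, which you resolve correctly (a gap $L_1=A\oplus B\oplus D$ has $A$ without a minimum, so every $y\in A$ witnesses nonemptiness of $(L_1,\le_z)_{\Rcal}$). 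Your surgery is in fact the safer of the two: the paper's blocks $A':=A\setminus\{w\}$, $B':=\{w\}$, $D':=D\cup\{z\}$ realize the stated ordinal-sum identities only when $w$ is extremal in $A$; the correct choice is $A_{<w}$, $A_{\ge w}\cup\{z\}$, $D$. Two things to make explicit in a final write-up: the lemma that the rotation $[x,\to)\oplus(\leftarrow,x)$ of a cut is again a cut, hence equals $\le_x$ (you use it each time you read $B'\oplus D'\oplus A'$ as a point-cut), and the observation that the constructed point-cut differs from $L_1,L_2$ (compare minima), since $[\,\cdot,\cdot,\cdot\,]_{\Rcal}$ is only defined for distinct cuts.
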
 
\begin{proof} 	
	To prove that $\nu(X)$ is dense in $\Xcal_r$, let $(L_1, L_2)_{\Rcal}$ be a non-empty open interval. There exists $L \in \Xcal_r$ such that $[L_1, L, L_2]_{\Rcal}$. By Definition \ref{d:Novak3blocks}, there is a partition of $X$ into three non-empty sets $A, B, D$ such that:
	\[
	L_1 = A \oplus B \oplus D, \quad L = B \oplus D \oplus A, \quad L_2 = D \oplus A \oplus B.
	\]
	Choose $z \in B$. Under the linear order $L_1$, we split $B$ at $z$ into $B_1 = \{x \in B : x <_{L_1} z\}$ and $B_2 = \{x \in B : z \leq_{L_1} x\}$. This gives the internal $L_1$-ordering $B = B_1 \oplus B_2$, with $B_2 \neq \emptyset$ since $z \in B_2$.  
	Define a new partition of $X$ by shifting $B_1$: $A' = A \sqcup B_1$, $B' = B_2$, $D' = D$. These blocks are non-empty, and substituting $B = B_1 \oplus B_2$ into the equations for $L_1$ and $L_2$ yields:
	\[
	L_1 = A' \oplus B' \oplus D', \quad L_2 = D' \oplus A' \oplus B'.
	\]
	Because $z$ is the $L_1$-minimum of $B_2=B'$, the cyclic sequence of $L_1$ starting from $B'$ exactly generates the point-cut $\leq_z$. Thus:
	$
	{\leq_z} = B' \oplus D' \oplus A'.
	$
	By Definition \ref{d:Novak3blocks}, 
	%the strict cyclic relation 
	$[L_1, \leq_z, L_2]_{\Rcal}$ holds, meaning $\nu(z) \in (L_1, L_2)_{\Rcal}$. Hence, $\nu(X)$ is dense.
	
	\smallskip\noindent
	\textbf{Topological Embedding.}
	As a COP map, $\nu$ is injective. 
	
	\emph{Openness onto the image:} The image of a basic open interval $(a, b)_R$ is exactly $(\nu(a), \nu(b))_{\Rcal} \cap \nu(X)$, which is open in the subspace topology of $\nu(X)$. 
	
	\emph{Continuity:}
	It is enough to consider rays of the form $(L,\nu(b))_{\Rcal}$ and $(\nu(a),L)_{\Rcal}$,
	where $L\in \Xcal_r$ and $a,b\in X$, because $\nu(X)$ is dense in $\Xcal_r$:
	for every $M\in (L_1,L_2)_{\Rcal}$ one can choose $a,b\in X$ such that
	$\nu(a)\in (L_1,M)_{\Rcal}$ and $\nu(b)\in (M,L_2)_{\Rcal}$, and then
	$M\in (\nu(a),L_2)_{\Rcal}\cap (L_1,\nu(b))_{\Rcal}\subseteq (L_1,L_2)_{\Rcal}$.
	Thus these rays form a subbase for $\mathrm{int}_r$. 
	
	Consider $U = \nu^{-1}((L, \nu(b))_{\Rcal})$. If $L = \nu(a)$, then $U = (a, b)_R$, which is open. If $L = \gamma$ is a gap, $[\gamma, \leq_x, \leq_b]_{\Rcal}$ means $x <_\gamma b$. Because the gap $\gamma$ lacks a minimum, there exists $a \in X$ such that $a <_\gamma x <_\gamma b$. By Lemma \ref{l:cut}.2, this implies $[a,x,b]_R$, so $x \in (a,b)_R$. Consequently, $U = \bigcup_{a <_\gamma b} (a, b)_R$, which is open in $\lambda_{\medcirc}$. A symmetric argument applies to $(\nu(a), L)_{\Rcal}$. Thus, $\nu$ is continuous.
\end{proof}

\begin{remark} \label{r:PullbackEmbedding}
	Because the canonical map $\nu \colon X \to \mathcal{X}_r$ is a strict COP embedding (as established in the proof of Theorem \ref{t:CompletionEmbeddingAndDensity}), it preserves circular relations exactly. Consequently, the pullback of any basic open cyclic interval in $\mathcal{X}_r$ whose endpoints lie in $\nu(X)$ precisely recovers the corresponding basic open interval in $X$. That is, for any $a, b \in X$:
	\[
	\nu^{-1}\big((\nu(a), \nu(b))_{\mathcal{R}}\big) = (a,b)_R.
	\]
	By extension, the pullback of any basic cyclic cover constructed from these intervals exactly yields the corresponding cover in $X$.
\end{remark}

\begin{cor} \label{c:COTSisCOMPACTIFIABLE} 
Every $(Y,R_Y,\t) \in \GCOTS_{\medcirc}$  admits a proper topological COP compactification. 	
\end{cor}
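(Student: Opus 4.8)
The plan is to observe that this corollary is an immediate translation of Theorem~\ref{t:CompletionEmbeddingAndDensity} against the terminology for compactifications fixed in Section~\ref{s:compINV}. First I would note that if $(X,\tau)$ is a $\COTS$, then by definition $\tau=\tau_R$ for some circular order $R$ on $X$; hence the Nov\'ak regular completion $(\Xcal_r,\Rcal)$ together with its canonical map $\nu\colon (X,\tau_R)\to(\Xcal_r,\mathrm{int}_r)$, $z\mapsto\ \leq_z$, is available.

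Next I would invoke Theorem~\ref{t:CompletionEmbeddingAndDensity} verbatim: it asserts that $\nu$ is a COP topological embedding with $\nu(X)$ dense in $(\Xcal_r,\mathrm{int}_r)$, and that the latter is a compact $\COTS$. The only remaining task is to check that these conclusions line up with the three relevant definitions. A \emph{compactification} is a continuous dense map into a compact Hausdorff space: the target $(\Xcal_r,\mathrm{int}_r)$ is Hausdorff because every $\COTS$ is Hausdorff (Proposition~\ref{Hausdorff}.3), and it is compact by the complete$=$compact principle (Theorem~\ref{t:c-comp}) applied to the complete c-ordered set $(\Xcal_r,\Rcal)$; density and continuity of $\nu$ are part of the cited theorem. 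The compactification is \emph{proper} exactly when the map is a topological embedding, which Theorem~\ref{t:CompletionEmbeddingAndDensity} also supplies. Finally it is a \emph{COP} compactification because $\nu$ is c-order preserving. Assembling these, $\nu$ is precisely a proper topological COP compactification of $(X,\tau)$.

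I do not anticipate any genuine obstacle at this stage: all the substantive work---the construction of the regular completion, and the verification of openness onto the image, continuity, and density of $\nu$---has already been carried out in the proof of Theorem~\ref{t:CompletionEmbeddingAndDensity}. The corollary is therefore a direct unpacking, and the proof will consist only of naming the right earlier result and matching it term-by-term against the definitions of ``proper'', ``compactification'', and ``COP''.
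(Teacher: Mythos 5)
Your proposal is correct and matches the paper exactly: the corollary is stated immediately after Theorem~\ref{t:CompletionEmbeddingAndDensity} with no separate proof, precisely because it is the direct unpacking you describe, with $\nu\colon X\to(\Xcal_r,\mathrm{int}_r)$ supplying the dense COP topological embedding into a compact Hausdorff $\COTS$. Your term-by-term check against the definitions of ``proper'', ``compactification'', and ``COP'' is exactly what the paper implicitly relies on.
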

\begin{proof}  By definition of GCOTS
 there exists a COP $\t$-topological embedding $i$ of  $(Y,R_Y,\t)$ into a COTS $(X,R_X,\t_{R_X})$. Nov\'ak's completion  $\nu \colon (X,R_X,\lambda_{\medcirc}) \to (\Xcal_r,\Rcal,\mathrm{int}_r)$  by Theorem \ref{t:CompletionEmbeddingAndDensity} is a COP proper compactification. Now  $\nu \circ i$ induces a COP proper $\t$-compactification of $(Y,R_Y,\t)$. 
\end{proof}

Note that Corollary \ref{c:COTSisCOMPACTIFIABLE} is a part of results proved recently by G.B. Sorin \cite[Corollary 3.2]{Sorin24} using different methods. 

\begin{cor} \label{c:OutComp} 
a) $\GLOTS \setminus Comp=\GCOTS \setminus Comp$; 
b) $\GCOTS \setminus \GLOTS \subset comp\COTS$.  	
\end{cor}
\begin{proof}   
\rm{(a)} 	
$\subseteq$-\textbf{part} follows from the inclusion $\GLOTS \subseteq \GCOTS$ (Proposition \ref{p:NoGCOTS}(a)). 

$\supseteq$-\textbf{part}. 
Let $X \in \GCOTS$ and $X$ is not compact. Then by Corollary \ref{c:COTSisCOMPACTIFIABLE} there exists a proper compactification 
$j \colon X \to K$ where $K$ is a compact COTS. Since $X$ is not compact we have $j(X) \neq K$. Then $j(X)$ (hence, also $X$) is $\GLOTS$ by Lemma \ref{l:easyPropGCO}.3.  

(b) follows by (a) and the inclusion 
$\GLOTS \subset \GCOTS$ from Proposition \ref{p:NoGCOTS}(a).
\end{proof}

\begin{thm}[Minimality of Nov\'ak's regular completion]\label{t:NovakMinimality}
	Let $i_Y \colon (X,R) \to (Y,R_Y)$ be any proper circularly ordered compactification.
	Then there exists a unique continuous circular order–preserving (COP) surjection
	$
	\Psi \colon Y \to (\Xcal_r,\Rcal)
	$
	such that $\Psi \circ i_Y = \nu$.
\end{thm}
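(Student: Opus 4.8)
The plan is to exhibit $\Xcal_r$ as the smallest proper COP compactification by extending $\nu$ continuously over $Y$, exploiting that both maps are \emph{order--embeddings}. By Theorem~\ref{t:CompletionEmbeddingAndDensity}, $\nu$ reflects the circular order (if $[\nu a,\nu b,\nu c]_{\Rcal}$ then $[a,b,c]_R$), and the same holds for the proper COP embedding $i_Y$: since $i_Y$ is injective and COP, if $[i_Y a,i_Y b,i_Y c]_{R_Y}$ then $[a,b,c]_R$, for otherwise $[a,c,b]_R$ would give $[i_Y a,i_Y c,i_Y b]_{R_Y}$, contradicting asymmetry. Identifying $X$ with $i_Y(X)\subseteq Y$, I would set, for each $y\in Y$,
$$
\Psi(y):=\lim_{i_Y(x)\to y}\nu(x)\ \in\ \Xcal_r,
$$
the cluster point in $\Xcal_r$ of the net $\nu(x)$ over all $x\in X$ with $i_Y(x)\to y$. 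Uniqueness is then immediate: any continuous $\Psi'$ with $\Psi'|_X=\nu$ must agree with $\Psi$ on the dense set $i_Y(X)$, and $\Xcal_r$ is Hausdorff. As $\Psi|_X=\nu$ holds by construction, everything reduces to the existence of this limit.

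The heart of the argument, and the step I expect to be the main obstacle, is that the cluster set
$$
C(y)=\bigcap\{\,\overline{\nu(U\cap X)}\ :\ U\ \text{a neighborhood of }y\ \text{in }Y\,\}
$$
is a \emph{singleton} for every $y$; it is nonempty by compactness. Suppose $L\neq L'$ both lie in $C(y)$. Using that $\Xcal_r$ is a compact COTS with the interval base of Proposition~\ref{Hausdorff}, that $\nu(X)$ is dense, and Hausdorffness, I would enclose $L$ and $L'$ in disjoint open convex arcs $V=(\nu a_1,\nu a_2)_{\Rcal}\ni L$ and $V'=(\nu b_1,\nu b_2)_{\Rcal}\ni L'$ with endpoints in $\nu(X)$ and with disjoint closures $[\nu a_1,\nu a_2]_{\Rcal}\cap[\nu b_1,\nu b_2]_{\Rcal}=\emptyset$. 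Choosing nets $x_\alpha,x'_\beta\in X$ with $i_Y(x_\alpha),i_Y(x'_\beta)\to y$ and $\nu(x_\alpha)\to L$, $\nu(x'_\beta)\to L'$, I obtain $\nu(x_\alpha)\in V$ and $\nu(x'_\beta)\in V'$ eventually; since $\nu$ reflects the order, $x_\alpha\in(a_1,a_2)_R$ and $x'_\beta\in(b_1,b_2)_R$, and applying the order-embedding $i_Y$ yields $i_Y(x_\alpha)\in(i_Y a_1,i_Y a_2)_{R_Y}$ and $i_Y(x'_\beta)\in(i_Y b_1,i_Y b_2)_{R_Y}$. Since the closed arcs are closed sets containing $y$ in their closures, passing to the limit forces
$$
y\in[i_Y a_1,i_Y a_2]_{R_Y}\cap[i_Y b_1,i_Y b_2]_{R_Y}.
$$
But these two closed arcs are disjoint, because their $\nu$-counterparts are disjoint and both $\nu$ and $i_Y$ preserve the cyclic arrangement of the four distinct endpoints; this contradiction shows $C(y)$ is a singleton. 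The degenerate configurations are disposed of separately: if $X$ is finite, or if a cluster value is an isolated point of $\Xcal_r$, then the approaching net is eventually constant, whence $y\in X$ and $C(y)=\{\nu(y)\}$.

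With $\Psi$ well defined, the remaining properties are routine. Continuity is the standard fact that a map defined on a dense subset of $Y$ into a compact Hausdorff space by pointwise limits is continuous once those limits exist at every point. Surjectivity follows because $\Psi(Y)$ is a continuous image of a compact space, hence closed, and contains the dense set $\nu(X)=\Psi(i_Y(X))$. Finally, to see that $\Psi$ is COP I would verify (COP1) using that $\Rcal$ is open on distinct triples (Lemma~\ref{l:c-is-Open}): given $[y_1,y_2,y_3]_{R_Y}$ with $\Psi(y_i)$ distinct, choose neighborhoods respecting both cyclic orders and approximate each $y_i$ by $x_i\in X$, so that $[\nu x_1,\nu x_2,\nu x_3]_{\Rcal}$ propagates to $[\Psi y_1,\Psi y_2,\Psi y_3]_{\Rcal}$; and (COP2) by checking that each fiber $\Psi^{-1}(w)$ is convex (Remark~\ref{r:convex}), which again reduces, via density and the order-reflecting property of $\nu$, to the convexity of the fibers of the COP map $\nu$.
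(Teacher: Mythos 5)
Your architecture is genuinely different from the paper's: you build $\Psi$ by extension-by-continuity, concentrating all the work in showing that the cluster set $C(y)$ is a singleton, whereas the paper defines $\Psi(y)$ directly and order-theoretically as the cut $\le_y$ on $X$ given by $a<_y b\iff[y,a,b]_{R_Y}$ (so well-definedness is automatic, and the effort goes into verifying continuity and the COP property via the three-block description of $\Rcal$). Your route is viable in principle — if the theorem holds, $C(y)$ is indeed $\{\Psi(y)\}$ — and it buys an almost free continuity and uniqueness argument. But the central step has a genuine gap.

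The separation you invoke — disjoint \emph{closed} arcs $[\nu a_1,\nu a_2]\ni L$ and $[\nu b_1,\nu b_2]\ni L'$ with all four endpoints in $\nu(X)$ — requires at least two distinct points of $\nu(X)$ in each of the open arcs $(L,L')_{\Rcal}$ and $(L',L)_{\Rcal}$ (the endpoints $\nu a_2,\nu b_1$ must be distinct points of $(L,L')_{\Rcal}$, otherwise the closed arcs meet; similarly on the other side). This is automatic when at least one of $L,L'$ is a gap, since gaps are two-sided limits of $\nu(X)$; but when $L=\nu(a)$ and $L'=\nu(b)$ are point-cuts with $(a,b)_R=\emptyset$ or $(a,b)_R$ a singleton (a ``jump'' in $X$), no such configuration exists: any open arc with $\nu(X)$-endpoints containing $L$ in its interior must then contain $L'$ in its closure, or share an endpoint with the arc around $L'$. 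Your degenerate-case clause (finite $X$, or an isolated cluster value) does not cover this, because $a$ can be adjacent to $b$ on one side while being a limit point of $X$ from the other side, hence not isolated. The repair is to allow arcs closed at the cluster values themselves when they lie in $\nu(X)$: from $\nu(a)\in C(y)$ one gets $y\in\bigcap_u[i_Y(u),i_Y(a)]_{R_Y}$ and from $\nu(b)\in C(y)$ one gets $y\in\bigcap_v[i_Y(b),i_Y(v)]_{R_Y}$, and for $[u,a,b,v]$ an injective cycle these two closed arcs in $Y$ are disjoint (their interiors miss the dense set $i_Y(X)$ on the overlap and the four endpoints are distinct), which gives the contradiction. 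As written, though, this case is simply not handled. A secondary, more minor point: your verification of (COP2) for $\Psi$ (convexity of fibers) is only asserted to ``reduce to the convexity of the fibers of $\nu$,'' which are singletons; the actual content is an approximation argument producing a forbidden $4$-cycle, and it deserves to be spelled out.
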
 
\begin{proof} 
\emph{Definition of $\Psi$.}
For $y\in i_Y(X)$, let $\Psi(y):=\le_x$, where $y=i_Y(x)$ and $\le_x$ is the standard lower point-cut at $x$. If $y\in Y\setminus i_Y(X)$, define a relation $<_y$ on $X$ by
	\[
	x_1<_y x_2 \ \Longleftrightarrow\ [y,x_1,x_2]_{R_Y}\qquad (x_1\neq x_2,\ x_1,x_2\in X),
	\]
and let $\le_y$ be its reflexive closure. We claim that $\le_y$ is a regular cut on $(X,R)$. Indeed, $\le_y$ is a cut on $(X,R)$, being the restriction to $X$ of the standard cut at $y$ on $(Y,R_Y)$ (Remark~\ref{r:chech}.2). If $\le_y$ has a least element $x\in X$, then $\le_y=\le_x$: for $a,b\in X\setminus\{x\}$ one has $a<_y b \iff [y,a,b]$, and since $x$ is the least element of the restricted cut $\le_y$ on $X$,
this is equivalent to $[x,a,b]$, hence to $a<_x b$.

Assume now that $\le_y$ has no least element. We claim that it cannot have a greatest element. Otherwise, let $x$ be the greatest element of $(X,\le_y)$. Then $z\le_y x$ for every $z\in X$, hence $X\subseteq [y,x]_{R_Y}$. By Lemma~\ref{l:cut}.3, the subspace topology on $X$ inherited from the circular interval $[y,x]_{R_Y}$ coincides with the interval topology
$\lambda_{\le_y}$ on $X$. On the other hand, since $i_Y:(X,\lambda_R)\to (Y,\lambda_{R_Y})$
is a topological embedding, the subspace topology on $X$ inherited from $Y$ is exactly $\lambda_R$.
	Therefore $\lambda_R=\lambda_{\le_y}$ on $X$, contradicting Lemma~\ref{l:cWEAKER}.3,
	because the restricted order $\le_y$ on $X$ would have a maximum but no minimum.
	
	Now we set $\Psi(y):=\le_y \in \Xcal_r$. 
	
	\smallskip
	\emph{Continuity and COP.} We claim that for any $x_1, x_2 \in X$ and any $y \in Y$:
	\[
	[x_1, y, x_2]_{R_Y} \iff [\nu(x_1), \Psi(y), \nu(x_2)]_{\Rcal}.
	\]
	Suppose $[x_1, y, x_2]_{R_Y}$. This means $y \in (x_1, x_2)_{R_Y}$. We partition $X$ into three sets using the intervals of $Y$:
	\[
	B = (y, x_2)_{R_Y} \cap X, \quad D = [x_2, x_1]_{R_Y} \cap X, \quad A = (x_1, y)_{R_Y} \cap X.
	\]
	Since $X$ is dense in $Y$ and $y \notin \{x_1, x_2\}$, these sets are non-empty and clearly partition $X$.
	Evaluating the linear orders defined by the cuts $\nu(x_1)=\le_{x_1}$, $\Psi(y)=\le_y$, and $\nu(x_2)=\le_{x_2}$ on $X$, their cyclic sequences begin at their respective minimums:
	\[
	(X, \le_{x_1}) = A \oplus B \oplus D,\qquad (X, \le_y) = B \oplus D \oplus A,\qquad (X, \le_{x_2}) = D \oplus A \oplus B.
	\]
	By Definition \ref{d:Novak3blocks}, this partition witnesses exactly $[\nu(x_1), \Psi(y), \nu(x_2)]_{\Rcal}$. The converse holds by totality. 
	This equivalence implies $\Psi^{-1}\big((\nu(x_1), \nu(x_2))_{\Rcal}\big) = (x_1, x_2)_{R_Y}$. Because $\nu(X)$ is dense in $\Xcal_r$, intervals of the form $(\nu(x_1), \nu(x_2))_{\Rcal}$ form a base for $\mathrm{int}_r$. Since the preimage of every basic open set is the open interval $(x_1, x_2)_{R_Y}$, $\Psi$ is continuous.    
	Since $\Psi$ is a continuous surjection between proper compactifications extending the COP identity on the dense subset $X$, Lemma \ref{l:AutomaticCOP} guarantees that $\Psi$ is COP. Surjectivity and uniqueness follow immediately from the density of $X$.
\end{proof}

\begin{remark} \label{r:bingo}
	Theorems \ref{t:CompletionEmbeddingAndDensity} and \ref{t:NovakMinimality} demonstrate that Nov\'{a}k's regular completion is the \emph{least} circularly ordered compactification of $(X,R)$, fully analogous to the linear theory.
	For a LOTS $(X,\leq)$, there are two classical approaches to the least orderable compactification: Kaufman’s order-topological construction via closed ideals \cite{Kaufman1967} and the Dedekind--MacNeille completion \cite{BM}.
	As noted in \cite[p.~580]{BM}, they coincide. Nov\'{a}k’s regular cuts (gaps and lower point-cuts) serve as the exact circular counterparts of Dedekind cuts (gaps and principal cuts), satisfying the same universal property in the circular setting.
\end{remark}

\subsection*{Additional remarks on COP compactifications} 

It is well known that any compactification of a topological space $X$ can be described as a diagonal product $j \colon X \to Y=cl(j(X)) \subset [0,1]^I$ of continuous functions 
$F:=\{f_i: X \to [0,1]\}_{i \in I}$. 

Below we show that the circle $\T$ may play a similar role for GCO-compactifications. 

\begin{prop} \label{p:DiagProd}  
	Let $(X,R,\t)$ be a $\GCOTS$, and $\{f_i \colon  X \to \T\}_{i \in I}$ be a $3$-point separating  
	family of c-order preserving $\t$-continuous maps into the circle. Then the induced injective diagonal map  
	$\nu \colon X \to Y=cl(\nu(X)) \subset \T^I, \  \nu(x)(i)=f_i(x),$ is a c-order preserving injective compactification of $(X,\t)$, where $Y$ is a compact $\COTS$. 
\end{prop}
\begin{proof}  
	Here $T^I$ carries the structure of PCOTS (Definition \ref{d:PartCOTS}) according to Lemma \ref{l:PCOTSprop}.3. The diagonal map  
	$\nu \colon X \to Y=cl(\nu(X)) \subset \T^I, \  \nu(x)(i)=f_i(x)$ is clearly injective.  
	Since the family is $3$-point separating, for every cycle $[u,v,w]$ in $Y$ there exists $i \in I$ such that $[f_i(u),f_i(v),f_i(w)]$ is a cycle in $\T$. If for some $j \in I$ we get a pairwise distinct triple $f_j(u), f_j(v), f_j(w)$ then necessarily $[f_j(u),f_j(v),f_j(w)]$ (otherwise, $f_j$ is not COP). This observation implies that induced partial-circular order on $\nu(X)$ is necessarily total and $\nu$ is an isomorphism of circularly ordered sets. So, 
	the injective continuous map $\nu$ is a COP map. Since the relation on the dense subset $\nu(X)$ of $Y$ is a circular order, by Lemma \ref{l:DenseCircOrd} we obtain that $Y$ is a COTS.    
\end{proof} 

For every $(X,\circ,\t) \in \GCOTS$ denote by $C_+(X,\T)$ the class of all $\t$-continuous $\circ$-COP maps into the circle $\T$. 

\begin{prop} \label{p:CompToCircle}
	Let $(X,\circ,\t)$ be a $\GCOTS$. Then 
	\begin{enumerate}
		\item $C_+(X,\T)$ is a $3$-point separating family. 
		\item There exists a $\t$-topological COP embedding $\nu \colon X \to \T^I$, where $cl(\nu(X))$ is a compact $\COTS$ (as before $\T^I$ is endowed with its product topology and coordinatewise partial circular order).
	\end{enumerate} 
\end{prop}

\begin{proof} 
 By Corollary \ref{c:COTSisCOMPACTIFIABLE} it suffices to treat the compact case; let $K:=X$ be a compact COTS. 
 
 	(1) Fix distinct points $c,u,v\in K$. Consider the single-split map $q_c\colon K(c)\to K$. By Corollary \ref{cover}(iii), $K(c)$ is a LOTS and $q_c$ is a quotient map that restricts to a homeomorphism on $K(c) \setminus \{c^\pm\}$.  
 	Assume without loss of generality that $[c,u,v]$ holds in $K$ (otherwise swap $u,v$). Then
 	\[
 	c^- <_c u <_c v <_c c^+ \quad\text{in } K(c).
 	\]
 	Since $K(c)$ is compact we can apply Nachbin’s Lemma \ref{l:Nachbin}. There exists a continuous LOP function $h \colon K(c) \to [0,1]$ such that
 	\[
 	0=h(c^-)\;<\;h(u)\;<\;h(v)\;<\;h(c^+)=1.
 	\]
 	Let $p\colon [0,1]\to\T$ be the standard covering map $p(t)=e^{2\pi i t}$. Define $f\colon K\to\T$ by the condition $f\circ q_c=p\circ h$. Since $q_c$ is a quotient map and $p\circ h$ is continuous (with $p(h(c^-))=p(0)=1$ and $p(h(c^+))=p(1)=1$), the map $f$ is well-defined and continuous. It is COP because $q_c$ is COP on $K(c)\setminus\{c^\pm\}$ and the endpoints map to the same value $1\in\T$. 
 	
 	Moreover $f(c)=1$, $f(u)\neq f(v)$ (both belong to $\T\setminus \{1\}$) and the three points $f(c),f(u),f(v)$ are distinct. So the family of all such $f$ (as $(c,u,v)$ varies) separates $3$-tuples. 
	
	(2)	Now, after proving (1) we can apply Proposition \ref{p:DiagProd} with $I:=C_+(X,\T)$ to $X=K$. Since $K$ is compact, the injective continuous map $\nu$ is a topological embedding, completing the proof.  
\end{proof}

Recall that for GO-spaces there exists a linearly ordered version of Stone-Čech compactification, which often is called \textit{Nachbin compactification} (for more details see \cite{Blatter1975,KentRichmond1988,BM,HK}).  

\begin{f}[Nachbin Compactification for GLOTS] \label{RepLem} 
	For every $\GLOTS$ $(X,\leq,\tau)$ there exists the greatest (maximal) proper GO-compactification $\mu \colon X \to \beta_{\prec} X$. For any $f \in C_+(X,[0,1])$ there exists $F \in C_+(\beta_{\prec} X,[0,1])$ such that $f=F \circ \mu$. 
\end{f}  

As expected, one may give a GCOTS analog of this fact but replacing $[0,1]$ by $\T$. 

\begin{prop}[Nachbin Compactification for GCOTS]  \label{p:maxCORDcomp}
	For every $\GCOTS$ $(X,R,\tau)$ there exists a greatest (maximal) proper COP compactification $\mu \colon X \to \beta_{R} X$.
	Moreover, for any $f \in C_+(X,\T)$, there exists a unique $F \in C_+(\beta_{R} X,\T)$ such that $f=F \circ \mu$.
\end{prop}  

\begin{proof} 	
	Let $C_+(X,\T)$ be the set of all $\tau$-continuous COP maps $X \to\T$.
	Consider the diagonal evaluation map:
	\[
	\mu \colon X \longrightarrow \T^{C_+(X,\T)}, \quad  \mu(x)=\big(f(x)\big)_{f\in C_+(X,\T)}.
	\]
	Define $\beta_{R} X := \overline{\mu(X)} \subset \T^{C_+(X,\T)}$. 
	By Proposition \ref{p:CompToCircle}, $C_+(X,\T)$ is a $3$-point separating family, which implies that $\beta_{R} X$ is a compact $\COTS$ and $\mu$ is a proper COP topological embedding.
	
	The extension property for maps into $\T$ follows immediately from the construction: any $f \in C_+(X,\T)$ corresponds to a continuous coordinate projection $\pi_f \colon \T^{C_+(X,\T)} \to \T$. Restricting this projection to $\beta_{R} X$ yields the required continuous COP map $F := \pi_f|_{\beta_{R} X}$, satisfying $f = F \circ \mu$.
	
	To prove maximality, let $\sigma \colon X\to K$ be any proper COP compactification. 
	For each $g \in C_+(K,\T)$, the composition $g\circ\sigma$ belongs to $C_+(X,\T)$.
	This inclusion induces a canonical continuous projection 
	\[
	\Pi \colon \T^{C_+(X,\T)} \longrightarrow \T^{C_+(K,\T)}, \quad (y_f)_{f \in C_+(X,\T)} \longmapsto (y_{g \circ \sigma})_{g \in C_+(K,\T)}.
	\]
	By Proposition \ref{p:CompToCircle}(1), $C_+(K,\T)$ is $3$-point separating, meaning the evaluation map $\nu_K \colon K \to \T^{C_+(K,\T)}$ is a proper COP embedding. 
	Identifying $K$ with its image $\nu_K(K) = \overline{\nu_K(\sigma(X))}$, we observe that $\Pi \circ \mu = \sigma$. 
	
	Restricting $\Pi$ to the closure $\beta_R X$ yields a continuous map $\Phi \colon \beta_R X \to K$ such that $\Phi \circ \mu = \sigma$. 
	Since $\sigma(X)$ is dense in $K$, the compact image $\Phi(\beta_R X)$ must be all of $K$, meaning $\Phi$ is surjective. 
	Finally, since $\Phi \circ \mu = \sigma \circ \text{id}_X$, where $\mu$ and $\sigma$ are COP maps, $\mu(X)$ is dense in $\beta_R X$, and $\text{id}_X$ is trivially COP, Lemma \ref{l:AutomaticCOP} guarantees that $\Phi$ is automatically a COP map. Thus, $\mu$ dominates $\sigma$.
\end{proof}

\section{Convex uniform structures on GCOTS}
\label{s:ConvexUniform} 

Let $(X,\tau,\leq)$ be a GLOTS. A (covering) $\t$-compatible uniformity $\mathcal{U}$ on $X$ is said to be a \textit{GO-uniformity} in the sense of D. Buhagiar and T. Miwa \cite{BuhagiarMiwa} if $\mathcal{U}$ admits a uniform base $\B$ such that every $\a \in \B$ is a $\leq$-convex cover, meaning that every member of $\a$ is convex.  Convex uniformities for LOTS were defined and studied (including the completions) by A.A. Borubaev \cite{Borubaev}. 

We give a similar definition for GCOTS. 

\begin{defin} \label{d:convexUNIF} 
	Let $(X,R,\tau)$ be a GCOTS. A $\t$-compatible  (covering) uniformity $\mathcal{U}$ on $X$ is said to be a \textit{GCO-uniformity} (or, simply a \textit{convex uniformity}) if $\mathcal{U}$ admits a uniform base $\B$ such that every $\beta \in \B$ is a $R$-convex cover. 	
\end{defin} 

\begin{lem} \label{l:IntConvGCO} 
 For every convex subset $C \subset X$ of a $\GCOTS$ $(X,R,\tau)$ its $\tau$-interior and $\t$-closure both are $R$-convex. 	
\end{lem}
\begin{proof}
 According to Remark \ref{r:convex}.1, exactly the following subsets of $X$ are convex:
 $$\emptyset, X, (u,v), [u,v], (u,v], [u,v), X\setminus\{u\}$$
 for all $u,v \in X$.  
 By definition of GCOTS we know that $\tau$ contains the interval topology $\lambda_{\medcirc}$.
 Our assertion about \textbf{interior} is trivial for always open sets $\emptyset, X, X\setminus\{u\}, (u,v)$. For $[u,u]=\{u\}$ the interior is the singleton or the empty set.  
 For the remaining cases involving intervals bounded by $u$ and $v$, the $\tau$-interior always contains the open interval $(u,v)$ and is contained in $[u,v]$. 
 Hence, it is necessarily one of the convex intervals with endpoints $u$ and $v$. The proof for the closure is similar (or follows by noting that the complement of a convex set is convex in c-ordered sets).	
\end{proof}

\begin{cor} \label{l:openFINITE} 
Every convex GCO-uniformity $\mathcal{U}$ contains a uniform base $\mathcal{B}$, where each cover $\a \in \mathcal{B}$ which consists of open convex subsets. If the uniformity $\mathcal{U}$ is precompact then we can assume that every cover $\a \in \mathcal{B}$ is finite and open. 		
\end{cor}
\begin{proof}
Combine Lemmas \ref{l:intBase} and \ref{l:IntConvGCO}.  	
\end{proof}

The class of GCO-uniformities is closed under subspaces, products and uniform completions (Proposition \ref{p:ORDERING-BM}).  
%	It is worth noting that the framework of precompact convex uniformities naturally captures the maximal c-ordered (or Nachbin-type) compactification. On a given $\GCOTS$ $X$, the maximal precompact convex uniformity exists and is exactly the initial uniformity induced by the family of all continuous COP maps from $X$ into the standard circle $\T$. Because the pullbacks of small open convex arcs under COP maps are strictly convex, this functionally generated uniformity is inherently convex. 
%	
	The family of all finite open convex covers of $X$ is a base for the unifomity of the maximal Nachbin  compactification of GCOTS (Proposition \ref{p:maxCORDcomp}). Note that every finite open cover is automatically a normal cover, as GCOTS are hereditarily normal. 
	%, representing the finest possible precompact convex uniformity on $X$. 
	%An analogous statement holds for a $\GLOTS$ using continuous order-preserving maps into $[0,1]$.

\begin{prop} \label{p:ORDERING-BM} 
	Let $\mathcal{U}$ be a GCO-uniformity on a $\GCOTS$ $(X,\tau,R)$. Consider the \textit{uniform completion} $i \colon (X,\mathcal{U}) \to (\widehat{X},\widehat{\mathcal{U}})$.
	Then 
	\begin{enumerate}
		\item $\widehat{\mathcal{U}}$ is a GCO-uniformity on $\widehat{X}$ with respect to a (uniquely defined) circular order $\widehat{R}$ such that $i$ is a COP embedding.
		\item Let $\pi \colon G \times X \to X$ be a (continuous) $\mathcal{U}$-equiuniform COP action of a (semi) topological group $G$ on $(X,\mathcal{U})$. Then the canonically extended 
		$G$-action on the completion
		$\widehat{\pi} \colon G \times \widehat{X} \to \widehat{X}$  
		is $\widehat{\mathcal{U}}$-equiuniform,  continuous and COP.    
	\end{enumerate} 
\end{prop}  
\begin{proof} (1) 
	Following arguments similar to \cite[Theorem 2.9]{BuhagiarMiwa} for LOTS, we define a circular order on the set $\widehat{X}$ of minimal Cauchy filters. 
	For any three distinct filters $\varPhi, \Psi, \Theta \in \widehat{X}$, there exists a convex cover $\alpha \in \mathcal{U}$ providing pairwise disjoint members $A \in \varPhi \cap \alpha$, $B \in \Psi \cap \alpha$, and $C \in \Theta \cap \alpha$.
	Because these sets are disjoint and convex, Lemma \ref{l:ConvexProprties}.3 dictates that exactly one of the macroscopic relations $[A,B,C]_R$ or $[A,C,B]_R$ holds in $X$. We define $[\varPhi,\Psi,\Theta]_{\widehat{R}}$ iff $[A,B,C]_R$. We claim that this canonical assignment yields a circular order $\widehat{R}$ on $\widehat{X}$ making $i$ a COP embedding.

	We have to check that this definition does not depend on the choice of the convex cover
	$\alpha$ nor on the chosen members $A\in \Phi\cap\alpha$, $B\in \Psi\cap\alpha$, $C\in \Theta\cap\alpha$.
	So let $\alpha'$ be another convex cover, and let
	$A'\in \Phi\cap\alpha'$, $B'\in \Psi\cap\alpha'$, $C'\in \Theta\cap\alpha'$ be pairwise disjoint.
	Since $A,A'\in \Phi$, $B,B'\in \Psi$ and $C,C'\in \Theta$, the intersections
	$A\cap A'$, $B\cap B'$ and $C\cap C'$ are nonempty. Choose
	$x\in A\cap A'$, $y\in B\cap B'$, $z\in C\cap C'$.
	Then $x,y,z$ are pairwise distinct, because $A,B,C$ are pairwise disjoint.
	Now Lemma~\ref{l:ConvexProprties}.3 implies
	$[A,B,C]_R \iff [x,y,z]_R \iff [A',B',C']_R$.
	Hence the relation $[\Phi,\Psi,\Theta]_{\widehat R}$ is well defined.
	
	The circular axioms for $\widehat R$ are now immediate from the corresponding axioms for $R$.
	For example, if $[\Phi_1,\Phi_2,\Phi_3]_{\widehat R}$ and $[\Phi_1,\Phi_3,\Phi_4]_{\widehat R}$,
	choose one sufficiently fine convex cover $\alpha$ and pairwise disjoint sets
	$A_i\in \Phi_i\cap \alpha$ ($i=1,2,3,4$). Then
	$[A_1,A_2,A_3]_R$ and $[A_1,A_3,A_4]_R$, hence by transitivity of $R$,
	$[A_1,A_2,A_4]_R$. Therefore $[\Phi_1,\Phi_2,\Phi_4]_{\widehat R}$.
	The proofs of cyclicity, asymmetry and totality are similar.
	
	Also, the canonical embedding $i:X\to \widehat X$ is COP: if $x_1,x_2,x_3\in X$ are pairwise
	distinct, then for every sufficiently fine convex cover the corresponding principal filters are
	represented by pairwise disjoint convex neighborhoods of $x_1,x_2,x_3$, so
	$[x_1,x_2,x_3]_R$ iff $[i(x_1),i(x_2),i(x_3)]_{\widehat R}$.
	
	To see that $\widehat{\mathcal{U}}$ is a GCO-uniformity, let $\mathcal{B}$ be a uniform base of $\mathcal{U}$ consisting of proper convex open covers. The standard canonical base $\widehat{\mathcal{B}}$ for $\widehat{\mathcal{U}}$ consists of covers 
	\[
	\widehat{\alpha}:=\{\widehat{A}: A \in \alpha\}, \ \ \text{where} \  \widehat{A}:=\{\Gamma \in \widehat{X} : A \in \Gamma\}.
	\]
	We must verify that each $\widehat{A}$ is convex in $\widehat{X}$.  
	Suppose $\varPhi, \Theta \in \widehat{A}$ are distinct. We must show that one of the cyclic intervals $(\varPhi, \Theta)_{\widehat{R}}$ or $(\Theta, \varPhi)_{\widehat{R}}$ is contained in $\widehat{A}$.
	Since $\varPhi, \Theta \in \widehat{A}$, we have $A \in \varPhi \cap \Theta$. For any $\Psi \in (\varPhi, \Theta)_{\widehat{R}}$, we can choose a sufficiently fine convex cover $\gamma \in \mathcal{B}$ containing pairwise disjoint sets $U \in \varPhi \cap \gamma$, $V \in \Psi \cap \gamma$, and $W \in \Theta \cap \gamma$ such that $U \subseteq A$ and $W \subseteq A$.
	
	The relation $[\varPhi, \Psi, \Theta]_{\widehat{R}}$ implies the macroscopic relation $[U, V, W]_R$ in $X$.
	Because $A$ is a proper convex set containing $U$ and $W$, exactly one of the two cyclic intervals between these sets, $(U,W)_R$ or $(W,U)_R$, is entirely contained in $A$.
	If $(U, W)_R \subseteq A$, then the relation $[U, V, W]_R$ forces $V \subseteq (U, W)_R \subseteq A$. Because $V \in \Psi$ and filters are upward closed, $A \in \Psi$, meaning $\Psi \in \widehat{A}$. Since this holds for all $\Psi \in (\varPhi, \Theta)_{\widehat{R}}$, we have $(\varPhi, \Theta)_{\widehat{R}} \subseteq \widehat{A}$.
	
	Conversely, if $(U, W)_R \not\subseteq A$, the convexity of $A$ forces $(W, U)_R \subseteq A$. A strictly symmetric argument applied to any $\Gamma \in (\Theta, \varPhi)_{\widehat{R}}$ shows that its corresponding set $Y \in \Gamma \cap \gamma$ must lie in $(W,U)_R \subseteq A$, which forces $(\Theta, \varPhi)_{\widehat{R}} \subseteq \widehat{A}$.
	
	Thus, at least one cyclic interval between $\varPhi$ and $\Theta$ is contained in $\widehat{A}$, proving $\widehat{A}$ is convex. 
	
	(2) The extended action $\widehat{\pi} \colon G \times \widehat{X} \to \widehat{X}$  
	is $\widehat{\mathcal{U}}$-equiuniform  (hence continuous) by Fact \ref{f:G-compactFacts}. This action is COP according to Lemma \ref{l:AutomaticCOP}.   
\end{proof}

\subsection*{An internal description of the uniformity for Nov\'{a}k's completion } 

By Theorems \ref{t:CompletionEmbeddingAndDensity} and \ref{t:NovakMinimality}, Nov\'{a}k regular completion $(\mathcal{X}_r, \mathrm{int}_r)$ is the minimal COTS compactification of $(X, \lambda_{\medcirc})$. This implies that $(X, \lambda_{\medcirc})$ is compatible with a unique precompact uniformity $\mu_R$ whose completion is $\mathcal{X}_r$. 
Theorem \ref{t:NovakUniformityInternal} provides a constructive, internal description of $\mu_{R}$ using only the order.
	
\begin{thm} \label{t:NovakUniformityInternal}
	Let $(X,R)$ be a $\COTS$. Let $\mathfrak{B}$ be the family of all finite ``star covers'' of $X$, where for any finite cycle $F = [a_1, \dots, a_n]$ in $X$, the star cover $\mathcal{C}_F$ is defined as:
	$$ \mathcal{C}_F := \{ (a_i, a_{i+2})_R \mid i=1, \dots, n \} \ \ \text{(with indices modulo } n\text{)} $$
	Let $\mu_{\medcirc}$ be the uniformity on $X$ generated by the family $\mathfrak{B}$ as a base.
	Then $\mu_{\medcirc}$ is a precompact convex uniformity, agrees with the interval topology $\lambda_{\medcirc}$, and is equal to the Nov\'{a}k compactification uniformity $\mu_R$. 
\end{thm}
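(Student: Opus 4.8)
The plan is to prove the equality of uniformities by showing that the family $\mathfrak{B}=\{\mathcal{C}_F\}$ is itself a base of $\mu_R$; the uniformity it generates is then $\mu_R$ by definition, and in particular $\mathfrak{B}$ is a genuine base (the base axioms of Definition~\ref{d:cov-unif} hold automatically, since $\mu_R$ is a uniformity). Recall that $\mu_R$ is the unique precompact $\tau_R$-compatible uniformity whose completion is the Novák compactification $\nu\colon X\to(\Xcal_r,\mathrm{int}_r)$ (Theorem~\ref{t:CompletionEmbeddingAndDensity} together with Theorem~\ref{t:c-comp}). Since $\nu$ is a COP topological embedding onto a dense subset of the compact $\COTS$ $\Xcal_r$, a standard base of $\mu_R$ consists of the traces $\nu^{-1}(\mathcal{O})=\{\,O\cap\nu(X): O\in\mathcal{O}\,\}$ of finite open covers $\mathcal{O}$ of $\Xcal_r$; and because open $\Rcal$-intervals form a base of $\mathrm{int}_r$ (Proposition~\ref{Hausdorff}), one may restrict $\mathcal{O}$ to finite covers of $\Xcal_r$ by open intervals. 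Thus it suffices to establish (I) each $\mathcal{C}_F$ is such a trace, hence $\mathfrak{B}\subseteq\mu_R$, and (II) every finite interval cover of $\Xcal_r$ is refined, after tracing to $X$, by some $\mathcal{C}_F$. Throughout I assume $X$ has at least three points and cycles have length $n\ge 3$, the remaining cases being discrete and trivial.

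For (I), fix an injective cycle $F=[a_1,\dots,a_n]$ in $X$. As $\nu$ is a COP embedding, the points $c_i:=\nu(a_i)$ form an injective cycle in $\Xcal_r$, and for distinct arguments $[a_i,x,a_{i+2}]_R\iff[c_i,\nu(x),c_{i+2}]_{\Rcal}$; hence $\nu^{-1}\big((c_i,c_{i+2})_{\Rcal}\big)=(a_i,a_{i+2})_R$. It remains to see that the open intervals $\widetilde C_i:=(c_i,c_{i+2})_{\Rcal}$ cover $\Xcal_r$, which is exactly the elementary computation showing that $\mathcal{C}_F$ covers $X$, now performed inside the $\COTS$ $\Xcal_r$ relative to the finite cycle $c_1,\dots,c_n$: a point $\xi\in\Xcal_r$ either equals a vertex $c_j$, so $\xi\in(c_{j-1},c_{j+1})_{\Rcal}$, or lies in a unique gap arc $(c_j,c_{j+1})_{\Rcal}$, so transitivity gives $\xi\in(c_j,c_{j+2})_{\Rcal}$. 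Therefore $\{\widetilde C_i\}$ is a finite open interval cover of $\Xcal_r$ with trace exactly $\mathcal{C}_F$, so $\mathcal{C}_F\in\mu_R$.

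For (II), let $\tilde\gamma=\{(\xi_k,\eta_k)_{\Rcal}\}_{k=1}^m$ be a finite interval cover of $\Xcal_r$, with trace $\gamma\in\mu_R$. Using density of $\nu(X)$ and the Hausdorff interval topology, I would first refine $\tilde\gamma$ to a finite open interval cover $\{(\zeta_l',\zeta_l'')_{\Rcal}\}_{l=1}^N$ whose endpoints all lie in $\nu(X)$. The key step is then a circular Lebesgue-number argument: one chooses a single injective cycle $E=[e_1,\dots,e_N]$ with vertices in $\nu(X)$, refining this finite set of endpoints and fine enough that any three \emph{consecutive} vertices $e_i,e_{i+1},e_{i+2}$ lie in a common member $(\zeta_l',\zeta_l'')_{\Rcal}$. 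Granting this, the arc $(e_i,e_{i+2})_{\Rcal}$ (which contains $e_{i+1}$) is forced into that member: passing to the cut $\leq_{\zeta_l'}$ and applying Lemma~\ref{l:cut} converts both arcs to linear intervals, whence $(e_i,e_{i+2})_{\Rcal}\subseteq(\zeta_l',\zeta_l'')_{\Rcal}$ since arcs are convex (Definition~\ref{d:c-convex}). Thus the double-interval cover of $E$ refines $\tilde\gamma$; pulling back through $\nu$, the corresponding cycle $F$ of $\nu$-preimages in $X$ gives $\mathcal{C}_F$ refining $\gamma$. The existence of such a cycle $E$ is where compactness of $\Xcal_r$ is used, and it is the main obstacle: one must extract from the open cover $\{(\zeta_l',\zeta_l'')_{\Rcal}\}$ of the compact circularly ordered space $\Xcal_r$ a sufficiently fine cyclic ``mesh'' with vertices confined to the dense subset $\nu(X)$, the delicate point being that the gap points of $\Xcal_r\setminus\nu(X)$ can only be approximated by, never equal to, vertices of $E$.

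Combining (I) and (II), $\mathfrak{B}\subseteq\mu_R$ is cofinal in the refinement order of $\mu_R$, hence a base of $\mu_R$; consequently the uniformity $\mathcal{U}_{cycle}$ generated by $\mathfrak{B}$ coincides with $\mu_R$, as claimed. I note that (I) already exhibits $(X,\mathcal{U}_{cycle})$ as a precompact $\GCOTS$ uniformity, so Lemma~\ref{l:ORDERING-BM} and Theorem~\ref{t:NovakMinimality} furnish a softer derivation of the inclusion $\mu_R\subseteq\mathcal{U}_{cycle}$; but the explicit trace description above makes both inclusions transparent and is what I would write out in full.
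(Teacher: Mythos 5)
Your part (I) coincides with the paper's Part~3: one pushes the cycle forward along $\nu$, observes that $\mathcal{C}_{\nu(F)}$ is a finite open interval cover of the compact space $\Xcal_r$ whose trace on $\nu(X)$ is exactly $\mathcal{C}_F$, and concludes $\mathcal{C}_F\in\mu_R$. That direction is correct and is essentially what the paper writes.

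The problem is part (II), and it is not just a missing detail. First, you explicitly defer the construction of the mesh $E$ (``the main obstacle''), so the inclusion $\mu_R\subseteq\mathcal{U}_{cycle}$ is never actually established. Second, the sufficient condition you propose --- that any three \emph{consecutive} vertices $e_i,e_{i+1},e_{i+2}$ lie in a common member of the interval cover --- is unachievable in general. Take $X=\Xcal_r=C_4=\{0,1,2,3\}$ and the interval cover $\{(3,1)_R,(0,2)_R,(1,3)_R,(2,0)_R\}$, which is the partition of $C_4$ into singletons: no member contains two points, so no cyclic mesh can have three consecutive vertices in a common member, even though $\mathcal{C}_{[0,1,2,3]}$ (the same partition) does refine this cover. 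The statement you actually need is the weaker one that each \emph{open} arc $(e_i,e_{i+2})_{\Rcal}$ lands in some member, and proving that requires a different mechanism than the one you describe. The paper sidesteps the Lebesgue-number issue entirely: it first checks directly that $\mathfrak{B}$ satisfies the base axioms (star-refinement is obtained by inserting a point $b_i$ into each nonempty arc $(a_i,a_{i+1})_R$, producing the doubled cycle $F^*$), then shows that the completion of $(X,\mathcal{U}_{cycle})$ carries a circular order extending $R$, so that this completion is a proper COP compactification; the minimality of Nov\'ak's compactification (Theorem~\ref{t:NovakMinimality}) then gives $\mu_R\subseteq\mathcal{U}_{cycle}$ with no covering combinatorics. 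You do gesture at this ``softer'' route in your closing sentence, but as presented it is also incomplete: your remark that the base axioms ``hold automatically, since $\mu_R$ is a uniformity'' is valid only after (II) is proved, so without an independent verification of star-refinement for $\mathfrak{B}$ the soft route cannot be run either.
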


\begin{proof}
	\textbf{1. $\mathfrak{B}$ is a uniform base:} 
	To show $\mathfrak{B}$ forms a base for a uniformity $\mu_{\medcirc}$, we verify that it is directed under refinement and satisfies the star-refinement condition.
	
	\emph{Refinement:} Let $\mathcal{C}_{F_1}, \mathcal{C}_{F_2} \in \mathfrak{B}$. Consider the union cycle $F_3 := F_1 \cup F_2$. For any basic interval $J = (z, z'')_R \in \mathcal{C}_{F_3}$ (where $z, z', z''$ are consecutive in $F_3$), the set $J$ contains at most one element of $F_1$ strictly between its endpoints.
	Thus, $J$ must be entirely contained within $(a_{i}, a_{i+2})_R$ for some $a_i \in F_1$. This proves $\mathcal{C}_{F_3} \succ \mathcal{C}_{F_1}$.
	Symmetrically, $\mathcal{C}_{F_3} \succ \mathcal{C}_{F_2}$, meaning $\mathcal{C}_{F_3}$ refines $\mathcal{C}_{F_1} \wedge \mathcal{C}_{F_2}$.
	
	\emph{Star-refinement:} Given a cycle $F = [a_1, \dots, a_n]$, construct a refined finite cycle $F^*$ as follows. For each $i = 1, \dots, n$, examine the open cyclic interval $(a_i, a_{i+1})_R$:
	\begin{itemize}
		\item If $|(a_i, a_{i+1})_R| \ge 3$, choose exactly three distinct points $b_i, c_i, d_i$ such that $[a_i, b_i, c_i, d_i, a_{i+1}]_R$ and add them to $F^*$.
		\item If $|(a_i, a_{i+1})_R| < 3$, add all the elements of $(a_i, a_{i+1})_R$ to $F^*$.
	\end{itemize}
	Let $F^*$ be the union of the original points $a_i$ and all the chosen intermediate points, ordered cyclically. 
	Let $J = (p_k, p_{k+2})_R$ be a basic set in the refined cover $\mathcal{C}_{F^*}$. We want to bound its star $\operatorname{St}(J, \mathcal{C}_{F^*})$.
	
	An interval $I \in \mathcal{C}_{F^*}$ intersects $J$ iff their intersection is non-empty. In a c-ordered set, disjoint adjacent arcs do not intersect: $(p_{k-2}, p_k)_R \cap (p_k, p_{k+2})_R = \emptyset$. Thus, the only sets in $\mathcal{C}_{F^*}$ that can possibly intersect $J$ are the immediately adjacent intervals $(p_{k-1}, p_{k+1})_R$ and $(p_{k+1}, p_{k+3})_R$.
	Because $p_{k+1} \in (p_k, p_{k+2})_R$, the union of these three intervals is exactly bounded by their outer endpoints:
	\[
	\operatorname{St}(J, \mathcal{C}_{F^*}) = (p_{k-1}, p_{k+1})_R \cup (p_k, p_{k+2})_R \cup (p_{k+1}, p_{k+3})_R = (p_{k-1}, p_{k+3})_R.
	\]
	Because $F^*$ contains at least 3 points between any two original elements $a_i, a_{i+1}$ (or all possible points if fewer exist), the 5-element sequence $p_{k-1}, p_k, p_{k+1}, p_{k+2}, p_{k+3}$ in $F^*$ can span across at most one element of the original cycle $F$. Therefore, the entire interval $(p_{k-1}, p_{k+3})_R$ must be contained in $(a_i, a_{i+2})_R$ for some $a_i \in F$.
	This proves $\operatorname{St}(J, \mathcal{C}_{F^*}) \subseteq (a_i, a_{i+2})_R$, hence $\mathcal{C}_{F^*}^* \succ \mathcal{C}_F$.
	
	Since $\mathfrak{B}$ consists of finite covers, $\mu_{\medcirc}$ is precompact.
	It is $\lambda_{\medcirc}$-compatible because for any basic open interval $(u,v)_R$ containing $x$, the cycle $F=\{u,x,v\}$ yields $\operatorname{St}(x, \mathcal{C}_F) = (u,v)_R$.
	By construction, every basic cover is convex, making $\mu_{\medcirc}$ a precompact GCO-uniformity.
	
	\textbf{2. Equality $\mu_{\medcirc} = \mu_R$:}
	Let $\mu_R$ be the uniformity induced on $X$ by the unique uniformity $\mathbf{M}$ of the Nov\'{a}k completion $\mathcal{X}_r$.
	Because $\mu_{\medcirc}$ is a precompact GCO-uniformity, its completion yields a proper COP compactification of $X$.
	By the minimality of the Nov\'{a}k completion (Theorem \ref{t:NovakMinimality}), $\mu_R$ is the coarsest possible COP compactification uniformity on $X$.
	Therefore, $\mu_R \subseteq \mu_{\medcirc}$.
	
	To prove the reverse $\mu_{\medcirc} \subseteq \mu_R$, let $\mathcal{C}_F = \{ (a_i, a_{i+2})_R \}_{i=1}^n$ be a basic cover for $\mu_{\medcirc}$.
	By Theorem \ref{t:CompletionEmbeddingAndDensity}, $\nu \colon X \to \mathcal{X}_r$ is a COP embedding.
	Consider the corresponding open star cover in $\mathcal{X}_r$:
	\[
	\mathcal{C}_{\nu(F)} := \{(\nu(a_i),\nu(a_{i+2}))_{\mathcal{R}} \mid i=1, \dots, n \}.
	\]
	Because $\mathcal{X}_r$ is compact, every open cover is uniform, so $\mathcal{C}_{\nu(F)} \in \mathbf{M}$.
	By Remark \ref{r:PullbackEmbedding}, its pullback to $X$ is exactly $\nu^{-1}(\mathcal{C}_{\nu(F)}) = \mathcal{C}_F$.
	Thus, every basic cover for $\mu_{\medcirc}$ belongs to $\mu_R$, proving $\mu_{\medcirc} \subseteq \mu_R$.   
\end{proof}	
	
Recall that for every LOTS there exists the minimal linearly ordered compactification, the classical Dedekind compactification (see, for example, \cite{Fed,Kaufman1967,Blatter1975,KentRichmond1988}). One may give an internal characterization of the corresponding precompact uniform structure   
using a linear modification of Theorem \ref{t:NovakUniformityInternal}. We omit the details and only formulate the description.

\begin{thm}[Uniformity for minimal LOP compactifications of LOTS] \label{t:DedekindUniformityInternal}
	Let $(X,\leq,\lambda_{\leq})$ be a $\LOTS$. Let $\mathfrak{B}$ be the family of all finite ``star covers" of $X$, where for any finite chain $F = [a_1, \dots, a_n]$ in $X$, the star cover $\mathcal{C}_F$ is defined as:
	$$ \mathcal{C}_F := \{ (a_{i-1}, a_{i+1})_R \mid i=1, \dots, n \},$$ 
	where we put $a_0=-\infty$, $a_{n+1}=+\infty$. 
	Let $\mu_{\leq}$ be the uniformity on $X$ generated by the family $\mathfrak{B}$ as a base.
	Then $\mu_{\leq}$ is a precompact convex uniformity, agrees with the interval topology $\lambda_{\leq}$ is equal to the Dedekind compactification uniformity. 
\end{thm}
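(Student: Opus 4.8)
The plan is to transcribe the proof of Theorem~\ref{t:NovakUniformityInternal} into the linear setting: replace finite cycles by finite chains, circular intervals by linear ones (with the ray conventions $(a_0,a_2)=(\leftarrow,a_2)$ and $(a_{n-1},a_{n+1})=(a_{n-1},\to)$ coming from $a_0=-\infty$, $a_{n+1}=+\infty$), and Nov\'ak's regular completion by the Dedekind compactification. Write $\mu_D$ for the uniformity of the Dedekind compactification. As in the circular case, the argument splits into three parts together with an appeal to minimality.

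First I would verify that $\mathfrak{B}$ is a base for a $\tau_{\leq}$-compatible precompact uniformity $\mu_{\leq}$. Each $\mathcal{C}_F$ is a finite \emph{convex} open cover: by Totality every $x\in X$ lies in some $(a_{i-1},a_{i+1})$, and each member is order-convex. For the base axiom one takes, given $F_1,F_2$, the chain $F_3=F_1\cup F_2$ (union of supports, reordered); then $\mathcal{C}_{F_3}$ refines $\mathcal{C}_{F_1}\wedge\mathcal{C}_{F_2}$. Star-refinement is obtained exactly as before: insert a point $b_i\in(a_i,a_{i+1})$ into each nonempty gap to form $F^{*}=[a_1,b_1,a_2,b_2,\dots,a_n,b_n]$, and a direct check with the linear-order axioms shows that $\mathcal{C}_{F^{*}}$ star-refines $\mathcal{C}_F$. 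Precompactness is immediate since the base consists of finite covers; compatibility $\tau(\mu_{\leq})=\tau_{\leq}$ follows from the ``star of $x$'' computation, namely $\operatorname{St}(x,\mathcal{C}_F)=(u,v)$ for $u<x<v$ and $F=\{u,x,v\}$.

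Next I would show that the uniform completion $(\widehat{X},\widehat{\mu_{\leq}})$ carries a canonical linear order $\widehat{\leq}$ extending $\leq$, so that it is a linearly ordered compactification of $(X,\leq)$. This is the linear analogue of Part~2 of Theorem~\ref{t:NovakUniformityInternal} and of Lemma~\ref{l:ORDERING-BM}: given two distinct minimal Cauchy filters, choose a sufficiently fine $\mathcal{C}_F$ separating them into disjoint convex members and read off their relative order from their positions in the chain $F$. That $\widehat{\leq}$ is a genuine linear order extending $\leq$ follows from Lemma~\ref{l:DenseLin}, applied to the closed graph of $\widehat{\leq}$ on the completion inside which the dense copy of $X$ is linearly ordered.

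Finally, I would combine minimality with the reverse inclusion. The Dedekind compactification is the \emph{least} linearly ordered compactification of $(X,\leq)$ (Remark~\ref{r:bingo}, \cite{Kaufman1967,BM}); equivalently $\mu_D$ is the coarsest $\tau_{\leq}$-compatible precompact ordered uniformity. Since the previous step exhibits $\mu_{\leq}$ as one such uniformity, minimality yields $\mu_D\subseteq\mu_{\leq}$ for free. For the reverse inclusion I would, as in Part~3 of Theorem~\ref{t:NovakUniformityInternal}, lift each basic cover: the dense embedding of $X$ into the Dedekind completion sends $\mathcal{C}_F$ to the finite open convex cover built from the principal cuts at the $a_i$ (with $\pm\infty$ producing the two extreme rays). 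Since the completion is compact, this cover belongs to $\mu_D$, and its trace on the image of $X$ recovers $\mathcal{C}_F$; hence $\mathcal{C}_F\in\mu_D$ and $\mu_{\leq}\subseteq\mu_D$. Combining the two inclusions gives $\mu_{\leq}=\mu_D$. The main obstacle I expect is the same fiddly point as in the circular case: verifying the star-refinement claim for $\mathcal{C}_{F^{*}}$ and, above all, handling the endpoint conventions $a_0=-\infty$, $a_{n+1}=+\infty$ coherently, so that the extreme members $(\leftarrow,a_2)$ and $(a_{n-1},\to)$ extend correctly over the completion and the minimality inclusion is applied in the right direction.
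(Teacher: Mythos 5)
Your proposal is correct and follows exactly the route the paper intends: the text explicitly omits the proof, stating only that it is obtained by "a linear modification of Theorem \ref{t:NovakUniformityInternal}", and your three-part argument (compatible precompact uniformity, ordering the completion to get one inclusion from minimality of the Dedekind compactification, lifting the star covers to the compact completion for the reverse inclusion) is precisely that modification, with the endpoint conventions $a_0=-\infty$, $a_{n+1}=+\infty$ handled coherently. Nothing essential is missing beyond the routine star-refinement check, which the paper also leaves to the reader in the circular case.
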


\begin{defin} \label{d:IntUnif} We call the uniformity 
 $\mu_{\medcirc}$ (= $\mu_R$) from Theorem \ref{t:NovakUniformityInternal} \textit{circular interval uniformity}.   
Similarly, for linear orders $(X,\leq)$ the uniformity $\mu_{\leq}$ from Theorem \ref{t:DedekindUniformityInternal} we call \textit{linear interval uniformity}.  
Mostly we say simply: \textit{interval uniformity} where the context is clear. 
	
\end{defin}

\begin{ex} \label{l:CompactCase} 
	Every compact COTS $X$ with its unique topologically compatible uniformity $\mathcal{U}_X$ is a (compact) GCO-uniformity. The same is true for every compact LOTS $X$. These are interval uniformities in the sense of Definition \ref{d:IntUnif}. 	
\end{ex}	
\begin{proof}
By Theorem \ref{t:NovakUniformityInternal} the interval uniformity $\mu_{\medcirc}$ is convex and compatible with the interval topology for every COTS. Since $X$ is compact, we obtain that its canonical unique uniformity $\mathcal{U}_X$ coincides with the convex uniformity $\mu_{\medcirc}$. Thus, $\mathcal{U}_X$ is a convex uniformity. 

For the case of LOTS use Theorem \ref{t:DedekindUniformityInternal}. 
\end{proof}

It is known that for every $(X,\leq,\tau) \in \GLOTS_{\curlyeqprec}$  
there exists a minimal LOP compactification (see Blatter \cite{Blatter1975} or Kent-Richmond \cite{KentRichmond1988}). One may show that this is just the Dedekind compactification of the ``least d-extension" $\theta(X)$.  Similar result is valid for circular orders.  

\begin{prop} \label{p:via-d-ext} 
	Let $(X,R,\tau)$ be a $\GCOTS_{\medcirc}$ with the least d-extension 
	$
	\theta \colon (X,R,\t) \to (\widetilde{X}, \widetilde{R},\tau^{\sim}).
	$	
	Then the Novák completion of $(\widetilde{X}, \widetilde{R})$ is the minimal COP compactification of $(X,R,\tau)$. 
\end{prop}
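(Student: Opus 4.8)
The plan is to exhibit the candidate compactification explicitly and then verify its least/minimal property by reducing everything to the minimality already established for the COTS case. Write $\nu_{\widetilde X}\colon \widetilde X\to (\widetilde X)_r$ for the Novák regular completion of the COTS $(\widetilde X,R^{\sim})$, and set $\Phi:=\nu_{\widetilde X}\circ\theta\colon X\to (\widetilde X)_r$. First I would check that $\Phi$ is a proper COP compactification of $X$. It is COP as a composition of COP maps, and a topological embedding as a composition of topological embeddings (the embedding $\theta$ from Lemma \ref{l:d-ext} and the embedding $\nu_{\widetilde X}$ from Theorem \ref{t:CompletionEmbeddingAndDensity}). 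Its image is dense: since $\theta(X)$ is dense in $\widetilde X$ and $\nu_{\widetilde X}$ is continuous, $\overline{\nu_{\widetilde X}(\theta(X))}\supseteq \nu_{\widetilde X}(\overline{\theta(X)})=\nu_{\widetilde X}(\widetilde X)$, whose closure is all of $(\widetilde X)_r$ by Theorem \ref{t:CompletionEmbeddingAndDensity}. Compactness of $(\widetilde X)_r$ is the complete$=$compact principle (Theorem \ref{t:c-comp}).

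For minimality, I would take an arbitrary proper COP compactification $\sigma\colon X\to Z$, so that $Z$ is a compact $\COTS$ and $\sigma$ is a dense COP topological embedding. The key observation is that $\sigma$ is, in particular, a circularly ordered $d$-extension of $X$ in the sense of Lemma \ref{l:d-ext}, since a compact $\COTS$ is a $\COTS$ and $\sigma$ is exactly a dense COP topological embedding into it. Applying the universal property of the least $d$-extension $\theta$, I obtain a (unique) COP topological embedding $j\colon \widetilde X\hookrightarrow Z$ with $j\circ\theta=\sigma$. Because $j(\widetilde X)\supseteq j(\theta(X))=\sigma(X)$ is dense in $Z$, and $Z$ is a compact $\COTS$, the map $j$ is itself a proper COP compactification of $\widetilde X$.

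The minimality of the Novák completion within $\COTS$ then finishes the argument. Applying Theorem \ref{t:NovakMinimality} to the $\COTS$ $\widetilde X$ together with its proper COP compactification $j\colon \widetilde X\to Z$ yields a unique continuous COP surjection $\Psi\colon Z\to (\widetilde X)_r$ with $\Psi\circ j=\nu_{\widetilde X}$. Composing with $\theta$ gives $\Psi\circ\sigma=\Psi\circ j\circ\theta=\nu_{\widetilde X}\circ\theta=\Phi$, so $\Phi$ is majorized by $\sigma$. As $\sigma$ was an arbitrary proper COP compactification, $\Phi$ is the least one; since the least compactification is unique up to equivalence, $\Phi$ is precisely the minimal COP compactification of $(X,R,\tau)$, as claimed.

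I expect the only genuinely delicate point to be the recognition that an arbitrary proper COP compactification $\sigma$ is a $d$-extension in exactly the form required by Lemma \ref{l:d-ext}, and that the embedding $j$ it produces is honestly a compactification of $\widetilde X$ (i.e. has dense image in the compact $Z$) rather than merely an abstract order embedding. Once this matching of hypotheses is secured, the remainder is formal diagram-chasing built on the facts that compositions of COP maps are COP and compositions of topological embeddings are topological embeddings.
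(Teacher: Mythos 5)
Your proposal is correct and follows essentially the same route as the paper's proof: factor an arbitrary proper COP compactification $\sigma\colon X\to Z$ through the least d-extension via Lemma \ref{l:d-ext}, observe that the resulting embedding is a proper COP compactification of the COTS $\widetilde X$, and then invoke the minimality of the Novák completion (Theorem \ref{t:NovakMinimality}) to obtain the factoring map. Your additional verifications (that $\nu_{\widetilde X}\circ\theta$ is itself a proper COP compactification with dense image, and that $j(\widetilde X)$ is dense in $Z$) are details the paper leaves implicit, and they are carried out correctly.
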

\begin{proof}
	Let $\s \colon (X,R,\t) \to (Y,R_Y)$ be any proper COP compactification. By Lemma \ref{l:d-ext} there exists a (unique) COP topological embedding $\tilde{\s} \colon \widetilde{X} \to Y$ which extends $\s$. Clearly it is a proper COP compactification of the COTS  $\widetilde{X}$. 	 
	Consider now the Novák completion 
	$
	\nu \colon \widetilde{X} \longrightarrow (\widetilde{\Xcal})_r. 
	$
	 of the COTS 
	$(\widetilde{X},\widetilde{R})$. 	  
	By Theorem \ref{t:NovakMinimality} 
	there exists a (unique) continuous onto COP map $q \colon Y \to (\widetilde{\Xcal})_r$ such that $q \circ \tilde{\s}=\nu$. This implies that $q \circ \tilde{\s} \circ \theta=\nu \circ \theta$. Since $\tilde{\s} \circ \theta =\s$, we obtain $q \circ \s= \nu \circ \theta$. Thus, the COP proper compactification $\nu \circ \theta \colon X \to (\widetilde{\Xcal})_r$ has the minimality property. 
\end{proof}

\subsection*{Uniformity for minimal COP compactifications of GCOTS} 
\label{s:UniformGCOTSmin} 

\begin{defin} \label{d:gen-star-cov}
	Let $(X,R,\tau)$ be a $\GCOTS_{\medcirc}$. Given a finite cycle $F=[a_1,\dots,a_n]$ in $(X,R)$, we construct the \emph{generalized cycle star cover} $\mathcal{C}^G_F$ by defining the open convex subsets contributed by each $a_i$. 
	For each $i \in \{1, \dots, n\}$ (with indices modulo $n$), let $\mathcal{U}(a_i)$ be the following family of subsets:
	\begin{itemize}
		\item [(i)] If $a_i \notin X^{-} \cup X^{+}$ (regular point), $\mathcal{U}(a_i) := \{ (a_{i-1},a_{i+1}) \}$.
		\item [(ii)] If $a_i \in X^{-} \setminus X^{+}$ (left singular only), $\mathcal{U}(a_i) := \{ (a_{i-1},a_{i}), [a_{i},a_{i+1}) \}$.
		\item [(iii)] If $a_i \in X^{+} \setminus X^{-}$ (right singular only), $\mathcal{U}(a_i) := \{ (a_{i-1},a_{i}], (a_{i},a_{i+1}) \}$.
		\item [(iv)] If $a_i \in X^{-} \cap X^{+}$ (isolated point), $\mathcal{U}(a_i) := \{ (a_{i-1},a_{i}), \{a_i\}, (a_{i},a_{i+1}) \}$.
	\end{itemize}
	We then define $\mathcal{C}^G_F := \bigcup_{i=1}^n \mathcal{U}(a_i)$. By the definition of the interval topology $\lambda_{\medcirc}$ for a $\GCOTS$, every set in $\mathcal{C}^G_F$ is open, making it an open finite cover of $X$.
\end{defin}

\begin{lem} \label{l:predec} 
	Let $\theta \colon (X,R,\t) \to (\widetilde{X}, \widetilde{R},\tau^{\sim})$ be the circular least d-extension of a $\GCOTS$ $(X,R,\t)$. Then:
	\begin{enumerate}
		\item $X \cap (x^{-}, y)_{\widetilde{R}} = [x,y)_R$ for every $x \in X^{-}$. 
		\item $X \cap (a,x^{+})_{\widetilde{R}} = (a,x]_R$ for every $x \in X^{+}$. 
		\item Let $F=[a_1, \dots, a_n]$ be a cycle in $X$. Define the canonically associated cycle $F^{\theta}$ in $(\widetilde{X}, \widetilde{R})$ by replacing each $a_i \in X^{-}$ with the pair $a_i^{-},a_i$, and each $a_i \in X^{+}$ with the pair $a_i, a_i^{+}$. 
		Then the trace of the standard cycle star cover $\mathcal{C}_{F^{\theta}}$ (defined for the $\COTS$ $\widetilde{X}$) on the subspace $X \subseteq \widetilde{X}$ is exactly $\mathcal{C}_{F}^G$.   	
	\end{enumerate} 
\end{lem}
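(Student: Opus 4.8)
The plan is to isolate a single computational engine — a \emph{trace formula} describing how a circular interval of $(\widetilde X,R^{\sim})$ whose endpoints lie among the columns $\{z,z^{-},z^{+}\}$ restricts to $\theta(X)=X\times\{0\}$ — and then read off all three assertions from it. Concretely, for $u,v\in\widetilde X$ write $\bar u,\bar v\in X$ for their first coordinates and assume $\bar u\neq\bar v$. I claim
\[
X\cap (u,v)_{R^{\sim}}=\langle \bar u,\bar v\rangle_R,
\]
where the bracket is closed at the left endpoint $\bar u$ exactly when $u=\bar u^{-}$ (open when $u\in\{\bar u,\bar u^{+}\}$), and closed at the right endpoint $\bar v$ exactly when $v=\bar v^{+}$ (open when $v\in\{\bar v,\bar v^{-}\}$). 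To prove this I would fix $w\in X$ and split into the three cases of Definition~\ref{d:lexic}. If $\bar w\notin\{\bar u,\bar v\}$ the three first coordinates are distinct, so clause (1) gives $(w,0)\in(u,v)_{R^{\sim}}\iff[\bar u,w,\bar v]_R$. If $w=\bar u$, then $u,(w,0),v$ have first coordinates $\bar u,\bar u,\bar v$, so clause (2) applies and $(w,0)$ lies in the interval iff the second coordinate of $u$ is $<0$, i.e.\ iff $u=\bar u^{-}$. Symmetrically, if $w=\bar v$, clause (3) shows membership iff the second coordinate of $v$ is $>0$, i.e.\ iff $v=\bar v^{+}$. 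This exhausts the formula.

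Given the formula, (1) and (2) are immediate: taking $u=x^{-},v=y$ yields $X\cap(x^{-},y)_{R^{\sim}}=[x,y)_R$, and taking $u=a,v=x^{+}$ yields $X\cap(a,x^{+})_{R^{\sim}}=(a,x]_R$. I would also record the two ``insensitivity'' corollaries needed below: decorating the left endpoint of an open interval by a $+$-shadow, or its right endpoint by a $-$-shadow, leaves the trace unchanged, since the relevant endpoint stays \emph{open}; explicitly $X\cap(a^{+},b)_{R^{\sim}}=X\cap(a,b^{-})_{R^{\sim}}=(a,b)_R$ for $a,b\in X$.

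For (3) I would write the associated cycle as $F^{\theta}=[b_1,\dots,b_m]$, where each non-singular $a_i$ contributes the single point $a_i$, each $a_i\in X^{-}$ contributes the consecutive pair $a_i^{-},a_i$, and each $a_i\in X^{+}$ contributes $a_i,a_i^{+}$; the star cover $\Ccal_{F^{\theta}}$ of Theorem~\ref{t:NovakUniformityInternal} then has one member $(b_{j-1},b_{j+1})_{R^{\sim}}$ centered at each $b_j$. The structural observation that makes the bookkeeping collapse is that a $-$-shadow is always \emph{immediately followed} by its base point and a $+$-shadow always \emph{immediately preceded} by it; hence in $F^{\theta}$ the left neighbor of any center is of the form $a_{i-1}$ or $a_{i-1}^{+}$ (both open at $a_{i-1}$) and the right neighbor is $a_{i+1}$ or $a_{i+1}^{-}$ (both open at $a_{i+1}$). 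Applying the formula center by center then gives: a non-singular center $a_i$ traces to $(a_{i-1},a_{i+1})_R$; the doubled pair $a_i^{-},a_i$ of a left-singular $a_i$ traces to $(a_{i-1},a_i)_R$ and $[a_i,a_{i+1})_R$; the doubled pair $a_i,a_i^{+}$ of a right-singular $a_i$ traces to $(a_{i-1},a_i]_R$ and $(a_i,a_{i+1})_R$. These are precisely the members prescribed by Definition~\ref{d:gen-star-cov}(i),(ii),(iii), so the trace of $\Ccal_{F^{\theta}}$ on $X$ is $\Ccal_F$.

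The main obstacle I anticipate is organizational rather than mathematical: keeping straight which decorated endpoint actually occurs as the neighbor of a given center, since neighbors may themselves be shadow points. The trace formula is designed to defuse exactly this, as it shows the trace depends only on the base point of each endpoint together with a single bit (is the left endpoint a $-$-shadow? is the right endpoint a $+$-shadow?), and the ``immediately followed / immediately preceded'' remark pins that bit down. One genuinely separate case to check is a point that is \emph{both} left- and right-singular, where $F^{\theta}$ replaces $a_i$ by the triple $a_i^{-},a_i,a_i^{+}$; here the three centered members trace to $(a_{i-1},a_i)_R$, the singleton $[a_i,a_i]_R=\{a_i\}$, and $(a_i,a_{i+1})_R$, consistent with performing substitutions (ii) and (iii) simultaneously.
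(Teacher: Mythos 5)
Your argument is correct and is exactly the detailed expansion of the paper's one-line proof, which simply asserts that (1) and (2) are clear from the properties of the circular lexicographic product and that (3) follows from them; your trace formula (closed at $\bar u$ iff $u=\bar u^{-}$, closed at $\bar v$ iff $v=\bar v^{+}$) is the right way to make the clauses of Definition~\ref{d:lexic} do that work, and your separate treatment of the doubly singular case $(a_i^{-},a_i^{+})_{R^{\sim}}$ via clause (5) correctly covers the one configuration where the formula's hypothesis $\bar u\neq\bar v$ fails.
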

\begin{proof} 
	Assertions (1) and (2) follow directly from the properties of the circular lexicographic product, where $x^-$ acts as the immediate predecessor to $x$, and $x^+$ as the immediate successor. 
	
	For (3), consider the intervals in $\mathcal{C}_{F^{\theta}}$ centered at the elements generated by $a_i$. 
	If $a_i$ is a regular point, $F^{\theta}$ retains $a_i$, and its standard star neighborhood in $\widetilde{X}$ is $(a_{i-1}^*, a_{i+1}^*)_{\widetilde{R}}$ (where $*$ denotes the appropriate substituted boundary). Its trace on $X$ is exactly $(a_{i-1}, a_{i+1})_R$. 
	If $a_i \in X^- \setminus X^+$, $F^{\theta}$ contains the adjacent elements $a_i^-$ and $a_i$. The star interval centered at $a_i^-$ is $(a_{i-1}^*, a_i)_{\widetilde{R}}$, whose trace on $X$ is $(a_{i-1}, a_i)_R$. The star interval centered at $a_i$ is $(a_i^-, a_{i+1}^*)_{\widetilde{R}}$, whose trace on $X$, by part (1), is $[a_i, a_{i+1})_R$. This matches $\mathcal{U}(a_i)$ in Definition \ref{d:gen-star-cov}. The remaining cases follow by identical trace calculations.
\end{proof}

\begin{thm} \label{t:GCOTS-Min-UnifInt}
	Let $(X,R,\tau)$ be a $\GCOTS_{\medcirc}$. 
	The family of all coverings $\mathcal{C}^G_F$ from Definition \ref{d:gen-star-cov}, where $F$ runs over all finite cycles in $(X,R)$, forms a base for a precompact GCO-uniformity $\mu^G_{\medcirc}$ that is topologically compatible with $(X,\t)$.
	Furthermore, $\mu^G_{\medcirc}$ coincides with the compactification uniformity $\mu_R$ of the minimal COP compactification $m \colon X \to X_m$.
\end{thm}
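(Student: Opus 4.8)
The plan is to reduce the statement to the COTS case already settled in Theorem~\ref{t:NovakUniformityInternal}, transporting it across the least $d$-extension $\theta\colon (X,R,\t)\to(\widetilde X,R^{\sim},\tau^{\sim})$ of Lemma~\ref{l:d-ext}. First I would recall from Proposition~\ref{p:via-d-ext} that the minimal COP compactification $m\colon X\to X_m$ factors as $X\xrightarrow{\theta}\widetilde X\xrightarrow{\nu}(\widetilde{\Xcal})_r$, where $\nu$ is Novák's regular completion of the COTS $\widetilde X$; since both $\theta$ and $\nu$ are topological COP embeddings (Lemma~\ref{l:d-ext} and Theorem~\ref{t:CompletionEmbeddingAndDensity}), $m$ is proper, and its compactification uniformity $\mu_R$ on $X$ is exactly the trace on $\theta(X)$ of the unique compact uniformity of $(\widetilde{\Xcal})_r$. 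Restricting first to the dense intermediate space $\widetilde X$, that compact uniformity traces to the Novák uniformity $\mathbf M$ of $\widetilde X$, which by Theorem~\ref{t:NovakUniformityInternal} (applied to the COTS $\widetilde X$) has the family $\{\Ccal_{\widetilde F}: \widetilde F\text{ a cycle in }\widetilde X\}$ as a base. Hence $\mu_R$ on $X$ is the uniformity whose base is the family of traces $\{\Ccal_{\widetilde F}\cap X\}$, and the whole theorem reduces to comparing this family of traces with the family $\{\Ccal_F: F\text{ a cycle in }X\}$ of Definition~\ref{d:gen-star-cov}.

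For the inclusion $\Ucal_{cycl}\subseteq\mu_R$ I would invoke Lemma~\ref{l:predec}(3): for every cycle $F$ in $X$ the canonically doubled cycle $F^{\theta}$ in $\widetilde X$ satisfies $\Ccal_{F^{\theta}}\cap X=\Ccal_F$, so each $\Ccal_F$ is the trace of a basic $\mathbf M$-cover and is therefore a uniform cover of $\mu_R$. For the reverse inclusion $\mu_R\subseteq\Ucal_{cycl}$, given an arbitrary cycle $\widetilde F=[\widetilde b_1,\dots,\widetilde b_m]$ in $\widetilde X$ I would form the cycle $F$ in $X$ whose support is the set of base points $\{q(\widetilde b_j)\}_j$, projecting each split point $x^{\pm}$ to its base $x$. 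Because each double $x^{-}$ (resp.\ $x^{+}$) sits immediately below (resp.\ above) its base $x$ and is isolated in $\widetilde X$, the adjacency statements of Lemma~\ref{l:predec}(1)--(2) give $\operatorname{supp}(\widetilde F)\subseteq\operatorname{supp}(F^{\theta})$; thus $F^{\theta}$ is obtained from $\widetilde F$ by adjoining points, and by the star-cover refinement observation already used in Theorem~\ref{t:NovakUniformityInternal} (adjoining points to a cycle refines its star cover) $\Ccal_{F^{\theta}}$ refines $\Ccal_{\widetilde F}$ in $\widetilde X$. Passing to traces and applying Lemma~\ref{l:predec}(3) once more yields $\Ccal_F=\Ccal_{F^{\theta}}\cap X\succ\Ccal_{\widetilde F}\cap X$, so every basic $\mu_R$-cover is refined by a member of $\{\Ccal_F\}$. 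By axiom (C2) of Definition~\ref{d:cov-unif} the two families therefore generate the same uniformity, i.e.\ $\Ucal_{cycl}=\mu_R$.

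It then remains to read off the structural claims about $\Ucal_{cycl}$ from this identification. As the compactification uniformity of the proper compactification $m$, the uniformity $\mu_R$ is precompact and topologically compatible with $(X,\tau)$; and since every block of each base member $\Ccal_F$ is convex in $(X,R)$ (an open interval, or at a singular point one of the half-open convex intervals permitted in Definition~\ref{d:gen-star-cov}), $\Ucal_{cycl}$ is a GCO-uniformity in the sense of Definition~\ref{d:convexUNIF}. In particular $\{\Ccal_F\}$ is genuinely a base of a covering uniformity, which discharges the first assertion of the statement.

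I expect the main obstacle to be the bookkeeping in the reverse inclusion: verifying cleanly that an arbitrary cycle in $\widetilde X$ — which may employ only one of the two split points $x^{-},x^{+}$ — is dominated by a doubled cycle $F^{\theta}$ arising from $X$, so that the singular refinements built into Definition~\ref{d:gen-star-cov}(ii)--(iii) are precisely what is required to match the traces. This rests entirely on the adjacency-and-isolation description of $\widetilde X$ supplied by Lemma~\ref{l:predec}, so the argument is essentially forced once that lemma is in hand; the only real care is demanded by the case analysis over left- and right-singular points (including points that are both).
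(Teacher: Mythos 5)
Your proposal is correct, and its first half coincides with the paper's argument: the paper likewise routes everything through the least $d$-extension, observes (via Proposition~\ref{p:via-d-ext}) that $X_m$ is the Nov\'ak completion of $\widetilde X$, and proves the inclusion $\Ucal_{cycl}\subseteq\mu_R$ exactly as you do, by Lemma~\ref{l:predec}(3) together with the fact that $\Ccal_{F^\theta}$ is a finite open cover of the compact $X_m$ and hence belongs to its unique uniformity. Where you genuinely diverge is the opposite inclusion: the paper obtains $\mu_R\subseteq\Ucal_{cycl}$ abstractly, from the minimality of $m\colon X\to X_m$ among proper COP compactifications — which requires first verifying (as in Parts~1--2 of Theorem~\ref{t:NovakUniformityInternal}, only sketched in the GCOTS case) that $\{\Ccal_F\}$ really is a base of a precompact GCO-uniformity whose completion is a COP compactification. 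You instead prove this inclusion concretely: project an arbitrary cycle $\widetilde F$ of $\widetilde X$ to the cycle $F$ of its base points, note $\operatorname{supp}(\widetilde F)\subseteq\operatorname{supp}(F^\theta)$, and use the ``enlarging a cycle refines its star cover'' observation (already implicit in the Refinement bullet of Theorem~\ref{t:NovakUniformityInternal}) to get $\Ccal_F=\Ccal_{F^\theta}\cap X\succ\Ccal_{\widetilde F}\cap X$. This buys you all the structural claims (base of a uniformity, precompactness, convexity, compatibility) for free from the identification with $\mu_R$, at the price of an extra, but routine, cover comparison; the paper's route is shorter once minimality is in hand but leaves more of the verification implicit. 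The only loose end on your side is degenerate bookkeeping: if the base-point support of $\widetilde F$ has fewer than three elements, $F$ is not a legitimate cycle, but enlarging $F$ by extra points only refines $\Ccal_F$ further, so this costs nothing.
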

\begin{proof}
By Lemma \ref{l:predec}.3, every cover $\mathcal{C}^G_F$ is the trace on $X$ of the standard cycle star cover $\mathcal{C}_{F^{\theta}}$ defined on the least d-extension $(\widetilde{X}, \widetilde{R}, \tau^{\sim})$. Because $\widetilde{X}$ is a proper $\COTS$, its star covers $\{\mathcal{C}_{\widetilde{F}}\}$ form a base for its canonical uniformity $\mu_{\widetilde{R}}$, which is topologically compatible with $\tau^{\sim}$. 
	Therefore, the trace family $\{\mathcal{C}^G_F\}$ automatically satisfies the axioms of a uniform base, generating a uniformity $\mu^G_{\medcirc}$ on $X$ that is topologically compatible with the subspace topology $\tau$.
	
	To prove $\mu^G_{\medcirc} = \mu_R$, observe that $X_m$ is the minimal COP compactification of $X$. Because $\widetilde{X}$ is the least d-extension, $X$ is dense in $\widetilde{X}$, and $\widetilde{X}$ shares the exact same minimal COP compactification $X_m$. The unique uniformity on the compact COTS $X_m$ pulls back to the canonical uniformity $\mu_{\widetilde{R}}$ on $\widetilde{X}$ (as in Theorem \ref{t:NovakUniformityInternal}), which in turn pulls back to $\mu^G_{\medcirc}$ on $X$. Because the composition of uniform pullbacks is simply the pullback along the inclusion $X \hookrightarrow X_m$, it follows that $\mu^G_{\medcirc}$ is precisely the compactification uniformity $\mu_R$. 
\end{proof}

Note that in \cite{Sorin24} G.B. Sorin gives a  lattice-theoretic description of all proper COP compactifications of any GCOTS in terms of gaps and order completions. This successfully extends the classical case of GLOTS coming back to R. Kaufman \cite{Kaufman1967} and V. Fedorchuk \cite{Fed}. 

In this work we give a different approach based on convex uniformities. 

\subsection*{Representation of COP compactifications by GCO-uniformities}

Write $\operatorname{Comp}_{\mathrm{COP}}(X)$ for the poset of proper COP compactifications $j\colon X\to Y$ of $(X,R,\tau)$.  
Let $\operatorname{Unif}_{\mathrm{GCO}}(X)$ be the poset of precompact GCO-uniformities on $(X,R,\tau)$ partially ordered by inclusions.

\begin{thm} \label{t:UniformRep}
	There is a natural order anti-isomorphism
	\[
	\operatorname{Comp}_{\mathrm{COP}}(X)\ \longleftrightarrow\ \operatorname{Unif}_{\mathrm{GCO}}(X),
	\]
	sending a compactification $j\colon X\to (Y,\Rcal_Y)$ to the pulled-back uniformity $\mathcal{U}_Y$  
	and a GCO-uniformity $\mathcal{U}$ to the completion compactification $X \to (\widehat{X},R_c)$. 
\end{thm}
\begin{proof}
	Define
	$
	\Phi \colon \operatorname{Comp}_{\mathrm{COP}}(X) \to \operatorname{Unif}_{\mathrm{GCO}}(X), \  \Phi(j):=\mathcal{U}_Y,
	$
	where $j \colon X\to Y$ is a proper COP compactification and $\mathcal{U}_Y$ is the pulled-back
	uniformity on $X$ induced by the unique uniformity of the compact COTS $Y$.
	By Example \ref{l:CompactCase} 
	$U_Y$ is a precompact GCO-uniformity on $X$.
	
	Conversely, for every $\mathcal{U} \in \operatorname{Unif}_{\mathrm{GCO}}(X)$ let
	$
	\Psi(\mathcal{U}):=i_{\mathcal{U}} \colon X\to \widehat X_{\mathcal{U}},
	$
	where $\widehat X_{\mathcal{U}}$ is the uniform completion of $(X,\mathcal{U})$.
	By Proposition~\ref{p:ORDERING-BM}, $\widehat X_{\mathcal{U}}$ carries a canonically defined compact
	circular order $\widehat R$, and $i_{\mathcal{U}}$ is a proper COP compactification of $X$.
	
	These assignments are order reversing. Indeed, if $j_1\le j_2$ in $\operatorname{Comp}_{\mathrm{COP}}(X)$, so that there exists 
	a COP map $p \colon Y_2\to Y_1$ with $j_1=p\circ j_2$, then $p$ is uniformly continuous between compact
	uniform spaces, hence the pulled-back uniformities satisfy $\mathcal{U}_{Y_1}\subseteq \mathcal{U}_{Y_2}$.
	Thus $\Phi(j_1)\subseteq \Phi(j_2)$.
	
	Conversely, if $\mathcal{U}_1\subseteq \mathcal{U}_2$ in $\operatorname{Unif}_{\mathrm{GCO}}(X)$, then the identity map on $X$ is uniformly
	continuous $(X,\mathcal{U}_2)\to (X,\mathcal{U}_1)$. By the universal property of uniform completions, it extends
	uniquely to a continuous map $\widehat p \colon \widehat X_{\mathcal{U}_2}\to \widehat X_{\mathcal{U}_1}$ such that
	$i_{\mathcal{U}_1}=\widehat p\circ i_{\mathcal{U}_2}$. Since both $i_{\mathcal{U}_1}$ and $i_{\mathcal{U}_2}$ are COP embeddings with dense
	range, Lemma~\ref{l:AutomaticCOP} implies that $\widehat p$ is COP. Hence $\Psi(\mathcal{U}_1)\le \Psi(\mathcal{U}_2)$.
	
	Finally, the two assignments are inverse to each other. If $j \colon X\to Y$ is a proper COP compactification,
	then $Y$ is the uniform completion of $(X,\mathcal{U}_Y)$; therefore $\Psi(\Phi(j))$ is equivalent to $j$.
	If $\mathcal{U} \in \operatorname{Unif}_{\mathrm{GCO}}(X)$, then the pulled-back uniformity induced by the completion compactification
	$i_{\mathcal{U}} \colon X \to \widehat X_{\mathcal{U}}$ is exactly $\mathcal{U}$, so $\Phi(\Psi(\mathcal{U}))=\mathcal{U}$. 
\end{proof}

Below we show that convex uniformities help to manage effectively and naturally continuous actions and $G$-compactifications. As well as it gives a very short proof of the fragmentability of COP functions, Theorem \ref{monot}. 
 
\section{Compactifications of order preserving group actions}  
 \label{s:G-comp}

Let $X$ be a topological space and $\pi \colon G \times X \to X$ be a group action. A topology $\s$ on $G$ is called \textit{admissible} \cite{Arens} if $(G,\s)$ is a topological group $G$ which makes $\pi$ jointly continuous. 

\begin{thm} \label{t:myCOTS-G-comp} 
	Let $(X,R)$ be a $\COTS$ with its interval topology $\lambda_{\medcirc}$ and 
	$G \subseteq H_+(X,\lambda_{\medcirc})$ with the pointwise topology $\sigma^p_{X}$. 
	Let $\pi_X \colon G \times X \to X$ be the induced COP separately continuous action. Then 
	\begin{enumerate} 
	\item $\mu_{\medcirc}$ is equiuniform under the action of the semitopological group $(G,\sigma^p_{X})$. 
	\item Nov\'ak's COP compactification $\nu \colon (X,\lambda_{\medcirc})\longrightarrow (\Xcal_r,\mathrm{int}_r)$ 
	admits a canonically defined COP action  $\pi_r \colon (G,\sigma^p_{X}) \times \Xcal_r \to \Xcal_r$ which extends the given action $\pi_X$ and is continuous. 
			
		\item $(G,\sigma^p_{X})$ is a topological group and $\pi_X$ is continuous (i.e., $\sigma^p_{X}$ is an admissible topology).  
	\end{enumerate}
	
(*) Similar results are true for every $\LOTS$ $(X,\leq) $ and every subgroup $G \subseteq H_+(X,\lambda_{\leq})$. 		
\end{thm} 

\begin{proof}
	(1) Since the action is COP, every $g$-translation moves a cycle to a cycle. Hence every cover $\mathcal{C}_F = \{ (a_i, a_{i+2}) \}_{i=1}^n$ is moved to $\mathcal{C}_{gF} = \{ (ga_i, ga_{i+2}) \}_{i=1}^n$. This yields that  $\mu_{\medcirc}$ is $G$-saturated.
	To establish equiuniformity, we must show $G$-boundedness (Definition \ref{d:equiun}): for any star-cover $\mathcal{C}_F = \{ (a_i, a_{i+2}) \}_{i=1}^n$ of a finite cycle $F$, there exists $V \in \mathcal{N}(e)$ such that $\{Vx : x \in X\}$ refines $\mathcal{C}_F$. 
 
	Construct a refined finite cycle $F^*:=[c_1,c_2,\dots,c_m]$ by inserting a point
	$b_i\in (a_i,a_{i+1})$ whenever this interval is nonempty. If $(a_i,a_{i+1})=\emptyset$ we set $b_i:=a_i$. Then for defining $F^*$, we list the resulting \textbf{distinct elements} in their cyclic order.
	
	Clearly, $\bigcup_{i=1}^n \big( (a_i,b_i) \cup (b_i,a_{i+1}) \big) \cup F^* = X$. 
	Because $F^*$ is finite and the action 
	of $G$ on $X$ is separately continuous, for the $G$-boundedness of the open cover $\mathcal{C}_F$ 
	it is enough to find $V \in \mathcal{N}(e)$ such that $Vx \subseteq (a_{i-1}, a_{i+1})$ for every $x \in (a_i,b_i)$ and   
	 $Vx \subseteq (a_i, a_{i+2})$ for every $x \in (b_i,a_{i+1})$.  

	Using Lemma \ref{l:c-is-Open}.1 for $F^*=[c_1,c_2, \cdots, c_m]$, 
	there exist (necessarily pairwise disjoint) open neighborhoods $O_k$ of $c_k$ in $X$, where $k \in \{1,\cdots,m\}$ such that $[O_1,O_2,\cdots,O_m]$. 	
	Since the action is separately continuous,   
	we may pick $V \in \mathcal{N}(e)$ such that $Vc_k \subseteq O_k$ for every $k \in \{1,\cdots,m\}$. 	
	
	Let $i \in \{1,\cdots,n\}$ be an index such that 
	$(a_i,a_{i+1}) \neq \emptyset$. Then by the construction of $F^*$, $b_i \in (a_i,a_{i+1})$ and $[a_{i-1}, a_i, b_i, a_{i+1}, a_{i+2}]$ is a subcycle of $F^*$. By the choice of $V$,  
	for every triple $g_1, g_2, g_3  \in V$ we have the  cycle  
$$[a_{i-1}, g_1a_i, g_2b_i, g_3a_{i+1}, a_{i+2}].$$  

	Let $x \in (a_i,b_i)$. 
	Since the action is COP, we get $gx \in (ga_i,gb_i) \subseteq (a_{i-1}, a_{i+1})$ for every $g \in V$. Thus, $Vx \subseteq (a_{i-1}, a_{i+1})$ for every $x \in (a_i,b_i)$.
	
	If $x \in (b_i,a_{i+1})$, then similarly we get $gx \in (gb_i,ga_{i+1}) \subseteq (a_i, a_{i+2})$. 
	Thus, $Vx \subseteq (a_i, a_{i+2})$ for every $x \in (b_i,a_{i+1})$. This completes the proof of the $G$-boundedness of $\mu_{\medcirc}$. 
	
	(2) Because the $G$-action on $(X, \mu_{\medcirc})$ is equiuniform and $\mathcal{X}_r$ is its uniform completion, 
	Proposition \ref{p:ORDERING-BM}.2 guarantees that the extended action $\pi_r \colon G \times  \mathcal{X}_r \to \mathcal{X}_r$ is continuous and COP. 
	
	(3) Let $K := \mathcal{X}_r$. Because $X$ is dense in $K$ and $G$ acts by homeomorphisms, the pointwise topologies on $G$ coincide: $\sigma^p_X = \sigma^p_K$. 
	By (2), the action $\pi_r \colon (G, \sigma^p_K) \times K \to K$ is jointly continuous on a compact space. Arens' minimality theorem \cite{Arens} then implies that the pointwise topology $\sigma^p_K$ contains the compact-open topology $\sigma_{co}$. Since the reverse inclusion $\sigma_{co} \supseteq \sigma^p_K$ is universally true, $\sigma^p_X = \sigma^p_K = \sigma_{co}$. 
	Because the homeomorphism group of a compact space equipped with the compact-open topology is a topological group, $(G, \sigma^p_X)$ is a topological group, and $\pi_X$ is jointly continuous.
	
	(*) Similar arguments apply to $\LOTS$ by substituting cycles with finite chains.
\end{proof}

\begin{remark} \label{r:p-topologies} 
The LOTS case (*) of Theorem \ref{t:myCOTS-G-comp} was proved by direct methods by Ovchinnikov \cite{Ovch} and also by B.V. Sorin \cite[Corollary 2]{Sorin-B-22} (using different ideas and properties of Dedekind compactifications). 
For the particular case of compact COTS (and LOTS), Theorem \ref{t:myCOTS-G-comp} can be derived also from Corollary \ref{c:Arens}. For two important cases of ordered spaces $\T$ and $[0,1]$, 
the coincidence of pointwise and compact-open topologies on $H_+(X)$ was mentioned, without presenting the proof, in \cite[Remark 7.9.2]{GM-AffComp} and \cite[Section 8]{GM-tame}, respectively. 	
\end{remark}

\subsection*{G-compactifications of generalized circularly ordered $G$-spaces} 

We have proved in \cite{Me-MaxEqComp} that for LOTS $G$-spaces every $G_{discr}$-compactification is in fact a $G$-compactification.
In particular, every LOTS $G$-space admits a proper $G$-compactification (i.e., the minimal LOTS compactification).  
More precisely, the following result for LOTS was proved  using V. Fedorchuk's characterization \cite{Fed68} of proximity relations for LOP proper compactifications.  

\begin{f} \label{f:myLOTS-G-comp} \cite[Theorem 3.18]{Me-MaxEqComp}
	Let $(X,\leq)$ be a $\LOTS$ with its interval topology and $\pi_X \colon G \times X \to X$ is a continuous action of a topological group $G$ on $X$. Assume that $j \colon X \to Y$ is a LOP proper compactification such that there exists an extended action $\pi_Y \colon G \times Y \to Y$ with continuous $g$-translations $Y \to Y$ (for every $g \in G$). Then $\pi_Y$ is also continuous.  	
\end{f}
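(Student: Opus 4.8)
The plan is to deduce joint continuity from separate continuity, via the linear analogue of Theorem~\ref{t:COTS_joint_cont} (valid for GLOTS and LOP actions by Remark~\ref{r:AboutArens}.1), noting that the topological group $G$ is in particular right topological. Since $j\colon X\to Y$ is a proper LOP compactification of a LOTS, $Y$ is a compact GLOTS, hence a LOTS, and its order $\le$ is closed (Lemma~\ref{ordHausd}, Proposition~\ref{p:GLOTS}). In this order-preserving setting the $g$-translations on $X$ are LOP (the system is a linearly ordered dynamical system in the sense of Definition~\ref{d:c-ordG-sp}); because $j$ is an equivariant LOP embedding with dense image and $\le$ is closed on $Y$, a density argument shows each $t^g\colon Y\to Y$ is again LOP, and being a continuous bijection of a compact Hausdorff space with continuous inverse $t^{g^{-1}}$, it is an order automorphism of $Y$. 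Thus $\pi_Y$ is a LOP action with continuous $g$-translations, and it remains only to verify that every orbit map $\omega_y\colon G\to Y,\ g\mapsto gy$ is continuous; once this holds, $\pi_Y$ is separately continuous and Theorem~\ref{t:COTS_joint_cont} yields joint continuity.

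For the orbit maps I would first reduce to continuity at the identity: from $\omega_y(g_0k)=t^{g_0}(\omega_y(k))$ and continuity of $t^{g_0}$, continuity of $\omega_y$ at $e$ propagates to every $g_0$. At a point $y$ which is neither an endpoint nor adjacent to a jump, continuity at $e$ follows from a squeezing argument: given a target interval $(p,q)\ni y$, pick $a,b\in X$ with $p<a<y<b<q$ (possible by density of $X$); continuity of $\pi_X$ gives $ka\to a$ and $kb\to b$ as $k\to e$, while the LOP property of each $k$ forces $ka\le ky\le kb$, so eventually $p<ka\le ky\le kb<q$, i.e. $ky\in(p,q)$. The endpoints $\min Y,\max Y$ are fixed by every order automorphism, so their orbit maps are constant.

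The main obstacle is the behaviour at singular (jump) points of $Y$, i.e. points $y$ for which $[y,\to)$ or $(\leftarrow,y]$ is clopen: there the squeeze fails, because the gap on one side prevents trapping $ky$ from the correct direction. The natural remedy is to show such a $y$ is \emph{eventually $G$-fixed}. When the jump already lives in $X$ (both $y$ and its immediate neighbour lie in $j(X)$), this follows from the linear analogue of Lemma~\ref{l:eventCIRCULAR} applied to the genuinely continuous action $\pi_X$ on the LOTS $X$, and eventual fixing in $X$ is inherited by $Y$. The delicate case is a jump \emph{created by the compactification} (an added point), where pointwise arguments are insufficient; this is precisely where the original proof in \cite{Me-MaxEqComp} invokes Fedorchuk's description of the proximity of a LOP proper compactification.

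An equivalent, more structural route — which I would present as the alternative — bypasses the pointwise analysis through equiuniformities. The compact Hausdorff space $Y$ carries a unique compatible uniformity $\mathcal{U}$, and its continuous $g$-translations are automatically $\mathcal{U}$-uniformly continuous, so the action is $\mathcal{U}$-saturated (Definition~\ref{d:equiun}). If one shows that the restricted continuous action on the dense $G$-invariant subspace $j(X)$ is $\mathcal{U}|_{j(X)}$-bounded — the one genuinely hard step, again supplied by Fedorchuk's proximity characterization — then that action is $\mathcal{U}|_{j(X)}$-equiuniform, and Fact~\ref{f:G-compactFacts}.1 immediately gives that $\pi_Y$ is $\mathcal{U}$-equiuniform and, in particular, continuous. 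In either formulation the crux is the boundedness/eventual-fixing phenomenon at the singular points, which is exactly the content secured by the order-proximity analysis of \cite{Me-MaxEqComp}.
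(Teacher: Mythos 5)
Your overall architecture is sound and your second, ``structural'' route is in fact the one the paper itself takes: it reproves (a generalization of) this Fact via Theorem~\ref{t:G-bounded-GCOTS}, i.e.\ by showing that the relevant precompact convex uniformity is $G$-bounded and then invoking Fact~\ref{f:G-compactFacts}. You also correctly diagnose where the difficulty sits (jumps, in particular jumps created by the compactification, where the two-sided squeeze fails). The problem is that at exactly that point — the $\mathcal{U}|_{j(X)}$-boundedness of the restricted action, which you yourself call ``the one genuinely hard step'' — you prove nothing and instead defer to Fedorchuk's proximity characterization in \cite{Me-MaxEqComp}. Since the statement is essentially equivalent to that boundedness, outsourcing it leaves the proof attempt with its only nontrivial content missing; and the stated purpose of the paper's treatment is precisely to \emph{replace} the Fedorchuk machinery by a direct argument.

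What the paper does to close this gap (Lemma~\ref{l:Stability}) is worth internalizing, because it sidesteps the ``compactification-created jump'' problem entirely: one never analyzes orbit maps at points of $Y\setminus j(X)$. Instead, fix a finite \emph{convex} open cover $\alpha=\{A_1,\dots,A_m\}$ of $X$ from the uniform base and show that each $A_i$ is eventually inside $\operatorname{St}(A_i,\alpha)$. By convexity this reduces to controlling only the finitely many endpoints $a_i,b_i$, and each endpoint falls into one of two cases: either it belongs to $A_i$, so that $(a_i,b_i]$ (resp.\ $[a_i,b_i)$) is open and the eventual-fixing Lemma~\ref{l:eventCIRCULAR} — applied to the genuinely continuous action on $X$, where your symmetric $g^{-1}$ trick is available — pins it down; or it lies in an overlapping member $A_j$, and ordinary orbit-map continuity traps it in $A_j\subseteq\operatorname{St}(A_i,\alpha)$. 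Hence $\{Vx:x\in X\}$ refines $\operatorname{St}(\alpha)$ for some $V\in N(e)$, and since $\alpha$ and $\operatorname{St}(\alpha)$ generate the same uniformity, this already gives $G$-boundedness; Fact~\ref{f:G-compactFacts} then transfers continuity to the completion $Y$. The star-refinement slack is the key idea your first route lacks: you do not need $ky$ to stay on the correct side of a jump of $Y$, only to stay inside a star of the pulled-back cover, and that is controlled entirely inside $X$.
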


We are going to prove below (in Theorem \ref{t:G-bounded-GCOTS} and Corollary \ref{r:SUMMARY}) more general results using more flexible arguments involving convex uniformities. Among others we show that it admits a natural generalization for GLOTS and GCOTS. One of our goals will be to establish that proper COTS $G$-compactifications are exactly completions of precompact convex $G$-saturated uniformities (compare Remarks \ref{f:G-bound}.4). The crucial point here is to show Lemma \ref{l:Stability}  that every precompact $G$-saturated uniformity is an equiniformity in the sense of Definition \ref{d:equiun}.3.

\begin{lem} \label{l:Stability} 
	Let $(X,R,\t)$ be a $\GCOTS$ and let $\pi \colon G \times X \to X$ be a continuous COP action of a topological group $G$ on $X$.
	Assume that $\mathcal{U}$ is a precompact GCO-uniformity on $X$ such that all $g$-translations ($g \in G$) are $\mathcal{U}$-uniformly continuous.
	Then $\mathcal{U}$ is $G$-equiuniform and the induced action $G \times \widehat{X} \to \widehat{X}$ on the uniform completion (compactification) is continuous and COP. 
\end{lem}

\begin{proof} 
	
 \noindent (A) \textit{$\mathcal{U}$ is $G$-equiuniform.} 	

	Since $\mathcal{U}$ is $G$-saturated, it is enough to show that $\mathcal{U}$ is $G$-bounded. So, we must prove that for every uniform cover $\beta \in \mathcal{U}$, there exists $V \in \mathcal{N}(e)$ such that $\{Vx: x \in X\}$ refines $\beta$. 	
	Since $\mathcal{U}$ is a precompact GCO-uniformity, there exists a finite uniform cover of open convex sets $\alpha:=\{A_1,\dots,A_m\} \in \mathcal{U}$ such that $\operatorname{St}(\alpha)$ refines $\beta$. By taking $\alpha$ sufficiently fine, we may assume it separates at least three distinct points of $X$. This excludes the large convex forms $X$ and $X \setminus \{u\}$ (see Remark \ref{r:convex}), guaranteeing that every $A_i \in \alpha$ has a regular interval form bounded by some endpoints $a_i, b_i \in X$ (where possibly $a_i = b_i$). By Lemma \ref{l:ConvexProprties}, each star $\operatorname{St}(A_i,\alpha)$ is also convex. 
	Fix $i \in \{1,\dots,m\}$. We claim that both endpoints $a_i$ and $b_i$ necessarily belong to $\operatorname{St}(A_i,\alpha)$. We demonstrate this for the right endpoint $r(A_i) = b_i$ (the argument for $a_i$ is identical). 
	
	If $b_i \in A_i$, then trivially $b_i \in \operatorname{St}(A_i,\alpha)$. 
	We may suppose below that $b_i \notin A_i$.
	
	(II) \textit{Overlapping endpoints.} 
	Since $\alpha$ is a covering, there exists $j \neq i$ such that $b_i \in A_j$. Let $a_j$ be the left endpoint of $A_j$. There are two subcases: 
	
	(IIa) (\textit{nonessential overlapping}): $(a_j,b_i) = \emptyset$. 
	Because there are no elements strictly between $a_j$ and $b_i$, and $b_i$ is the right endpoint of $A_i$ but $b_i \notin A_i$, we must have $a_j \in (a_i,b_i) \subseteq A_i$. Hence, $A_i=(a_i,a_j]$. However, this implies that the right endpoint $r(A_i)=a_j$. This contradicts our assumption that $r(A_i)=b_i$ because $a_j \neq b_i$. Thus, this subcase is impossible.
	
	(IIb) (\textit{essential overlapping}): $(a_j,b_i) \neq \emptyset$. 
	In this case, the interval strictly between $a_j$ and $b_i$ is non-empty, which necessarily forces the open convex sets to overlap: $A_i \cap A_j \neq \emptyset$. Hence, $b_i \in A_j \subseteq \operatorname{St}(A_i, \alpha)$. 
	
	Summing up, we see that in all possible cases the endpoints $a_i, b_i$ belong to $\operatorname{St}(A_i,\alpha)$. Since $\operatorname{St}(A_i,\alpha)$ is open and the orbit maps at $a_i$ and $b_i$ are continuous, there exists a symmetric neighborhood $W_i \in \mathcal{N}(e)$ such that $g(a_i), g(b_i) \in \operatorname{St}(A_i,\alpha)$ for all $g \in W_i$. 
	Because the action is COP and $\operatorname{St}(A_i,\alpha)$ is convex, the image $g(A_i)$ is bounded by the interval between $g(a_i)$ and $g(b_i)$, forcing $g(A_i) \subseteq \operatorname{St}(A_i,\alpha)$ for all $g \in W_i$.
	
	Since $\alpha$ is finite, we may take the finite intersection $V := \bigcap_{i=1}^m W_i \in \mathcal{N}(e)$. For every $g \in V$ and every $A_i \in \alpha$, we have $g(A_i) \subseteq \operatorname{St}(A_i,\alpha)$. Consequently, for any $x \in X$, choosing an $A_i$ containing $x$ yields $Vx \subseteq \operatorname{St}(A_i,\alpha)$. Thus, the cover $\{Vx: x \in X\}$ refines $\operatorname{St}(\alpha)$, which in turn refines $\beta$, completing the proof.
	
	\noindent (B) \textit{The extended action $G \times \widehat{X} \to \widehat{X}$ is continuous and COP.} 
	
	This follows by Proposition \ref{p:ORDERING-BM} because 
	now, we can apply that $\mathcal{U}$ is $G$-equiuniform.  	
\end{proof}

%The following theorem highlights a major advantage of the uniform structures approach for ordered dynamical systems. In general topological dynamics, extending a separately continuous action to a jointly continuous action on a compactification is notoriously difficult. However, the circular order structure bridges this gap perfectly: every $G$-saturated convex precompact uniformity automatically induces a jointly continuous $G$-compactification. 

The following theorem shows that for GCOTS $G$-spaces 
every proper COP $G_{discr}$-compactification is a $G$-compactification, where $G_{discr}$ is the group $G$ with the discrete topology.  

\begin{thm} \label{t:G-bounded-GCOTS}
Let	$\pi \colon G \times X \to X$ be a $\t$-continuous COP action  of a topological group $G$ on a $\GCOTS$ $(X,R,\t)$. If $\sigma \colon X \to Y$ is a proper COP compactification of $(X,\t)$ which admits an extension of the action $\pi_Y \colon G \times Y \to Y$ with continuous $g$-translations $Y \to Y$, then $\sigma$  necessarily is a $G$-compactification (that is, $\pi_Y$ is a continuous action).   
\end{thm}
\begin{proof} 
Since $Y$ is a compact COTS, Example  \ref{l:CompactCase} implies that the precompact uniformity $\mathcal{U}_Y$ on $X$ induced by $\sigma \colon X \to Y$ is convex. It is $G$-saturated by our assumption. 
Now apply Lemma \ref{l:Stability}.  
\end{proof}

\begin{cor}  \label{r:SUMMARY}  \
		\begin{enumerate} 
			\item In the order anti-isomorphism for $\GCOTS$ (from Theorem \ref{t:UniformRep}) 
			\[
			\operatorname{Comp}_{\mathrm{COP}}(X)\ \longleftrightarrow\ \operatorname{Unif}_{\mathrm{GCO}}(X),
			\]
			$G$-\textbf{saturated} uniformities correspond to $G$-compactifications with continuous COP actions.  
		\item For every $\GCOTS$ $G$-space $X$ there exists a proper COP $G$-compactification (e.g., the minimal COP compactification of $X$ or the Nachbin compactification $\beta_R X$). 	
		\end{enumerate} 
\end{cor}
\begin{proof}    
	(1) Theorem \ref{t:UniformRep} gives the anti-isomorphism between precompact convex uniformities and compactifications; Theorem \ref{t:G-bounded-GCOTS} says that $G$-saturated is enough for COP $G$-compactifications.

	(2) 
	Apply Theorem \ref{t:G-bounded-GCOTS} to Nov\'ak's completion.  
	An alternative proof can be obtained by combining Theorem \ref{t:myCOTS-G-comp} and Proposition \ref{p:ActionLeastExtension}.  
	
	For the case of $\beta_R X$ (from Proposition \ref{p:maxCORDcomp}) the corresponding precompact uniformity is the weakest uniformity $\mathcal{U}_{\beta_R}$  generated by all bounded continuous COP functions $X \to \T$. This class of functions is $G$-invariant and this guarantees that $\mathcal{U}_{\beta_R}$ is $G$-saturated.  
\end{proof}

Analogs of \ref{t:G-bounded-GCOTS} and \ref{r:SUMMARY} are true (after minor adaptations) also for $\GLOTS$ with LOP actions. For $\LOTS$ case cf. Fact \ref{f:myLOTS-G-comp} (from \cite{Me-MaxEqComp}). 

\section{Fragmented functions, tame families}  
\label{s:fr} 

By a classical fact, going back to R. Baire (1899),  every monotone function $X \to \R$ is a Baire 1 function for every interval $X \subseteq \R$. In Theorem \ref{monot} we  show this result can be generalized to much more general order preserving functions.   

A function $f \colon X \to Y$ is said to be 
{\em of Baire class 1} if the inverse image of every open set in $Y$ is an $F_\sigma$ set (the union of countably many closed sets) in $X$.  
Notation: $f \in \mathcal{B}_1(X,Y)$ and  $f \in \mathcal{B}_1(X)$ for $Y=\R$.  
If $X$ is separable and metrizable then a real valued function $f \colon  X \to \R$ is Baire 1 if and only if $f$ is a pointwise limit of a sequence of continuous functions (see, for example, \cite{Dulst, GMU08}).  
A function $f \colon X \to Y$ has the {\em point of continuity property}  
if for every closed nonempty subset $A$ of $X$ the restriction$f|_A \colon A \to Y$ has a point of continuity. 
When $X$ is compact or Polish and $(Y,d)$ is a (pseudo)metric space then $f\colon X \to Y$ is fragmented iff $f$ has the point of continuity property (see \cite{GM1}).   
Recall the following (slightly generalized) definition of fragmentability which comes from Banach space theory 
and is effectively used also in dynamical systems theory \cite{Me-nz,GM1,GM-survey}.  

\begin{defin}\label{d:frag}
	Let $X$ be a topological space and $(Y,\mu)$ a uniform space. 
	A map $f \colon X\to(Y,\mu)$ is \emph{fragmented} if for every entourage $\eps \in\mu$ and every nonempty set $A\subseteq X$ there exists a nonempty relatively open set $O\subseteq A$ such that 
	$
	f(O)\times f(O)\subseteq \eps.
	$
	Equivalently, $f(O)$ is \emph{$\eps$-small}. We write $f\in\mathcal{F}(X,Y)$, and $f\in\mathcal{F}(X)$ when $Y=\R$ with its usual uniformity.
\end{defin}

\begin{f} \label{f:fr}  \cite[p. 137]{Dulst} For every Polish space $X$, we have $\mathcal{F}(X)=\mathcal{B}_1(X)$. More generally, if $X$ is Polish and $(Y,d)$ is a separable metric space then $f \colon X \to Y$ is fragmented if and only if $f$ is a Baire class 1 function. 
\end{f}

\begin{lem}\label{l:lift-fragm}
	Let $(Y,\mu)$ be a uniform space, $(X,R)$ a $\COTS$, $c\in X$, and $f \colon X \to(Y,\mu)$.
	Then $f\circ q_c \colon X(c)\to Y$ is fragmented if and only if $f \colon X\to Y$ is fragmented.
\end{lem}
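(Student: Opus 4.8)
The plan is to prove the two implications separately, exploiting the three structural features of the single-split projection $q_c\colon X(c)\to X$ recorded in Corollary~\ref{cover}: it is a continuous closed surjection, its fibre over $c$ is exactly the two-point set $\{c^-,c^+\}$, and its restriction to $X(c)\setminus\{c^-,c^+\}$ is a homeomorphism onto $X\setminus\{c\}$. Since $X$ is Hausdorff (Proposition~\ref{Hausdorff}), the singleton $\{c\}$ is closed, so $A\setminus\{c\}$ is relatively open in $A$ for every $A\subseteq X$ — a fact I will use repeatedly to transport relatively open sets through $q_c$. Throughout write $g:=f\circ q_c$.

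For the direction \emph{$f$ fragmented $\Rightarrow$ $g$ fragmented}, I would argue that fragmentability always pulls back along a continuous closed surjection. Fix an entourage $\eps\in\mu$ and a nonempty closed $B\subseteq X(c)$. Then $q_c(B)$ is closed in $X$, so fragmentability of $f$ yields a nonempty relatively open $U\subseteq q_c(B)$ with $f(U)$ $\eps$-small; writing $U=V\cap q_c(B)$ with $V$ open, the set $O:=q_c^{-1}(V)\cap B$ is relatively open in $B$, nonempty (any point of $U$ has a $q_c$-preimage in $B$), and satisfies $q_c(O)\subseteq U$, so that $g(O)\subseteq f(U)$ is $\eps$-small. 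This step is routine and uses closedness of $q_c$ only to keep $q_c(B)$ closed.

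The substantive direction is \emph{$g$ fragmented $\Rightarrow$ $f$ fragmented}, where the obstacle is precisely the doubled fibre over $c$: a fragmenting open set for $g$ could a priori be trapped inside $\{c^-,c^+\}$, and its image $\{c\}$ need not be relatively open in the target closed set $A$. The key idea is to apply the fragmentability of $g$ not to $B:=q_c^{-1}(A)$ itself but to the smaller closed set $B_0:=\overline{B\setminus\{c^-,c^+\}}$. If $B_0=\emptyset$ then $A=\{c\}$ and $O:=A$ works trivially. Otherwise fragmentability of $g$ on $B_0$ gives a nonempty relatively open $W=V\cap B_0$ with $g(W)$ $\eps$-small; since $B\setminus\{c^-,c^+\}$ is dense in $B_0$, the set $W':=V\cap(B\setminus\{c^-,c^+\})$ meets the dense part, hence is nonempty, and it is relatively open in $B\setminus\{c^-,c^+\}=q_c^{-1}(A\setminus\{c\})$. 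Transporting through the homeomorphism $q_c\colon X(c)\setminus\{c^-,c^+\}\to X\setminus\{c\}$, the set $O:=q_c(W')$ is relatively open in $A\setminus\{c\}$, hence in $A$, nonempty, and $f(O)=g(W')$ is $\eps$-small.

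The only genuinely delicate point is the one isolated above: forcing the fragmenting open set into the region where $q_c$ is injective. Passing to the closure $B_0$ of the unsplit part of the fibre $B$ resolves it cleanly, because any nonempty relatively open subset of $B_0$ must meet the dense unsplit part, and there $q_c$ behaves like a homeomorphism. I expect no further difficulty; the remaining verifications (that $q_c^{-1}(A)$ and $B_0$ are closed, and that the images and preimages of the various open sets are as claimed) are immediate from continuity of $q_c$ and the homeomorphism on the complement of the fibre.
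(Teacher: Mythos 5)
Your proof is correct and follows essentially the same route as the paper's: treat the degenerate case $A=\{c\}$ separately, and otherwise transport the fragmenting relatively open set through the homeomorphism $q_c\colon X(c)\setminus\{c^-,c^+\}\to X\setminus\{c\}$ (and, for the converse, pull back along the continuous closed surjection $q_c$). Your additional step of applying fragmentability of $f\circ q_c$ to the closure $B_0=\overline{B\setminus\{c^-,c^+\}}$ and then invoking density of the unsplit part is in fact slightly more careful than the paper's argument, which applies Definition~\ref{d:frag} to $q_c^{-1}(A\setminus\{c\})$ --- a set closed only in the subspace $X(c)\setminus\{c^-,c^+\}$, so that strictly speaking one must also invoke the standard equivalence of fragmentability over closed sets with fragmentability over arbitrary nonempty subsets.
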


\begin{proof}
	($\Rightarrow$) Assume $f\circ q_c$ is fragmented. Fix an entourage $\eps \in\mu$ and a nonempty $A\subseteq X$.
	If $A \subseteq \{c\}$, then $f(A)$ is a singleton and thus trivially $\eps$-small. So, we may assume $A\setminus\{c\} \neq \emptyset$.
	By Corollary~\ref{cover}(iii), $q_c$ restricts to a homeomorphism 
	$q_c \colon X(c)\setminus\{c^-,c^+\} \rightarrow  X\setminus\{c\}$.
	The set $B:=q_c^{-1}(A\setminus\{c\})$ is nonempty in $X(c)\setminus\{c^-,c^+\}$.
	Since $f\circ q_c$ is fragmented, there exists a nonempty relatively open subset $U\subseteq B$ such that $(f\circ q_c)(U)$ is $\eps$-small.
	Let $O:=q_c(U)$. Because $q_c$ is a homeomorphism off the split points, $O$ is nonempty and relatively open in $A\setminus\{c\}$, which is itself an open subspace of $A$. Thus, $O$ is relatively open in $A$, and $f(O)=(f \circ q_c)(U)$ is $\eps$-small. Hence, $f$ is fragmented.
	
	($\Leftarrow$) 
	Let $A \subseteq X(c)$ be nonempty and $\eps \in \mu$.
	By the fragmentability of $f$ applied to the subset $q_c(A) \subseteq X$, there exists an open subset $O \subseteq X$ such that $q_c(A) \cap O$ is nonempty and $f(q_c(A) \cap O)$ is $\eps$-small in $Y$.
	Let $W := A \cap q_c^{-1}(O)$. Because $q_c$ is continuous, $W$ is a relatively open subset of $A$. Furthermore, $q_c(W) = q_c(A) \cap O \neq \emptyset$, making $W$ nonempty. Consequently, $(f \circ q_c)(W) = f(q_c(A) \cap O)$ is $\eps$-small, proving $f \circ q_c$ is fragmented.
\end{proof}

The following result is a generalization of the classical theorem of Baire and its proof relies on convex uniformities.    
   
\begin{thm} \label{monot} Let $(X,R,\t)$ be a $\GCOTS$
	(or $\GLOTS$) and let $Y$ be a $\COTS$ (resp. $\LOTS$) space with a \textbf{precompact convex}  compatible uniformity $\mu$ on $Y$.
	Then 
	\begin{enumerate}
		\item 
		Every order preserving map $f\colon  (X,\t) \to (Y,\mu)$ is fragmented.
		\item If, in addition, $(X,\t)$ is Polish and $\mu$ is metrizable and separable (e.g., $Y$ is a bounded interval in $\R$) then $f\colon (X,\t) \to Y$ is of Baire class 1.   
	\end{enumerate}
\end{thm}  
\begin{proof}  
	(1) Let $M \subseteq X$ be an arbitrary nonempty subset and let $\eps \in \mu$. We seek a nonempty relatively open subset $O \subseteq M$ such that $f(O)$ is $\eps$-small. 
	If $M$ contains an isolated point $y$ (in the relative topology), then $\{y\}$ is a relatively open subset of $M$, and $f(\{y\})$ is trivially $\eps$-small. 
	Thus, we may assume $M$ has no isolated points. Then $M$ is infinite.
	
	Because $\mu$ is a precompact GCO-uniformity, there exists a finite convex cover $\alpha \in \mu$ refining $\eps$.
	Since $\alpha$ is finite and $M$ is infinite, the pigeonhole principle guarantees there is some $A \in \alpha$ such that $f^{-1}(A) \cap M$ contains at least three distinct points $x, y, z$. 
	Since $f$ is COP and $A$ is convex, the preimage $f^{-1}(A)$ is a convex subset of $X$ (Proposition \ref{p:Prop-c-ord}).
	
	Without loss of generality, assume the cyclic relation $[x, y, z]_R$ holds. 
	Because the convex set $f^{-1}(A)$ contains $x$ and $z$, it must contain at least one of the closed cyclic intervals $[x, z]_R$ or $[z, x]_R$. 
	Because $y \in f^{-1}(A)$ and $y$ belongs to $(x, z)_R$, the set $f^{-1}(A)$ is forced to contain the entire interval $[x, z]_R$.  
	Define the relatively open set $O := (x, z)_R \cap M$. Since $y \in O$, this set is nonempty. By construction, $O \subseteq f^{-1}(A)$, meaning $f(O) \subseteq A$. Since $A \in \alpha$ refines $\eps$, $f(O)$ is $\eps$-small. This proves $f$ is fragmented.
	
	(2) Follows directly from (1) and Fact \ref{f:fr}.1. 
	
	\sk 
	The GLOTS/LOTS case is analogous.
\end{proof}

This implies that in item (2) the discontinuity set 
$D(f)$ is a meager $F_{\s}$-set in $X$.
One of the immediate important dynamical consequences
of Theorem \ref{monot} is the tameness of any compact COTS $G$-space (with COP action); see Theorem \ref{t:COTStame}.  	

\subsection*{Independent sequences of functions}
\label{s:ind}

Let $f_n: X \to \R$ be a uniformly bounded sequence of functions on a \emph{set} $X$. Following Rosenthal \cite{Ro} we say that this sequence is an \emph{$l_1$-sequence} on $X$ if there exists a constant $a >0$
such that for all $n \in \N$ and choices of real scalars $c_1, \dots, c_n$ we have
$$
a \cdot \sum_{i=1}^n |c_i| \leq ||\sum_{i=1}^n c_i f_i||_{\infty}.
$$

For every $l_1$-sequence $f_n$, its closed linear span in $l_{\infty}(X)$
is linearly homeomorphic to the Banach space $l_1$.
In fact, the map $l_1 \to l_{\infty}(X), \ (c_n) \to \sum_{n \in \N} c_nf_n$ is a linear homeomorphic embedding.

A Banach space $V$ is said to be {\em Rosenthal} if it does
not contain an isomorphic copy of $l_1$, or equivalently, if $V$ does not contain a sequence which is equivalent to an $l_1$-sequence. 
Every Asplund (in particular, every reflexive) space is Rosenthal. 
A sequence $f_n$ of	real valued functions on a set
$X$ is said to be \emph{independent} (see \cite{Ro,Tal,Dulst}) if
there exist real numbers $a < b$ such that
$$
\bigcap_{i \in P} f_i^{-1}(\leftarrow,a) \cap  \bigcap_{j \in M} f_j^{-1}(b,\to) \neq \emptyset
$$
for all finite disjoint subsets $P, M$ of $\N$.

\begin{defin} \label{d:tameF} \cite{GM-tame,GM-tLN}  
	We say that a bounded family $F$ of real valued (not necessarily continuous)  
	functions on a set $X$ is {\it tame} if $F$ does not contain an independent infinite sequence.
\end{defin} 

The following observation is straightforward. Its equivalent form was mentioned in \cite{GM-tLN,Me-b}.  

\begin{f} \label{f:lift-TameFamily} \cite{Me-b} 
	Let $g \colon X_1 \to X_2$ be an onto map. Then $F$ is a tame family of functions from $X_2$ to $\R$ if and only if $F \circ g$ is a tame family of functions from $X_1$ to $\R$. 
\end{f}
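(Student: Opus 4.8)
The plan is to reduce the biconditional to the single statement that $F$ contains an independent sequence if and only if $F \circ g$ does, since by Definition~\ref{d:tameF} tameness is precisely the absence of an independent sequence. First I would record that, because $g$ is onto, the assignment $f \mapsto f \circ g$ is a bijection $F \to F \circ g$: if $f_1 \circ g = f_2 \circ g$, then for any $y \in X_2$ we may write $y = g(x)$ and obtain $f_1(y) = f_1(g(x)) = f_2(g(x)) = f_2(y)$, so $f_1 = f_2$. Hence sequences in $F$ correspond bijectively to sequences in $F \circ g$, and it suffices to prove that a sequence $(f_n)$ in $F$ is independent with witnessing constants $a < b$ if and only if the sequence $(f_n \circ g)$ is independent with the same constants $a < b$.

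The core observation is that for any finite disjoint $P, M \subset \N$ and any $x \in X_1$ with $y := g(x)$ one has $(f_n \circ g)(x) = f_n(y)$ for all $n$. Consequently, $x$ witnesses membership in
\[
\bigcap_{n \in P} (f_n \circ g)^{-1}(\leftarrow,a) \cap \bigcap_{n \in M} (f_n \circ g)^{-1}(b,\to)
\]
exactly when $y$ witnesses membership in
\[
\bigcap_{n \in P} f_n^{-1}(\leftarrow,a) \cap \bigcap_{n \in M} f_n^{-1}(b,\to).
\]
For the implication ``$(f_n)$ independent $\Rightarrow (f_n \circ g)$ independent'' I would fix a witness $y \in X_2$ for a given pair $(P,M)$ and invoke surjectivity of $g$ to choose some $x \in g^{-1}(y)$; then $x$ is a witness for $(f_n \circ g)$. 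For the converse I would simply push a witness $x \in X_1$ forward to $y = g(x) \in X_2$, which shows the corresponding intersection in $X_2$ is nonempty; note that this direction does not even require surjectivity.

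Finally, I would observe that $F \circ g$ is bounded whenever $F$ is, since $\|f \circ g\|_\infty = \|f\|_\infty$ by surjectivity, so both families indeed satisfy the hypothesis of Definition~\ref{d:tameF}. Combining the two equivalences yields that $F$ admits an independent sequence if and only if $F \circ g$ does, which is exactly the desired equivalence of tameness. The only point at which surjectivity of $g$ is genuinely used is the lifting of a witness point $y$ to some $x \in g^{-1}(y)$ in the forward direction; this is the sole (and essentially trivial) obstacle, and is precisely why the statement is formulated for onto maps.
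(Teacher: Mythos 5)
Your proof is correct and is exactly the standard argument: the paper states this Fact without proof (calling it a "straightforward observation" and citing \cite{Me-b}), and your reduction to the equivalence of independent sequences via lifting/pushing witness points along the onto map $g$ is precisely what is intended. The bookkeeping on boundedness and on where surjectivity is actually used is accurate.
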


\begin{ex} \label{ex:tame2} 
	Let $(X,\leq)$ be a linearly ordered set. Then any family $F$ of order preserving real functions is tame. Moreover there is no independent pair of functions in $F$.  
\end{ex}
\begin{proof}    
	Assuming that $f_1, f_2 \in F$ is an independent pair there exist $a < b$ and $x,y \in X$ such that $x \in f_1^{-1}(\leftarrow,a) \cap f_2^{-1}(b, \to)$ and $y \in f_2^{-1}(\leftarrow,a) \cap f_1^{-1}(b, \to)$. Then $f_1(x) < f_1(y)$ and $f_2(y) < f_2(x)$. Since $f_1$ and $f_2$ are order preserving and $X$ is linearly ordered we obtain that $x <y$ and $y < x$, a contradiction.  
\end{proof}

The following useful result is a reformulation of some known facts. It is based on results of Rosenthal \cite{Ro}, Talagrand \cite[Theorem 14.1.7]{Tal}, van Dulst \cite{Dulst} and also \cite[Section 4]{GM-rose}. 

\begin{f} \label{f:sub-fr} \cite[Theorem 3.11]{Dulst} 
	Let $X$ be a compact space and $F \subset C(X)$ a bounded subset. The following conditions are equivalent:
	\begin{enumerate}
		\item
		$F$ does not contain an $l_1$-sequence. 
		\item $F$ is a tame family (does not contain an independent sequence). 
		\item
		Each sequence in $F$ has a pointwise convergent subsequence in $\R^X$.
		\item 
		The pointwise closure ${\overline{F}}$ of $F$ in $\R^X$ consists of fragmented maps,
		that is,
		${\overline{F}} \subset {\mathcal F}(X).$
	\end{enumerate}
\end{f}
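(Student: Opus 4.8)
The plan is to assemble this equivalence from classical ingredients, presenting it as a guided synthesis of the cited results of Rosenthal, Talagrand and van Dulst rather than a from-scratch argument. It is natural to prove the chain $(1)\Leftrightarrow(2)$, $(1)\Leftrightarrow(3)$, $(3)\Leftrightarrow(4)$, since each step isolates a different classical mechanism.

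For $(1)\Leftrightarrow(2)$ I would invoke Rosenthal's combinatorial correspondence between $l_1$-sequences and independent sequences: a uniformly bounded sequence in $\R^X$ admits a subsequence equivalent to the $l_1$-basis if and only if it admits an independent subsequence (in the sense of the separating constants $a<b$ used in the definition preceding Definition \ref{d:tameF}). Granting this, the negation ``$F$ contains no $l_1$-sequence'' coincides verbatim with ``$F$ contains no independent sequence'', i.e.\ with tameness, yielding $(1)\Leftrightarrow(2)$.

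For $(1)\Leftrightarrow(3)$ the key tool is Rosenthal's $l_1$-dichotomy: every bounded sequence in a Banach space either has a weakly Cauchy subsequence or contains an $l_1$-sequence. Applied in $C(X)$, condition $(1)$ forces every sequence in $F$ to have a weakly Cauchy subsequence. I would then observe that a bounded weakly Cauchy sequence in $C(X)$ is automatically pointwise convergent: testing against the Dirac measures $\delta_x\in C(X)^{*}$ shows the sequence is pointwise Cauchy, hence converges pointwise to some $g\in\R^X$; this gives $(1)\Rightarrow(3)$. For the converse, note that an $l_1$-sequence has no weakly Cauchy subsequence, while a bounded pointwise convergent sequence \emph{is} weakly Cauchy (evaluate against an arbitrary $\mu\in C(X)^{*}$ and apply the bounded convergence theorem). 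Hence an $l_1$-sequence can have no pointwise convergent subsequence, so the failure of $(1)$ forces the failure of $(3)$.

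Finally, $(3)\Leftrightarrow(4)$ is the fragmentability side, where I would lean on the Bourgain--Fremlin--Talagrand theory of Rosenthal compacta together with van Dulst's characterization. Relative pointwise countable compactness of $F$ (the content of $(3)$) forces the pointwise closure $\overline{F}$ to be an angelic, Rosenthal-compact space each of whose elements has the point of continuity property on every closed subspace, i.e.\ is fragmented, so $\overline{F}\subseteq {\mathcal F}(X)$; conversely, if $\overline{F}\subseteq{\mathcal F}(X)$ then $\overline{F}$ is angelic, and in an angelic space pointwise compactness and sequential compactness coincide, recovering $(3)$. I expect this last equivalence to be the main obstacle: steps $(1)$--$(3)$ are soft once the dichotomy is in hand, but $(3)\Leftrightarrow(4)$ genuinely uses the deep machinery of Rosenthal compacta and must be handled with care, since $X$ is only assumed compact (not metrizable), so ``fragmented'' is the correct substitute for ``Baire class $1$'' (compare Fact \ref{f:fr}) and angelicity of $\overline{F}$ is precisely what replaces naive sequential arguments.
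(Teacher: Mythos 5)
The paper does not prove this statement at all: it is labelled a \emph{Fact} and is simply cited from van Dulst \cite[Theorem 3.11]{Dulst}, with the surrounding text attributing the ingredients to Rosenthal, Talagrand and \cite[Sect.~4]{GM-rose}. So there is no internal argument to compare against; what you have written is a synthesis of the classical proofs, and its architecture is the standard one. Your steps $(1)\Leftrightarrow(2)$ (independent $\Rightarrow$ $l_1$ by direct estimate; $l_1$ $\Rightarrow$ contains independent via the dichotomy) and $(1)\Leftrightarrow(3)$ (Rosenthal's $l_1$-dichotomy in $C(X)$ combined with the identification of weakly Cauchy and bounded pointwise convergent sequences through Dirac measures and dominated convergence) are correct and are exactly how the equivalence is assembled in the cited sources. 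One imprecision in the $(3)\Leftrightarrow(4)$ step is worth fixing: relative \emph{countable} compactness of $F$ in $\R^X$ is automatic for any bounded family (the pointwise closure $\overline{F}$ is compact in the product of intervals), so it cannot be ``the content of $(3)$''; condition $(3)$ is relative \emph{sequential} compactness, and the nontrivial implication $(3)\Rightarrow(4)$ is usually proved by contraposition through $(2)$: a non-fragmented $g\in\overline{F}$ yields, via the dense-oscillation criterion of Fact~\ref{f:fr}(3), an independent sequence in $F$, which is an $l_1$-sequence and hence has no pointwise convergent subsequence. With that correction your appeal to the Bourgain--Fremlin--Talagrand angelicity machinery for $(4)\Rightarrow(3)$ is the right (and unavoidable) deep input, and you correctly note that fragmentability replaces Baire class~$1$ since $X$ is not assumed metrizable.
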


\begin{lem}\label{l:cop-lift-principle}
	Let $f \colon X\to\T$ be a COP map from a compact $\COTS$ $X$.
	For every $c\in X$ with $y_0:=f(c)\in\T$, 
	there exists a unique LOP map $h \colon (X(c),\le_c)\to([0,1],\le)$ such that
	$q_*^{\,y_0}\circ h\;=\;f\circ q_c$, making the following diagram commute:
	$$
	\xymatrix{
		(X(c), \leq_c) \ar[r]^-{h} \ar[d]_-{q_c} & ([0,1], \leq) \ar[d]^-{q_*^{\,y_0}} \\
		(X, R) \ar[r]^-{f} & (\T, R_{\T})
	}
	$$
	where $q_*^{\,y_0} \colon [0,1]\to\T$ is the quotient $q_*^{\,y_0}(t)=y_0\cdot e^{2\pi i t}$ identifying $0$ and $1$ with $y_0$.
\end{lem}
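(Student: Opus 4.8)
The plan is to construct $h$ by using the two point-cuts simultaneously: the cut $\leq_c$ on $X$ (which makes $c$ the least element, Remark \ref{r:chech}(2), extended to $X(c)$ with $c^-=\min$, $c^+=\max$) and the cut $\leq_{y_0}$ on $\T$ at $y_0=f(c)$. The map $q_*^{y_0}$ restricts to an order isomorphism of $(0,1)$ onto $(\T\setminus\{y_0\},\leq_{y_0})$; let $\ell\colon(\T\setminus\{y_0\},\leq_{y_0})\to((0,1),\leq)$ denote the inverse of this restriction. Setting $F_0:=f^{-1}(y_0)$, I would first define $h:=\ell\circ f$ on $X\setminus F_0$. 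By Lemma \ref{l:Prop-c-ord}(2c) applied with $z=c$, the restriction $f\colon(X\setminus F_0,\leq_c)\to(\T,\leq_{y_0})$ is linear order preserving (its image avoids $y_0$), so $h$ is order preserving from $(X\setminus F_0,\leq_c)$ into $((0,1),\leq)$, and $q_*^{y_0}\circ h=f$ there since $q_*^{y_0}\circ\ell=\mathrm{id}$.

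The main obstacle is the fiber $F_0$, which may be strictly larger than $\{c\}$: since $(q_*^{y_0})^{-1}(y_0)=\{0,1\}$, every point of $q_c^{-1}(F_0)$ is forced to take value $0$ or $1$, and I must make these choices while preserving monotonicity. The key structural fact is that, because $f$ is COP, $F_0$ is $R$-convex (Remark \ref{r:convex}(2)) and contains $c$; cutting at $c$ then splits $F_0\setminus\{c\}$ into a $\leq_c$-initial block and a $\leq_c$-terminal block. Concretely, for $x\in F_0\setminus\{c\}$ convexity gives $[c,x]_{\circ}\subseteq F_0$ or $[x,c]_{\circ}\subseteq F_0$, which by Lemma \ref{l:cut} read as $[c,x]_{\leq_c}\subseteq F_0$ or $[x,\to)_{\leq_c}\subseteq F_0$; I put $x$ into $A$ in the first case and into $B$ in the second. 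Then $A\cup\{c\}$ is an initial segment and $B$ a terminal segment of $(X,\leq_c)$, and (unless $F_0=X$) they are disjoint with $X\setminus F_0$ the convex block strictly between them, yielding the linear picture $c^-<A<(X\setminus F_0)<B<c^+$ in $X(c)$. Establishing this decomposition and the block ordering is the heart of the argument.

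I would then finish by declaring $h(c^-):=0$, $h|_A:=0$, $h|_B:=1$, $h(c^+):=1$ (handling the degenerate constant case $f\equiv y_0$ separately, e.g.\ $h(c^-)=0$ and $h\equiv 1$ elsewhere). The remaining verifications are routine: monotonicity holds since the values are $0$ on $\{c^-\}\cup A$, increase through $(0,1)$ on the middle block $X\setminus F_0$, and equal $1$ on $B\cup\{c^+\}$, matching the block order above; and commutativity $q_*^{y_0}\circ h=f\circ q_c$ is checked casewise, using $q_*^{y_0}(0)=q_*^{y_0}(1)=y_0$ at the split point and on $F_0$, and $q_*^{y_0}\circ\ell=\mathrm{id}$ off $F_0$. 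I emphasize that no continuity of $h$ is needed (nor available in general); order preservation is exactly the property that makes $h$ fragmented via Theorem \ref{monot} in the intended application, since $X(c)$ is a LOTS and $[0,1]$ a compact LOTS.
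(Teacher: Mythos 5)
Your proof is correct and follows essentially the same route as the paper's: split the domain at $c$ and the codomain at $y_0$, and use the fact that a COP map is LOP with respect to the standard cuts (Lemma \ref{l:Prop-c-ord}(2c)). You in fact supply more detail than the paper does — the convexity analysis of the fiber $F_0=f^{-1}(y_0)$ into an initial block $A$ and a terminal block $B$, forcing the values $0$ and $1$ respectively, is precisely the step that the paper's one-line appeal to ``$f$ is LOP with respect to every standard cut'' leaves implicit.
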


\begin{proof}
	Let $\varphi := f \circ q_c \colon X(c)=[c^-,c^+] \to \mathbb{T}$.
	Because $f$ is a COP map and the canonical quotient $q_c$ is LOP, their composition $\varphi$ is COP on the LOTS $(X(c), \le_c)$ (see Remark~\ref{r:InducedMaps}.2).
	Let $c^-$ and $c^+$ be the minimum and maximum elements of $X(c)$. Since $q_c(c^-) = q_c(c^+) = c$, we have $\varphi(c^-) = \varphi(c^+) = y_0$.
	
	To construct $h$ explicitly, let $\psi \colon \mathbb{T} \to [0,1)$ be the inverse of the bijection $q_*^{\,y_0}|_{[0,1)}$. Define $h \colon X(c) \to [0,1]$ by:
	$$
	h(x) := 
	\begin{cases} 
		\psi(\varphi(x)) & \text{if } x <_c c^+ \\
		1 & \text{if } x = c^+
	\end{cases}
	$$
	By construction, $q_*^{\,y_0}(h(x)) = \varphi(x)$ for all $x \in X(c)$, and $h(c^-) = \psi(y_0) = 0$.
	
	We must show that $h$ is LOP. Let $x_1, x_2 \in X(c)$ with $x_1 <_c x_2$.
	If $x_2 = c^+$, then $h(x_2) = 1$, which is trivially greater than or equal to $h(x_1) \in [0,1]$.
	If $x_2 <_c c^+$, then $c^- \le_c x_1 <_c x_2 <_c c^+$. 
	Because $\varphi$ is COP and these points are ordered linearly, their images must either collapse or respect the standard circular order on $\mathbb{T}$ starting from $y_0$. Specifically, the relation $[y_0, \varphi(x_1), \varphi(x_2)]$ holds weakly; either $\varphi(x_1)=\varphi(x_2)$ 
	or $[y_0, \varphi(x_1), \varphi(x_2)]$.  
	By the definition of the inverse map $\psi$, this precisely means $0 \le \psi(\varphi(x_1)) \le \psi(\varphi(x_2)) < 1$. Thus, $h(x_1) \le h(x_2)$, confirming $h$ is LOP.
	
	Finally, $h$ is uniquely determined. Any LOP lift $h'$ must map the endpoints $c^-$ and $c^+$ to $0$ and $1$ respectively to preserve the linear order boundaries while wrapping back to $y_0$, and the values on the interior $(c^-, c^+)$ are uniquely determined by the injectivity of $q_*^{\,y_0}$ on $[0,1)$.
\end{proof}

\section{Functions of bounded variation on ordered sets} \label{s:BV} 

In \cite{Me-Helly} we proposed a natural generalization of the  
classical definition of bounded variation functions for functions $f \colon X \to Y$ (or into metric spaces $Y$) with $X \subseteq \R, Y \subseteq \R$. Namely, we consider any linearly ordered set in the domain. This can be extended also to circularly ordered sets \cite{GM-c}. 

\begin{defin} \label{d:BV} \ 
	\begin{enumerate} 
		\item \cite{Me-Helly}
		Let $(X,\leq)$ be a linearly ordered set and $(M,d)$ a metric space. 
		We say that a bounded function $f \colon  (X,\leq) \to (M,d)$ has variation not greater than $r$ (notation: $f \in BV_r$) if 
	$$
			\sum_{i=1}^{n-1} d(f(x_i),f(x_{i+1})) \leq r
	$$
		for every choice of $x_1 < x_2 < \cdots < x_n$ in $X$.
		
		\item \cite{GM-c}
		For circularly ordered sets $(X,R)$ (instead of $(X,\leq)$) the definition is similar but we take injective \emph{cycles} $[x_1, x_2, \cdots, x_n]$  in $X$  (Definition \ref{d:cycl}) and require that 
		$$
			\sum_{i=1}^{n} d(f(x_i),f(x_{i+1})) \leq r
		$$ 
		where $x_{n+1}=x_1$.  
%		Note that the sum in Equation \ref{eq2} remains the same under the standard cyclic translation ($+1 \pmod n$). 
	\end{enumerate} 
	
	The least upper bound of all such possible sums is the {\it variation} of $f$; notation (both cases):   $\Upsilon(f)$. If  $\Upsilon(f) \leq r$ then we write $f \in BV_r(X,M)$ or, simply $BV_r(X)$ if $(M,d)=\R$. If $f(X) \subset [c,d]$ for some reals $c \leq d$ then we write also $f \in BV_r(X,[c,d])$. One more notation: $BV(X):=\cup_{r>0} BV_r(X)$. In order to distinguish linear and circular cases, sometimes we use more special notation writing:  $BV^{\prec}_r(X,M)$, $\Upsilon^{\prec}(f)$ and $BV^{\circ}_r(X,M)$, $\Upsilon^{\circ}(f)$ respectively. 
\end{defin}

\begin{remarks} \label{r:FinCol} \ 
	\begin{enumerate} 
		\item  
		$M_+(X,[c,d]) \subset BV^{\prec}_r(X,[c,d])$ holds for every $r \geq d-c$. 
		In particular, $M_+(X,[0,1]) \subset BV_1(X,[0,1])$.  
	
		\item If $f_1 \colon X\to Y$ is LOP between LOTS and $f_2\in BV^{\prec}_r(Y,M)$, then
		$f_2\circ f_1\in BV^{\prec}_r(X,M)$. 
		For every COP map $f_1 \colon X \to Y$ and every $f_2 \in BV^{\circ}_r(Y,M)$ we have $f_2 \circ f_1 \in BV^{\circ}_r(X,M)$.  
		
		\item For every $1$-Lipschitz map $\alpha \colon M_1 \to M_2$ between two metric spaces and every $f \in BV^{\prec}_r(X,M_1)$ ($f \in BV^{\circ}_r(X,M_1)$) on a linearly (circularly) ordered set $X$ we have $\alpha \circ f \in BV_r^{\prec}(X,M_2)$ (resp., $BV_r^{\circ}(X,M_2)$).  
	\end{enumerate}
\end{remarks}

$H([0,1]) \subset BV^{\prec}_1([0,1],[0,1])$ because 
every homeomorphism $[0,1] \to [0,1]$ is either order preserving or order reversing.  
For every finite interval partition of a c-ordered set $(X,R)$ (or of a linearly ordered set $(X,\leq)$), every finite coloring 
$f \colon X \to \Delta \subset \R$  of this partition is a function with bounded variation.  
This observation is important, in particular, for Sturmian like systems \cite{GM-c,GM-tLN}.   

\begin{lem} \label{l:BVprop} 
	Let $(X, \leq)$ be a linearly ordered set and $(X,R)$ a circularly ordered set. Suppose that $(M,d)$ is a compact metric space and $(K,\rho,\leq_K)$ is a partially ordered (in the sense of Nachbin, Definition \ref{d:ord}) compact metric space. 
	\begin{enumerate} 
		\item 
		$BV^{\prec}_r(X,M)$ and $BV^{\circ}_r(X,M)$ are pointwise closed (hence, compact) subsets of $M^X$. 
		
		\item $M_+(X,[c,d])$ is a closed subset of $BV^{\prec}_r(X,[c,d])$ for every $r \geq d-c$.
		
		\item 
		(Analog of Jordan's decomposition)  Every function $f \in BV^{\prec}(X)$ on $(X, \leq)$ is a difference $f=u-v$ of two linear order preserving bounded functions $u,v\colon  X \to \R$. 
	\end{enumerate}
\end{lem}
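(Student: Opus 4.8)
The plan is to treat the three assertions separately, since they rest on different ideas. For part (1) I would observe that bounded variation is defined by a family of pointwise-closed conditions. Fix a chain $x_1 \le \cdots \le x_n$ in $X$ (resp.\ a cycle in the circular case). The evaluation $f \mapsto \sum_i d(f(x_i),f(x_{i+1}))$ factors through the finitely many coordinate projections $f \mapsto (f(x_1),\dots,f(x_n))$ and is therefore continuous on $M^X$ in the pointwise topology, because $d$ is continuous on $M \times M$. Hence the set of $f$ for which this sum is $\le r$ is pointwise closed, and $BV^{\prec}_r(X,M)$ (resp.\ $BV^{\circ}_r(X,M)$) is the intersection of all such closed sets as the chain (resp.\ cycle) ranges over its admissible family, so it is pointwise closed. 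Since $M$ is compact, $M^X$ is compact by Tychonoff's theorem, and a closed subspace of a compact space is compact; this yields compactness, and the special case $M=[c,d]$ follows at once.

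For part (2), the inclusion $M_+(X,[c,d]) \subseteq BV^{\prec}_r(X,[c,d])$ for $r \ge d-c$ is Remark \ref{r:FinCol}.1: for increasing $f$ the sum over any chain telescopes to $f(x_n)-f(x_1)\le d-c$. It remains to note that $M_+(X,[c,d])$ is already pointwise closed in the ambient space $[c,d]^X$, as recorded earlier (since $[c,d]$ is a compact partially ordered space, the coordinatewise order condition survives pointwise limits). Being closed in $[c,d]^X$ and contained in $BV^{\prec}_r(X,[c,d])$, it is a fortiori closed in that subspace.

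Part (3) is the Jordan decomposition and is where the real work lies. Given $f \in BV^{\prec}(X)$, say $f \in BV^{\prec}_r(X)$, I would introduce the variation function
\[
u(x) := \sup\Big\{ \sum_{i=1}^{n-1} |f(x_{i+1}) - f(x_i)| : x_1 \le \cdots \le x_n \le x \Big\},
\]
which is nonnegative and finite (bounded by $r$). Monotonicity of $u$ is immediate, since enlarging the upper bound $x$ enlarges the admissible family of chains. The crux is to show that $v := u - f$ is also increasing, i.e.\ $f(x')-f(x) \le u(x')-u(x)$ whenever $x \le x'$. Given any chain $x_1 \le \cdots \le x_n \le x$, appending $x$ and $x'$ produces a chain terminating at $x'$, whence
\[
u(x') \ge \sum_{i=1}^{n-1} |f(x_{i+1})-f(x_i)| + |f(x')-f(x)| \ge \Big(\sum_{i=1}^{n-1} |f(x_{i+1})-f(x_i)|\Big) + \big(f(x')-f(x)\big),
\]
and taking the supremum over such chains gives $u(x') \ge u(x) + f(x')-f(x)$. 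Then $f = u - v$ with $u$ and $v$ bounded and order preserving, as required.

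The only genuine obstacle is the key inequality in part (3); everything else is a formal closedness argument. The point to handle carefully is that $u(x)$ is defined as a supremum over chains whose largest element is $\le x$ rather than equal to $x$, so that appending the pair $x,x'$ is always legitimate (the extra term $|f(x)-f(x_n)|\ge 0$ is simply discarded), and the telescoping estimate $|f(x')-f(x)| \ge f(x')-f(x)$ then transfers cleanly to the supremum. Boundedness of $u$ by $r$ guarantees that $v=u-f$ is bounded, completing the decomposition.
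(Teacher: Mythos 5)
Your proof is correct and follows essentially the same route as the paper: part (1) as an intersection of pointwise-closed conditions indexed by chains/cycles, part (2) from Remark \ref{r:FinCol}.1 plus closedness of $M_+(X,[c,d])$ in $[c,d]^X$, and part (3) via the variation function $u(x)=\Upsilon^{x}(f)$ taken over chains with largest element $\le x$, exactly as in the paper's $\Upsilon^{c}(f)$. You simply supply the telescoping inequality $u(x')\ge u(x)+f(x')-f(x)$ in detail where the paper defers to the classical argument in Natanson.
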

\begin{proof} (1) Is straightforward. 
	
	(2) The set $M_+(X,[c,d])$ is pointwise closed in $[c,d]^X$ (use that the linear order of $[c,d]$ is closed). The set $BV^{\prec}_r(X,[c,d])$ is also pointwise closed by part (1) of this lemma. By Remark \ref{r:FinCol}.1, $M_+(X,[c,d])$ is a subset of $BV^{\prec}_r(X,[c,d])$. 
	
	(3) 
	If in Definition \ref{d:BV}.1 we
	allow only the chains $\{x_i\}_{i=1}^n$ with $x_n \leq c$ for some given $c \in X$ then we obtain a
	variation on the subset $\{x \in X: x \leq c\} \subset X$. Notation: $\Upsilon^{c}(f)$. 
	As in the classical case (as, for example, in \cite{Natanson}) it is easy to see that the functions 
	$u(x):=\Upsilon^{x}(f)$ and $v(x):=u(x)-f(x)$ on $X$ are increasing. These functions are bounded because $|\Upsilon^{x}(f)| \leq \Upsilon(f)$ and $f$ is bounded. 
\end{proof}

\begin{lem} \label{l:FisVectSp} 
	$\mathcal{F}(X)$ is a vector space over $\R$ with respect to the natural operations. 
\end{lem}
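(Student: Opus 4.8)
The plan is to verify that $\mathcal{F}(X)$ is a linear subspace of $\R^X$: it contains the zero function (every constant map is fragmented, since one may take $O=A$), and it is closed under scalar multiplication and under addition. Throughout I use that for the target $Y=\R$ with its usual uniformity, the fragmentability of $f$ (Definition~\ref{d:frag}) says: for every $\eps>0$ and every nonempty closed $A\subseteq X$ there is a nonempty relatively open $O\subseteq A$ with $\diam f(O)\le \eps$.

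Scalar multiplication is immediate. If $\lambda=0$ then $\lambda f\equiv 0$ is constant, hence fragmented. If $\lambda\ne 0$, given $\eps>0$ and a nonempty closed $A$, I apply fragmentability of $f$ with tolerance $\eps/|\lambda|$ to obtain a nonempty relatively open $O\subseteq A$ with $\diam f(O)\le \eps/|\lambda|$; then $\diam (\lambda f)(O)=|\lambda|\,\diam f(O)\le \eps$, so $\lambda f\in\mathcal{F}(X)$.

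The substantive step is closure under addition, and this is where the main obstacle lies: after fragmenting $f$ one obtains a set that is only \emph{relatively open}, not closed, so one cannot directly apply fragmentability of $g$ to it. The remedy is a two-step ``localize, then pass to the closure'' argument. Given $f,g\in\mathcal{F}(X)$, $\eps>0$, and a nonempty closed $A$, I first fragment $f$ on $A$ with tolerance $\eps/2$, obtaining a nonempty relatively open $U\subseteq A$ with $\diam f(U)\le \eps/2$. I then set $F:=\operatorname{cl}_A(U)$, the closure of $U$ inside $A$; since $A$ is closed in $X$, so is $F$, and by construction $U$ is \emph{dense} in $F$ and (writing $U=A\cap W$ with $W$ open in $X$, and using $U\subseteq F$) $U$ is also relatively open in $F$. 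Now I fragment $g$ on the nonempty closed set $F$ with tolerance $\eps/2$, obtaining a nonempty relatively open $V\subseteq F$ with $\diam g(V)\le \eps/2$.

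Finally I put $O:=U\cap V$. Because $U$ is dense in $F$ and $V$ is a nonempty open subset of $F$, the intersection $O$ is nonempty; writing $V=F\cap W'$ with $W'$ open in $X$ and using $U\subseteq F$, one gets $O=U\cap W'=A\cap W\cap W'$, so $O$ is relatively open in $A$. Since $O\subseteq U$ and $O\subseteq V$, I have $\diam f(O)\le \eps/2$ and $\diam g(O)\le \eps/2$, whence by the triangle inequality $\diam (f+g)(O)\le \eps$. Thus $f+g$ is fragmented, and together with the scalar case this shows that $\mathcal{F}(X)$ is a vector space over $\R$ under the pointwise operations. The only delicate point is the density argument in the third paragraph guaranteeing that the second localization $V$ still meets the first region $U$; everything else is a routine diameter estimate.
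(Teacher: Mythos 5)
Your proof is correct and follows essentially the same two-step localization as the paper's: fragment $f$ on $A$ with tolerance $\eps/2$, then fragment $g$ on the resulting piece, and combine by the triangle inequality. The only difference is that you explicitly pass to the closure $\operatorname{cl}_A(U)$ and use density of $U$ there to guarantee the second localization still meets $U$, whereas the paper applies fragmentability of $f_2$ directly to the (not necessarily closed) set $A\cap O_1$, implicitly invoking the standard equivalence of Definition~\ref{d:frag} with the version quantified over arbitrary nonempty subsets; your variant is the more scrupulous reading of the definition as literally stated.
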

\begin{proof} Clearly, $f \in \mathcal{F}(X)$ implies that $cf \in \mathcal{F}(X)$ for every $c \in \R$. 
	Let $f_1, f_2 \in \mathcal{F}(X)$. We have to show that $f_1 + f_2 \in \mathcal{F}(X)$. 
	Let $\emptyset \neq A \subset X$ and $\eps >0$. Since $f_1 \in \mathcal{F}(X)$  there exists an open subset $O_1 \subset X$ such that 
	$A \cap O_1 \neq \emptyset$ and $f_1(A \cap O_1)$ is $\frac{\eps}{2}$-small. Now since $f_2 \in \mathcal{F}(X)$, for $A \cap O_1$ we can choose an open subset $O_2 \subset X$ such that $(A \cap O_1) \cap O_2$ is nonempty and $f_2(A \cap O_1 \cap O_2)$ is $\frac{\eps}{2}$-small. Then $(f_1+f_2)(A \cap (O_1 \cap O_2))$ is $\eps$-small.    
\end{proof}

\begin{cor} \label{c:BVisFr} 
	$BV^{\prec}(X) \subset \mathcal{F}(X)$ for any $(X,\leq) \in $ LOTS$_{\leq}$ .  
\end{cor}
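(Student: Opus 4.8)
The plan is to reduce the statement to three facts already in place: the Jordan-type decomposition of bounded variation functions (Lemma \ref{l:BVprop}.3), the fragmentability of order preserving maps into a compact LOTS (Theorem \ref{monot}), and the fact that $\mathcal{F}(X)$ is a vector space (Lemma \ref{l:FisVectSp}). No genuinely new argument is needed; the work is in assembling these pieces.

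First I would fix an arbitrary $f \in BV^{\prec}(X)$, so that $f \in BV^{\prec}_r(X,\R)$ for some $r>0$ and, in particular, $f$ is bounded. Lemma \ref{l:BVprop}.3 then lets me write $f = u - v$, where $u,v \colon X \to \R$ are bounded and $\leq$-order preserving. The idea is to show each monotone summand is fragmented and to conclude by linearity.

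Next, since $u$ and $v$ are bounded, their ranges lie in a compact interval $[c,d] \subseteq \R$, which is a compact $\LOTS$ equipped with its unique compatible uniformity $\mu$ (the trace of the usual uniformity of $\R$). Every $\LOTS$ is in particular a $\GLOTS$ (Lemma \ref{l:easyPropGCO}.1), so Theorem \ref{monot} applies directly and gives that the order preserving maps $u \colon (X,\tau_{\leq}) \to ([c,d],\mu)$ and $v \colon (X,\tau_{\leq}) \to ([c,d],\mu)$ are fragmented. Here it is essential that Theorem \ref{monot} demands no continuity of the maps, only that they be order preserving, which is exactly what the Jordan decomposition provides. Because the entourages of $[c,d]$ are precisely the restrictions of entourages of $\R$ and fragmentability concerns only the $\eps$-smallness of images, being fragmented into $[c,d]$ is the same as $u,v \in \mathcal{F}(X)$ (with codomain $\R$).

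Finally, Lemma \ref{l:FisVectSp} states that $\mathcal{F}(X)$ is a vector space over $\R$, so $f = u - v \in \mathcal{F}(X)$, which is the desired conclusion. I do not anticipate a real obstacle: the only points warranting a short remark are the passage between the codomains $[c,d]$ and $\R$ in the definition of fragmentability, and the fact that the summands $u,v$ are merely order preserving rather than continuous, precisely the generality in which Theorem \ref{monot} is phrased.
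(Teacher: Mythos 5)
Your proposal is correct and follows exactly the paper's own argument: Jordan decomposition via Lemma \ref{l:BVprop}.3, fragmentability of the monotone summands via Theorem \ref{monot}, and the vector space structure of $\mathcal{F}(X)$ from Lemma \ref{l:FisVectSp}. The extra remarks on passing between the codomains $[c,d]$ and $\R$ are a harmless elaboration of the same route.
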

\begin{proof} Any $f \in BV^{\prec}(X)$ is a difference of two increasing functions (Lemma \ref{l:BVprop}.3). Hence we can combine Theorem \ref{monot} and Lemma \ref{l:FisVectSp}. 	
\end{proof}

\begin{thm} \label{newprinciple} \cite{Me-Helly}  
	For every linearly ordered set $(X,\leq)$ the family of functions $BV^{\prec}_r(X,[c,d])$ 
	is tame. In particular, its subfamily $M_+(X,[c,d])$ is also tame. 
\end{thm}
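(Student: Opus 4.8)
The plan is to argue by contradiction, pitting the bounded-variation bound of each individual function against the combinatorial richness that independence would force on the sequence. Suppose $BV^{\prec}_r(X,[c,d])$ were not tame; then by Definition~\ref{d:tameF} it contains an independent sequence $\{f_n\}_{n\in\N}$, so there are reals $a<b$ such that for every pair of disjoint finite sets $P,M\subset\N$ the set
$$
\bigcap_{n\in P} f_n^{-1}(\leftarrow,a)\ \cap\ \bigcap_{n\in M} f_n^{-1}(b,\to)
$$
is nonempty. Put $\delta:=b-a>0$. The governing idea is that, along any fixed chain, each $f_n$ can swap ``side'' (from below $a$ to above $b$) only boundedly often, whereas independence lets us prescribe arbitrarily many side-patterns simultaneously.

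Fix $n$ and, for every subset $S\subseteq\{1,\dots,n\}$, I would apply independence with $M:=S$ and $P:=\{1,\dots,n\}\setminus S$ to select a point $x_S\in X$ with $f_i(x_S)>b$ for $i\in S$ and $f_i(x_S)<a$ for $i\notin S$. These $2^n$ points are pairwise distinct (distinct subsets are separated by some $f_i$), so the order of $X$ lists them as a strictly increasing chain $x_{S_1}<x_{S_2}<\cdots<x_{S_{2^n}}$. For a fixed index $i$, call $j$ a \emph{flip of $f_i$} if exactly one of $S_j,S_{j+1}$ contains $i$; at such a $j$ the values $f_i(x_{S_j}),f_i(x_{S_{j+1}})$ straddle the gap $(a,b)$, whence $|f_i(x_{S_j})-f_i(x_{S_{j+1}})|>\delta$.

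Next I would run two counts of the total number of flips. The analytic half uses bounded variation: since $x_{S_1}<\cdots<x_{S_{2^n}}$ is a chain and $f_i\in BV^{\prec}_r$, summing increments along it gives $\sum_{j}|f_i(x_{S_j})-f_i(x_{S_{j+1}})|\le r$, and as each flip contributes more than $\delta$, the number $c_i$ of flips of $f_i$ satisfies $c_i\le r/\delta$; hence $\sum_{i=1}^n c_i\le nr/\delta$. The combinatorial half is immediate: $\sum_{i=1}^n c_i=\sum_{j=1}^{2^n-1}|S_j\triangle S_{j+1}|$, and since the $S_j$ are distinct each symmetric difference is at least $1$, so $\sum_{i=1}^n c_i\ge 2^n-1$. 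Combining gives $2^n-1\le nr/\delta$, false for large $n$. This contradiction shows no independent sequence exists, i.e. $BV^{\prec}_r(X,[c,d])$ is tame; the final assertion about $M_+(X,[c,d])$ then follows from the inclusion $M_+(X,[c,d])\subseteq BV^{\prec}_r(X,[c,d])$ of Remark~\ref{r:FinCol}.1 (valid for $r\ge d-c$), since a subfamily of a tame family is tame.

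The step I expect to need the most care is confirming that independence genuinely yields one point per subset and that the $X$-induced enumeration $S_1,\dots,S_{2^n}$ exhausts all $2^n$ subsets, so that the chain really has $2^n$ nodes and $2^n-1$ consecutive gaps; the bound $|S_j\triangle S_{j+1}|\ge1$ is then just distinctness, and no Gray-code optimality is required. A secondary point to phrase carefully is the clause ``each flip contributes more than $\delta$,'' which rests precisely on the straddling of the open gap $(a,b)$ guaranteed by the thresholds. An alternative via the Jordan-type decomposition $f=u-v$ (Lemma~\ref{l:BVprop}.3) combined with the no-independent-pair property of $M_+$ (Example~\ref{ex:tame2}) is tempting, but it would require a stability-of-tameness-under-differences principle unavailable for arbitrary discontinuous functions, so I would retain the self-contained double count above.
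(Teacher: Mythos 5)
Your proof is correct, but it takes a genuinely different route from the paper. The paper's argument decomposes each $f_n=u_n-v_n$ via the Jordan-type Lemma~\ref{l:BVprop}.3, extends the increasing parts to \emph{continuous} increasing functions on a compact LOTS via the Representation Theorem~\ref{RepLem}, observes that the differences $F_n$ lie in a pointwise-closed $BV^{\prec}_k(Y,[-k,k])\subset\mathcal{F}(Y)$ (Corollary~\ref{c:BVisFr}), and then invokes the Rosenthal--Talagrand--van Dulst equivalence (Fact~\ref{f:sub-fr}) on the compact space $Y$ to rule out an independent subsequence. Your double-count of flips is instead entirely elementary and self-contained: independence manufactures $2^n$ points realizing all side-patterns, arranging them as a chain forces $\sum_j|S_j\triangle S_{j+1}|\ge 2^n-1$ total flips, while the variation bound caps each $c_i$ by $r/\delta$, giving the impossible $2^n-1\le nr/\delta$. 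Each step checks out: the $x_S$ are pairwise distinct because distinct subsets are separated by some $f_i$ across the gap $(a,b)$; the chain has exactly $2^n$ nodes; and the variation inequality applies verbatim to that chain by Definition~\ref{d:BV}.1. What your approach buys is a quantitative, machinery-free proof valid for arbitrary linearly ordered $X$ with no compactification and no appeal to fragmentability or to Fact~\ref{f:sub-fr}; what the paper's approach buys is the embedding of the result into the fragmentability framework (Corollary~\ref{c:BVisFr}, Fact~\ref{f:sub-fr}) and the Representation Theorem, both of which are reused immediately afterwards (e.g.\ in Theorem~\ref{t:GenH} to upgrade tameness to sequential compactness). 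Your closing remark about the decomposition route is slightly off target only in that the paper does not rely on a naive ``tameness is stable under differences'' principle: it sidesteps that issue by passing to continuous functions on a compactum, where Fact~\ref{f:sub-fr} handles the differences.
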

\begin{proof} 
	Let $f_n \colon X \to \R$ be an independent sequence in $BV^{\prec}_r(X,[c,d])$. 
	By Lemma \ref{l:BVprop}.3, for every $n$ we have $f_n=u_n -v_n$, where $u_n(x):=\Upsilon^{x}(f_n)$ and $v_n(x):=u_n(x)-f_n(x)$  are increasing functions on $X$. Moreover, the family $\{u_n, v_n\}_{n \in \N}$ remains bounded because $|\Upsilon^{x}(f_n)| \leq \Upsilon(f_n) \leq r$ for every $x \in X, n \in \N$ and $f_n$ is bounded. 
	Apply Fact \ref{RepLem} to the GLOTS $(X,\leq,\lambda_{disc})$. Then we conclude that there exist two bounded sequences $t_n\colon  Y \to \R$ and $s_n\colon  Y \to \R$ of continuous increasing functions on a compact LOTS $Y$ which extend $u_n$ and $v_n$. 
	Consider $F_n:=t_n -s_n$.  
	First, note that for a sufficiently large $k \in \R$, $F_n \in BV^{\prec}_k(Y,[-k,k])$ holds simultaneously for all $n \in \N$.
	
	Since $F_n|_X=f_n$ we clearly obtain that the sequence $F_n\colon  Y \to \R$ is independent, too. 
	On the other hand we can show that  $\overline{\Gamma} \subset \mathcal{F}(Y)$, where $\Gamma=\{F_n\}_{n \in \N} \subset \R^Y$. 
	Indeed, by	Corollary \ref{c:BVisFr} we know that 
	$BV^{\prec}_k(Y,[-k,k]) \subset \mathcal{F}(Y)$. Using Lemma \ref{l:BVprop}.1 we get 	
$$\overline{\Gamma} \subset \overline{BV^{\prec}_k(Y,[-k,k])} = BV^{\prec}_k(Y,[-k,k]) \subset \mathcal{F}(Y).$$
	Then $\Gamma$ is a tame family 
	by Fact \ref{f:sub-fr}. This contradiction completes the proof. 
\end{proof}

\subsection*{Generalization of Helly's sequential compactness type theorems for linear orders} 
\label{s:H} 

Recall that the Helly's compact space $M_+([0,1],[0,1])$ of all increasing selfmaps on the closed unit interval $[0,1]$ is sequentially compact in the pointwise topology.  
A slightly more general form of this result is the following classical result of Helly.  

\begin{f} \label{t:HellyClassic} 
	{\bf (Helly's selection theorem)} 
	
	\noindent For every sequence of functions from the set $BV^{\prec}_r([a,b],[c,d])$ of all real functions $[a,b] \to [c,d]$ with variation $\leq r$, there exists a pointwise convergent subsequence (which tends to a function with finite variation). 
	Equivalently, $BV^{\prec}_r([a,b],[c,d])$ is sequentially compact.
\end{f}

\begin{thm} \label{t:GenH}   
	Let $(X, \leq)$ be a linearly ordered set. Then $BV^{\prec}_r(X, [c,d])$ is sequentially compact 
	in the pointwise topology. 
	In particular, its closed subspace $M_+(X,[c,d])$ of all increasing functions is also sequentially compact in the pointwise topology.  
\end{thm}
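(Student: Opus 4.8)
The plan is to reduce the statement to the already-established tameness of bounded-variation families (Theorem \ref{newprinciple}) together with the equivalence between tameness and pointwise sequential compactness on a \emph{compact} domain supplied by Fact \ref{f:sub-fr}. The obstruction is that $X$ is merely an abstract linearly ordered set, so the $f_n$ are not continuous functions on any compact space and Fact \ref{f:sub-fr} cannot be invoked directly. I would therefore transport the whole sequence to a compact LOTS, exactly as in the proof of Theorem \ref{newprinciple}.

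Concretely, starting from a sequence $(f_n)$ in $BV^{\prec}_r(X,[c,d])$, first I would apply the Jordan-type decomposition of Lemma \ref{l:BVprop}.3 to write $f_n = u_n - v_n$, where $u_n(x) := \Upsilon^{x}(f_n)$ and $v_n := u_n - f_n$ are bounded increasing functions (bounded uniformly in $n$, since $|\Upsilon^{x}(f_n)| \le r$). Next I would feed the countable family $\{u_n, v_n : n \in \N\}$ of order preserving functions into the Representation Theorem \ref{RepLem}, obtaining a \emph{single} compact LOTS $Y$, a dense order preserving injection $\nu \colon X \hookrightarrow Y$, and continuous increasing extensions $t_n, s_n \colon Y \to \R$ with $t_n \circ \nu = u_n$ and $s_n \circ \nu = v_n$. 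Setting $F_n := t_n - s_n$, each $F_n$ is continuous on $Y$, satisfies $F_n \circ \nu = f_n$, and lies in $BV^{\prec}_k(Y,[-k,k])$ for a common $k$ (the extensions inherit a uniform bound).

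Now $\{F_n\} \subset C(Y)$ is a bounded family on the compact space $Y$, and by Theorem \ref{newprinciple} the ambient set $BV^{\prec}_k(Y,[-k,k])$ is tame; hence $\{F_n\}$ contains no independent sequence. By the implication $(2)\Rightarrow(3)$ of Fact \ref{f:sub-fr}, some subsequence $(F_{n_j})$ converges pointwise on $Y$. Restricting along $\nu$, the subsequence $f_{n_j} = F_{n_j} \circ \nu$ converges pointwise on $X$ to a limit $g \in [c,d]^X$. Finally, since $BV^{\prec}_r(X,[c,d])$ is pointwise closed in $[c,d]^X$ by Lemma \ref{l:BVprop}.1, we obtain $g \in BV^{\prec}_r(X,[c,d])$, which proves sequential compactness. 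For the last assertion, $M_+(X,[c,d])$ is a pointwise closed subset of $BV^{\prec}_r(X,[c,d])$ for $r \ge d-c$ by Lemma \ref{l:BVprop}.2, and a closed subspace of a sequentially compact space is sequentially compact.

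The main obstacle is precisely the passage from the abstract ordered domain $X$ to the compact LOTS $Y$: one must verify that the $F_n$ can be chosen \emph{simultaneously} continuous on one and the same compactum and with a uniform variation bound, so that Fact \ref{f:sub-fr} applies to the whole sequence at once. The Representation Theorem applied to the \emph{countable} family $\{u_n,v_n\}$ is what makes this possible, and it is worth emphasizing that, in contrast with the classical Helly theorem, no metrizability or separability of $X$ (nor of $Y$) is required.
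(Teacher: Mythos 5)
Your proposal is correct and follows essentially the same route as the paper: Jordan decomposition (Lemma \ref{l:BVprop}.3), transport to a compact LOTS via the Representation Theorem \ref{RepLem}, tameness from Theorem \ref{newprinciple}, and extraction of a pointwise convergent subsequence via Fact \ref{f:sub-fr}. The only cosmetic difference is that the paper first reduces to the monotone case and then represents, whereas you represent the decomposed family $\{u_n,v_n\}$ and apply tameness directly to the differences $F_n=t_n-s_n$; both variants are valid.
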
 
\begin{proof} 
	Using Lemma \ref{l:BVprop}.3  
	one may reduce the proof to the case where $f_n\colon  X \to \R$ is a bounded sequence in $M_+(X)$. 
	By Fact \ref{RepLem} (for the GLOTS $(X,\leq,\lambda_{disc})$) we have a bounded sequence of {\it continuous} increasing functions $F_n\colon  Y \to \R$ on a compact LOTS $Y$, where $F_n|_X=f_n$. 
	By Theorem \ref{newprinciple} the sequence $F_n$ does not contain an independent subsequence. By Fact  \ref{f:sub-fr} 
	there exists a convergent subsequence $F_{n_k}$. Since the convergence is pointwise and $X$ is a subset of $Y$, we obtain that the corresponding sequence of restrictions $f_{n_k}:=F_{n_k}|_X$ is pointwise convergent on $X$. 
\end{proof} 

The following corollary can be derived also by results of \cite{FP}. 

\begin{cor} \label{increas} 
	Let $(X, \leq)$ be a linearly ordered set. Then the compact space $M_+(X,[c,d])$ of all order preserving maps 
	is sequentially compact. 
\end{cor}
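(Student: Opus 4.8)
The plan is to deduce this directly from the Generalized Helly theorem (Theorem \ref{t:GenH}), since $M_+(X,[c,d])$ sits inside a bounded-variation space as a closed subset. First I would fix any $r \geq d-c$ and recall from Remark \ref{r:FinCol}.1 that every order preserving map $X \to [c,d]$ has variation at most $d-c \leq r$; hence $M_+(X,[c,d]) \subseteq BV^{\prec}_r(X,[c,d])$. (For concreteness one may normalize to $[c,d]=[0,1]$, where $M_+(X,[0,1]) \subset BV_1(X,[0,1])$.)

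The second ingredient is Lemma \ref{l:BVprop}.2, which states that $M_+(X,[c,d])$ is a \emph{pointwise closed} subset of $BV^{\prec}_r(X,[c,d])$ for every such $r$. With these two facts in hand, the main step is to invoke Theorem \ref{t:GenH}, which asserts that $BV^{\prec}_r(X,[c,d])$ is sequentially compact in the pointwise topology. Since a closed subspace of a sequentially compact space is again sequentially compact, the closedness from Lemma \ref{l:BVprop}.2 immediately yields the sequential compactness of $M_+(X,[c,d])$.

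I expect no genuine obstacle here: the corollary is essentially a restatement of the ``in particular'' clause already contained in Theorem \ref{t:GenH}, and all the real work lies upstream in the chain of results powering that theorem — the tameness of $BV^{\prec}_r$-families (Theorem \ref{newprinciple}), the Representation Theorem \ref{RepLem} producing continuous increasing extensions on a compact $\LOTS$, and the equivalence between tameness and existence of pointwise convergent subsequences (Fact \ref{f:sub-fr}). The only point worth stating explicitly is the elementary topological fact that sequential compactness passes to closed subspaces, which is what connects Lemma \ref{l:BVprop}.2 to Theorem \ref{t:GenH}.
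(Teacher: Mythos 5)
Your proposal is correct and follows exactly the route the paper intends: the corollary is the ``in particular'' clause of Theorem \ref{t:GenH}, obtained by combining the inclusion $M_+(X,[c,d])\subseteq BV^{\prec}_r(X,[c,d])$ (Remark \ref{r:FinCol}.1), the pointwise closedness from Lemma \ref{l:BVprop}.2, and the fact that closed subspaces of sequentially compact spaces are sequentially compact. No gaps.
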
 

Using Nachbin's Lemma \ref{l:Nachbin} we give now in Theorem \ref{corOfGenHelly1} a further generalization  replacing $[c,d]$ in Theorem \ref{t:GenH} by partially ordered compact metrizable spaces. This gives a partial generalization of \cite[Theorem 7]{FP}. Some restriction (e.g., metrizability) on a compact ordered space $Y$ is really essential as it follows from \cite[Theorem 9]{FP}.

\begin{thm} \label{corOfGenHelly1} 
	Let $(X, \leq)$ be a linearly ordered set and $(Y, \leq)$ be a compact metrizable partially ordered space. Then the compact space $M_+(X,Y)$ of all LOP maps is sequentially compact. 
\end{thm}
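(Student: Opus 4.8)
The plan is to reduce the general metrizable codomain $Y$ to the scalar case $[0,1]$, where Theorem~\ref{t:GenH} and Corollary~\ref{increas} already furnish sequential compactness. First I would exploit Nachbin's Lemma~\ref{l:Nachbin}: since $(Y,\leq)$ is a compact partially ordered space, $C_+(Y,[0,1])$ separates the points of $Y$. Because $Y$ is in addition compact \emph{metrizable}, hence second countable, the open set $(Y\times Y)\setminus\Delta$ is Lindel\"of; covering it by the sets $\{(y,y'):g(y)\neq g(y')\}$ as $g$ ranges over $C_+(Y,[0,1])$ and passing to a countable subcover yields a countable family $\{g_k\}_{k\in\N}\subseteq C_+(Y,[0,1])$ that still separates points. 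The diagonal map
\[
\Phi:=(g_k)_{k\in\N}\colon Y\longrightarrow [0,1]^{\N}
\]
is then continuous, injective, and (being a continuous injection from a compact space into a Hausdorff space) a topological embedding; in particular $\Phi(Y)$ is compact, hence closed, in $[0,1]^{\N}$.

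Next I would take an arbitrary sequence $f_n\in M_+(X,Y)$ and push it down through the coordinates. For each fixed $k$ the composite $g_k\circ f_n\colon X\to[0,1]$ is order preserving (a composition of the OP map $f_n$ with the OP map $g_k$), so $g_k\circ f_n\in M_+(X,[0,1])$. By Corollary~\ref{increas} (equivalently Theorem~\ref{t:GenH} with $r\geq 1$), the space $M_+(X,[0,1])$ is sequentially compact. A standard diagonal extraction over $k$ — extract a subsequence making $g_1\circ f_n$ converge pointwise, refine it for $g_2$, and so on, then take the diagonal — produces a single subsequence $(f_{n_j})_j$ such that $g_k\circ f_{n_j}$ converges pointwise on $X$ for \emph{every} $k$, say $g_k\circ f_{n_j}\to h_k$ with $h_k\in M_+(X,[0,1])$.

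It then remains to reconstruct an honest $Y$-valued pointwise limit. Fix $x\in X$. For every $k$ we have $g_k(f_{n_j}(x))\to h_k(x)$, so in $[0,1]^{\N}$ the points $\Phi(f_{n_j}(x))$ converge to $(h_k(x))_k$. Since $\Phi(Y)$ is closed, this limit lies in $\Phi(Y)$, so there is a (unique, by injectivity) element $f(x)\in Y$ with $\Phi(f(x))=(h_k(x))_k$; as $\Phi$ is a homeomorphism onto its image, $f_{n_j}(x)\to f(x)$ in $Y$. Thus $f_{n_j}\to f$ pointwise. Finally, because $Y$ is a compact partially ordered space, $M_+(X,Y)$ is pointwise closed in $Y^{X}$ (as recorded after Definition~\ref{d:ord}), so the pointwise limit $f$ is again order preserving, i.e.\ $f\in M_+(X,Y)$. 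This exhibits the desired convergent subsequence and proves sequential compactness.

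The main obstacle I expect is precisely this last reconstruction step: sequential compactness in each scalar coordinate only yields a limit in the cube $[0,1]^{\N}$, and one must verify that this coordinatewise limit actually comes from a point of $Y$. This is exactly where compactness of $Y$ (closedness of $\Phi(Y)$) and the \emph{countable} separating family (so that a single diagonal subsequence controls all coordinates at once) are essential — metrizability of $Y$ enters only to guarantee such a countable separating family exists, and no order-reflecting property of $\Phi$ is needed.
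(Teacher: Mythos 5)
Your argument is correct and is essentially the paper's own proof: both use Nachbin's lemma plus metrizability to obtain a countable point-separating family $\{g_k\}\subseteq C_+(Y,[0,1])$ and then reduce to the sequential compactness of $M_+(X,[0,1])$ from Theorem~\ref{t:GenH}. The only cosmetic difference is that the paper packages your diagonal extraction and limit-reconstruction as a closed topological embedding $M_+(X,Y)\hookrightarrow M_+(X,[0,1])^{\N}$ into a countable power of sequentially compact spaces, which is the same argument.
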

\begin{proof} First of all note that $M_+(X,Y)$ is compact being a closed subset of $Y^X$. Here we have to use the assumption that the given order on $Y$ is closed (Definition \ref{d:ord}). 
	$M_+(X,[0,1])$ is sequentially compact by 
	Theorem \ref{t:GenH}. Therefore, its countable power  $M_+(X,[0,1])^{\N}$ is also sequentially compact. Now observe that $M_+(X,Y)$ is topologically embedded as a closed subset into $M_+(X,[0,1])^{\N}$. 
	Indeed,  by  Nachbin's Lemma \ref{l:Nachbin} continuous increasing maps $Y \to [0,1]$ separate the points. Since $Y$ is a compact metrizable space  one may choose a countable family $h_n$ of increasing continuous maps which separate the points of $Y$. For every $f \in M_+(X,Y)$ define the  function 
	$$u(f) \colon \N \to M_+(X,[0,1]), \ n \mapsto h_n \circ f.$$
	This assignment defines a natural topological embedding of compact Hausdorff spaces (hence, this embedding is closed) 
	$
	u \colon M_+(X,Y) \hookrightarrow M_+(X,[0,1])^{\N}, \ \ f \mapsto u(f)=(h_n \circ f)_{n \in \N}.
	$
\end{proof}

Another Helly type theorem can be obtained for functions of bounded variation with values into a compact metric space. 
In the particular case of $(X,\leq)=[a,b]$ Theorem  \ref{corOfGenHelly2} is well known, \cite{BC,Ch}.      

\begin{thm} \label{corOfGenHelly2}    
	Let $(X, \leq)$ be a linearly ordered set and $(Y,d)$ be a compact metric space. 
	Then the compact space $BV^{\prec}_r(X,Y)$ 
	is sequentially compact in the pointwise topology. 
\end{thm}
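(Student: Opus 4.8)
The plan is to mirror the proof of Theorem~\ref{corOfGenHelly1}, reducing the metric-valued case to the already established scalar case $BV^{\prec}_r(X,[c,d])$ by composing with a countable separating family of non-expansive real functions on $Y$. The key inputs are the sequential compactness of $BV^{\prec}_r(X,[c,d])$ (Theorem~\ref{t:GenH}), the stability of bounded variation under post-composition with Lipschitz maps (Remark~\ref{r:FinCol}.6), and the pointwise closedness, hence compactness, of $BV^{\prec}_r(X,Y)$ in $Y^X$ (Lemma~\ref{l:BVprop}.1).

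Concretely, first I would record that $BV^{\prec}_r(X,Y)$ is a compact Hausdorff space by Lemma~\ref{l:BVprop}.1. Set $D:=\diam(Y)<\infty$, fix a countable dense subset $\{y_n\}_{n\in\N}$ of the compact metric space $Y$, and consider the $1$-Lipschitz functions $h_n\colon Y\to[0,D]$, $h_n(y):=d(y,y_n)$. These separate the points of $Y$: if $d(y,y_n)=d(z,y_n)$ for all $n$, then choosing $y_n\to y$ forces $d(z,y)=0$. By Remark~\ref{r:FinCol}.6 every composition $h_n\circ f$ with $f\in BV^{\prec}_r(X,Y)$ again has variation $\le r$, so $h_n\circ f\in BV^{\prec}_r(X,[0,D])$. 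This allows one to define
$$
u\colon BV^{\prec}_r(X,Y)\ \longrightarrow\ BV^{\prec}_r(X,[0,D])^{\N},\qquad u(f):=(h_n\circ f)_{n\in\N}.
$$

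Next I would verify that $u$ is a topological embedding with closed image. Injectivity follows because the $h_n$ separate the points of $Y$, and continuity (in the pointwise topologies) is clear since each coordinate map $f\mapsto h_n\circ f$ is pointwise continuous. As the domain is compact and the codomain is Hausdorff, $u$ is a homeomorphism onto its image, and that image is closed. Finally, $BV^{\prec}_r(X,[0,D])$ is sequentially compact by Theorem~\ref{t:GenH}; a standard diagonal argument shows that the countable power $BV^{\prec}_r(X,[0,D])^{\N}$ is again sequentially compact, and a closed subspace of a sequentially compact space inherits sequential compactness. Transporting this back through the homeomorphism $u$ yields the sequential compactness of $BV^{\prec}_r(X,Y)$.

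The only genuinely delicate point is the reduction step — guaranteeing that post-composition with the $h_n$ lands inside the scalar space $BV^{\prec}_r(X,[0,D])$ with the \emph{same} variation bound $r$ — which is exactly why the non-expansiveness (Lipschitz constant $1$) of the $h_n$ is used rather than merely the Lipschitz property; everything else is the routine transfer of sequential compactness through a closed embedding into a countable product, precisely as in Theorem~\ref{corOfGenHelly1}.
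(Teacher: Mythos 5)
Your proposal is correct and follows essentially the same route as the paper: compose with the countable separating family of $1$-Lipschitz functions $h_n=d(\cdot,y_n)$, use Remark~\ref{r:FinCol}.6 to stay inside $BV^{\prec}_r$ of an interval, and transfer sequential compactness back through the closed diagonal embedding into a countable power, exactly as in Theorem~\ref{corOfGenHelly1}. Your explicit use of $[0,D]$ with $D=\diam(Y)$ is a minor (and slightly more careful) normalization of the paper's $[0,1]$; otherwise the arguments coincide.
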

\begin{proof} Since $(Y,d)$ is a compact metric space there exist  
	countably many $1$-Lipschitz functions $h_n \colon  (Y,d) \to [0,1]$ which separate the points of $Y$. Indeed, take a countable dense subset $\{y_n: \ n \in \N\}$ in $Y$ and define $h_n(x):=d(y_n,x)$. 
	Then $h_n \circ f \in  BV^{\prec}_r(X,[0,1])$ for every $f \in BV^{\prec}_r(X,Y)$. 
	The rest is similar to the proof of Theorem \ref{corOfGenHelly1}.  
\end{proof}
  
An elegant argument was presented by Rosenthal in \cite{Ros1}.  
The set $M_+([0,1],[0,1])$ is a 
compact subset in the space $\mathcal{B}_1[0,1]$ of Baire 1 functions. 
Hence it is sequentially compact because the compactness and sequential compactness are the same for subsets $\mathcal{B}_1(X)$ for any Polish $X$, \cite{Ros1}.  

\subsection*{Circularly ordered sets and BV}

\begin{lem}\label{l:lift-BV}
	Let $f \colon X\to (M,d)$ be a map, where $(M,d)$ is a bounded metric space, and let $q\colon X(c)\to X$ be the canonical quotient from Corollary~\ref{cover}. Then
	\[
	\Upsilon^{\prec}(f\circ q)\ \le\ \Upsilon^{\circ}(f)\ \le\ \Upsilon^{\prec}(f\circ q)+\diam(M).
	\]
	Consequently,
	\begin{align*}
		&\text{(i)}\quad f\in BV^{\circ}_{r}(X,M)\ \Longrightarrow\ f\circ q\in BV^{\prec}_{r}(X(c),M),\\
		&\text{(ii)}\quad f\circ q\in BV^{\prec}_{t}(X(c),M)\ \Longrightarrow\ f\in BV^{\circ}_{\,t+\diam(M)}(X,M).
	\end{align*}
\end{lem}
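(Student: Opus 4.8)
The plan is to exploit the canonical correspondence, mediated by the split quotient $q$, between finite $\le_c$-chains in the LOTS $X(c)=[c^-,c^+]$ and finite cycles in the COTS $(X,R)$. Recall from Remark~\ref{r:chech} and Lemma~\ref{l:cut1} that $\le_c$ is the standard cut at $c$, so $x<_c y$ holds exactly when $[c,x,y]$; together with Corollary~\ref{cover}(iii), the map $q$ restricts to a bijection of $X(c)\setminus\{c^-,c^+\}$ onto $X\setminus\{c\}$ and sends both $c^-,c^+$ to $c$. The first step is to record the key bookkeeping identity: if $y_1\le_c\cdots\le_c y_n$ is a chain in $X(c)$ that \emph{starts at $c^-$ and ends at $c^+$}, then its images $z_i:=q(y_i)$ satisfy $z_1=z_n=c$ and $(c,z_2,\dots,z_{n-1})$ is a cycle in $X$ (by the cut property), whose circular sum $\sum_i d(f(z_i),f(z_{i+1}))$ (wraparound included) equals the linear sum $\sum_i d((f\circ q)(y_i),(f\circ q)(y_{i+1}))$ of the chain, since the two endpoint contributions $d(f(c),f(z_2))$ and $d(f(z_{n-1}),f(c))$ play exactly the roles of the first term and the wraparound term.

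For the left inequality $\Upsilon^{\prec}(f\circ q)\le\Upsilon^{\circ}(f)$, I would take an arbitrary chain in $X(c)$ and prepend $c^-$ and append $c^+$; since $c^-=\min X(c)$ and $c^+=\max X(c)$ this remains a chain, and adjoining points never decreases a variation sum, so the original linear sum is dominated by the sum of the extended chain, which by the identity above is a circular sum of $f$ and hence is $\le\Upsilon^{\circ}(f)$. Taking suprema gives the bound. For the right inequality $\Upsilon^{\circ}(f)\le\Upsilon^{\prec}(f\circ q)+\diam(M)$, I would start from an arbitrary cycle in $X$; by Remark~\ref{r:FinCol}.4 it suffices to treat an injective cycle, and by the cyclic invariance of the circular sum I may rotate it so that its points become increasing for $\le_c$, say $x_1<_c\cdots<_c x_m$ (when $c$ itself occurs as a vertex, place it first and use $q(c^-)=c$). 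Lifting each $x_i$ to $\tilde x_i\in X(c)\setminus\{c^-,c^+\}$ produces a chain $\tilde x_1<_c\cdots<_c\tilde x_m$ whose linear sum is precisely the non-wraparound part $\sum_{i=1}^{m-1}d(f(x_i),f(x_{i+1}))$ of the circular sum. The circular sum therefore exceeds this chain sum by the single wraparound term $d(f(x_m),f(x_1))\le\diam(M)$, so it is at most $\Upsilon^{\prec}(f\circ q)+\diam(M)$; taking suprema finishes the estimate. (In fact, estimating that wraparound term instead by the triangle inequality through $f(c)$, i.e. using the extended chain $c^-,\tilde x_1,\dots,\tilde x_m,c^+$, would even yield the sharper identity $\Upsilon^{\circ}(f)=\Upsilon^{\prec}(f\circ q)$; but the one-sided bounds stated in the lemma are all that is needed below.)

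The routine yet genuinely delicate step — the one I would write out most carefully — is the order-theoretic matching underlying the first paragraph: verifying that cutting a cyclically ordered injective tuple at $c$ yields a $\le_c$-increasing tuple and, conversely, that a $\le_c$-chain anchored at the endpoints closes up to a genuine cycle in the sense of Definition~\ref{d:cycl}. This includes the degenerate configurations where $c$ is itself a vertex, where consecutive arcs $(x_i,x_{i+1})_{\circ}$ are empty, or where $c^-,c^+$ coincide with chain points. All of these are controlled by Lemma~\ref{l:cut1} and the definition of $\le_c$, but this is exactly where the circular-versus-linear subtlety resides, so I expect it to be the main obstacle.

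The two stated consequences are then immediate. For (i), if $f\in BV^{\circ}_r(X,M)$ then $\Upsilon^{\circ}(f)\le r$, so the left inequality gives $\Upsilon^{\prec}(f\circ q)\le r$, i.e. $f\circ q\in BV^{\prec}_r(X(c),M)$. For (ii), if $f\circ q\in BV^{\prec}_t(X(c),M)$ then $\Upsilon^{\prec}(f\circ q)\le t$, so the right inequality gives $\Upsilon^{\circ}(f)\le t+\diam(M)$, i.e. $f\in BV^{\circ}_{\,t+\diam(M)}(X,M)$.
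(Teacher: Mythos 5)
Your proposal is correct and follows essentially the same route as the paper: both inequalities come from the chain/cycle correspondence induced by $q$, with the wraparound term $d(f(x_n),f(x_1))\le\diam(M)$ accounting for the defect, and the case where $c$ is a vertex handled via the anchored chain $(c^-,x_1,\dots,x_{n-1},c^+)$. Your parenthetical observation that routing the wraparound term through $f(c)$ by the triangle inequality yields the sharper identity $\Upsilon^{\circ}(f)=\Upsilon^{\prec}(f\circ q)$ is a valid refinement, though the one-sided bounds of the lemma are all that the paper uses.
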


\begin{proof}
	Let $f_q:=f\circ q$. For any finite chain $y_1\le_c\cdots\le_c y_n$ in $X(c)$ put $x_i:=q(y_i)$. Then
	\[
	\sum_{i=1}^{n-1} d\big(f_q(y_i),f_q(y_{i+1})\big)
	=\sum_{i=1}^{n-1} d\big(f(x_i),f(x_{i+1})\big)
	\le \sum_{i=1}^{n} d\big(f(x_i),f(x_{i+1})\big)\!,
	\]
	(where $x_{n+1}=x_1$), hence $\Upsilon^{\prec}(f_q)\le \Upsilon^{\circ}(f)$.
	
	For the reverse bound, take any cycle $(x_1,\dots,x_n)$ in $X$. If $c\notin\{x_1,\dots,x_n\}$ then, after reindexing in the $c$–cut order, we get a chain $y_1<_c\cdots<_c y_n$ in $X(c)$ with
	\[
	\sum_{i=1}^{n} d\big(f(x_i),f(x_{i+1})\big)
	=\sum_{i=1}^{n-1} d\big(f_q(y_i),f_q(y_{i+1})\big) + d\big(f(x_n),f(x_1)\big)
	\le \Upsilon^{\prec}(f_q)+\diam(M).
	\]
	If $c=x_n$, consider the chain $(c^-,x_1,\dots,x_{n-1},c^+)$ in $X(c)$; then the circular sum equals the corresponding linear sum, so it is $\le \Upsilon^{\prec}(f_q)$. Taking suprema yields $\Upsilon^{\circ}(f)\le \Upsilon^{\prec}(f_q)+\diam(M)$.
	
	The implications (i) and (ii) follow immediately from these bounds.
\end{proof}

A crucial application of the bounded variation concept, particularly for dynamics, involves c-order preserving maps into the circle, $f \colon X \to \T$. It is a non-trivial fact that any such map from a $\COTS$ $X$ to the circle $\T$ 
is of bounded variation.	

\begin{lem} \label{l:COPtoTisBV} 
	For every circularly ordered set $(X,R)$, one has $M_+(X,\T) \subset BV^\circ(X,\T)$ and $M_+(X,\T)$ is pointwise closed.
\end{lem}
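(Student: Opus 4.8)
The second assertion is immediate: applying Theorem~\ref{t:M_Closed} with $Y=\T$ shows that $M_+(X,\T)$ is pointwise closed in $\T^X$. So the entire content lies in the inclusion $M_+(X,\T)\subset BV^{\circ}(X,\T)$, and the plan is to transport a given COP map $f\colon X\to\T$ to a genuinely monotone real-valued function on a linearly ordered set, where the elementary bound $M_+\subset BV^{\prec}_1$ (Remark~\ref{r:FinCol}.1) is available, and then to push the resulting variation estimate back down to $X$.

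Concretely, fix any $c\in X$ and set $y_0:=f(c)$. First I would split the domain at $c$ via the single-split quotient $q_c\colon X(c)\to X$ of Corollary~\ref{cover}, so that $X(c)=[c^-,c^+]$ is a LOTS carrying the cut order $\le_c$. Simultaneously I split the codomain circle $\T$ at $y_0$, identifying the resulting LOTS $\T(y_0)$ with $[0,1]$ through the roll-up map $q_*\colon[0,1]\to\T$, $q_*(t)=y_0e^{2\pi i t}$, which sends both endpoints $0,1$ to $y_0$. The heart of the argument is to produce a \emph{linear} order preserving lift $h\colon(X(c),\le_c)\to([0,1],\le)$ with $q_*\circ h=f\circ q_c$. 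This is exactly the lifting principle of Lemma~\ref{l:cop-lift-principle}, whose construction is purely order-theoretic (by Remark~\ref{r:InducedMaps}.2, $f$ is LOP with respect to the standard cut $\le_c$) and therefore applies verbatim here, even though $X$ need not be compact.

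Granting the lift, the remainder is a short chain of reductions. Since $h$ is LOP into $[0,1]$, Remark~\ref{r:FinCol}.1 gives $h\in BV^{\prec}_1(X(c),[0,1])$, i.e. $\Upsilon^{\prec}(h)\le 1$. The roll-up $q_*$ is Lipschitz, so by Remark~\ref{r:FinCol}.6 the composite $f\circ q_c=q_*\circ h$ lies in $BV^{\prec}(X(c),\T)$ with variation controlled by the Lipschitz constant of $q_*$. Finally, Lemma~\ref{l:lift-BV}(ii) descends this linear bound on $X(c)$ to a circular one on $X$, yielding $\Upsilon^{\circ}(f)\le \Upsilon^{\prec}(f\circ q_c)+\diam(\T)<\infty$, hence $f\in BV^{\circ}(X,\T)$ as required.

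The step I expect to be the main obstacle is the construction of the lift $h$, and specifically its behaviour on the fiber $f^{-1}(y_0)$. This fiber is $R$-convex (Remark~\ref{r:convex}.2, Lemma~\ref{l:Prop-c-ord}.(2b)) and contains $c$; under the cut $\le_c$ it therefore splits into an initial segment and a final segment of $X(c)$, on which $h$ must be forced to the endpoint values $0$ and $1$ respectively so that $q_*\circ h=f\circ q_c$ holds while $h$ stays order preserving. Checking that this assignment is consistent and LOP (so that Lemma~\ref{l:cop-lift-principle} is genuinely available without its compactness hypothesis) is the only delicate point; everything downstream is the formal bookkeeping of variations.
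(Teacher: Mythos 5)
Your proposal is correct and follows essentially the same route as the paper's proof: split the domain at $c$ and the codomain at $y_0=f(c)$, lift $f$ to a LOP map $h$ via Lemma~\ref{l:cop-lift-principle}, bound $\Upsilon^{\prec}(q_*\circ h)$ using the Lipschitz property of the roll-up together with Remarks~\ref{r:FinCol}, and descend to $\Upsilon^{\circ}(f)<\infty$ with Lemma~\ref{l:lift-BV}. Your remark that Lemma~\ref{l:cop-lift-principle} is stated for compact $X$ while the present lemma concerns an arbitrary circularly ordered set is a legitimate point that the paper's proof passes over silently, and your justification --- that the lift is purely order-theoretic, resting on the fact (Remark~\ref{r:InducedMaps}.2) that a COP map is LOP with respect to every standard cut --- is the right way to close it.
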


\begin{proof}
	Pointwise closedness follows from Theorem \ref{t:M_Closed}. 
	
	For the BV inclusion: Let $f \in M_+(X,\T)$. Fix $c \in X$ and set $y_0 := f(c)$. 
	By Lemma \ref{l:cop-lift-principle}, the COP map $f$ lifts to a LOP map $h \colon (X(c), \le_c) \to ([0,1], \le)$ such that $f \circ q_c = q_*^{y_0} \circ h$.
	
	Because $h$ is a LOP map into $[0,1]$, we have $h \in BV_1^{\prec}(X(c), [0,1])$ (see Remark \ref{r:FinCol}.1). 
	The standard quotient map $q_*^{y_0} \colon [0,1] \to \T$ is a 1-Lipschitz function. By Remark \ref{r:FinCol}.3, composing a BV function with a Lipschitz map preserves bounded variation, yielding $\Upsilon^{\prec}(q_*^{y_0} \circ h) \le \Upsilon^{\prec}(h) \le 1$.
	
	Applying Lemma \ref{l:lift-BV} (with $M=\T$), we obtain:
	\[
	\Upsilon^\circ(f) \le \Upsilon^{\prec}(f \circ q_c) + \operatorname{diam}(\T) = \Upsilon^{\prec}(q_*^{y_0} \circ h) + \operatorname{diam}(\T) \le 1 + \operatorname{diam}(\T) < \infty.
	\]
	Thus, $f \in BV^\circ(X,\T)$.
\end{proof}

\begin{thm}[Helly Selection Theorem for \textbf{circular orders}]  \label{GenHellyThm} 
	Let $(X,R)$ be a circularly ordered set and $(Y,d)$ be a compact metric space.
	Then $BV^{\circ}_t(X,Y)$ is sequentially compact in the pointwise topology for any $t>0$. 
	In particular, the space of COP maps $M_+(X,\T)$ is sequentially compact.
\end{thm}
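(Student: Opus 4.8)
The plan is to reduce the circular selection theorem to its already-proved linear counterpart by passing through the single-split construction, which converts a circularly ordered domain into a linearly ordered one while controlling bounded variation in both directions. Fix any base point $c\in X$ and form the single-split space $X(c)=[c^-,c^+]$ together with its canonical onto quotient $q\colon X(c)\to X$ (Definition~\ref{d:cover}, Corollary~\ref{cover}); recall that $X(c)$ is a genuine linearly ordered set. Given a sequence $\{f_n\}_{n\in\N}$ in $BV^{\circ}_t(X,Y)$, the first step is to lift it along $q$: by Lemma~\ref{l:lift-BV}(i) each composite $f_n\circ q$ lies in $BV^{\prec}_t(X(c),Y)$, so $\{f_n\circ q\}$ is a sequence in the \emph{linear} BV class over the LOTS $X(c)$.

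Next I would invoke the linear theorem. Since $X(c)$ is a linearly ordered set and $(Y,d)$ is a compact metric space, Theorem~\ref{corOfGenHelly2} gives that $BV^{\prec}_t(X(c),Y)$ is sequentially compact in the pointwise topology; hence there is a subsequence $\{f_{n_k}\circ q\}$ converging pointwise on $X(c)$ to some $g\in BV^{\prec}_t(X(c),Y)$. I then push this convergence back down to $X$ using surjectivity of $q$: for each $x\in X$ choose any $y\in q^{-1}(x)$ and observe $f_{n_k}(x)=(f_{n_k}\circ q)(y)\to g(y)$, so $\{f_{n_k}\}$ converges pointwise on all of $X$. The only point requiring care is the split point $c$, where $q^{-1}(c)=\{c^-,c^+\}$: for every fixed $k$ one has $(f_{n_k}\circ q)(c^-)=f_{n_k}(c)=(f_{n_k}\circ q)(c^+)$, so passing to the limit forces $g(c^-)=g(c^+)$ and the limit value at $c$ is unambiguous. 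Thus the pointwise limit $f\colon X\to Y$ is well defined and satisfies $f\circ q=g$.

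Finally, membership of the limit in the prescribed class is immediate: $BV^{\circ}_t(X,Y)$ is pointwise closed in $Y^X$ by Lemma~\ref{l:BVprop}.1, and since each $f_{n_k}$ lies in $BV^{\circ}_t(X,Y)$ and $f_{n_k}\to f$ pointwise, we conclude $f\in BV^{\circ}_t(X,Y)$. (One could instead combine $f\circ q=g\in BV^{\prec}_t(X(c),Y)$ with Lemma~\ref{l:lift-BV}(ii) to get $f\in BV^{\circ}_{t+\diam(Y)}(X,Y)$, but closedness yields the sharp bound directly.) The only genuinely delicate step is the descent of pointwise convergence through the non-injective map $q$, namely verifying that the two fibers over $c$ force a single limit value; this is handled by the elementary identity above, and everything else is a direct application of the linear Helly theorem and the closedness lemma. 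For the final assertion, take $Y=\T$: by Lemma~\ref{l:COPtoTisBV} the set $M_+(X,\T)$ is pointwise closed and (with a uniform variation bound coming from the Lipschitz constant of the roll-up $q_*$ and $\diam(\T)$) contained in some $BV^{\circ}_t(X,\T)$, hence is sequentially compact as a closed subset of a sequentially compact space.
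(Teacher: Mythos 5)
Your proof is correct and follows essentially the same route as the paper: both reduce to the linear case via the single-split quotient $q\colon X(c)\to X$, use Lemma \ref{l:lift-BV}(i) to land in $BV^{\prec}_t(X(c),Y)$, invoke Theorem \ref{corOfGenHelly2}, and use pointwise closedness (Lemma \ref{l:BVprop}.1) to conclude. The paper packages the descent as ``$q_*$ is a continuous injection of compact spaces, hence a closed embedding,'' whereas you extract and descend a convergent subsequence by hand; your explicit uniform variation bound for $M_+(X,\T)$ is a slightly more careful treatment of the final assertion than the paper gives.
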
   
\begin{proof}
	Fix $c \in X$ and consider the canonical quotient map $q \colon X(c) \to X$. 
	By Lemma \ref{l:lift-BV}(i), the pullback map 
	\[
	q_{\#} \colon BV_{t}^{\circ}(X,Y) \to BV_{t}^{\prec}(X(c),Y), \quad f \mapsto f \circ q
	\]
	is well-defined. This map is trivially continuous with respect to the pointwise topologies.
	
	By Lemma \ref{l:BVprop}(1), the domain $BV_{t}^{\circ}(X,Y)$ is compact. Since $q$ is surjective, $q_{\#}$ is injective. Because a continuous injection from a compact space into a Hausdorff space is a topological embedding, $BV_{t}^{\circ}(X,Y)$ is homeomorphic to its closed image under $q_{\#}$. 
	By Theorem \ref{corOfGenHelly2}, the space $BV_{t}^{\prec}(X(c),Y)$ on the linearly ordered set $X(c)$ is sequentially compact. Because closed subspaces of sequentially compact spaces are sequentially compact, $BV_{t}^{\circ}(X,Y)$ is also sequentially compact.
	
	Finally, by Lemma \ref{l:COPtoTisBV}, $M_+(X,\T)$ is a pointwise closed subspace of $BV^{\circ}_t(X,\T)$ for $t = 1 + \operatorname{diam}(\T)$. Therefore, it inherits sequential compactness.
\end{proof}

\begin{thm} \label{t:BVX(c)} 	
	Let $(X, R)$ be a c-ordered set. Then any family 
	%of functions 
	$F \subseteq [c,d]^X$ with uniformly bounded total variation is tame. In particular, $BV^{\circ}_r(X,[c,d])$ is a tame family for every $r>0$.
\end{thm} 

\begin{proof} 
	It suffices to prove that the maximal family $F := BV^{\circ}_r(X,[c,d])$ is tame for any fixed $r>0$. 
	
	Fix an arbitrary $c \in X$ and consider the canonical quotient map $q \colon X(c) \to X$. 
	By Lemma \ref{l:lift-BV}(i), the pullback family $F_q := \{f \circ q \mid f \in F\}$ is entirely contained within $BV^{\prec}_r(X(c),[c,d])$. 
	
	By Theorem \ref{newprinciple}, $BV^{\prec}_r(X(c),[c,d])$ is a tame family on the linearly ordered space $X(c)$, and therefore its subfamily $F_q$ is also tame. Because $q$ is surjective, Fact \ref{f:lift-TameFamily} immediately guarantees that the original family $F$ is tame on $X$.
\end{proof}

\begin{thm} \label{t:BVisFr}  
	Let $X$ be a $\LOTS$ or a $\COTS$, and let $M$ be a compact metric space.  
	Then every bounded variation function $f \colon X \to M$ is fragmented. Furthermore, if the natural topology $(X,\lambda_{R_X})$ is Polish, then $f$ is of Baire class 1.
\end{thm}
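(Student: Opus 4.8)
The plan is to split along the dichotomy in the hypothesis: first establish the statement for a $\LOTS$ $X$, then transport it to the $\COTS$ case through the single-split map, and finally read off the Baire-$1$ refinement from Fact \ref{f:fr}.2.

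For the $\LOTS$ case the idea is to reduce the metric-valued problem to the real-valued one already solved in Corollary \ref{c:BVisFr}. Fix a countable dense set $\{y_n\}$ in the compact metric space $M$ and let $h_n(\cdot):=d(y_n,\cdot)$; these are $1$-Lipschitz, separate points, and the weighted sum $\rho(x,y):=\sum_n 2^{-n}|h_n(x)-h_n(y)|$ is a metric on $M$ topologically (hence, by compactness, uniformly) equivalent to $d$, so fragmentability may be tested with $\rho$. By Remarks \ref{r:FinCol}.6 each composite $h_n\circ f$ lies in $BV^{\prec}_r(X,[0,\diam M])\subset BV^{\prec}(X)$, and Corollary \ref{c:BVisFr} makes it fragmented. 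Given $\eps>0$ and a nonempty closed $A\subseteq X$, I would truncate the series at an index $N$ making the tail $\diam(M)\sum_{n>N}2^{-n}$ smaller than $\eps/2$, and then apply to the finite list $h_1\circ f,\dots,h_N\circ f$ the very iteration used in the proof of Lemma \ref{l:FisVectSp}: refine $A$ successively to a nonempty relatively open $O$ on which every $h_n\circ f$ ($n\le N$) is $(\eps/2)$-small. On such an $O$ the $\rho$-oscillation of $f$ is below $\eps$, so $f(O)$ is $\eps$-small and $f$ is fragmented.

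For the $\COTS$ case I would fix any $c\in X$ and pass to the single-split map $q_c\colon X(c)=[c^-,c^+]\to X$, which presents $X(c)$ as a $\LOTS$ (Corollary \ref{cover}). For $f\in BV^{\circ}_r(X,M)$, Lemma \ref{l:lift-BV}(i) shows $f\circ q_c\in BV^{\prec}_r(X(c),M)$, so the already-proved $\LOTS$ case makes $f\circ q_c$ fragmented; then Lemma \ref{l:lift-fragm} transfers fragmentability back to $f$ itself. Finally, when $(X,\tau_{R_X})$ is Polish, the target $M$ is separable metric, so the fragmented map $f$ is automatically of Baire class $1$ by Fact \ref{f:fr}.2.

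The main obstacle is the passage from the finitely many scalar fragmented coordinates $h_n\circ f$ to genuine fragmentability of the $M$-valued map $f$: one must simultaneously control all coordinates on a single relatively open piece (the iterated refinement) while absorbing the infinite tail of the series, using the boundedness of $M$. Everything else — the $BV$-lifting along $q_c$, the LOTS-to-metric reduction, and the Baire-$1$ upgrade — is a direct invocation of the cited lemmas.
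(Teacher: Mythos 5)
Your proof is correct, and its skeleton coincides with the paper's: Corollary \ref{c:BVisFr} handles the real-valued $\LOTS$ case, Lemmas \ref{l:lift-BV} and \ref{l:lift-fragm} transport the result through the single-split map $q_c$ to the $\COTS$ case, the general compact metric codomain is reached through a point-separating family of $1$-Lipschitz functions whose composites with $f$ have bounded variation by Remark \ref{r:FinCol}.6, and the Baire-$1$ refinement is Fact \ref{f:fr}.2. The one place where you genuinely diverge is the synthesis step from scalar coordinates back to the $M$-valued map: the paper uses the full family $\{h_z=d(z,\cdot):z\in M\}$ and closes the argument by citing an external result (\cite[Lemma 2.3.3]{GM-rose}), whereas you take only countably many coordinates coming from a dense set, assemble them into the uniformly equivalent metric $\rho(x,y)=\sum_n 2^{-n}|h_n(x)-h_n(y)|$, absorb the tail using $\diam(M)<\infty$, and control the finitely many remaining coordinates on a single relatively open piece by iterating the refinement of Lemma \ref{l:FisVectSp}. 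Your route is longer but self-contained within the paper (it in effect reproves the cited lemma in the compact metric setting); the paper's is shorter at the cost of an external reference. Both are valid, and your observation that fragmentability can be tested with any uniformly equivalent metric (automatic here by compactness of $M$) is exactly the point that makes the truncation legitimate.
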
  

\begin{proof} 
	\textit{Case 1: The scalar case.} Assume $M$ is a bounded interval $[c,d] \subset \R$. If $X$ is a $\LOTS$, the fragmentability of $f$ follows directly from Corollary \ref{c:BVisFr}. If $X$ is a $\COTS$, fix an arbitrary $c \in X$. By Lemma \ref{l:lift-BV}, the pullback $f \circ q_c$ has bounded variation on the linearly ordered split space $X(c)$. By the LOTS case, $f \circ q_c$ is fragmented, and Lemma \ref{l:lift-fragm} immediately implies that the original function $f$ is fragmented on $X$.
	
	\textit{Case 2: The general case.} Observe that the family of 1-Lipschitz distance functions 
	\[
	\{h_z \colon M \to [0,\operatorname{diam}(M)], \quad h_z(x):=d(z,x) \mid z \in M \}
	\]
	separates the points of $M$. For every $z \in M$, the composition $h_z \circ f \colon X \to [0,\operatorname{diam}(M)]$ has bounded variation by Remark \ref{r:FinCol}.3. By Case 1, each scalar function $h_z \circ f$ is fragmented. Since the family $\{h_z\}_{z \in M}$ separates the points of $M$, applying \cite[Lemma 2.3.3]{GM-rose} guarantees that the original target function $f$ is fragmented. If the topology on $X$ is Polish, then $f \in \mathcal{B}_1(X)$ by Fact \ref{f:fr}.1.  	
\end{proof}

	In \cite{Me-medianBV} we study functions of bounded variation on \textit{median pretrees}, which is a natural generalization of linearly ordered sets.

\section{Order preserving actions and tameness} \label{s:tame} 

Let $X$ be a compact dynamical $G$-system and let $h \colon G \to \operatorname{H}(X)$ be the induced (continuous) homomorphism. Recall that the pointwise closure $E(X):=\operatorname{cl}_p(h(G))$ of $h(G)$ in $X^X$ is a compact right topological (Ellis) semigroup, which is said to be the \textit{enveloping semigroup} and reflects several important dynamical properties of $(G,X)$, \cite{Ellis,Gl-env}. 

A compact $G$-space $K$ is said to be \textit{tame}  \cite{Ko,KL} if $fG$ is a tame family for every $f \in C(K)$ (\textit{regular}, in terms of K{\"o}hler). By results of \cite{GMU08,GM-rose,GM-tLN} it is equivalent to require that for every element $p \in E(K)$, the map $p \colon K \to K$ is fragmented. 
If, in addition, $K$ is metrizable, then this means that $p$ is a Baire 1 function  
and we can assume that $E(K)$ is a (separable) pointwise compact subset of $\mathcal{B}_1(K,K)$. That is, a \textit{Rosenthal compactum}. 
It is well known that every Rosenthal compactum is sequentially compact (by a result of Bourgain-Fremlin-Talagrand 
\cite[Theorem 3F]{BFT}) and also a \textit{Fr\'echet space} (a space where the closure operator and the sequential closure are the same).   
 
\begin{thm} \label{t:COTStame} 
Every compact $\COTS$ $G$-space $K$ is tame. 	
\end{thm}
\begin{proof} $M_+(K,K)$ is pointwise closed in $K^K$ by Theorem \ref{t:M_Closed}. Since $E(K) \subset M_+(K,K)$, 
for every $p \in E(K)$ the map $p \colon K \to K$ is COP, hence fragmented by Theorem \ref{monot}. 	
\end{proof}

\subsection*{Representations on Rosenthal spaces} 

Let $V$ be a Banach space and let $\operatorname{Iso}(V)$ be the topological group (with the strong operator topology) of all onto linear isometries $V \to V$. For every continuous homomorphism $h \colon G \to \operatorname{Iso}(V)$, we have a canonically induced dual continuous action on the weak-star compact unit ball $B_{V^*}$ of the dual space $V^*$. So, we get a $G$-space $B_{V^*}$. 
A natural question is which continuous actions 
of $G$ on a topological space $X$ can be represented as a $G$-subspace of $B_{V^*}$ for a certain Banach space $V$ from a nice class of (low-complexity) spaces.  
Recall that a dynamical system $(G,X)$ is WRN (\textit{Weakly Radon-Nikodym}) if it is representable on a Rosenthal Banach space \cite{GM-rose}. In particular, this defines the class of WRN compact spaces.  

\begin{f} \label{WRNcriterion}   
	(see \cite[Theorem 6.5]{GM-rose} and with more details \cite{Me-b}) 
	Let $X$ be a compact $G$-space. The following conditions are equivalent:
	\begin{enumerate}
		\item $(G,X)$ is Rosenthal representable (that is, $(G,X)$ is $\mathrm{WRN}$). 
		\item There exists a point separating bounded $G$-invariant family $F$ of continuous real functions on $X$ such that $F$ is a tame family.
	\end{enumerate}
\end{f}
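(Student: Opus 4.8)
The plan is to prove the two implications separately, using Fact~\ref{f:sub-fr} as the bridge between the dynamical notion of tameness and the Banach-space notion of containing no $\ell_1$-sequence, together with Rosenthal's characterization of Rosenthal spaces as those with no isomorphic copy of $\ell_1$.

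First I would dispatch the easy direction $(1)\Rightarrow(2)$. Assume $(G,X)$ is WRN, so there are a Rosenthal Banach space $V$, a continuous homomorphism $h\colon G\to\operatorname{Iso}(V)$, and a $G$-embedding $\alpha\colon X\hookrightarrow B_{V^*}$ into the weak-star ball. Each $v\in V$ yields a continuous $f_v\colon X\to\R$, $f_v(x):=\langle \alpha(x),v\rangle$ (continuity because $\alpha$ is weak-star continuous), and I set $F:=\{f_v: v\in B_V\}$. This family is bounded by $1$; it is point-separating since $\alpha$ is injective and $V$ separates the points of $V^*$; and it is $G$-invariant because $\alpha$ is equivariant and $h(g)$ preserves $B_V$, giving $g f_v=f_{h(g)v}$. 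The one nontrivial point is tameness. The linear map $T\colon V\to C(X)$, $v\mapsto f_v$, has norm $\le 1$, so $\|\sum c_i f_{v_i}\|_\infty\le\|\sum c_i v_i\|_V$ for all finite scalar choices. Hence an $\ell_1$-sequence inside $F$ would force the corresponding $(v_n)$ to be an $\ell_1$-sequence in $V$ (the lower estimate transfers, the upper one being automatic), contradicting that $V$ is Rosenthal. Thus $F$ has no $\ell_1$-sequence, and by Fact~\ref{f:sub-fr} it is tame.

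The substantial direction is $(2)\Rightarrow(1)$. Starting from a $G$-invariant, point-separating, tame family $F\subset C(X)$, normalized so that $\|f\|_\infty\le 1$, the evaluation map $j\colon X\to[-1,1]^F$, $j(x)(f)=f(x)$, is already a $G$-embedding. The task is to manufacture a Rosenthal Banach space $V$ carrying a strongly continuous isometric $G$-action together with a $G$-embedding $X\hookrightarrow B_{V^*}$ whose coordinate functionals recover $F$. I would use the canonical Banach space $V=V_F$ associated with a bounded family of functions, as in the representation theory of \cite{GM-rose,GM1}: it is obtained by a Davis--Figiel--Johnson--Pe\l czy\'nski (DFJP) type interpolation applied to the symmetric convex hull $W$ of $F$ inside $C(X)$, yielding a bounded injection $V_F\to C(X)$ with $W$ contained in the image of $B_{V_F}$. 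By Fact~\ref{f:sub-fr}, tameness of $F$ says exactly that the pointwise closure of $W$ in $\R^X$ consists of fragmented maps (a Rosenthal compactum); this is the precise hypothesis under which the interpolation space $V_F$ is Rosenthal. Because $F$, hence $W$, is $G$-invariant and $G$ acts by isometries on $C(X)$, the construction is functorial and $G$ acts on $V_F$ by isometries, while the adjoint of the factoring injection restricts on $X\subset B_{C(X)^*}$ to the required embedding into $B_{V_F^*}$.

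The main obstacle is the interpolation step in $(2)\Rightarrow(1)$, where two things must hold at once. The first is that $V_F$ is \emph{Rosenthal}: this is the $\ell_1$-free analogue of the classical reflexive DFJP factorization of a weakly compact operator, and its validity rests on transporting ``no $\ell_1$-sequence in $F$'' (equivalently, fragmentability of the pointwise closure, Fact~\ref{f:sub-fr}) into ``no $\ell_1$-sequence in $V_F$''. The second, equally delicate, is that the homomorphism $g\mapsto h(g)$ is \emph{strongly} continuous into $\operatorname{Iso}(V_F)$: this follows from joint continuity of the original action $G\times X\to X$ combined with the uniform structure underlying the interpolation, but it is exactly here that one must check that each orbit map $g\mapsto h(g)v$ is norm-continuous rather than merely pointwise continuous. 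Once these two points are secured, the two directions together establish the stated equivalence.
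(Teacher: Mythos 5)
The paper does not prove this statement at all: it is quoted as a Fact with references to \cite{GM-rose} and \cite{Me-b}, so there is no internal proof to compare against. Your outline correctly reconstructs the standard argument from the cited source: the direction $(1)\Rightarrow(2)$ via pulling back the coordinate functionals $f_v(x)=\langle\alpha(x),v\rangle$ and transferring an $\ell_1$-lower estimate from $F$ to $V$ is complete and correct, and for $(2)\Rightarrow(1)$ you identify the right machinery (DFJP-type interpolation applied to the symmetric convex hull of $F$ inside $C(X)$, with tameness of $F$ — via Fact~\ref{f:sub-fr} — guaranteeing that the interpolation space contains no copy of $\ell_1$, plus the check that the induced isometric representation is strongly continuous). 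Two points you leave as black boxes are exactly the technical content of \cite[Theorem 6.5]{GM-rose}: that the $\ell_1$-freeness of $F$ passes to the \emph{convex hull} $W$ (this is not automatic and uses the Bourgain--Fremlin--Talagrand/Talagrand stability of tame families under convex combinations), and that the interpolation functor converts this into Rosenthality of $V_F$. Since the statement is itself an imported result, your roadmap is an acceptable and faithful account of how it is actually proved.
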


Recall that every Rosenthal representable compact $G$-space is tame by \cite{GM-rose}. If $X$ is metrizable and tame, then it is necessarily Rosenthal representable. The following result was established in \cite{GM-c}. Its proof uses the arguments of BV functions on c-ordered sets.  

\begin{thm} \label{t:CoisWRN} \cite{GM-c} 
	Every c-ordered compact, not necessarily metrizable, $G$-space $X$ is Rosenthal representable (that is, $\mathrm{WRN}$), and therefore tame. 
	Consequently, $\mathrm{CODS} \subset \mathrm{WRN} \subset \mathrm{Tame}$. 
\end{thm}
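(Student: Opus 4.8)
The plan is to verify the functional-analytic criterion for Rosenthal representability recorded in Fact \ref{WRNcriterion}: it suffices to exhibit a point-separating, bounded, $G$-invariant family $F \subseteq C(X)$ which is \emph{tame}. Once this is done, $(G,X)$ is WRN, and since every WRN system is tame (as recalled just before the statement), both assertions follow, giving $\mathrm{CODS} \subseteq \mathrm{WRN} \subseteq \mathrm{Tame}$. Metrizability will be used nowhere, so the non-metrizable case is covered with no change.

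First I would manufacture circle-valued coordinates. Since $X$ is a compact COTS, the construction of Proposition \ref{p:CompToCircle} shows that the set $C_+(X,\T)$ of continuous COP maps $X \to \T$ separates the points of $X$. To descend to real scalars, fix a finite family $\Phi$ of $1$-Lipschitz functions $\T \to \R$ separating the points of $\T$ (for instance $\Phi=\{\operatorname{Re},\operatorname{Im}\}$), and set
\[
F := \{\, \psi \circ f \ : \ f \in C_+(X,\T),\ \psi \in \Phi \,\}.
\]
Every element of $F$ is continuous and takes values in $\psi(\T)\subseteq[-1,1]$, so $F$ is bounded; and $F$ separates points because $\Phi$ separates $\T$ while $C_+(X,\T)$ separates $X$. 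Crucially, $F$ is automatically $G$-invariant: each translation $t^g$ is a COP automorphism of $X$, so $f\mapsto f\circ t^g$ permutes $C_+(X,\T)$, whence $F\circ t^g=F$.

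The heart of the argument, and the step I expect to be the main obstacle, is tameness of $F$, which I would obtain from a \emph{uniform} bound on circular variation over the whole family. The key is that every continuous COP map $f\colon X\to\T$ has variation bounded by a single universal constant $r_0$ depending only on the metric of $\T$: splitting at a point $c$, Lemma \ref{l:cop-lift-principle} lifts $f$ to a monotone $h\colon (X(c),\le_c)\to[0,1]$ with $f\circ q_c=q_*\circ h$, so $\Upsilon^{\prec}(h)\le 1$; since $q_*$ is Lipschitz, Remark \ref{r:FinCol}.6 bounds $\Upsilon^{\prec}(f\circ q_c)=\Upsilon^{\prec}(q_*\circ h)$ by a constant, and Lemma \ref{l:lift-BV} then yields $\Upsilon^{\circ}(f)\le \Upsilon^{\prec}(f\circ q_c)+\diam(\T)\le r_0$. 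As each $\psi\in\Phi$ is $1$-Lipschitz, a further application of Remark \ref{r:FinCol}.6 gives $F\subseteq BV^{\circ}_{r_0}(X,[-1,1])$.

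Finally, Theorem \ref{t:BVX(c)} asserts that a family of $[c,d]$-valued functions with uniformly bounded circular variation is tame; applying it to $F\subseteq BV^{\circ}_{r_0}(X,[-1,1])$ shows that $F$ is tame. Thus $F$ meets condition (2) of Fact \ref{WRNcriterion}, so $X$ is Rosenthal representable, hence tame. The only genuinely technical point is the uniform variation estimate of the preceding paragraph; the remaining steps are a direct assembly of the cited results.
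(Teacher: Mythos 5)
Your argument is correct and follows essentially the same route as the paper: both proofs verify condition (2) of Fact~\ref{WRNcriterion} by producing a point-separating, bounded, $G$-invariant family of continuous functions with uniformly bounded circular variation (via splitting at a point, Nachbin's lemma, and Lemma~\ref{l:lift-BV}) and then invoke Theorem~\ref{t:BVX(c)}. The only difference is packaging: the paper builds explicit real ``tent'' functions of variation $\le 3$ and then $G$-saturates the family (harmless by Remark~\ref{r:FinCol}.5), whereas you take the canonically $G$-invariant family $\Phi\circ C_+(X,\T)$ and extract the uniform variation bound from Lemmas~\ref{l:cop-lift-principle} and~\ref{l:lift-BV}, which makes the $G$-invariance automatic but is otherwise the same mechanism.
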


\begin{proof}
	Let $X$ be a c-ordered compact $G$-system. By Fact \ref{WRNcriterion}, it suffices to find a $G$-invariant, bounded, point-separating family $F \subseteq C(X, \R)$ that is tame. By Theorem \ref{t:BVX(c)}, establishing that $F$ has uniformly bounded total variation is sufficient to guarantee its tameness. 
	
	Let $a \neq b$ be distinct points in $X$. Assuming $X$ is infinite, choose a third distinct point $c \in X \setminus \{a,b\}$. Consider the cut space $X(c)=[c^-,c^+]$ with its natural quotient map $q \colon X(c) \to X$, where $q(c^{-})=q(c^{+})=c$ and $q$ is the identity elsewhere. 
	
	Since $a$ and $b$ are distinct from $c$, they lie in the open interval $(c^-, c^+)_c$. Without loss of generality, assume $c^- <_c a <_c b <_c c^+$ (the case $c^- <_c b <_c a <_c c^+$ is identical).  
	The intervals $[c^-,a]$, $[a,b]$, and $[b,c^+]$ are compact LOTS. By Nachbin's Lemma \ref{l:Nachbin}, there exist continuous maps into $[0,1]$:
	\begin{itemize}
		\item $f_1 \colon [c^{-},a] \to [0,1]$ which is identically zero,
		\item $f_2 \colon [a,b] \to [0,1]$ which is order-preserving, with $f_2(a)=0$ and $f_2(b)=1$,
		\item $f_3 \colon [b,c^+] \to [0,1]$ which is order-reversing, with $f_3(b)=1$ and $f_3(c^+)=0$.
	\end{itemize}
	Because these functions agree on their overlapping boundaries ($f_1(a)=f_2(a)=0$ and $f_2(b)=f_3(b)=1$), they glue together to form a well-defined continuous function $f \colon [c^-,c^+] \to [0,1]$. Furthermore, since $f$ monotonically increases from $0$ to $1$ and then decreases from $1$ to $0$, its total variation is exactly $\Upsilon^{\prec}(f) = 2$.
	
	Crucially, because $f(c^-) = f_1(c^-) = 0$ and $f(c^+) = f_3(c^+) = 0$, the function $f$ respects the equivalence relation of the quotient map $q$. Therefore, it uniquely defines a continuous factor-function $f_0 \colon X \to [0,1]$ such that $f_0 \circ q = f$. By construction, $0 = f_0(a) \neq f_0(b) = 1$, meaning $f_0$ separates $a$ and $b$. 
	By Lemma \ref{l:lift-BV}, the variation on the circularly ordered space is bounded by:
	\[
	\Upsilon^{\prec}(f) \leq  \Upsilon^{\circ}(f_0) \leq  \Upsilon^{\prec}(f)+1 = 3.
	\]
	Let $F_0$ be the collection of all such continuous factor-functions $f_0$ constructed for every pair of distinct points $a,b \in X$. Since every $g \in G$ acts as a COP homeomorphism, $g$-translations preserve circular variation (Remark \ref{r:FinCol}.2), meaning $\Upsilon^{\circ}(f_0 \circ g) \le 3$ for all $g \in G$ and $f_0 \in F_0$.
	
	Define the orbit family $F := F_0 G$. This family $F$ is a $G$-invariant, point-separating family of continuous functions $X \to [0,1]$ with uniformly bounded total variation ($\le 3$). By Theorem \ref{t:BVX(c)}, $F$ is tame. Thus, by Fact \ref{WRNcriterion}, $X$ is $\mathrm{WRN}$.
\end{proof}

Every GCOTS $G$-space $X$ is embedded into a compact COTS $G$-space $K$ which is tame by Theorem \ref{t:COTStame}. Indeed, $X$ admits a proper COP $G$-compactification $X \hookrightarrow K$ by Corollary \ref{r:SUMMARY}.1.  
Therefore, every GCOTS $G$-space is Rosenthal representable. 

Using Theorem \ref{t:CoisWRN}, one may show that many Sturmian-like multidimensional symbolic $\Z^d$-systems are circularly ordered, hence tame. See \cite{GM-c,GM-tLN} for details. 	

As a direct purely topological consequence of Theorem \ref{t:CoisWRN} and Remark \ref{l:easyPropGCO}.5, we obtain:  

\begin{prop} \label{p:CorGCOTSisWRN} 
	Every $\GCOTS$ and every $\GLOTS$ is $\mathrm{WRN}$. 
\end{prop}

For instance, the two arrows compact LOTS space $K$ is Rosenthal representable. At the same time, $K$ is not Asplund representable by a result of Namioka \cite[Example 5.9]{N}.  
Note that $\beta \N$ is not WRN, a result of Todor\u{c}evi\'{c} (see \cite{GM-tame}).  

\begin{thm} \label{t:GrRepr} 
	The topological group $H_+(X)$ (with compact open topology) is Rosenthal representable for every c-ordered compact space $X$. For example, this is the case for $H_+(\T)$.  
\end{thm}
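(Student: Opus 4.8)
The plan is to promote the dynamical representation of the flow $(H_+(X),X)$ to a faithful representation of the acting group itself. By Proposition~\ref{p:compCirc1}, $G:=H_+(X)$ is a closed topological subgroup of $H(X)$; and since a compact $\COTS$ is in particular a locally compact $\GCOTS$, Corollary~\ref{c:Arens} tells us that the compact-open topology on $G$ coincides with the pointwise topology inherited from $G\subseteq X^X$. The action $G\times X\to X$ is continuous, and by Theorem~\ref{t:CoisWRN} the flow $(G,X)$ is WRN. Unpacking the definition of Rosenthal representability, this provides a Rosenthal Banach space $V$, a continuous homomorphism $h\colon G\to\Iso(V)$ (strong operator topology), and a $G$-equivariant topological embedding $\alpha\colon X\hookrightarrow (B_{V^*},w^*)$, where $G$ acts on $B_{V^*}$ through the dual action $\tilde h(g)\phi=\phi\circ h(g)^{-1}$; equivariance reads $\alpha(gx)=\tilde h(g)\alpha(x)$.

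First I would check that $h$ is injective. If $h(g)=h(g')$ then $\tilde h(g)=\tilde h(g')$, whence $\alpha(gx)=\alpha(g'x)$ for every $x\in X$; as $\alpha$ is injective this forces $gx=g'x$ for all $x$, i.e. $g=g'$, because $G=H_+(X)$ acts faithfully on $X$ by definition. Thus $h$ is a continuous injective homomorphism onto the subgroup $h(G)\subseteq\Iso(V)$.

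The heart of the matter is to show that $h$ is a \emph{topological} embedding, i.e. that $h(g_i)\to h(g)$ in the strong operator topology implies $g_i\to g$ pointwise on $X$. Here I would use that $(\Iso(V),\mathrm{SOT})$ is a topological group, so inversion is continuous and $h(g_i)^{-1}v\to h(g)^{-1}v$ in norm for every $v\in V$. Then for each $x\in X$ and $v\in V$,
\[
\langle v,\tilde h(g_i)\alpha(x)\rangle=\alpha(x)\big(h(g_i)^{-1}v\big)\longrightarrow \alpha(x)\big(h(g)^{-1}v\big)=\langle v,\tilde h(g)\alpha(x)\rangle,
\]
so $\tilde h(g_i)\alpha(x)\to\tilde h(g)\alpha(x)$ weak-star. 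By equivariance this says $\alpha(g_i x)\to\alpha(gx)$ weak-star, and since $\alpha$ is a topological embedding we conclude $g_i x\to gx$ in $X$ for every $x$, i.e. $g_i\to g$ in the pointwise ($=$ compact-open) topology of $G$. Combined with continuity of $h$, this gives that $h$ is a homeomorphism onto $h(G)$.

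This exhibits $G=H_+(X)$ as a topological subgroup of $\Iso(V)$ with $V$ Rosenthal, which is exactly Rosenthal representability of the group; the case $X=\T$ then yields $H_+(\T)$. The one genuinely delicate point is the topological-embedding step: it hinges on the precise matching of three topologies — the compact-open topology of $G$ (identified with the pointwise topology via Corollary~\ref{c:Arens}), the weak-star topology transported through $\alpha$, and the strong operator topology on $\Iso(V)$ — together with the continuity of inversion in $(\Iso(V),\mathrm{SOT})$ needed to pass from $h(g_i)$ to the dual operators $\tilde h(g_i)$. Everything else (injectivity and continuity of $h$) is formal.
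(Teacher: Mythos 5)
Your proposal is correct and follows essentially the same route as the paper: both invoke Theorem \ref{t:CoisWRN} to obtain a representation $(h,\alpha)$ of the flow $(H_+(X),X)$ on a Rosenthal space $V$ and then argue that $h$ is a topological group embedding by matching the strong operator topology on $\Iso(V)$ with the topology of the action on the weak-star compact ball. The only difference is one of detail: the paper states the topology-matching step in a single sentence (SOT $=$ compact-open topology of the action on $(B_{V^*},w^*)$), whereas you spell it out via the pointwise topology and Corollary \ref{c:Arens} — a useful expansion, but not a different argument.
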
 
\begin{proof} (See also \cite{GM-tame}) Let $G:=H_+(X)$ with its compact open topology. 
	The dynamical $G$-system $X$ admits a representation $(h, \a)$ (in the sense of \cite{Me-nz,GM-rose})
	$$
	h \colon G \to \operatorname{Iso}(V), \ \ \a \colon X \to B^*
	$$ 
	on a Rosenthal Banach space $V$ by Theorem \ref{t:CoisWRN}. Then the homomorphism 
	$
	h^* \colon G \to \operatorname{Iso}(V), \   \ g \mapsto h(g^{-1})
	$ is a topological group embedding because 
	the strong operator topology on $\operatorname{Iso}(V)$ is identical with the compact open topology inherited from the action of this group on the weak-star compact unit ball $(B_{V^*},w^*)$ in the dual $V^*$.   
\end{proof}

The Ellis compactification $j \colon G \to E(G,\T)$ of the Polish group $G=H_+(\T)$ is a topological embedding. Indeed, the compact-open topology on $j(G) \subset C_+(\T, \T)$ coincides with the pointwise topology \cite[Section 8]{GM-tame}. 
This observation implies, by \cite[Remark 4.14]{GM-survey}, that
$\mathrm{Tame}(G)$ separates points and closed subsets. 
Thus, by Theorem \ref{t:GrRepr}, every orderly topological group $G$ is Rosenthal representable.
For example, $\R$ is orderly as it can be embedded into $H_+([0,1])$, where $[0,1]$ is treated as the two-point compactification of $\R$.
Recall that it is yet unknown (see \cite{GM-survey}) whether every topological group is Rosenthal representable.

\subsection*{When is the universal system $M(G)$ circularly ordered?} 

Recall that for every topological group $G$ there exist the canonically defined 
universal minimal $G$-system $M(G)$
and universal irreducible affine $G$-system $I\!A(G)$. 
In \cite{GM-tame,GM-UltraHom21} we discuss some examples of Polish groups $G$ for which $M(G)$ and $I\!A(G)$ are tame. 
These properties can be viewed as natural generalizations of 
extreme amenability and amenability, respectively. 

Let us say that $G$ is \emph{intrinsically c-ordered} (\emph{intrinsically tame}) if the $G$-system $M(G)$ is c-ordered (respectively, tame). 
In particular, we see that $G=H_+(\T)$ 
is intrinsically c-ordered, using a well known result of Pestov \cite{Pest98} which identifies $M(G)$ as the tautological action of $G$ on the circle $\T$. 
Note also that 
the Polish groups $\Aut(\mathbf{S}(2))$ and 
$\Aut(\mathbf{S}(3))$, of automorphisms 
of the circular directed graphs $\mathbf{S}(2)$ and $\mathbf{S}(3)$, are also intrinsically c-ordered. 
The universal minimal $G$-systems for the groups $\Aut(\mathbf{S}(2))$ and 
$\Aut(\mathbf{S}(3))$ are computed by L. Nguyen van Th\'{e} in \cite{van-the}. One can show that $M(G)$ for these groups are c-ordered $G$-systems, see \cite{GM-tame}.  
It is interesting to find more examples where $M(G)$ is c-ordered. 

The following definition is justified by Todor\u{c}evi\'{c}'s Trichotomy and the dynamical version of the Bourgain-Fremlin-Talagrand dichotomy \cite{GM1}. 

\begin{defin} \label{d:TameClasses} \cite{GM-TC} 
A compact metrizable dynamical $G$-system is said to be:
	\begin{enumerate}
		\item Tame$_\mathbf{1}$ if $E(X)$ is first countable.
		\item Tame$_\mathbf{2}$ if $E(X)$ is hereditarily separable. 
	\end{enumerate}	
	By results of \cite{GM-TC} we know that 
	$
	\mathrm{Tame}_\mathbf{2} \subset \mathrm{Tame}_\mathbf{1} \subset \mathrm{Tame}. 
	$ 
\end{defin}
  
\begin{thm} \label{t:LOT1}
	Every linearly ordered compact metric dynamical system is Tame$_\mathbf{1}$.
\end{thm}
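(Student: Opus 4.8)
The plan is to realize the enveloping semigroup $E(X)$ as a \emph{subspace} of the generalized Helly space $M_+(X,X)$ and then invoke the first countability of the latter. Since $X$ is a compact metric $\LOTS$, it is in particular a compact partially ordered space in the sense of Definition~\ref{d:ord} (Proposition~\ref{p:GLOTS}.1), so the set $M_+(X,X)$ of order preserving maps $X\to X$ is pointwise closed in $X^X$ (by the pointwise closedness noted after Definition~\ref{d:ord}, the linear analogue of Theorem~\ref{t:M_Closed}). As $X$ is a \emph{linearly} ordered dynamical system, every $g$-translation $t^g\colon X\to X$ is LOP (Definition~\ref{d:c-ordG-sp}), hence $h(G)\subseteq M_+(X,X)$. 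Taking pointwise closures and using that $M_+(X,X)$ is closed, I obtain
\[
E(X)=\operatorname{cl}_p(h(G))\subseteq M_+(X,X)\subseteq X^X .
\]
In particular each element $p\in E(X)$ is itself an order preserving selfmap of $X$.

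The second step is to apply Theorem~\ref{t:lin-Helly} with $Y:=X$: since both the domain and codomain are compact metric $\LOTS$, the generalized Helly space $M_+(X,X)$ is first countable in the pointwise topology. First countability is hereditary, so the subspace $E(X)$ is first countable as well. By Definition~\ref{d:TameClasses} this is exactly the assertion that $X$ is $\mathrm{Tame}_{\mathbf 1}$, completing the proof. I would also remark that $X$ is already known to be tame (indeed $\mathrm{WRN}$) by Theorem~\ref{t:CoisWRN} via Lemma~\ref{l:DS-inclusion}, so the content here is the sharper structural statement that $E(X)$ is \emph{first countable}, not merely contained in $\mathcal F(X,X)$.

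The only genuine step requiring care is the inclusion $E(X)\subseteq M_+(X,X)$; once this is in place the conclusion is immediate from Theorem~\ref{t:lin-Helly}. If one prefers an argument that does not cite the Helly theorem as a black box, the plan would instead unfold its proof directly through Fact~\ref{countable}: for a given $p\in E(X)$ one produces the countable determining set $C:=\mathrm{disc}(p)\cup\mathrm{sing}(X)\cup A$, where $A$ is a countable dense subset containing the endpoints. Here $\mathrm{disc}(p)$ is countable because $p$ is LOP into a compact metric $\LOTS$ (Lemma~\ref{l:propSIDE}.3) and $\mathrm{sing}(X)$ is countable by Lemma~\ref{l:sing}. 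The crux of that route is showing that any $q\in E(X)$ agreeing with $p$ on $C$ must agree with $p$ everywhere, which follows from continuity of $p$ at the non-exceptional points together with the convexity/density argument already carried out in the proof of Theorem~\ref{t:lin-Helly}.

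Thus the main (and essentially only) obstacle is verifying that enveloping-semigroup elements remain order preserving, i.e. that the pointwise limit of LOP translations is LOP; this is handled uniformly by the closedness of the order relation on the compact $\LOTS$ $X$. Everything else is an appeal to the previously established first countability of $M_+(X,X)$ and the hereditary nature of first countability.
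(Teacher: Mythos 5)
Your proposal is correct and follows essentially the same route as the paper: the paper also observes that every $p\in E(X)$ is a LOP selfmap because $M_+(X,X)$ is pointwise closed, and then concludes that $E(X)$ is a subspace of the first-countable generalized Helly space $M_+(X,X)$ via Theorem~\ref{t:lin-Helly}. Your additional unpacking of the closedness argument and the optional route through Fact~\ref{countable} are consistent elaborations of the same proof.
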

\begin{proof} Let $X$ be a compact metrizable linearly ordered dynamical system. Every element $p \in E(X)$ is a LOP selfmap $X \to X$, because $M_+(X,X)$ is pointwise closed. 
	So, $E(X)$ is a subspace of $M_+(X,X)$ (Generalized Helly space), which is first countable by Theorem \ref{t:lin-Helly}. 
\end{proof}

\begin{ex} \label{ex:Helly} 
	Consider the linearly ordered $H_+([0,1])$-system $[0,1]$. 
	The enveloping semigroup of this order preserving system is a (compact) subspace of the Helly space, which is first countable. So, this system is Tame$_\mathbf{1}$. It is not Tame$_{\bf 2}$. 
	In fact, it is (like the Helly space) not hereditarily separable. 
\end{ex}

\begin{prop} \label{ex:T} \cite[Proposition 8.10]{GM-TC} 
	Let $H_+(\T)$ be the Polish topological group of all c-order preserving homeomorphisms of the circle $\T$. The minimal circularly ordered dynamical system $(H_+(\T), \T)$ is tame but not \rm{Tame}$_\mathbf{1}$. 
\end{prop}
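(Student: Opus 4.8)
The plan is to treat the two claims separately. Tameness will be immediate from the general theory, whereas the failure of $\mathrm{Tame}_{\mathbf 1}$ amounts to transplanting the non-first-countability of the generalized Helly space $M_+(\T,\T)$ (Example \ref{ex:BigHelly}) into the enveloping semigroup $E(\T)$.

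For tameness, I would simply invoke Theorem \ref{t:CoisWRN}: since $\T$ is a c-ordered compact space and every $g$-translation of $G:=H_+(\T)$ is COP, the pair $(G,\T)$ is a circularly ordered compact $G$-system, hence $\mathrm{WRN}$ and in particular tame. Minimality is Pestov's identification of $M(G)$ with the tautological action on $\T$, already recalled above.

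For the negative assertion, recall that $\mathrm{Tame}_{\mathbf 1}$ means that $E(\T)$ is first countable (Definition \ref{d:TameClasses}), and that $E(\T)=\operatorname{cl}_p(G)$ in $\T^{\T}$. First I note $E(\T)\subseteq M_+(\T,\T)$, because every element of $E(\T)$ is a pointwise limit of COP homeomorphisms and $M_+(\T,\T)$ is pointwise closed by Theorem \ref{t:M_Closed}. The crux is then to show that the two degenerate maps from Example \ref{ex:BigHelly} actually lie in $E(\T)$. Fix $a\in\T$ and $b\ne a$. I would build a sequence $g_n\in H_+(\T)$ that expands a shrinking arc $I_n$ about $b$ homeomorphically and orientation-preservingly onto $\T\setminus J_n$, where $J_n$ is a shrinking arc about $a$, and maps the complement $\T\setminus I_n$ into $J_n$. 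For any $x\ne b$ we have $x\notin I_n$ eventually, so $g_n(x)\in J_n\to a$; choosing the expansion so that $g_n(b)=b$ yields $g_n\to p_a^{\,b}$ pointwise, while steering $g_n(b)\to a$ yields $g_n\to p_a$. Thus $p_a\in E(\T)$ and $p_a^{\,b}\in E(\T)$ for every $b\ne a$.

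It remains to repeat the argument of Example \ref{ex:BigHelly} inside $E(\T)$. Applying Fact \ref{countable} with $X=Y=\T$ and $E=E(\T)$, if $p_a$ admitted a countable local basis in $E(\T)$ then some countable $C\subseteq\T$ would determine $p_a$ in $E(\T)$; but picking $b\in\T\setminus(C\cup\{a\})$, the element $p_a^{\,b}\in E(\T)$ agrees with $p_a$ on $C$ and differs from it at $b$, a contradiction. Hence $E(\T)$ is not first countable and $(G,\T)\notin\mathrm{Tame}_{\mathbf 1}$. The one genuinely non-formal step, and the expected main obstacle, is the explicit verification that the collapsing maps $p_a$ and $p_a^{\,b}$ are pointwise limits of honest orientation-preserving homeomorphisms of the circle; once this membership in $E(\T)$ is secured, the remainder is a direct transcription of Example \ref{ex:BigHelly} through Fact \ref{countable}.
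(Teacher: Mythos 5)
Your proposal is correct and follows essentially the same route as the paper: tameness via Theorem \ref{t:CoisWRN}, and failure of Tame$_{\mathbf 1}$ by locating the parabolic idempotent $p_a$ and the loxodromic idempotents $p_a^{\,b}$ (your north--south limits) inside $E(\T)$ and then running the Fact \ref{countable} argument of Example \ref{ex:BigHelly}. The only difference is that the paper outsources the membership of these idempotents in $E(\T)$ and the non-first-countability at $p_a$ to \cite{GM-TC}, whereas you verify the approximation by orientation-preserving homeomorphisms directly; that verification is sound.
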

\begin{proof}
	This system is tame being a circularly ordered system (Theorem \ref{t:CoisWRN}). 
	However, the enveloping semigroup of this c-order preserving system is not first countable. 
	Choose any point $a \in \T$. For every $b \neq a$ in $\T$, 
	the pair $(a, b)$ is the target pair of a loxodromic idempotent $p=p_{(a, b)}$ (see Figure 3)  
 	with attracting point $a$ and repelling point $b$.  
	\begin{figure}[h] 	
		\begin{center} 
			\scalebox{0.4}{\includegraphics{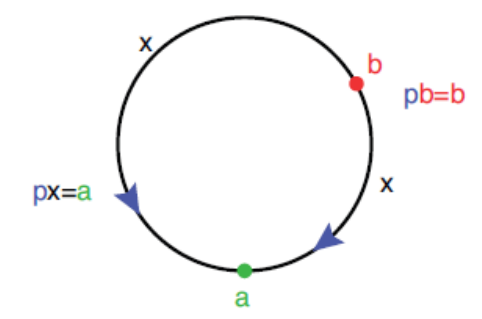}}
		\caption{loxodromic idempotent $p$}
		\end{center}  
		\label{fig:three}
	\end{figure}  
	Then, by results of \cite{GM-TC}, 
	the parabolic idempotent $p_a$ defined by $p_ax = a, \ \forall x \in \T$, does not admit a countable basis for its topology. 
\end{proof}  

In a similar way to the proof of Proposition \ref{ex:T}, one shows that the $G$-system $M(G)=split(\T,\Q_0)$ from Fact \ref{f:GM}.3 is tame but not Tame$_\mathbf{1}$. 

\begin{remark} \label{r:c-Helly} 
	Theorems \ref{t:lin-Helly} and \ref{t:LOT1} cannot be extended, in general, to circular orders. Indeed, the ``circular analog of Helly's space" $M_+(\T,\T)$ (which is a separable Rosenthal compactum) is not first countable. Also its subspace, the enveloping semigroup $E(\T)$ of the circularly ordered system $(H_+(\T), \T)$
	from Example \ref{ex:T}, is not first countable. 
	This result demonstrates once more 
	the relative complexity of circular orders 
	when compared with linear orders.
\end{remark}

\begin{prop}  \label{p:EisCODS} 
	If the enveloping semigroup $E(X)$, as a compact space, is circularly ordered, then the original compact metrizable dynamical system $X$ is \rm{Tame}$_\mathbf{2}$. 
\end{prop}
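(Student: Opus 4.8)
The plan is to show that $E(X)$ is hereditarily separable, which is exactly the assertion $X\in\mathrm{Tame}_{\mathbf 2}$. By Lemma~\ref{c-doubling}(v) a \emph{separable} compact $\COTS$ is automatically hereditarily separable, so the whole proof reduces to proving that the compact $\COTS$ $E(X)$ is separable. I would extract this separability from the dynamical structure rather than from the order alone, since a compact $\COTS$ need not be separable in general (for instance the lexicographic $\{0,1\}^{\omega_1}$ is a non-separable compact $\LOTS$, hence a non-separable compact $\COTS$).

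First I would observe that every compact $\COTS$ is sequentially compact. Fix $c\in E(X)$ and pass to the single split $E(X)(c)=[c^-,c^+]$, which by Corollary~\ref{cover} is a compact $\LOTS$ admitting a continuous surjection $q\colon E(X)(c)\to E(X)$. A compact $\LOTS$ is sequentially compact: given any sequence $(x_n)$, Ramsey's theorem applied to the $2$-colouring of pairs $\{i<j\}\mapsto (x_i\le x_j)$ produces a monotone subsequence, and a monotone sequence converges to its supremum (resp.\ infimum), which exists by the compactness criterion Fact~\ref{f:propLOTS}(2). Hence $E(X)(c)$ is sequentially compact, and so is its continuous image $E(X)$ (lift a sequence along $q$, extract a convergent subsequence upstairs, and push it down).

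Next, sequential compactness forces tameness. If $X$ were not tame, the dynamical Bourgain--Fremlin--Talagrand dichotomy \cite{GM1,BFT} would embed a topological copy of $\beta\N$ into $E(X)$; being compact inside the Hausdorff space $E(X)$ (Proposition~\ref{Hausdorff}(3)) this copy is closed, hence sequentially compact, contradicting the fact that the sequence $(n)_{n\in\N}\subset\beta\N$ has no convergent subsequence. Therefore $X$ is tame. Since $X$ is in addition metrizable, $E(X)$ is a \emph{separable} Rosenthal compactum, exactly as recalled immediately after the definition of tameness. Thus $E(X)$ is a separable compact $\COTS$, and Lemma~\ref{c-doubling}(v) yields that it is hereditarily separable, i.e.\ $X\in\mathrm{Tame}_{\mathbf 2}$. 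The main obstacle is precisely this bridge from the order-topological side to separability: the $\COTS$ hypothesis by itself only buys sequential compactness, and one must route this through the metric BFT dichotomy to upgrade it to the separability of the enveloping semigroup, after which the split-to-$\LOTS$ reduction closes the argument.
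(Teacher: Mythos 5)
Your argument is correct, but it is a considerable detour compared with the paper's two-line proof. Both proofs end the same way: separability of the compact $\COTS$ $E(X)$ plus Lemma~\ref{c-doubling}(v) gives hereditary separability, which is literally the definition of Tame$_\mathbf{2}$. The difference is how separability of $E(X)$ is obtained. The paper simply invokes the general fact that the enveloping semigroup of a \emph{metrizable} compact $G$-system is separable --- no tameness needed: $h(G)\subseteq C(X,X)$ is separable in the (finer) uniform topology because $X$ is compact metric, and a countable uniformly dense subset of $h(G)$ is pointwise dense in $E(X)=\overline{h(G)}$. Your worry that ``a compact $\COTS$ need not be separable'' is legitimate in the abstract, but the separability here comes for free from metrizability of $X$, so your route through sequential compactness of compact $\COTS$ (via the split $E(X)(c)$ and a Ramsey extraction of monotone subsequences) and the dynamical Bourgain--Fremlin--Talagrand dichotomy is logically unnecessary for this proposition. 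That said, your chain of reasoning is sound: compact $\LOTS$ are sequentially compact, the split quotient transfers this to $E(X)$, a copy of $\beta\N$ would be a closed non-sequentially-compact subset, so tameness follows, and the separable-Rosenthal-compactum remark then supplies separability. What your longer route buys is a self-contained derivation of tameness of $X$ from the order structure of $E(X)$ alone (the paper gets ``Tame$_\mathbf{2}\subset$ Tame'' only by citation to \cite{GM-TC}); what it costs is reliance on the heavy BFT machinery where a one-line density argument suffices.
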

\begin{proof}  
		Since $X$ is a metrizable compactum, $E(X)$ is separable. Hence, assuming that $E(X)$ is circularly ordered, we obtain that $E(X)$ is hereditarily separable by Corollary \ref{c:normality}(3).    		
	\end{proof}

\begin{cor} \label{cor:St} 
	The Sturmian-like cascades  
	$\operatorname{Split}(\T, R_\al; A)$ are \rm{Tame}$_\mathbf{2}$.  
\end{cor}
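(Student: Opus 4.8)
The plan is to obtain the statement as an application of Proposition~\ref{p:EisCODS}, so that the whole problem reduces to showing that the enveloping semigroup of $X:=\operatorname{Split}(\T,R_\al;A)$ is a circularly ordered compact space. First I would record the standing facts about $X$ itself: since $A$ is countable, $X$ is a compact metrizable space by Lemma~\ref{c-doubling}(iii),(vi), the generator $R_\al$ induces a continuous COP $\Z$-action by Lemma~\ref{c-doublingGEN}, and the identification of this cascade as a splitting of the circle is the one recalled in Remark~\ref{r:add}. In particular $X$ is a circularly ordered compact metric $\Z$-system, so Proposition~\ref{p:EisCODS} will apply the moment $E(X)$ is seen to be circularly ordered.

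Next I would locate $E(X)$ inside a space we understand. Every translation $R_\al^{n}$ is a COP homeomorphism of $X$, and $M_+(X,X)$ is pointwise closed by Theorem~\ref{t:M_Closed}; hence $E(X)=\operatorname{cl}_p\{R_\al^{n}:n\in\Z\}\subseteq M_+(X,X)$, so every element of $E(X)$ is a COP selfmap. As the pointwise closure of a countable set, $E(X)$ is automatically separable, which is the feature that Proposition~\ref{p:EisCODS} converts into hereditary separability once a circular order is available.

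The core of the argument, and the step I expect to be hardest, is to describe $E(X)$ explicitly and to exhibit its circular order. I would push everything down to the equicontinuous factor $\pi\colon X\to\T$, the irrational rotation, whose enveloping semigroup is the full rotation group $E(\T)=\T$. The induced semigroup map $\pi_*\colon E(X)\to\T$ sends a limit of $R_\al^{n_i}$ to rotation by $\beta=\lim n_i\al$. Because $A$ is $R_\al$-invariant and a convergent net $(n_i\al)$ approaches $\beta$ monotonically from a single side, I expect the fibre $\pi_*^{-1}(\beta)$ to consist of exactly the two one-sided limit maps $p_\beta^{-},p_\beta^{+}$ (approach from below, resp.\ above). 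This realizes $E(X)$ as the double circle $\operatorname{Split}(\T;\T)=\T\otimes_c\{-,+\}$, which is a compact COTS by Remark~\ref{r:3}(3) together with Proposition~\ref{p:LexComp}. The circular order on $E(X)$ is then read off from the cyclic position of the pairs $(\beta,\pm)$, and compatibility with Definition~\ref{newC} is a routine verification using Lemma~\ref{l:3CONVEX}. The genuine obstacle is the two-to-one claim: proving that each fibre has exactly two points and that the induced map onto the double circle is a homeomorphism requires a careful analysis of the one-sided limits of $R_\al^{n_i}$ on the split points lying over $A$ — precisely the Sturmian computation carried out in \cite{GM-c,GM-TC}.

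Once $E(X)$ is identified as a circularly ordered compact space, Proposition~\ref{p:EisCODS} yields directly that the metrizable system $X$ is Tame$_\mathbf{2}$. In fact the argument is robust: I need only a topological embedding of $E(X)$ into the double circle $\T_\T=\operatorname{Split}(\T;\T)$, since the latter is hereditarily separable by Lemma~\ref{c-doubling}(v) (taking $X=\T$ and $A=\T$) and hereditary separability passes to subspaces. Thus even if the fibre analysis yields only that $E(X)$ is a closed subspace of the double circle rather than all of it, the Tame$_\mathbf{2}$ conclusion survives unchanged.
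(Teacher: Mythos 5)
Your overall strategy is the paper's: reduce to Proposition~\ref{p:EisCODS} by identifying $E(X)$ as a separable circularly ordered compact space, deferring the hard Sturmian fibre computation to \cite{GM-c} (the paper cites \cite[Cor.~6.5]{GM-c} for exactly this). However, your explicit identification of $E(X)$ is wrong in a way that also undermines your fallback. The enveloping semigroup always contains the image of the acting group, so $E(X)\supseteq\{R_\al^{\,n}:n\in\Z\}$; each $R_\al^{\,n}$ is a homeomorphism of $X$, hence distinct from both (discontinuous) one-sided limit maps $p_{n\al}^{-},p_{n\al}^{+}$. Thus the fibre of $\pi_*$ over each $\beta=n\al$ has \emph{three} points, not two, and $E(X)$ is not the double circle $\T\otimes_c\{-,+\}$ but rather $\T_{\T}\cup\Z$, sitting inside the triple lexicographic product $\T\otimes_c\{-,0,+\}$ (this is the description the paper quotes from \cite{GM-c}). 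Consequently your robustness remark --- that it suffices to embed $E(X)$ into the hereditarily separable double circle --- fails as stated, since $E(X)$ does not embed there; and the full triple product $\T\otimes_c\{-,0,+\}$ is of no help as an ambient space, because every point $(x,0)$ is isolated in it, so it is not even separable.

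The repair is straightforward and brings you back to the paper's argument: $E(X)=\T_{\T}\cup\Z$ is still a compact COTS (a closed c-ordered subset of $\T\otimes_c\{-,0,+\}$, which is compact by Proposition~\ref{p:LexComp}), and it is separable because only \emph{countably} many middle points $(n\al,0)$ occur. Proposition~\ref{p:EisCODS} (via Lemma~\ref{c-doubling}(v)) then gives hereditary separability of $E(X)$ and hence Tame$_\mathbf{2}$. One further small caution: your claim that a convergent net $n_i\al\to\beta$ "approaches from a single side" requires passing to a subnet; nets approaching from both sides do not converge pointwise in $X^X$, which is precisely why the fibre over $\beta\notin\Z\al$ consists of the two one-sided limits --- but that is part of the computation you are already outsourcing to \cite{GM-c,GM-TC}.
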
  
\begin{proof}
	We can apply Proposition \ref{p:EisCODS} because for these systems the enveloping semigroup 
	$E$ becomes (by \cite[Cor. 6.5]{GM-c}) a circularly ordered cascade, where $E=\T_{\T} \cup \Z$ is a c-ordered compact (non-metrizable) subset of 
	the c-ordered lexicographic order $\T \times \{-,0,+\}$. 
	Here, $\T_{\T}$ is homeomorphic to the two arrows space and $\Z$ is a discrete copy of the integers.
\end{proof}

It would be interesting to study for which c-ordered systems the enveloping semigroup is c-ordered. 

\bibliographystyle{amsplain}

\end{document}